\documentclass[leqno,11pt]{article}
\usepackage[margin=1in]{geometry}

\usepackage[latin,english]{babel}
\usepackage[utf8]{inputenc}
\usepackage[T1]{fontenc}
\usepackage{eufrak, color, mathrsfs,dsfont}
\usepackage{times}
\usepackage{geometry}
\usepackage{amsmath}
\usepackage{amssymb}
\usepackage{amsthm}
\usepackage{tikz}
\usepackage{mathtools} 
\usepackage{mathabx} 
\usepackage{cases}
\usepackage{braket}
\usepackage[toc,page]{appendix}
\usepackage{pdfsync} 
\usepackage{hyperref}
\usepackage{psfrag}
\mathtoolsset{showonlyrefs} 
\usepackage[mathscr]{euscript}
\usepackage{math}

\numberwithin{equation}{section}

\theoremstyle{plain}
\newtheorem{thm}{Theorem}[section]

\newtheorem{lem}[thm]{Lemma}
\newtheorem{prop}[thm]{Proposition}

\theoremstyle{definition}
\newtheorem{defn}[thm]{Definition}

\theoremstyle{remark}
\newtheorem{rem}[thm]{Remark}

\DeclareMathOperator{\diag}{diag}

\definecolor{blush}{rgb}{0.87, 0.36, 0.51}

\newcommand{\OpM}{{\mathcal O}{\mathcal P}{\mathcal M}}

\renewcommand{\bar}{\overline}

\newcommand{\odd}{{\rm odd}}

\newcommand{\pa}{\partial}
\newcommand{\vs}{\varsigma}
\newcommand{\vphi}{\varphi}

\newcommand{\Lip}{\mathrm{Lip}}

\newcommand{\bbeta}{{\boldsymbol{\beta}}}

\title{\bf Long time dynamics close to large amplitude quasi-periodic traveling waves in two dimensional forced rotating fluids}

\author{Roberto Feola, Luca Franzoi,  Riccardo Montalto}

\begin{document}

\date{}

\maketitle

\noindent
{\bf Abstract.} In this paper we consider the $\beta$-plane equation with a smooth 
external force which is a quasi-periodic traveling wave of large amplitude 
$O(\lambda^{\alpha - 1})$, $1 < \alpha < 2$, 
and with large speed of propagation of size $O(\lambda)$.
In a previous paper, the second and the third author proved the existence of quasi-periodic traveling wave solutions of large amplitude of order $O(\lambda^{\theta})$, for some $\theta > 0$. 
The purpose of this paper is to analyze the long time dynamics for smooth initial data close to these traveling wave solutions. In particular, we shall prove that, for initial data sufficiently close to a fixed traveling wave solution (in the $H^s$ topology), the corresponding solution remains close to the traveling wave solution for arbitrary long time (independent of the size of the traveling wave solution). As a consequence, we prove that there are open sets of large initial for which one has almost global existence, namely such that the corresponding solution remains of the same size of the initial datum for arbitrary long time (independent of the size of the initial data). The proof combines several ingredients: an analysis of the linearized PDE at any traveling wave solution via normal form methods, a sharp analysis of the transformed nonlinear problem under the change of coordinates that diagonalizes the linearized equation and energy estimates.

\smallskip

\noindent
{\em Keywords:} Fluid dynamics, rotating fluids, quasi-periodic traveling waves, long time stability.
% Nash-Moser theory, 
%  invariant tori.

\noindent
{\em MSC 2020:}  35Q35, 76U60, 35B40, 35B35.

\smallskip %\\[1mm]

\tableofcontents

\section{Introduction}

In this paper we consider  the \emph{$\beta$-plane equation} on the bi-dimensional torus $\T^2$, $\T := (\R /(2 \pi \Z))$ which describes a bi-dimensional approximation of the Euler Coriolis equation for rotating fluids. 
The $\beta$-plane model was proposed by Rossby in 1939 as a flattening approximation for rotating fluids on a sphere to capture with a simpler system the effects of the latitude variation in the Coriolis force (Pedloski, \cite[Ch. 3.17]{Pedlosky}).
This and others approximate models for rotating fluids in two and three dimensions are commonly used in oceanogra\-phy and geophysical fluid dynamics. For more detailes we refer, for instance, to the monographs \cite{McWill}, \cite{Pedlosky}, \cite{chemin2006}, \cite{laure}. In its vorticity formulation, the $\beta$-plane equation takes the form 
\begin{equation}\label{rotating.2D}
	\partial_t v + u \cdot \nabla v - \beta\, \tL v  =  {\bf F}(t, x) \,,\quad  x = (x_1, x_2) \in \T^2 \,.
\end{equation}
Here ${\bf F} : \R \times \T^2 \to \R^2$ represents the external force, $u = (u_1, u_2) : \R \times \T^2 \to \R^2$ represents the velocity field with scalar vorticity $v := \partial_{x_1} u_2 - \partial_{x_2} u_1$, 
$\beta \in \R\setminus\{0\}$ represents the speed of rotation of the frame system around the rotation axis and 
\begin{align}
& 	{\tL :=  \pa_{x_1}(-\Delta)^{-1}}\,, \label{operator.tL} \\
& \tL h(x) 
= \sum_{\xi\in\Z^2\setminus\{0\}} \im\,\tL(\xi) \whh(\xi) e^{\im\,\xi\cdot x}\,, \quad { \tL (\xi) :=  \frac{\xi_{1}}{|\xi|^2} }\quad \forall\,\xi=(\xi_{1},\xi_{2}) \in \Z^2\setminus\{0\}\,.
    \end{align}
is a {\it dispersive} operator which arises 
from the Coriolis term. 
The two dimensional velocity field 
$u:\R\times\T^2 \to \R^2$ can be expressed in terms 
of the scalar vorticity $v : \R\times \T^2 \to \R$ 
by the classical Biot-Savart law 
\begin{equation}\label{biot-savart}
u(t,x) := \fB\big( v (t,x)\big) := \nabla^\perp (-\Delta)^{-1} v(t,x) 
= \sum_{\xi\in\Z^2\setminus\{0\}} {\frac{\im(\xi_{2},-\xi_{1})}{|\xi|^2} }\whv (t,\xi) e^{\im\, \xi\cdot x}\,.
\end{equation}
  There have been several investigations 
  concerning rotating fluids, 
  both in the viscous case and in the inviscid case. 
  An interesting regime that is frequently studied is
when the fluid is rapidly rotating, 
namely when $|\beta| \gg 1$ in the $\beta$-plane 
approximation, both in the viscous case and in the inviscid case. 
In this regime a very strong dispersive effect prevails, 
allowing to control 
the long time behaviour of the solutions. 
For rotating Navier-Stokes we refer to the monograph \cite{chemin2006} 
(see also references therein and \cite{KohLeeTak0}). 
In a series of works, Babin, Mahalov \& Nicolaenko 
\cite{BaMaNi0, BaMaNi97,BaMaNi99,BaMaNi2000} proved regularity and integrability properties of solutions in 3D tori both in the resonant and in the non-resonant case. 
They showed that in the limit of fast rotations, 
the solutions becomes global for rotating Navier-Stokes 
and exists for longer and longer time. 
For inviscid rotating fluids on $\R^3$, the regime of fast rotation was investigated in  \cite{AngFer}, \cite{dalibard2009}, \cite{Dutrifoy},  \cite{KohLeeTak}, \cite{takada}. 

 Independently of the speed of rotation, several results have been proved for the Euler-Coriolis equations on $\R^3$. Global well-posedness was proved by  Guo, Huang, Pausader \& Widmayer \cite{GHPW} and Guo, Pausader \& Widmayer \cite{GuoPauWid} with axisymmetric initial data, and, very recently, by Ren \& Tian \cite{RenTian} with general non-axisymmetric initial data. Independently of the speed of rotation, the long time stability of small amplitude solutions  was studied for the $\beta$-plane equation on $\R^2$ by Elgindi \& Widmayer \cite{ElgWid} and Pusateri \& Widmayer \cite{PusaWid}. 
 In these two works, 
 it is essential to exploit the dispersive effect, 
 which is absent for periodic initial data
 (i.e. the PDE on $\T^2$). In this paper,  
 we address the problem of analyzing 
 the long time dynamics of large solutions
 for the forced $\beta$-plane equation on $\T^2$.
This fits within a broader context of studying the long-time dynamics of two-dimensional fluids for generic initial data, which is essentially a widely open problem.
Indeed, for 2D incompressible equations it is conjectured that
the long time dynamics is described by ``simple states''
(i.e. steady states, traveling waves, periodic and quasi-periodic waves...), see the review \cite{Drivas-Elgindi}.

In this paper, we consider the forced $\beta$-plane equation on $\T^2$ with a forcing term ${\bf F}$ which is a {\it  quasi-periodic traveling wave} of large amplitude $O(\lambda^\alpha)$, with $1 < \alpha < 2$ and $\lambda \gg 1$. In this situation, in \cite{BFMT1} it has been proved that there exist families of  large amplitude quasi-periodic traveling wave solutions of size $O(\lambda^{\alpha - 1})$. The main purpose of the present paper is to prove the nonlinear stability of such traveling wave solutions for arbitrarily long times. As a consequence, we also prove that there are open sets of initial data of large size $O(\lambda^{\alpha - 1})$ such that the corresponding solutions stay of size $O(\lambda^{\alpha - 1})$ for arbitrarily long time (independent of $\lambda$). Note that, as it has been proved in Elgindi \& Widmayer \cite{ElgWid}, for general smooth initial data, the corresponding solution can grow more than double exponentially in time.  

\medskip

We now give the precise statement of our result. 
We assume that the forcing term ${\bf F}(t, x)$ 
in \eqref{rotating.2D}  has the form
\begin{align}
& {\bf F}(t, x) = \lambda^\alpha f(\lambda \omega t, x), \quad f \in {\mathcal C}^\infty(\T^\nu \times \T^2), \quad f \neq 0\,, \quad \omega \in \R^{\nu}\setminus\{0\}\,, \quad \alpha \in (1, 2),  \\
& f(\vf, x) = f(- \vf, - x), \quad \forall \, (\vf, x) \in \T^\nu \times \T^2\,,  \label{ipotesi forcing} \\
&  \int_{\T^2} f(\vf, x)\, \wrt x = 0\,, \quad \forall \, \vf \in \T^\nu\,.
\end{align}
% and $\omega\in\R^{\nu}$  and 
We also assume that ${\bf F}$ is a {\it quasi-periodic traveling wave}, according to the following definition. 
\begin{defn}\label{def1.QPT}
	{\bf (Quasi-periodic traveling waves).}
	Let $\nu \in \N$. We say that a function $v:\R \times \T^2\to \R$ is  a \emph{quasi-periodic traveling wave} with irrational frequency vector $\omega= (\omega_{1},...,\omega_{\nu})\in\R^{\nu}\setminus\{0\}$, that is $\omega\cdot \ell \neq 0$ for any $\ell\in\Z^\nu\setminus\{0\}$, and  ``wave vectors'' $\bar\jmath_{1},...,\bar\jmath_{\nu}\in \Z^2$ if there exists a function $\breve{v}:\T^\nu \to\R$ such that
	\begin{equation}
		v(t,x) = \breve{v} (\omega_{1}t- \bar{\jmath}_1\cdot x,...,\omega_{\nu}t-\bar{\jmath}_{\nu}\cdot x) = \breve{v}(\omega t- \pi( x)) \,, \quad \forall \, (t,x) \in\R \times\T^2\,,
	\end{equation}
	where  $\pi :\R^2 \to \R^\nu$ is the linear map $x \mapsto \pi( x) := (\bar{\jmath}_{1}\cdot x,...,\bar{\jmath}_{\nu}\cdot x)$. We also denote by $\pi^\top$ the transpose of the map $\pi$. 
\end{defn}

%  In this setting, the Coriolis force contribution rewrites as 
%%
%\begin{equation}
%\ff \, \Omega \land \uu = \beta x_2\, \Omega \land \uu = \beta x_2 \,\uu_h^\perp, 
%\end{equation}
%%
%where $\uu_h^\perp=(-u_2, u_1)$ is the orthogonal vector field to the horizontal part $\uu_h=(u_1, u_2)$ of the three dimensional velocity field $\uu$. 
%\medskip

%%%%%
%\subsection{Incompressible Porous Media (IPM) around a stable profile}
%\begin{align}\label{eq:IPM}
% \rho_t - \cR_1^2 + \bu \cdot \nabla \rho=0, \quad \bu=(\cR_1, \cR_2 \rho, - \cR_1^2 \rho). 
%\end{align}
%%%%%%
%\subsection{Dispersive SQG}
%\begin{align}\label{eq:SQG}
%\rho_t - \cR_1 \rho+\bu \cdot \nabla \rho=0, \qquad \bu= \nabla^\perp (-\Delta)^{-1/2} \rho.
%\end{align}
%Once $v(t,x)$ is solved in \eqref{rotating.but.3D},
% solution to the original problem \eqref{euler.coriolis} with $\ff(x_2)=\beta x_2$ is then recovered by solving a transport equation for the third component of the velocity field $\uu=(\uu_h,u_3)$.
In view of the assumptions in \eqref{ipotesi forcing},  we are reduced to studying
the equation
\begin{equation}\label{beta.waves.large.eq}
	\pa_{t} v  + u \cdot \nabla v - \beta \mathtt L v = \lambda^{\alpha} f(\lambda \omega t,x) \,,  \quad u=\fB(v) \,, \quad   t\in\R\,, \quad x = (x_1, x_2) \in 
	\T^2\,,
\end{equation}
where we take $\omega\in \Omega$, being 
$\Omega\subset \R^\nu$  the reference compact set
\begin{equation}\label{anello}
	\Omega := \{\omega\in\R^\nu \,: \, 1 \leq |\omega|  \leq 2 \}\,.
\end{equation}
We fix once and for all  $\nu \geq 2$ and the wave-vectors $\bar\jmath_{1},...,\bar\jmath_{\nu}\in \Z^2$ such that ${\rm dim}\,{\rm span}\{\bar{\jmath}_{1},...,\bar{\jmath}_{\nu}\} = 2$, in order to avoid trivial cases.

\noindent
To give the precise statement of our result, we even need to state precisely the theorem proved in \cite{BFMT1}. To this aim, we introduce 
%\begin{equation}\label{QP.Eq.to.solve}
%	\lambda \,\omega \cdot \pa_{\vf} v -\beta(-\Delta)^{-1/2} \cR_{1} v + u \cdot \nabla v = \lambda^{\alpha} f(\lambda \omega t,x) \,, \quad u = \nabla^\perp (-\Delta)^{-1} v\,.
%\end{equation}
the Sobolev  space on $\T^n$, $n\in\N$,
\begin{equation}\label{Hs.spaces}
	\begin{aligned}
	H^s(\T^n) & := \Big\{ u(\vartheta) = \sum_{k \in \Z^n } \widehat u(k) e^{\im \vartheta \cdot k} : \| u \|_{H^s}^2 := \sum_{k \in \Z^n} \langle k \rangle^{2 s} |\widehat u(k)|^2 < + \infty \Big\}\,, \\
	%H^s_0 \equiv 
	H^s_0(\T^n) & := \Big\{ u \in H^s(\T^n)  : \int_{\T^n} u(\vartheta) \wrt \vartheta = 0 \Big\}\,, \quad \langle k \rangle := \max \{ 1, |k| \}\,.  
	\end{aligned}
\end{equation}
For functions $u(\vf, x)$, $(\vf,x)\in \T^\nu\times\T^2$, $u \in H^s(\T^\nu \times \T^d)$ we often write $\| u \|_s \equiv \| u \|_{H^s}$, whereas for functions depending only on $x \in \T^2$, we write $\| u \|_{H^s}$. 
%We remark that the forcing of this equation is not perturbatively small, but it is actually of large size, with $\lambda\gg 1$. The range of the admissible $\alpha >0$ will be fixed later.

%{\color{red} CAMBIARE $\bar J$, $\bar J^\top$ in $\pi$, $\pi^\top$ IN TUTTO IL PAPER. ORA STO USANDO QUESTE NOTAZIONI. \\
%	CAMBIARE ANCHE NOTAZIONE $\Lambda_{1}$ IN $\tL$ (senza il pedice 1) CON FOURIER MULTIPLIER $\tL(\xi)$}

\begin{thm}[\cite{BFMT1}]\label{teo principale beta plane}
Let $\beta\in\R\setminus\{0\}$, $ \alpha \in (1,2)$ and $\nu \in \N$, $\nu\geq 2$, be fixed. Let $f \in {\mathcal C}^\infty(\T^\nu \times \T^2, \R)$ be a quasi-periodic traveling wave (see  Definition \ref{def1.QPT}) satisfying \eqref{ipotesi forcing}. There exists $\bar s = \bar s(\nu, \alpha) > 0$ such that, for any $s \geq \bar s(\nu, \alpha)$,
there exist $\lambda_0 = \lambda_0(f, s,  \nu, \alpha, \beta) \gg 1$ large enough and constants $C_1, C_2>0$, with
$C_i = C_i(f, s, \nu, \alpha, \beta) $ for  $i=1,2$, 
such that, for every $\lambda \geq \lambda_0$, the following holds. 
There exists a Borel set $\Omega_\lambda \subset \Omega$ 
of asymptotically full Lebesgue measure as $\lambda \to + \infty$,
that is $\lim_{\lambda \to + \infty} |\Omega \setminus \Omega_\lambda| = 0$, 
such that, for every $\omega \in \Omega_\lambda$, there exists a quasi-periodic traveling wave $v_\lambda(\,\cdot\, ; \omega) \in H^s(\T^\nu \times \T^2)$, that solves the equation \eqref{beta.waves.large.eq}. Moreover 
%$$
%C_1 \lambda^{b} \leq \sup_{\omega \in \Omega_\lambda} \| v_\lambda(\cdot; \omega) \|_s \leq C_2 \lambda^b
%$$ for some exponent $b = b(\nu, \alpha) > 0$. 
\begin{equation}\label{stima.grande.solutione}
	 C_1 \lambda^{\alpha-1} \leq \inf_{\omega \in \Omega_\lambda} \| v_\lambda(\,\cdot\,; \omega) \|_s  \leq    \sup_{\omega \in \Omega_\lambda} \| v_\lambda(\,\cdot\,; \omega) \|_s \leq C_2 \lambda^{\alpha-1+\tc}\,,
\end{equation}
for some $\tc \in (0,\tc_{0})$ arbitrarily small, with $\tc_{0}= \tc_{0}(\alpha,\nu)$, and 
\begin{equation}\label{travel pari media nulla teo}
\begin{aligned}
& v_\lambda(\vf, x) = - v_\lambda(- \vf, - x)\,, \quad \forall \, (\vf, x) \in \T^\nu \times \T^2\,, \\
& \int_{\T^2} v_\lambda(\vf, x) \wrt x = 0\,, \quad \forall \, \vf \in \T^\nu\,. 
\end{aligned}
\end{equation}
\end{thm}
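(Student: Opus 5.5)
We reduce the search for quasi-periodic traveling waves to a functional equation on $\T^\nu\times\T^2$. We look for $v(t,x)=\breve v(\lambda\omega t,x)$. Then \eqref{beta.waves.large.eq} becomes
\[
\lambda\,\omega\cdot\partial_\vf v + \fB(v)\cdot\nabla v - \beta\tL v = \lambda^\alpha f(\vf,x),
\]
and we restrict to the traveling-wave ansatz $v(\vf,x)=V(\vf-\pi(x))$. This ansatz is preserved by the equation, since $\tL$, $\fB$ and the nonlinearity commute with the translations $\tau_\varsigma:(\vf,x)\mapsto(\vf-\pi(\varsigma),x+\varsigma)$. We also restrict to the parity class $v(\vf,x)=-v(-\vf,-x)$ with zero $x$-average, which is compatible with the assumptions \eqref{ipotesi forcing}.

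\textbf{Step 1: approximate solution.} We rescale and write $v=\lambda^{\alpha-1}(v_1+w)$, where $v_1$ solves the leading linear equation $\omega\cdot\partial_\vf v_1=f$. This equation is solvable for Diophantine $\omega$ because $f$ has zero $\vf$-average on each traveling mode; if it does not, we first subtract the average part by hand. Since $f$ is a traveling wave, only the harmonics $\ell$ with $\xi=-\pi^\top\ell$ appear, so $\omega\cdot\partial_\vf$ is the relevant inverse. The remaining equation for $w$ reads
\[
\lambda\omega\cdot\partial_\vf w-\beta\tL w+\lambda^{\alpha-1}\big(\fB(v_1+w)\cdot\nabla(v_1+w)\big)-\beta\tL v_1=0 .
\]
Because $\alpha<2$, we have $\lambda^{\alpha-1}\ll\lambda$, so the nonlinearity is perturbative relative to the time-derivative term, but only by the factor $\lambda^{\alpha-2}$. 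We iterate this step a finite number of times, depending on $\alpha$, to produce an approximate solution whose error is $O(\lambda^{-N})$ in $H^s$.

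\textbf{Step 2: Nash--Moser scheme around the approximate solution.} The main obstacle is inverting the linearized operator
\[
\mathcal L=\lambda\omega\cdot\partial_\vf+\lambda^{\alpha-1}\big(a(\vf,x)\cdot\nabla\big)+\lambda^{\alpha-1}\mathcal R-\beta\tL ,
\]
where $a=\fB(v_1+\dots)$ is a divergence-free transport field of size $\lambda^{\alpha-1}$ and $\mathcal R$ is a smoothing operator of order $-1$. The transport term is unbounded, so it cannot simply be absorbed. We therefore straighten it by a quasi-periodic traveling change of variables $x\mapsto x+\mathtt b(\vf,x)$, solving $\lambda\omega\cdot\partial_\vf\mathtt b+\lambda^{\alpha-1}a(\cdot)=\text{average}$ perturbatively in $\lambda^{\alpha-2}$. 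This reduces $\mathcal L$ to $\lambda\omega\cdot\partial_\vf + \mathtt m\cdot\nabla -\beta\tL+$ smoothing terms. Traveling invariance forces the averaged constant vector $\mathtt m$ to vanish or to be handled by the momentum condition. We then perform a pseudo-differential normal form in decreasing orders and a KAM reducibility scheme to diagonalize the operator to $\lambda\omega\cdot\partial_\vf-\beta\tL(\xi)+r(\xi)$.

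The inverse then requires second-order Melnikov conditions of the form
\[
|\lambda\omega\cdot\ell+\mu_\xi-\mu_{\xi'}|\geq \gamma\langle\ell\rangle^{-\tau}\quad\text{for } \xi-\xi'=-\pi^\top\ell .
\]
By the traveling structure, these reduce to the diagonal first-order type conditions $|\lambda\omega\cdot\ell|$ plus small eigenvalue differences. Since $\lambda$ is large and $\mu_\xi=O(1)$, the excluded set of $\omega$ has measure $O(\gamma\lambda^{-1}\cdot\lambda^{\text{small}})$, which tends to $0$. This gives $\Omega_\lambda$ with $|\Omega\setminus\Omega_\lambda|\to0$. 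The loss $\gamma^{-1}$ in the tame estimates produces the factor $\lambda^{\tc}$ in the upper bound of \eqref{stima.grande.solutione}.

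\textbf{Step 3: conclusion.} The Nash--Moser iteration converges on $\Omega_\lambda$ for $\lambda\ge\lambda_0$ and $s\ge\bar s$ (the loss of derivatives fixes $\bar s$). The bound $\|v_\lambda\|_s\le C_2\lambda^{\alpha-1+\tc}$ follows from the scheme. The lower bound $\ge C_1\lambda^{\alpha-1}$ follows from $v_\lambda=\lambda^{\alpha-1}(v_1+o(1))$ together with $v_1\neq0$, which holds because $f\neq0$. The parity and zero-average properties \eqref{travel pari media nulla teo} are preserved at every step, since all transformations respect the reversible structure.
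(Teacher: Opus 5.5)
Your overall architecture is the one of \cite{BFMT1}, whose reducibility part is reproduced in Section~\ref{sez:linear}; the present paper only quotes the theorem. That architecture consists of: a leading term obtained by inverting $\lambda\omega\cdot\partial_\vf$ (there $g_\lambda=\lambda^\alpha(\lambda\omega\cdot\partial_\vf-\beta\tL)^{-1}f$), Nash--Moser, straightening of the transport by a traveling reversible diffeomorphism, a normal form in decreasing orders, KAM reducibility with momentum-restricted second Melnikov conditions, and $\gamma=\lambda^{-\tc}$ as the source of the $\lambda^{\tc}$ loss. The gap is in the measure estimate, which is exactly the delicate point for the degenerate dispersion $\tL(\xi)=\xi_1/|\xi|^2$. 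First, the eigenvalues are not $O(1)$. Right after straightening, the order $-1$ remainder has size $\lambda^{\theta}$, $\theta=\alpha-1+\tc$, and its averages give $\mu_\xi=\im\beta\tL(\xi)+z(\xi)$ with $z(\xi)$ of size up to $\lambda^\theta|\xi|^{-1}$ (Proposition~\ref{prop normal form lower orders}). You only have $\mu_\xi-\mu_{\xi'}=o(\lambda)$, so the second Melnikov conditions follow from the Diophantine one only when $\langle\ell\rangle^\tau\lesssim\gamma\lambda^{1-\theta}$. For larger $\ell$, momentum fixes $\xi-\xi'=-\pi^\top(\ell)$ but leaves infinitely many $\xi'\in\Z^2$. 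Each resonant set has measure $\lesssim\gamma\lambda^{-1}\langle\ell\rangle^{-\tau-1}$ in your normalization, so the union bound over $\xi'$ diverges, and your claim $O(\gamma\lambda^{-1}\lambda^{\rm small})$ is unsupported.

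The missing idea is a mechanism that makes this family of conditions summable. In \cite{BFMT1} and in \eqref{cantor finale ridu} one imposes the weaker conditions $|\im\lambda\omega\cdot\ell+\mu(j)-\mu(j')|\geq\lambda\gamma\langle\ell\rangle^{-\tau}|j'|^{-\tau}$. Their resonant sets have measure $\lesssim\gamma\langle\ell\rangle^{-\tau-1}|j'|^{-\tau}$, which is summable over $(\ell,j')$ since $j$ is fixed by momentum. The price is a loss of $|j'|^{\tau}$ (and $|j'|^{2\tau}$ in the Lipschitz bound) in the homological equation. This loss is absorbed only because the decreasing-order normal form has first pushed the remainder to order $-M$ with $M\geq 2\tau+1$ and made it small; a KAM scheme cannot start on a remainder of size $\lambda^\theta\gg1$. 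Alternatively, you could exploit $\tL(\xi),z(\xi)\to0$ to show the conditions are automatic for $|\xi'|$ beyond a power of $\langle\ell\rangle$ and count the remaining pairs, but this has to be done. Moreover, the dominant excluded set is the Diophantine one, of measure $O(\gamma)$, so you need $\gamma=\gamma(\lambda)\to0$. With $\gamma=\lambda^{-\tc}$ the correction to $v_1$ has size $\lambda^{\alpha-2}\gamma^{-3}=\lambda^{\alpha-2+3\tc}$, consistent with $\zeta$ in \eqref{stimaZetalambda}. Your lower bound therefore needs a restriction like \eqref{choice mathtt c}, which is where $\tc_0$ comes from. Since the statement involves $\inf_{\omega\in\Omega_\lambda}$, it also needs the uniform bound $\|v_1\|_0\geq\frac12\big(\sum|\widehat f(\ell,j)|^2\langle\ell\rangle^{-2}\big)^{1/2}$, which follows from $|\omega\cdot\ell|\leq 2|\ell|$. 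Finally, no subtraction is needed (or allowed) in Step 1: a traveling wave with zero $x$-average satisfies $\widehat f(\ell,j)\neq0\Rightarrow\ell\neq0$.
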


The purpose of this paper is to prove the long time stability of the traveling wave solutions constructed in Theorem \ref{teo principale beta plane}. Given a Banach space $X$ with norm $\| \cdot \|_X$, for any $T > 0$, and any integer $k \geq 0$ we denote by ${\mathcal C}^k([0, T], X)$ the space of the ${\mathcal C}^k$ maps
$$
[0, T] \to X, \quad t \mapsto u(t)
$$
equipped with the norm
$$
\| u \|_{{\mathcal C}^k_T X} := \sum_{0 \leq n \leq k} \sup_{t \in [0, T]} \| \partial_t^n u(t) \|_X\,. 
$$The precise statement of the main result of the paper is the following.
\begin{thm}[{\bf Long time stability of large amplitude traveling wave solutions.}]\label{thm:mainsatability}
Let $\beta\in\R\setminus\{0\}$, $ \alpha \in (1,2)$ and $\nu \in \N$, $s > 2$ be fixed. Let $f \in {\mathcal C}^\infty(\T^\nu \times \T^2, \R)$ be an even quasi-periodic traveling wave (see  Definition \ref{def1.QPT}) satisfying \eqref{ipotesi forcing}. Then there exist $\bar \mu = 
\bar \mu(\nu, \alpha) > 0$ and $\lambda_0 = \lambda_0(s, f, \nu, \alpha, \beta) \gg 1$ large enough  
such that, for every $\lambda \geq \lambda_0$, 
there exists a Borel set $\mathcal{O}_{\lambda}\subseteq\Omega_\lambda \subset \Omega$ 
of asymptotically full Lebesgue measure as $\lambda \to + \infty$,
that is $\lim_{\lambda \to + \infty} |\Omega \setminus \mathcal{O}_{\lambda}| = 0$, 
such that, for every $\omega \in \mathcal{O}_{\lambda}$,
the following holds. let $v_\lambda(\lambda \omega t, x)$, $\omega \in \Omega_\lambda$ be a traveling wave solution of \eqref{beta.waves.large.eq} with $v_\lambda \in H^{s + \bar \mu}(\T^\nu \times \T^2)$ and satisfying \eqref{travel pari media nulla teo}. 
Then there are $c_{s}>0$, $\delta_0 = \delta_0(s, \lambda, \alpha, \nu)>0$ (small enough) such that, for any 
$0 < \delta\leq \delta_0$ and for any initial datum $v_0 \in H^s_0(\T^2)$ with
\[
\| v_0 - v_\lambda(0, \,\cdot\,)\|_{H^s}\leq \delta\,,
\]
there exists a unique solution $v\in {\mathcal C}^0([0, T_\delta], H^s_0(\T^2)) \cap {\mathcal C}^1([0, T_\delta], H^{s - 1}_0(\T^2))$
with $v(0)=v_0$ satisfying
\[
\| v(t) - v_\lambda(\lambda \omega t, \cdot) \|_{H^s} \leq 2 \delta, \quad \forall \, t \in [0, T_\delta ]\,,
\qquad 
T_{\delta}:= c_{s}\delta^{-1}
\]
\end{thm}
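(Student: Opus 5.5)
We write $v = v_\lambda(\lambda\omega t, x) + w$ and study the equation for the perturbation $w$:
\begin{equation}
\partial_t w + \fB(v_\lambda)\cdot\nabla w + \fB(w)\cdot \nabla v_\lambda - \beta\tL w + \fB(w)\cdot\nabla w = 0\,, \quad w(0) = v_0 - v_\lambda(0,\cdot)\,.
\end{equation}
The forcing drops out because $v_\lambda$ solves \eqref{beta.waves.large.eq}. The difficulty is that the linear part involves the large coefficients $\fB(v_\lambda)$ and $\nabla v_\lambda$, of size $O(\lambda^{\alpha-1+\tc})$. A naive energy estimate would give a growth rate $\exp(C\lambda^{\alpha-1+\tc}t)$, and hence a time of existence depending on $\lambda$. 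The plan is therefore to first conjugate the linearized operator
\begin{equation}
\mathcal L_\omega := \partial_t + \fB(v_\lambda)\cdot\nabla + \fB(\cdot)\cdot\nabla v_\lambda - \beta\tL
\end{equation}
to a time-independent Fourier multiplier, up to a bounded remainder that is small in $\lambda$. We then run an energy estimate on the transformed nonlinear equation.

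\textbf{Step 1 (reducibility).} We use the normal form analysis of \cite{BFMT1}, done at the level of the quasi-periodic operator $\lambda\omega\cdot\partial_\vf + \ldots$ with $\omega\in\Omega_\lambda$. This provides a change of variables $\Phi(\vf)$, $\vf=\lambda\omega t$, such that $\Phi^{-1}\mathcal L_\omega\Phi = \partial_t + \im\,{\mathcal D}$. It is built from three pieces: a quasi-periodic diffeomorphism of $\T^2$ straightening the transport term, which is symplectic and volume preserving since $\fB(v_\lambda)$ is divergence free; pseudo-differential normal form steps removing the lower orders; and a KAM reducibility scheme. Here ${\mathcal D}=\diag(\mu_\xi)$ with real $\mu_\xi$, by the reversibility in \eqref{travel pari media nulla teo}. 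The smallness for the perturbative scheme comes from the fast frequency: after rescaling time, the coefficients are $O(\lambda^{\alpha-2+\tc})$ relative to $\lambda\omega\cdot\partial_\vf$, and $\alpha<2$. The reduction requires second Melnikov conditions, which cut $\Omega_\lambda$ down to $\mathcal O_\lambda$, still of asymptotically full measure. Each step loses derivatives, which accounts for the assumption $v_\lambda\in H^{s+\bar\mu}$. The output is that $\Phi(\vf)$, $\Phi(\vf)^{-1}$ are bounded on $H^s_0(\T^2)$ uniformly in $\vf\in\T^\nu$, and in fact $\|\Phi^{\pm1}-{\rm Id}\|_{\mathcal L(H^s)}\leq C\lambda^{-\epsilon}$ for some $\epsilon>0$.

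\textbf{Step 2 (transformed nonlinear equation).} We set $w=\Phi(\lambda\omega t)h$. Then $h$ solves
\begin{equation}
\partial_t h + \im{\mathcal D}h + \Phi^{-1}\big(\fB(\Phi h)\cdot\nabla(\Phi h)\big)=0\,.
\end{equation}
The linear part is skew-adjoint and diagonal, so it commutes with $\langle D\rangle^s$ and contributes nothing to $\frac{d}{dt}\|h\|_{H^s}^2$. The delicate point is the quadratic term, which loses one derivative. We write $\fB(\Phi h)\cdot\nabla(\Phi h)$ as a transport operator $\fB(\Phi h)\cdot\nabla$ applied to $\Phi h$. Conjugated by $\Phi$, this becomes a transport operator along a divergence-free field, plus bounded terms. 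Writing $a\cdot\nabla$ for that transported field, the Kato–Ponce commutator estimate then gives
\begin{equation}
\big|\langle \langle D\rangle^s (a\cdot\nabla h), \langle D\rangle^s h\rangle\big|\lesssim \|a\|_{H^{s}}\|h\|_{H^s}^2\,, \qquad s>2\,.
\end{equation}
This uses $\nabla a\in L^\infty$, which holds because $H^{s-1}\subset L^\infty$ in dimension two for $s>2$. We need $a$ to be controlled by $\|h\|_{H^s}$. This holds because $\Phi$ is essentially a composition with a diffeomorphism close to the identity, composed with smoothing corrections. So the sharp structure of $\Phi$ must be tracked: it is a change of variables plus a bounded pseudo-differential part of order zero whose commutators with $\nabla$ are of order zero. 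This is the main obstacle. If the pure commutator route fails, the fallback is to estimate $\|\Phi^{-1}h\|_{H^s}$ directly: apply the energy method to $w$ with the modified energy $\|\Phi(\lambda\omega t)^{-1}w\|_{H^s}^2$ and handle the nonlinearity there, where it is an honest transport term.

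\textbf{Step 3 (bootstrap).} The energy inequality reads $\frac{d}{dt}\|h\|_{H^s}^2\leq C_s\|h\|_{H^s}^3$, with $C_s$ independent of $\lambda$ because $\Phi$ is uniformly bounded. Combined with local well-posedness in $H^s$, $s>2$, and a continuation argument, this gives $\|h(t)\|_{H^s}\leq \frac32\|h(0)\|_{H^s}$ for $t\leq c_s\delta^{-1}$. Since $\|\Phi^{\pm1}\|\leq 1+C\lambda^{-\epsilon}$, we obtain $\|w(t)\|_{H^s}\leq 2\delta$ for $\lambda\geq\lambda_0$ large. Uniqueness and the regularity $w\in{\mathcal C}^0H^s\cap{\mathcal C}^1H^{s-1}$ follow from the standard well-posedness theory. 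The smallness threshold $\delta_0$ may depend on $\lambda$, since it is needed for local existence near a large state and for the bootstrap. The lifespan constant $c_s$, by contrast, is independent of $\lambda$.
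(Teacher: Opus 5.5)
Your route is the same as the paper's. You write $v=v_\lambda+w$ and reduce the linearized field by $\mathcal U=\mathcal B_\bot\circ\mathcal W$: straightening diffeomorphism, normal form, then KAM, with second Melnikov conditions defining $\mathcal O_\lambda$. You then need a structural lemma saying the transformed nonlinearity is a transport term plus an $H^s$-bounded quadratic remainder. Finally you run the Kato--Ponce energy estimate $\partial_t\|\psi\|_{H^s}\lesssim\|\psi\|_{H^s}^2$ and integrate this Riccati inequality up to $T_\delta\sim\delta^{-1}$. Several statements need correcting, though.

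\emph{$\Phi^{\pm1}-\mathrm{Id}$ is not small in operator norm on $H^s$.} For the composition $h\mapsto h(x+\bbeta(\varphi,x))$ the difference is small only as a map $H^{s+1}\to H^s$. Already a shift $x\mapsto x+c$ gives norm $\sup_k|e^{\mathrm{i}k\cdot c}-1|$, which is of order one.
\begin{itemize}
\item What is true, and all your Step~3 uses, is $\|\mathcal B(\varphi)^{\pm1}\|_{H^s\to H^s}\le 1+C\lambda^{-\eta}$. This is what yields the constant $2$.
\item It needs its own proof. The paper does $s=0$ by change of variables with Jacobian $1+O(\lambda^{-\eta})$, then inducts on $s$ via the chain rule.
\end{itemize}

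\emph{Step~2 does not rest on ``order zero with order-zero commutators''.} Operators in $\OpM^0_s$ need not have commutators with $a\cdot\nabla$ of order zero. What actually works is the following.
\begin{itemize}
\item $\mathcal W^{\pm1}=\mathrm{Id}+\mathcal F_\pm$ with $\mathcal F_\pm$ one-smoothing, $\|\mathcal F_\pm\|_{H^{p-1}\to H^p}\lesssim\lambda^{-\eta}$. The construction provides this because every generator lies in $\OpM^{-1}_s$ or better.
\item With $\mathcal N[w_1,w_2]=-\mathfrak B(w_1)\cdot\nabla w_2$, the terms $\mathcal N[\mathcal U\psi,\mathcal B_\bot\mathcal F_+\psi]$ and $\mathcal F_-\mathcal B_\bot^{-1}\mathcal N[\mathcal U\psi,\mathcal U\psi]$ are bounded by $\|\psi\|_{H^s}^2$. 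This follows directly from $\|\mathcal N[w_1,w_2]\|_{H^r}\lesssim\|w_1\|_{H^{r-1}}\|w_2\|_{H^{r+1}}$, $r>1$.
\item The remaining term is exact transport by the chain rule: $\mathcal B^{-1}(u\cdot\nabla\mathcal B\psi)=(M^T\mathcal B^{-1}[u])\cdot\nabla\psi$. Here $M=\mathcal B^{-1}[(\mathrm{Id}+\nabla\bbeta)^T]$ and $u=\mathfrak B(\mathcal U\psi)$.
\end{itemize}

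\emph{The straightening diffeomorphism need not be volume preserving.} Divergence-freeness of $\mathfrak B(v_\lambda)$ only makes its Jacobian constant along characteristics. So the new field $\mathtt a$ need not be divergence free. This is harmless: your estimate already pays for $\mathrm{div}\,\mathtt a$ via $\|\nabla\mathtt a\|_{L^\infty}\lesssim\|\mathtt a\|_{H^s}$, exactly as the paper does.

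\emph{Your ``fallback'' is not an alternative.} $\|\Phi^{-1}w\|_{H^s}^2=\|h\|_{H^s}^2$, and differentiating it requires the same structural lemma.
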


\begin{rem}
We remark that Theorem \ref{thm:mainsatability}  holds also in the case $\nu = 1$ or when ${\rm dim}\,{\rm span}\{\bar{\jmath}_{1}, \dots, \bar{\jmath}_{\nu}\} = 1$. In the latter situation, the existence of a traveling wave solution follows as in \cite{BFMT1}, and the proof further simplifies since the corresponding traveling wave is one-dimensional.
In the case $\nu = 1$, the existence of a traveling wave of the form
$v_{\lambda}(\lambda \omega t - \bar{\jmath} \cdot x)$, with $\omega \in \mathbb{R} \setminus \{0\}$, is immediate, as the nonlinearity satisfies
$\mathfrak{B}(v_{\lambda}) \cdot \nabla v_{\lambda} \equiv 0$ and the equation for $v_\lambda(\vf, x)$ becomes
$$
\lambda \omega \partial_\vf v_\lambda(\vf, x) - \beta \mathtt L v_\lambda(\vf, x) = \lambda^\alpha f(\vf, x)
$$
whose solution is given by 
\begin{equation}
    v_\lambda(\vf, x) = \sum_{\begin{subarray}{c}
\ell \in \Z \,,\, j \in \Z^2 \setminus \{ 0 \} \\
\pi^\top(\ell) + j = 0
\end{subarray}} \dfrac{\lambda^\alpha \widehat f(\ell , j)}{\im (\lambda \omega \ell - \beta \mathtt L(j))} e^{\im ( \ell \vf + j x)}
\end{equation}
(recall that $\pi^\top$ in Definition \ref{def1.QPT}). Note also that in this time periodic case, for any $\ell \in \Z$ and $j \in \Z^2 \setminus \{ 0 \}$, with $\pi^\top(\ell) + j = 0$, one has that $|\lambda \omega \ell - \beta \mathtt L(j)| \geq \lambda \gamma$ for some constant $\gamma > 0$ ad for $\lambda \equiv \lambda (\beta) \gg 0$ large enough. 
We choose to state the theorem under the assumptions $\nu \geq 2$ and
${\rm dim}\,{\rm span}\{\bar{\jmath}_{1}, \dots, \bar{\jmath}_{\nu}\} = 2$, which correspond to the genuinely nontrivial cases for the existence of traveling waves. Nevertheless, the stability proof remains valid also in the simpler settings described above.
\end{rem}
As a consequence of Theorem \ref{thm:mainsatability}, we get the following.
\begin{thm}\label{corollario nonlinear stability}
Let $s > 2$. Under the assumptions of Theorem \ref{thm:mainsatability},
for any $\lambda\geq \lambda_0$ large enough, there is $\delta_0 = \delta_0(s, \lambda, \alpha, \nu) > 0$ small enough such that, for any $\delta \in (0, \delta_0)$, for any $\omega\in\mathcal{O}_\lambda$, there exist an open set 
of initial conditions $\mathcal{U}_{\lambda,\delta,\omega}\subseteq H^{s}$
and constants $C_1,C_2>0$ 
such that, for any $v_0\in \mathcal{U}_{\lambda,\delta,\omega} $
 the following holds.
There exists
 a  unique solution of \eqref{beta.waves.large.eq} 
 $v\in {\mathcal C}^0([0, T_\delta], H^s_0(\T^2)) \cap {\mathcal C}^1([0, T_\delta], H^{s - 1}_0(\T^2))$
with $v(0)=v_0$ satisfying
\begin{equation}\label{stimaaltobasso}
C_1\lambda^{\alpha-1}\leq \sup_{t \in [0, T_\delta]}\|v(t)\|_{H^s}\leq C_2\lambda^{\alpha-1 + \mathtt c}\,.  
\end{equation}
\end{thm}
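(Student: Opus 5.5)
The plan is to derive Theorem \ref{corollario nonlinear stability} directly from Theorem \ref{thm:mainsatability}, combined with the two-sided bound \eqref{stima.grande.solutione} of Theorem \ref{teo principale beta plane}. Fix $\lambda\geq\lambda_0$ and $\omega\in\mathcal O_\lambda\subseteq\Omega_\lambda$, and let $v_\lambda(\cdot;\omega)$ be the traveling wave of Theorem \ref{teo principale beta plane}. We take the regularity index there large enough ($\geq s+\bar\mu$ and $\geq \bar s$) so that $v_\lambda\in H^{s+\bar\mu}$. Define the open set
\[
\mathcal U_{\lambda,\delta,\omega}:=\big\{v_0\in H^s_0(\T^2)\,:\,\|v_0-v_\lambda(0,\cdot)\|_{H^s}<\delta\big\},
\]
which is an open ball in $H^s_0$. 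For $v_0$ in this ball, Theorem \ref{thm:mainsatability} provides the unique solution $v\in\mathcal C^0([0,T_\delta],H^s_0)\cap\mathcal C^1([0,T_\delta],H^{s-1}_0)$. It satisfies $\|v(t)-v_\lambda(\lambda\omega t,\cdot)\|_{H^s}\leq 2\delta$ for all $t\in[0,T_\delta]$, with $T_\delta=c_s\delta^{-1}$.

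\textbf{Upper bound.} By the triangle inequality, $\|v(t)\|_{H^s}\leq \sup_{\vf}\|v_\lambda(\vf,\cdot)\|_{H^s(\T^2)}+2\delta$. To control $\sup_\vf\|v_\lambda(\vf,\cdot)\|_{H^s(\T^2)}$ by a space-time norm, we use the traveling structure $v_\lambda(\vf,x)=\breve v(\vf-\pi(x))$. Sobolev embedding in $\vf$ (valid for index $>\nu/2$) gives $\sup_\vf\|v_\lambda(\vf,\cdot)\|_{H^s}\lesssim\|v_\lambda\|_{s+\nu/2+1}$. Applying \eqref{stima.grande.solutione} at that higher regularity yields $\lesssim \lambda^{\alpha-1+\tc}$. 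Adding $2\delta\leq 2\delta_0\leq 1$ then gives the upper bound with some constant $C_2$.

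\textbf{Lower bound.} This is the main obstacle, since \eqref{stima.grande.solutione} bounds the $H^s(\T^\nu\times\T^2)$ norm from below, not the norm at the fixed slice $\vf=\lambda\omega t$. The first attempt uses the traveling structure: in Fourier, $\widehat{v_\lambda}(\ell,j)$ is supported on $j=-\pi^\top(\ell)$. Since $\mathrm{span}\{\bar\jmath_k\}$ is two-dimensional but $\nu\geq2$, the map $\ell\mapsto\pi^\top\ell$ may fail to be injective, so a direct comparison of norms is not immediate. Instead, average in time. Because $\omega$ is nonresonant, the orbit $\lambda\omega t$ equidistributes on $\T^\nu$. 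Hence
\[
\frac1T\int_0^T\|v_\lambda(\lambda\omega t,\cdot)\|^2_{H^s(\T^2)}dt\to(2\pi)^{-\nu}\int_{\T^\nu}\|v_\lambda(\vf,\cdot)\|_{H^s}^2d\vf=(2\pi)^{-\nu}\|v_\lambda\|_{L^2_\vf H^s_x}^2 .
\]
For large $T$ (and $T_\delta$ is large once $\delta_0$ is small), the supremum over $[0,T_\delta]$ is therefore at least a constant times $\|v_\lambda\|_{L^2_\vf H^s_x}$.

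It remains to bound $\|v_\lambda\|_{L^2_\vf H^s_x}\gtrsim\lambda^{\alpha-1}$. The lower bound in \eqref{stima.grande.solutione} in \cite{BFMT1} comes from the leading term $v_\lambda\approx\lambda^{\alpha-1}(\omega\cdot\pa_\vf)^{-1}f$ plus smaller corrections. Since $f\neq0$ has zero $x$-average, the $L^2_\vf L^2_x$ norm of this leading term is $\gtrsim\lambda^{\alpha-1}$, and the correction terms are $o(\lambda^{\alpha-1})$ in the needed norm. This gives $C_1$, after subtracting $2\delta$ and taking $\lambda_0$ large. If one only wants to rely on the stated form of \eqref{stima.grande.solutione}, the alternative is interpolation: combine $\|v_\lambda\|_{s}\geq C_1\lambda^{\alpha-1}$ with the upper bound at a higher regularity and the equation to trade $\vf$-derivatives for $x$-derivatives. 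This costs a small loss in the exponent, which is acceptable since $\tc$ is arbitrary.
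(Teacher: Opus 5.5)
Your main argument is correct, but the lower bound goes by a different route from the paper's. The upper bound matches the paper: Sobolev embedding in $\varphi$, the upper bound \eqref{stima.grande.solutione} at higher regularity, and the $2\delta$ from Theorem~\ref{thm:mainsatability}. For the lower bound you use the same input from \cite{BFMT1}, namely $v_\lambda=g_\lambda+z_\lambda$ with $g_\lambda=\lambda^\alpha(\lambda\omega\cdot\partial_\varphi-\beta\mathtt L)^{-1}f$ and $z_\lambda$ of size $\lambda^{\alpha-1-\zeta}$ (see \eqref{stimaZetalambda}). The difference is how the space-time bound is transferred to the orbit segment $\{\lambda\omega t : t\in[0,T_\delta]\}$.

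The paper's chain is as follows. It bounds $\|g_\lambda\|_{L^2_{\varphi,x}}\gtrsim\lambda^{\alpha-1}$, so $|g_\lambda(\varphi_0,x_0)|\gtrsim\lambda^{\alpha-1}$ at a maximum point. Density of the orbit and continuity then give $t_a\le T_\delta$ with $|g_\lambda(\lambda\omega t_a,x_0)|\gtrsim\lambda^{\alpha-1}$. It returns to $H^s_x$ through $H^s(\mathbb{T}^2)\hookrightarrow\mathcal C^0(\mathbb{T}^2)$, and only then subtracts $z_\lambda$ in $\sup_\varphi\|\cdot\|_{H^s_x}$, which needs the high-norm bound on $z_\lambda$.

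You instead apply unique ergodicity of the linear flow to $F(\varphi):=\|v_\lambda(\varphi,\cdot)\|_{H^s}^2$. No pointwise values enter, and the corrections only need to be small in $L^2_{\varphi,x}$. Your route also makes a quantitative, $\omega$-uniform version easy. For $\omega\in\mathcal O_\lambda\subseteq\mathtt{D}\mathtt{C}(2\gamma,\tau)$,
\[
\Big|\frac1T\int_0^T F(\lambda\omega t)\,dt-\widehat F(0)\Big|\le\sum_{\ell\neq0}\frac{2|\widehat F(\ell)|}{\lambda T|\omega\cdot\ell|}\le\frac{1}{\lambda\gamma T}\sum_{\ell\in\mathbb{Z}^\nu}\langle\ell\rangle^{\tau}|\widehat F(\ell)|\,.
\]
The last sum is $\lesssim\|v_\lambda\|^2_{s+\tau+\frac{\nu+1}{2}}\lesssim\lambda^{2(\alpha-1+\mathtt c)}$ by \eqref{ansatz}, provided $\bar\mu\geq\tau+\frac{\nu+1}{2}$. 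With $\gamma=\lambda^{-\mathtt c}$ the right-hand side is $\lambda^{2(\alpha-1)}\lambda^{3\mathtt c-1}/T$. Since $3\mathtt c<1$, this is $o(\lambda^{2(\alpha-1)})$ already for $T\ge1$. So $\delta_0$ can be taken independent of $\omega$, as the order of quantifiers in the statement asks. The paper's time $t_a\lesssim C(a,\omega)/\lambda$, by contrast, depends on $\omega$.

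Conversely, the paper avoids your detour through $\lambda^{\alpha-1}(\omega\cdot\partial_\varphi)^{-1}f$. Its $L^2$ lower bound on $g_\lambda$ uses only $|\lambda\omega\cdot\ell-\beta\mathtt L(j)|\lesssim\lambda\langle\ell\rangle$. Controlling $g_\lambda-\lambda^{\alpha-1}(\omega\cdot\partial_\varphi)^{-1}f=O(\lambda^{\alpha-2}\gamma^{-2})$ in $L^2$, as you would need, requires lower bounds on both divisors. Working with $g_\lambda$ directly is simpler.

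The fallback in your last sentences would not prove the statement. Interpolating $\|v_\lambda\|_s\ge C_1\lambda^{\alpha-1}$ against $\|v_\lambda\|_{s'}\le C_2\lambda^{\alpha-1+\mathtt c}$ gives only $\|v_\lambda\|_{L^2_{\varphi,x}}\gtrsim\lambda^{\alpha-1-\mathtt c\, s/(s'-s)}$. This loss cannot be absorbed by choosing $\mathtt c$ small. The exponent $\alpha-1$ in the lower bound of \eqref{stimaaltobasso} carries no $\mathtt c$, and $C_1$ must be independent of $\lambda$ for the bound to mean anything. So the decomposition $v_\lambda=g_\lambda+z_\lambda$ is genuinely needed, and your first argument is the one to keep.
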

Some comments are in order. 

The above results 
allow us to control the Sobolev norm of solutions evolving from
data in a ball of $H^s$ of radius $\delta$ centered at a quasi-periodic traveling wave solution $v_{\lambda}$.
First of all, we remark that the time of stability depends only on the radius $\delta$ and not on the size of the initial 
datum nor  the solution $v$. Such parameter $\delta$ can be taken arbitrarily small.
As a consequence, Theorem \ref{thm:mainsatability} provides open sets of initial data for the problem \eqref{beta.waves.large.eq}, for which the corresponding solutions are defined over an arbitrarily long time scale and they remains bounded over such long time interval.

A second crucial point is that, in order to control the 
$H^s$
norm of the solution, one needs to study its stability in a neighborhood of a quasi-periodic wave $v_{\lambda}$,
which enjoys higher regularity, belonging to 
$H^{s+\bar{\mu}}$ for some $\bar{\mu}>0$.
This is essentially unavoidable due to the loss of regularity 
which typically arises from small divisors problem.
In particular, such difficulty arises 
in studying the spectral analysis of
linearized problem, where we implement 
a reducibility scheme which diagonalizes the linearized operator. This procedure is crucial in order to show the linear evolution is remains stable for all time, which in turn
allows us to conclude that the time of the nonlinear stability $T_{\delta}$ depends only on $\delta$ and it is independent of $\lambda$ (i.e. the size of the traveling wave solution and of the initial datum).

\medskip

 \noindent
 {\sc Related results.}

 \medskip

 \noindent
 In the last year, there has been a lot of attention in the analysis of quasi-periodic waves in Fluid Mechanics. This is a difficult problem due to the presence of small divisors issues and due to the quasi-linearity of the equations that one has to deal with. In spite of this, the nonlinear stability analysis and the long time dynamics for initial data close to such quasi-periodic wave solutions is a widely open issue. The main result of this paper is a first step towards this investigation.  

\medskip

 \noindent
 For fluids that reduces to one-dimensional quasi-linear PDEs, a quite general approach has been successfully developed in the last decade, based on tools from micro-local analysis, spectral theory, perturbation theory and Nash-Moser methods. For two dimensional water waves equations, we mention
%Iooss, Plotnikov & Toland [27] for periodic standing waves, 
 \cite{BM}, \cite{BBHM}, \cite{BFM,BFM21},  \cite{FeoGiu}. For 2D Euler and $\alpha$-SQG equations, bifurcation results of quasi-periodic vortex patches (that reduces to one dimensional PDEs for the contour dynamics) have been proved in  \cite{BertiHassMasm}, \cite{HmRo},  \cite{HaHmMa}, \cite{HHMINVENTIONES}, \cite{Roulley}, \cite{HaRo}, \cite{HaHmRou}, \cite{GomIonePa}.

 \noindent
 When the fluid is described by a quasi-linear PDE in dimension greater than two, the situation is much more complicated due to much stronger resonant phenomena that typically appear in the higher dimensional framework, combined with the strong perturbative effect of the quasi-linear nonlinearities. 
Time quasi-periodic solutions were also constructed for the Euler and Navier Stokes equations on $\T^2, \T^3$ and in the channel $\R \times [- 1, 1]$, see  \cite{BM20}, \cite{FrMo}, \cite{CF}, \cite{EPsTdL}, \cite{FrMaMo} and for 3D Water Waves in finite depth, see \cite{FeMoTe}.  Finally, large amplitude quasi-periodic traveling waves have been constructed in \cite{CiMoTe} for the MHD equation and in \cite{BFMT1} for the $\beta$-plane equation. 

\noindent
The analysis of the long time dynamics close to quasi-periodic waves in fluid mechanics is widely open. In this paper we aim to address this question, by analyzing the long time dynamics of solutions of the $\beta$-plane equation close to the traveling waves constructed in \cite{BFMT1}. Results in a similar spirit have been obtained for semilinear dispersive PDEs. More precisely, this question has been explored for Nonlinear Schr\"odinger (NLS) equation and Korteweg de Vries (KdV) equations on $\T$, by exploiting their integrable structure. In \cite{albi}, the $2d$ Nonlinear Schr\"odinger (NLS) equation has been considered (see also \cite{HMLP} in the direction of proving the long time stability of small amplitude quasi-periodic solutions of NLS on $\T^2$): 
% It is well known that 1d defocusing cubic NLS equation admits family of quasi-periodic solutions called finite gap solutions.  T
the authors proved the long time stability of small amplitude finite gap solutions (namely, quasi-periodic solutions of the 1d defocusing cubic NLS equation) under bi-dimensional periodic perturbations. In \cite{Kappeler-Montalto}, the long time stability of large amplitude finite gap solutions has been addressed for semilinear perturbations of the KdV equations. 
In the context of quasi-linear PDEs (quasi-linear Klein-Gordon equation), we also mention \cite{FeoGiu2} where the stability of \emph{approximate} periodic/quasi-periodic solutions is analyzed.

\noindent
We conclude this part of the introduction by mentioning that, concerning PDEs of Fluid Mechanics, the only previous available result on nonlinear stability is proved in \cite{Diego-Angel}, in which the authors showed the stability of small amplitude periodic traveling waves (stationary in a moving frame) for the Burgers-Hilbert equation, which is a one-dimensional quasi-linear PDE that approximates the dynamics of the vortex patches for the 2D Euler equation. 

% In the next section, we give an overview of the main difficulties and of the main ideas of the proof of our results. 

\medskip

\noindent
{\sc Notation.}  Throughout the paper we shall use the following notations.

\noindent
Given two Banach spaces $X$ and $Y$, we denote by ${\mathcal B}(X, Y)$ the space of bounded linear operators from $X$ to $Y$ equipped with the standard operator norm $\| \cdot \|_{{\mathcal B}(X, Y)}$. If $X = Y$, we use the notation ${\mathcal B}(X) \equiv {\mathcal B}(X, X)$. Given a linear operator ${\mathcal A}$ (densely defined) on a suitable Hilbert space, we denote by ${\mathcal A}^*$ its adjoint. Moreover given two linear operators ${\mathcal A}, {\mathcal B}$, their commutator is given by $[{\mathcal A}, {\mathcal B}] := {\mathcal A} {\mathcal B} - {\mathcal B} {\mathcal A}$. We denote by $\langle \cdot\,,\, \cdot \rangle_{L^2}$ the standard scalar product on $L^2(\T^n)$. 

\noindent
Given some parameters $a_1, \ldots, a_n > 0$, we use the following notation. We write
$$
A \lesssim_{a_1, \ldots, a_n} B 
$$
if there exists a positive constant $C(a_1, \ldots, a_n) > 0$ such that 
$$
A \leq C(a_1, \ldots, a_n) B\,. 
$$
If the constant $C$ depends on the number of frequencies $\nu \in \N$, the exponent appearing in the non-resonance conditions $\tau$ (see for instance \eqref{DC.2gamma}) and the data of the problem, i.e. the forcing term $f$, the strength of the Coriolis force $\beta$ in \eqref{rotating.2D} and the parameter $\alpha$ in \eqref{ipotesi forcing}, we simply write $\lesssim$ instead of $\lesssim_{\nu, \tau, \alpha, \beta, f}$. 
Throughout the paper, we will systematically 
omit tracking the dependence of the constants on these parameters

\subsection{Ideas of the proof}
In this section we shall describe the main difficulties 
and ideas required to prove Theorems \ref{thm:mainsatability}, \ref{corollario nonlinear stability}. 
We want to study the stability of the traveling wave solutions 
$v_\lambda(\lambda \omega t, x)$ 
provided by Theorem \ref{teo principale beta plane}. 
Hence, we look for initial data $v_0 \in H^s_0(\T^2)$, with $s > 2$, close to the traveling wave $v_\lambda$, 
namely 
$\| v_0 - v_\lambda(0, \cdot) \|_{H^s} \leq \delta \ll 1$, 
and we look for solutions which are small perturbations of $v_\lambda$, 
namely $v (t, x) = v_\lambda(\lambda \omega t, x) + w(t, x)$. 
The Cauchy problem for $w(t, x)$ takes the form
\begin{equation}\label{Cauchy problem intro}
\begin{cases}
	 \pa_{t} w =   {\mathcal L}(\lambda \omega t) [w] - {\mathfrak B}(w) \cdot \nabla w  
	\\
     w(0, x) = w_0(x) := v_0(x) - v_\lambda(0, x)\,,
    \end{cases}
\end{equation}
where ${\mathfrak B}$ is defined in \eqref{biot-savart} and ${\mathcal L}(\lambda \omega t)$ 
is the linearized vector field at the traveling wave solution, namely 
\[
{\mathcal L}(\lambda \omega t)[w] := 
\beta {\mathtt L} w - {\mathfrak B}\big(v_\lambda(\lambda\omega t,x)\big) \cdot \nabla w 
- {\mathfrak B}(w) \cdot \nabla v_\lambda(\lambda\omega t,x)\,.
\]
Since $\| v_\lambda \|_{H^{s + \bar \mu}} \leq   O(\lambda^{\alpha-1+\tc})$ (see \eqref{stima.grande.solutione}), with $\lambda \gg 1$ and $s > 2$, $\bar \mu \gg 1$ large enough, 
standard energy methods for local existence (see \cite{Kato1}, \cite{Taylor}), 
gives stability only for short times, namely 
\[
\| w(t) \|_{H^s} \lesssim \delta\,, \quad \forall \, t \in [0, T_\lambda]\,, 
\qquad T_\lambda \simeq \frac{1}{\lambda^{\alpha - 1+\tc}} \ll 1\,.
\]
We want to improve the stability time up to order $O(\delta^{- 1})$ for $0 < \delta \ll 1$ arbitrarily small,
in such a way that the solution $v(t, x)$ remains of the same size of the initial datum $v_0$ 
for times arbitrarily long, i.e. independent of the size of the initial datum $\| v_0 \|_{H^s} \simeq \| v_\lambda(0, \cdot)\|_{H^s} \simeq \lambda^{\alpha - 1 + \mathtt c}$. 
In order to do so, one has to perform a very precise spectral analysis of the 
linearized PDE at the traveling wave solution $v_\lambda$ 
combined with sharp energy estimates 
that remain valid over arbitrarily long time intervals.
The main difficulties are:
\begin{itemize}
\item{ the resonance phenomena due to small divisors and space resonances in higher dimension}
\item{ the control of the long time dynamics for large initial data}
\item{ the control of the quasi-linear term over arbitrarily long time intervals.}
\end{itemize}
The preliminary step is the analysis of the linearized equation at $v_\lambda$, namely
\[
\partial_t w(t) = {\mathcal L}(\lambda \omega t)[w(t)]\,.
\]
This is done in Section \ref{sez:linear} and it follows in the spirit the methods developed in \cite{BFMT1}, 
that we implement in the framework of the present paper. 
The main result of this section is Theorem \ref{reducibility linearized totale} that allows to construct, 
for a large measure set of frequencies $\omega$, 
a bounded invertible map ${\mathcal U}(\lambda \omega t) : H^s_0(\T^2) \to H^s_0(\T^2)$, $s \gg 1$, 
such that, under the change of coordinates 
$w(t) = {\mathcal U}(\lambda \omega t)[\psi(t)]$, 
the linearized PDE transforms into 
\[
\partial_t \psi(t) = {\mathcal D}[\psi(t)]\,,
\]
where ${\mathcal D} := {\rm diag}_{j \in \Z^2 \setminus \{ 0 \}} \mu_\infty(j)$ 
is a diagonal operator with purely imaginary eigenvalues. 
This theorem is proved by normal form methods, 
that is a perturbative scheme which allows to remove the time dependence 
from the linearized vector field ${\mathcal L}(\lambda \omega t)$. 
A key step is to control the space-time resonances between the time frequency 
of the traveling wave solutions and the normal frequencies of small oscillations 
at the traveling wave solution. 
More precisely, one has to provide suitable lower bounds on the small divisors 
$\im \lambda \omega \cdot \ell + \mu_\infty(j) - \mu_\infty(j')$, 
$\ell \in \Z^\nu$, $j, j' \in \Z^2 \setminus\{ 0 \}$, 
which are called second Melnikov conditions. 
Due to the high degeneracy of the anisotropic dispersion relation 
${\mathtt L}(\xi) = \frac{\xi_1}{|\xi|^2}$, this small divisors are particularly dangerous 
and, in principle, they vanish for infinitely many indexes. 
It is then fundamental to use the conservation of momentum (i.e. the traveling wave structure) 
which allows to verify for a large measure set of $\omega$ 
non resonance conditions of the form 
\[
\begin{aligned}
& |\im \lambda \omega \cdot \ell + \mu_\infty(j) - \mu_\infty(j')| \geq 
\dfrac{\lambda \gamma}{|\ell|^\tau |j|^\tau}\,, 
\qquad 
\ell \neq 0\,, 
\qquad \pi^\top(\ell) + j - j' = 0\,,  
% \\
% & 0 < \gamma \ll 1, \quad \lambda \gg 1\,, 
\end{aligned}
\]
where $0 < \gamma \ll 1$ and in the regime $\lambda \gg 1$.
By imposing conditions of this type, one then removes (using a perturbative approach)
the time dependence from the vector field ${\mathcal L}(\lambda \omega t)$.
The conservation of momentum ensures that the transformed vector field is diagonal. 

\medskip

\noindent
The analysis of the nonlinear problem \eqref{Cauchy problem intro} 
is then performed in Section \ref{sezione nonlinear stability}. 
The fundamental point in order to study the nonlinear stability is to write 
the nonlinear Cauchy problem \ref{Cauchy problem intro} 
in the coordinates constructed before. 
By doing so, one gets a Cauchy problem for the new variable 
$\psi(t) := {\mathcal U}(\lambda \omega t)^{- 1}[w(t)]$ of the form 
\begin {align}
& \begin{cases}
\partial_t \psi(t) = {\mathcal D}[\psi(t)] + {\mathcal Q}(\lambda \omega t, \psi(t)) 
\\
\psi(0) = \psi_0 := {\mathcal U}(0)^{- 1}[w_0]\,, 
\end{cases} \label{cauchy problem trasformato info} \\
& {\mathcal Q}(\vf, \psi) := 
- {\mathcal U}(\vf)^{- 1} \big[{\mathfrak B}\big({\mathcal U}(\vf)[\psi] \big) \cdot \nabla {\mathcal U}(\vf)[\psi] \big]\,, 
\quad \vf \in \T^\nu \,.
\end{align}
We then  perform energy estimate on the latter system. 
In order to achieve this purpose, the crucial point is to analyze the structure 
of the transformed nonlinear term ${\mathcal Q}(\lambda \omega t, \psi)$ 
and to verify that it has a ``good structure'' for performing energy estimates. 
Clearly, this issue requires a sharp analysis of the transformation 
${\mathcal U}(\vf)$, $\vf \in \T^\nu$, and its inverse, 
which is done in details in the previous Section \ref{sez:linear}. 
More precisely, we prove the following property (see Theorem \ref{reducibility linearized totale}): 
the map $\cU(\vf)$ reads as ${\mathcal U}(\vf) := {\mathcal B}(\vf) \circ {\mathcal W}(\vf)$, 
where ${\mathcal B}(\vf)$, $\vf \in \T^\nu$, is a map induced by a diffeormorphism of $\T^2$, namely
\begin{align}
& {\mathcal B}(\vf) : h(x) \mapsto h(x + {\bf \beta}(\vf, x))\,, \qquad \vf \in \T^\nu\,, 
\\
& \| {\bf \beta} \|_{H^s} \lesssim_s \lambda^{- \eta}\,,
\end{align}
and ${\mathcal W}(\vf)$, $\vf \in \T^\nu$, is a one-smoothing perturbation of the identity, 
namely it satisfies 
\[
\| {\mathcal W}(\vf) - {\rm Id} \|_{{\mathcal B}(H^{s - 1}, H^{s})} 
\lesssim_s \lambda^{- \eta}\,,
\]
for some $\eta > 0$. The same properties hold for the inverse ${\mathcal U}(\vf)^{- 1}$. 
This structural property allows to prove in Proposition \ref{lemma struttura nonlin sistema trasformato} the crucial property that the transformed nonlinear term 
${\mathcal Q}$ is a nonlinear transport-type operator up to a bounded quadratic remainder, 
namely of the form
\begin{align}
& {\mathcal Q}(\vf, \psi)  = \ta(\vf, \psi) \cdot \nabla \psi + {\mathcal R}_{\mathcal Q}(\vf, \psi)\,, \label{trapsorto nonlin cal Q intro}
\\
& \| \ta(\vf, \psi) \|_{H^s} \lesssim_s \| \psi \|_{H^{s - 1}}\,, 
\qquad 
\| {\mathcal R}_{\mathcal Q}(\vf, \psi) \|_{H^s} \lesssim_s \| \psi \|_{H^s}^2\,. 
\end{align}
By using the latter key fact, one proves an energy estimates on $\psi(t)$ of the form 
\[
\partial_t \| \psi(t) \|_{H^s} \lesssim_s \| \psi(t) \|_{H^s}^2\,,
\]
which allows to show that 
\[
\| \psi(t) \|_{H^s} \lesssim \delta\,, 
\qquad \forall \, t \in [0, T_\delta] 
\qquad \text{with} \quad T_\delta \equiv T_\delta(s) = O(\delta^{- 1})\,.
\]
By using that ${\mathcal U}(\vf)$ is an isomorphism of $H^s$, 
the same estimate holds for $w(t) = v(t) - v_\lambda(\lambda \omega t, \cdot)$, from which we deduce Theorem \ref{thm:mainsatability}. 

\noindent
To prove Theorem \ref{corollario nonlinear stability}, 
one basically needs to establish the estimates \eqref{stimaaltobasso}. 
The upper bound is obvious by the estimate on 
$v(t) - v_\lambda (\lambda \omega t, \,\cdot\,)$ of Theorem \ref{thm:mainsatability}, 
 taking $\delta \equiv \delta (\lambda) \ll 1$ small enough. 
The lower bound is more delicate. 
It is based on the fact that, as it is proved in \cite{BFMT1}, 
the first approximation of the traveling wave solution 
$v_\lambda(\vf, x)$ is given by 
\[
\begin{aligned}
& v_\lambda \simeq g_\lambda 
\quad 
\text{where}
\quad 
(\lambda \omega \cdot \partial_\vf - \beta \mathtt L)g_\lambda(\vf, x) 
= \lambda^\alpha f(\vf, x)\,.
\end{aligned}
\]
The key fact then is that, by using the explicit formula of 
$g_\lambda$ (with $0 < \delta \ll 1$ small enough), 
one proves that
\[
\sup_{t \in [0, T_\delta]} \| g_\lambda (\lambda \omega t, \cdot) \|_{H^s} \gtrsim \lambda^{\alpha - 1}\,,
\]
This is the content of 
Lemma \ref{lower bound g lambda vari}. 
Finally, the same property of the full solution $v$ 
follows by a perturbative argument.

\medskip

\noindent {\bf Acknowledgements.} L. Franzoi and R. Montalto  
are supported by the ERC STARTING GRANT 2021 “Hamiltonian Dynamics, 
Normal Forms and Water Waves” (HamDyWWa), Project Number: 101039762. 
Views and opinions expressed are however those of the authors only and do not necessarily reflect those of the European Union or the European Research Council. Neither the European Union nor the granting authority can be held responsible for them.
R. Feola is supported 
by ``GNAMPA - INdAM'', CUP E53C25002010001,
and 
``GNAMPA - INdAM'', CUP E5324001950001.
L. Franzoi is also supported by ``GNAMPA - INdAM'', CUP E53C25002010001.

\section{Function spaces, norms and linear operators}\label{sez:functional}

\subsection{Function spaces}
\label{subsec:function spaces}
We denote by ${\mathcal C}^s \equiv {\mathcal C}^s(\T^n)$ the space of ${\mathcal C}^s$ functions (real or complex, scalar- or vector-valued) on $\T^n$, equipped with the standard ${\mathcal C}^s$ norm $\| \cdot \|_{{\mathcal C}^s}$. We also use the Sobolev space $H^s \equiv H^s(\T^n)$ of functions
(real or complex, scalar- or vector-valued) equipped by the norm given in \eqref{Hs.spaces}. Throughout the paper we shall use the following standard properties:  For any $k \in \N$ and $s > \frac{n}{2}$, $H^{s + k}(\T^n)$ is compactly embedded in ${\mathcal C}^k(\T^n)$ and 
\begin{equation}\label{embedding sob}
\| u \|_{{\mathcal C}^k} \lesssim_s \| u \|_{H^{s + k}}, \quad \forall \, u \in H^{s + k}(\T^n)\,. 
\end{equation}
Moreover
\begin{equation}\label{algebra Hs}
\begin{aligned}
& \| u v \|_{H^s} \lesssim_s \| u \|_{H^s} \| v \|_{H^s}, \quad u, v \in H^s(\T^n), \quad s > \frac{n}{2}\,, 
\\
& \| u v \|_{H^s} \lesssim_s 
\| u \|_{{\mathcal C}^s} \| v \|_{H^s}\,, 
\quad u \in {\mathcal C}^s(\T^n)\,,\, v \in H^s(\T^n)\,, 
\quad s \in \N\,.
\end{aligned}
\end{equation}
To simplify the notation, we often write $a = a(\vf,x)$
for a (real or complex-valued, scalar or vector-valued)
function  defined for 
$(\vf, x) \in \T^\nu \times \T^2$   
and  denote its Sobolev norm by
%is given by
\begin{equation*} \label{def Sobolev norm generale}
	\| a \|_s^2 := \sum_{(\ell, j) \in \Z^\nu \times \Z^2} 
	\langle \ell, j \rangle^{2s} | \widehat a(\ell,j) |^2 \,,
	\quad \ 
	\langle \ell, j \rangle := \max \{ 1, |\ell|, |j| \} \,,
\end{equation*}

%we write, in short, $H^s$ both for vectors and for matrices.

In this paper, we use Sobolev norms for 
(real or complex-valued, scalar or vector-valued) 
functions $u( \vf, x; \omega)$, 
$(\vf,x) \in \T^\nu \times \T^2$, which are 
Lipschitz continuous with respect to the parameter 
$\omega\in\R^\nu\setminus\{0\}$.
%$\lambda:=(\om,\zeta) \in \R^{d+2}$.
%We use the compact notation $\lambda := (\omega,\zeta)$ to collect 
%the frequency $\om$ and the depth $\zeta$ into one parameter vector.  
We fix the threshold regularity
\begin{equation}\label{definizione s0}
	s_0 :=  \nu+ 6
\end{equation}
(according to \cite{BFMT1}) and we define the weighted Sobolev norms in the following way. %$\| \cdot \|_{s,F}^{\Lip(\gamma)}$.

\begin{defn} 
	{\bf (Weighted Sobolev norms).} 
	\label{def:Lip F uniform} 
	Let   $\gamma \in (0,1]$, $\Lambda \subseteq \R^{\nu}$ be an arbitrary closed set,
$s \geq 0$,
and consider a function $u : \Lambda \to H^s(\T^\nu \times \T^2)$, 
	$\omega \mapsto u(\omega) = u(\vf,x; \omega)$,
	which is Lipschitz continuous with respect to $\omega$. 
	We define its weighted Sobolev norm by
{	$$
	\| u \|_{s}^{\Lip(\gamma)}:= \| u \|_{s,\Lambda}^{\Lip(\gamma)} := \| u\|_{s}^{\sup} + \gamma \,\| u\|_{s-1}^{\rm lip}\,,
	$$
	where
	\begin{equation*}
		\| u\|_{s}^{\sup}: =\| u\|_{s,\Lambda}^{\sup} := \sup_{\omega\in \Lambda} \| u(\omega)\|_{s}\,, \quad \| u\|_{s}^{\rm lip}:= \| u\|_{s,\Lambda}^{\rm lip}:= 
		\sup_{\substack{\omega_1,\omega_2\in \Lambda \\ \omega_1\neq \omega_2}} 
		\frac{\| u(\omega_1)-u(\omega_2)\|_{s}}{| \omega_1-\omega_2|}\,.
	\end{equation*}}
	For $u$ independent of $(\vf,x)$, we simply denote by 
	$| u |^{\Lip(\gamma)}:= | u|^{\sup} + \gamma \, | u|^{\rm lip} $.
\end{defn}

%\begin{rem}
%{	The norm $\| \,\cdot\, \|_{s,\Lambda}^{\Lip(\gamma)}$ is monotone with respect to the set $\Lambda$ in the following sense.  Let $\Lambda\subseteq \Lambda' \subset \R^\nu$ be arbitrary closed sets and let $u : \Lambda \to H^s(\T^\nu \times \T^d)$, 
%	$\omega \mapsto u(\omega) = u(\vf,x; \omega)$ 
%	be Lipschitz continuous with respect to $\omega\in \Lambda$. Then $\| u\|_{s,\Lambda}^{\Lip(\gamma)} \leq \| u' \|_{s,\Lambda'}^{\Lip(\gamma)} $, where $u' : \Lambda' \to H^s(\T^\nu \times \T^d)$ is the Lipschitz continuous extension of $u$ from $\Lambda$ to $\Lambda'$ provided by Kirszbraun Theorem. \\
%	For sake of simplicity in the notations, in the following sections we will declare each time on which closed set $\Lambda\subset\R^\nu$ of parameters the functions and the oeprators are defined, but we will omit to specity both the set $\Lambda$ in the norms and the Kirszbraun extension to bigger sets in the estimates, since it will be clear by the context.}
%\end{rem}

%\begin{lem}{\bf (Product).}
%	\label{lemma:LS norms}
	%Consider the space $\Lip(k_0,F,s,\g)$ defined in Definition \ref{def:Lip F uniform}.
%	For all $ s \geq s_0$, % > (\nu + 3)/2 $, we have
%	\begin{align}
%		\| uv \|_{s}^{\Lip(\gamma)}
%		& \lesssim_s C(s)  
%\| u \|_{s}^{\Lip(\gamma)} \| v %\|_{s_0}^{\Lip(\gamma)}
%+ C(s_0)  \| u \|_{s_0}^{\Lip(\gamma)} 
%\| v \|_{s}^{\Lip(\gamma)}\,.
%		\label{p1-pr}
%	\end{align}
%\end{lem}

\smallskip

\noindent
We restate the definition of quasi-periodic traveling as given in Definition \ref{def1.QPT}, 
for functions of $(\vf,x)$ instead of $(t,x)$.

\begin{defn}\label{def2.QPT}{\bf (Quasi-periodic traveling waves).} 
Let $\bar\jmath_{1},...,\bar\jmath_{\nu}\in \Z^2$ be a given choice of $\nu$ vectors in $\Z^2$.
A function $u:\T^\nu \times \T^2\to \R$ is a \emph{quasi-periodic traveling wave} 
if there exists a function $\breve{u}:\T^\nu \to\R$   such that
\begin{equation}
u(\vf,x) =  \breve{u}(\vf- \pi( x)) \,, \quad \forall (\vf,x) \in\T^\nu \times\T^2\,,
\end{equation}
	where  $\pi :\R^2 \to \R^\nu$ is the linear map 
    $x \mapsto \pi( x) := (\bar{\jmath}_{1}\cdot x,...,\bar{\jmath}_{\nu}\cdot x)$. 
\end{defn}
Comparing with Definition \ref{def1.QPT}, %(with $d=2$), 
it is convenient to call \emph{quasi-periodic} traveling wave both the function $u(\vf,x) =  \breve{u}(\vf- \pi( x))$ and the function of time $u(\omega t,x) =  \breve{u}(\omega t- \pi( x))$.
We define the translation operator
\begin{equation}\label{def:vec.tau}
	\tau_\vs : h(x) \mapsto h(x + \vs)\,,\;\;\vs\in \R^{2}\,.
\end{equation}
Then, quasi-periodic traveling waves are also characterized by the relation
\begin{equation}\label{condtraembedd}
	v(\vphi-\pi(\vs),x)=
	( \tau_\vs \circ v)(\vphi,x)=v(\vphi,x+\vs)\,,  
	\quad \forall \,\vphi \in \T^\nu\,,\; \vs\in \R^{2}\,, \; x\in\T^{2}\,.
\end{equation}
Expanding in Fourier the equivalence in \eqref{condtraembedd}, we obtain that a quasi-periodic traveling wave has the form
\begin{equation}\label{QPT.forma}
	u(\vf,x) = \sum_{\begin{subarray}{c}
	(\ell, j) \in \Z^\nu \times \Z^2 \\
 \pi^\top(\ell) + j = 0
 \end{subarray}} \whu(\ell,j) e^{\im(\ell\cdot \vf + j\cdot x)}\,.
\end{equation}

\subsection{Matrix representation of linear operators}
Throughout the paper we deal with linear operators which leaves invariant the space of zero average functions. We denote by $L^2_0(\T^2) \equiv H^0_0(\T^2)$ the space of $L^2$ functions with zero average and we define the projections $\Pi_0$, $\Pi_0^\bot$ as 
\begin{equation}\label{definizione proiettore media spazio tempo}
	\Pi_0 h := \frac{1}{(2 \pi)^{ 2}} \int_{\T^{ 2}} h( x)\, \wrt  x \,,
	\qquad 
	\Pi_0^\bot := {\rm Id} - \Pi_0\,.
\end{equation} 
Let
${\mathcal R}  : L^2_0(\T^2) \to L^2_0(\T^2)$ be a linear operator. It can be represented as
\begin{equation}\label{matriciale 1}
	{\mathcal R} u (x) := \sum_{j, j' \in \Z^2 \setminus \{ 0 \}} {\mathcal R}_j^{j'}\widehat u(j') e^{\im j \cdot x} \,, 
	\quad  \text{for} \quad  u (x) = \sum_{j \in \Z^2 \setminus \{ 0 \}} \widehat u(j) e^{\im j \cdot x} \,,
\end{equation}
where, for $j, j' \in \Z^2 \setminus \{ 0 \}$, the matrix element ${\mathcal R}_j^{j'}$ is defined by 
\begin{equation}\label{rappresentazione blocchi 3 per 3}
	{\mathcal R}_j^{j'} :=  \frac{1}{(2\pi)^2} \int_{\T^2} 
	{\mathcal R}[e^{\im j' \cdot x}] e^{- \im j \cdot x} \wrt x\,. 
\end{equation}

We also consider smooth $\vphi$-dependent families of linear operators 
$\T^\nu \to {\mathcal B} (L^2_0(\T^2))$, 
$\vphi \mapsto {\mathcal R}(\vphi)$, 
which we write in Fourier series with respect to $\vphi$ as 
\begin{equation}\label{matrix representation 1}
	{\mathcal R}(\vphi) = \sum_{\ell \in \Z^\nu} \widehat{\mathcal R}(\ell) e^{\im \ell \cdot \vphi}, \quad \widehat{\mathcal R}(\ell) := \frac{1}{(2 \pi)^\nu} \int_{\T^\nu} {\mathcal R}(\vphi) e^{- \im \ell \cdot \vphi}\, \wrt \vphi \,, \quad \ell \in \Z^\nu\,. 
\end{equation}
According to \eqref{rappresentazione blocchi 3 per 3}, for any $\ell \in \Z^\nu$, the linear operator 
$\widehat{\mathcal R}(\ell) \in {\mathcal B} (L^2_0(\T^2))$ is identified 
with the matrix 
$(\widehat{\mathcal R}(\ell)_j^{j'})_{j, j' \in \Z^2 \setminus \{ 0 \}}$.

%\begin{defn}
%	{\bf (Diagonal operators).}
%	\label{def block-diagonal op}
%	Let ${\mathcal R}$ be a linear operator as in 
%	\eqref{matriciale 1}-\eqref{amatriciana}. We define ${\mathcal D}_{\mathcal R}$ as the operator defined by 
%	\begin{equation*}
%		{\mathcal D}_{\mathcal R} := {\rm diag}_{j \in \Z^2} \widehat{\mathcal R}(0)_j^j\,, \quad (\cD_{\cR})(\ell)_j^{j'} := \begin{cases}
%			\widehat{\mathcal R}(0)_j^{j} & j=j'\,, \ \ell=0\,, \\
%			0 & \text{otherwise}\,.
%		\end{cases}\,.
%	\end{equation*}
%	In particular, we say that $\cR$
%	is a \emph{diagonal operator} if $\cR \equiv \cD_{\cR}$.
%\end{defn}
	Let ${\mathcal R}$ be a linear operator as in 
	\eqref{matriciale 1}. We define ${\mathcal D}_{\mathcal R}$ as the operator
	\begin{equation}\label{diagonal.op.matrix}
		{\mathcal D}_{\mathcal R} := {\rm diag}_{j \in \Z^2} \widehat{\mathcal R}(0)_j^j\,, \quad (\cD_{\cR})(\ell)_j^{j'} := \begin{cases}
			\widehat{\mathcal R}(\ell)_j^{j'} & \quad \text{if} \quad j=j'\,, \ \ell=0\,, \\
			0 & \text{otherwise}\,.
		\end{cases}\,.
	\end{equation}
	In particular, we say that $\cR$
	is a \emph{diagonal operator} if $\cR \equiv \cD_{\cR}$.

For the purpose of the Normal Form methods for the linearized operator in 
Section \ref{sez:linear}, 
it is convenient to introduce the following norms, which take into account both the order and the 
off-diagonal decay of the matrix elements representing 
any linear operator ${\mathcal R}(\vf)$ on $L^2_0(\T^{2})$.
\begin{defn} \label{block norm}
	{\bf (Matrix decay norm and the class $\OpM^m_s$).}
	Let $m \in \R$, $s \geq 0$. We say that ${\mathcal R}$ belongs to the class $\OpM^m_s$ if 
	\begin{equation} \label{def decay norm}
		| \cR |_{m, s} := \sup_{j' \in \Z^2 \setminus \{ 0 \}} 
		\Big( \sum_{\ell \in \Z^\nu\,, \, j \in \Z^2 \setminus \{ 0 \} } 
		\langle \ell, j-j' \rangle^{2s} | \widehat \cR(\ell)_j^{j'}|^2  \Big)^{\frac12}
		 \langle j' \rangle^{- m} < \infty \,.
	\end{equation}
	If the operator $\cR = \cR(\omega)$ is Lipschitz with 
	respect to the parameter $\omega \in \Lambda \subseteq  \R^{\nu}$, 
	we define  
		\begin{align}
			& | {\mathcal R} |_{m, s}^{{\rm Lip}(\gamma)} := |{\mathcal R}|_{m, s}^{\rm sup} + \gamma |{\mathcal R}|^{\rm lip}_{m, s - 1}\,, \\
			& |{\mathcal R}|_{m, s}^{\rm sup} := 
			\sup_{\omega \in \Lambda} |{\mathcal R}(\omega)|_{m, s} \,, 
			\quad 
			|{\mathcal R}|_{m, s - 1}^{\rm lip} := \sup_{\begin{subarray}{c}
					\omega_1, \omega_2 \in \Lambda \label{def decay norm parametri}
					\\
					\omega_1 \neq \omega_2
			\end{subarray}} 
\dfrac{|{\mathcal R}(\omega_1) - {\mathcal R}(\omega_2)|_{m, s - 1}}{|\omega_1 - \omega_2|} \,.
		\end{align}
\end{defn}
\noindent
It readily follows that 
\begin{equation}\label{prop elementari}
	\begin{aligned}
		& m  \leq m' \Longrightarrow \OpM^m_s \subseteq \OpM^{m'}_s \quad \text{and} \quad |\cdot |_{m', s}^{{\rm Lip}(\gamma)} \leq |\cdot |_{m, s}^{{\rm Lip}(\gamma)}, \\
		& s \leq s' \Longrightarrow\OpM^m_{s'} \subseteq \OpM^m_s \quad \text{and} \quad | \cdot |_{m, s}^{{\rm Lip}(\gamma)} \leq |\cdot|_{m, s'}^{{\rm Lip}(\gamma)}\,. 
	\end{aligned}
\end{equation}
We now state some
standard properties of the decay norms 
that are needed for the reducibility scheme 
of Section \ref{sez:linear}.
%\ref{ridusezione}. 

\begin{lem}\label{proprieta standard norma decay}
	
	$(i)$ Let $s \geq s_0$, $m, m' \in \R$, and let ${\mathcal R} \in\OpM^m_s$, ${\mathcal Q} \in \OpM^{m'}_{s + |m|}$. %${\mathcal R}(\lambda)$, ${\mathcal Q}(\lambda)$, $\lambda \in F$ such that 
	Then ${\mathcal R} {\mathcal Q} \in \OpM^{m + m'}_s$ and 
	$$
	|{\mathcal R}{\mathcal Q}|_{m + m', s}^{{\rm Lip}(\gamma)} \lesssim_{s, m} |{\mathcal R}|_{m, s}^{{\rm Lip}(\gamma)} |{\mathcal Q}|_{m', s_0 + |m|}^{{\rm Lip}(\gamma)} + |{\mathcal R}|_{m, s_0}^{{\rm Lip}(\gamma)} |{\mathcal Q}|_{m', s + |m|}^{{\rm Lip}(\gamma)}\ \,;
	$$
$(ii)$ Let $s \geq s_0$, $m\geq 0$ and ${\mathcal R} \in \OpM^{-m}_s$. 
Then, for any integer $n \geq 1$, ${\mathcal R}^n \in \OpM^{-m}_s$ and there exist constants $C(s_0,m), C(s, m) > 0$, independent of $n$, such that 
%	\begin{equation*}
	%		\begin{aligned}
		%			& |{\mathcal R}^n|_{0, s_0}^{{\rm Lip}(\gamma)} \leq C(s_0)^{n - 1} \big(|{\mathcal R}|_{0, s_0}^{{\rm Lip}(\gamma)}\big)^{n} \,, \\
		%			& |{\mathcal R}^n|_{0, s}^{{\rm Lip}(\gamma)} \leq n\,C(s)^{n - 1} \big(C(s_0)|{\mathcal R}|_{0, s_0}^{{\rm Lip}(\gamma)}\big)^{n - 1} |{\mathcal R}|_{0, s}^{{\rm Lip}(\gamma)}\,;
		%		\end{aligned}
	%	\end{equation*}
	\begin{align}
					& |{\mathcal R}^n|_{-m, s_0}^{{\rm Lip}(\gamma)} \leq C(s_0, m)^{n - 1} \big(|{\mathcal R}|_{-m, s_0}^{{\rm Lip}(\gamma)}\big)^{n} \,, \label{stima.potenza} \\
		& |{\mathcal R}^n|_{-m, s}^{{\rm Lip}(\gamma)} \lesssim \, \big(C(s, m)|{\mathcal R}|_{-m, s_0}^{{\rm Lip}(\gamma)}\big)^{n - 1} |{\mathcal R}|_{-m, s}^{{\rm Lip}(\gamma)}\,;
	\end{align}
	$(iii)$ Let $s \geq s_0$, $m \geq 0$ and ${\mathcal R} \in \OpM^{- m}_s$.
% Then there exists $\delta(s,m) \in (0, 1)$ small enough such that, 
%if $|{\mathcal R}|_{-m, s_0}^{{\rm Lip}(\gamma)} \leq \delta(s,m)$, 
%then the map $\Phi = {\rm Id} + {\mathcal R}$ is invertible 
%and the inverse satisfies the estimate 
%\[
%|\Phi^{- 1} - {\rm Id}|_{m, s}^{{\rm Lip}(\gamma)} \lesssim_{s, m} |{\mathcal R}|_{-m, s}^{{\rm Lip}(\gamma)}. 
%\]
%OPPURE ESPONENZIALI? 
%
%Let ${\mathcal R} \in \OpM^{- m}_s$, $s \geq s_0$, $m \geq 0$, $|{\mathcal R}|_{- m , s_0}^{\Lip(\gamma)} \leq 1$. 
Then there exists $\delta(s,m) \in (0, 1)$ small enough such that, 
if $|{\mathcal R}|_{-m, s_0}^{{\rm Lip}(\gamma)} \leq \delta(s,m)$,
then  the map $\Phi := {\rm exp}({\mathcal R}) \in \OpM^{0}_s$  is invertible and satisfies the estimates 
$$
|\Phi^{\pm 1} - {\rm Id}|_{-m, s}^{\Lip(\gamma)} \lesssim_s  |{\mathcal R}|_{-m, s}^{\Lip(\gamma)}\,; 
$$

	\noindent
	$(iv)$ Let $s \geq s_0$, $m \in \R$ and ${\mathcal R} \in \OpM^m_s$. Let ${\mathcal D}_{\mathcal R}$ be the diagonal operator as in \eqref{diagonal.op.matrix}
%	Definition \ref{def block-diagonal op}.
	Then ${\mathcal D}_{\mathcal R} \in \OpM^m_s$ and $|{\mathcal D}_{\mathcal R}|_{m, s}^{{\rm Lip}(\gamma)} \lesssim |{\mathcal R}|_{m, s_0}^{{\rm Lip}(\gamma)}$ for any $s \geq s_0$.  
	As a consequence, 
	\[
	| \widehat{\mathcal R}(0)_j^j |^{{\rm Lip}(\gamma)} \lesssim \langle j \rangle^m|{\mathcal R}|_{s_0}^{{\rm Lip}(\gamma)}\,.
	\] 
\end{lem}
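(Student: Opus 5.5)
The plan is to work directly at the level of matrix entries, using Cauchy--Schwarz and the weight splitting $\langle \ell, j-j'\rangle^{s} \lesssim_s \langle \ell-\ell_1, j-k\rangle^{s} + \langle \ell_1, k-j'\rangle^{s}$. This is the standard argument for decay norms with a column-wise $\ell^2$ structure, as in \cite{BFMT1}. The Lipschitz parts follow from the sup parts by writing $\Delta(\mathcal R\mathcal Q) = (\Delta\mathcal R)\mathcal Q(\omega_1) + \mathcal R(\omega_2)\Delta \mathcal Q$ at regularity $s-1$. So it suffices to treat fixed $\omega$.

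For (i), I write $\widehat{(\mathcal R\mathcal Q)}(\ell)_j^{j'} = \sum_{\ell_1, k} \widehat{\mathcal R}(\ell-\ell_1)_j^k\,\widehat{\mathcal Q}(\ell_1)_k^{j'}$ and fix the column $j'$. Using the weight splitting, the sum breaks into two pieces.

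In the first piece, the high weight falls on $\mathcal R$. I use $|\widehat{\mathcal R}(\cdot)_j^k| \le (\text{column }k\text{ norm})$, and the column-$k$ weighted norm of $\mathcal R$ is at most $|\mathcal R|_{m,s}\langle k\rangle^{m}$. The key inequality is then $\langle k\rangle^{m} \lesssim \langle j'\rangle^{m}\langle k-j'\rangle^{|m|}$. It lets me absorb $\langle k\rangle^m$ into $\mathcal Q$ at the price of $|m|$ extra derivatives of decay, giving $|\mathcal Q|_{m', s_0+|m|}$. Here I use $\sum_{\ell_1,k}\langle \ell_1,k-j'\rangle^{-2s_0}<\infty$, valid since $s_0 > (\nu+2)/2$.

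More precisely, the first piece is handled by Young/Cauchy--Schwarz in $(\ell_1,k)$. I bound
\[
\sum_{\ell_1,k} a_{\ell-\ell_1,j,k}\, b_{\ell_1,k}\,\langle\ell_1,k-j'\rangle^{s_0}\langle\ell_1,k-j'\rangle^{-s_0},
\]
then take the $\ell^2_{\ell,j}$ norm, integrating the $a$-factor column by column.

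In the second piece, the high weight falls on $\mathcal Q$. I put $s_0$ on $\mathcal R$ and $s+|m|$ on $\mathcal Q$ symmetrically. Dividing by $\langle j'\rangle^{m+m'}$ then gives the stated tame bound.

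Items (ii)--(iv) follow quickly from (i).

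(ii) I iterate (i) with $m'=-m\le 0$. Since $|m|$ is spent only on the right factor, I should instead apply (i) in the form $\mathcal R\cdot\mathcal R^{n-1}$ with orders $-m$ and $0$: because $\OpM^{-m}\subseteq\OpM^0$, the loss $|m|$ becomes irrelevant once I redo (i) for a right factor of order $\le 0$. The point is that $\langle k\rangle^{-m}\le 1$, so no derivative loss occurs when $m\ge 0$. Induction gives $C(s_0,m)^{n-1}$, and the tame estimate comes from the standard induction $a_n \le C(s)(a_1 b_{n-1} + b_1 a_{n-1})$, with constants absorbed.

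(iii) I sum the exponential series using (ii). For $|\mathcal R|_{-m,s_0}\le\delta$ small, the series converges, with $|\Phi-{\rm Id}|_{-m,s}\lesssim \sum_n \frac{(C\delta)^{n-1}}{n!}|\mathcal R|_{-m,s}$. The inverse is $\exp(-\mathcal R)$.

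(iv) This is immediate: the diagonal entry $\widehat{\mathcal R}(0)_j^j$ appears in column $j$ with weight $1$, so $|\widehat{\mathcal R}(0)_j^j|\le \langle j\rangle^m|\mathcal R|_{m,0}$. For the diagonal operator, the weight is $\langle 0,0\rangle^{s}=1$ for every $s$, which gives the claim.

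The main obstacle is the $|m|$-loss bookkeeping in (i), namely Peetre's inequality for the order weights. I need to check that positive and negative $m$ are both handled by $\langle k\rangle^{m}\le 2^{|m|}\langle j'\rangle^m\langle k-j'\rangle^{|m|}$.
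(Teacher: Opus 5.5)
Your argument is correct: it is the standard weight-splitting plus Cauchy--Schwarz proof for these column-wise decay norms, with the summable weight $\langle\cdot\rangle^{-s_0}$ placed on the low-regularity factor in each piece, and the paper gives no proof of its own, only a citation to Lemma 2.6 of \cite{BFMT1}. Two bookkeeping points need care. First, in the Lipschitz part of (i) you must run the sup estimate at regularity $s-1$ with low index $s_0-1$ rather than $s_0$ (allowed, since only $s_0-1>\frac{\nu+2}{2}$ is used), because $s-1$ may lie below $s_0$ and the weighted norm controls $|\mathcal R|^{\rm lip}_{m,s_0-1}$, not $|\mathcal R|^{\rm lip}_{m,s_0}$. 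Second, in (ii) apply (i) verbatim with the left factor $\mathcal R$ at order $0$ (using $|\cdot|_{0,s}\le|\cdot|_{-m,s}$) and the right factor $\mathcal R^{n-1}$ at order $-m$, which already gives zero loss without redoing (i); the assignment ``$-m$ and $0$'' in the other order would reintroduce the $m$-loss.
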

\begin{proof}
We refer to 
Lemma $2.6$ in \cite{BFMT1}.
\end{proof}

\noindent
For $N > 0$, we define the operators $\Pi_N {\mathcal R}$ and $\Pi_N^\perp \cR$ 
by means of their matrix representation as follows: 
\begin{equation}\label{def proiettore operatori matrici}
	(\widehat{\Pi_N {\mathcal R}})(\ell)_{j}^{j'} := \begin{cases}
		\widehat{\mathcal R}(\ell)_j^{j'} & \text{if } |\ell|, |j - j'| \leq N\,, 
		\\
		0 & \text{otherwise}\,, 
	\end{cases} \qquad   \ \Pi_N^\bot {\mathcal R} := {\mathcal R} - \Pi_N {\mathcal R}\,,
\end{equation}
and we recall the following result.

\begin{lem}\label{lemma proiettori decadimento}
{\rm [Lemma 2.7 in \cite{BFMT1}.]}
	For all $s, \alpha \geq 0$, $m \in \R$, one has 
	$|\Pi_N {\mathcal R}|_{m, s + \alpha}^{{\rm Lip}(\gamma)} \leq N^\alpha |{\mathcal R}|_{m, s}^{{\rm Lip}(\gamma)}$ and $|\Pi_N^\bot {\mathcal R}|_{m, s}^{{\rm Lip}(\gamma)} \leq N^{- \alpha} |{\mathcal R}|_{m, s + \alpha}^{{\rm Lip}(\gamma)}$. 
\end{lem}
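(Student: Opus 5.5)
The statement is Lemma \ref{lemma proiettori decadimento}, and the plan is to argue directly on the definition \eqref{def decay norm} of the decay norm, working column by column. Fix $j' \in \Z^2 \setminus \{0\}$. By \eqref{def proiettore operatori matrici}, the truncation $\Pi_N$ acts on the matrix entries as a multiplication by the characteristic function of the set
\[
A_N := \{ (\ell, j) : |\ell| \leq N,\ |j - j'| \leq N \},
\]
while $\Pi_N^\perp$ multiplies by the characteristic function of its complement. Since both are Fourier cut-offs in the joint variable $(\ell, j - j')$, the two bounds reduce to pointwise comparisons of the weights $\langle \ell, j - j' \rangle^{2s}$.

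\emph{First bound.} On $A_N$ we have $\langle \ell, j-j' \rangle = \max\{1, |\ell|, |j-j'|\} \leq \max\{1, N\}$. For $N \geq 1$ this is at most $N$; the case $N < 1$ is either excluded by convention or trivial, since only entries with $\ell = 0$, $j = j'$ survive. Hence
\[
\langle \ell, j-j' \rangle^{2(s+\alpha)} \leq N^{2\alpha} \langle \ell, j-j' \rangle^{2s}
\]
on the support. Summing over $(\ell, j)$, multiplying by $\langle j' \rangle^{-m}$ and taking the supremum over $j'$ gives $|\Pi_N \mathcal R|_{m, s+\alpha} \leq N^\alpha |\mathcal R|_{m, s}$.

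\emph{Second bound.} On the complement of $A_N$ we have $\max\{|\ell|, |j-j'|\} > N$, so $\langle \ell, j-j' \rangle > N$ and therefore
\[
\langle \ell, j-j' \rangle^{2s} \leq N^{-2\alpha} \langle \ell, j-j' \rangle^{2(s+\alpha)}.
\]
Summing over $(\ell, j)$, multiplying by $\langle j' \rangle^{-m}$ and taking the supremum over $j'$ gives $|\Pi_N^\perp \mathcal R|_{m, s} \leq N^{-\alpha} |\mathcal R|_{m, s+\alpha}$.

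\emph{Lipschitz part.} Because $\Pi_N$ and $\Pi_N^\perp$ are linear and independent of $\omega$, they commute with taking differences $\mathcal R(\omega_1) - \mathcal R(\omega_2)$. The same two inequalities, applied at regularity $s-1$ in place of $s$, therefore bound the $\mathrm{lip}$ seminorms with the same factors $N^{\pm \alpha}$. Adding the $\sup$ part and $\gamma$ times the $\mathrm{lip}$ part yields both stated estimates for $|\cdot|^{\mathrm{Lip}(\gamma)}$.

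There is no real obstacle here. The only point needing care is the convention $N \geq 1$ used in the first bound, where the $\max\{1,\cdot\}$ in the bracket $\langle \ell, j-j' \rangle$ matters.
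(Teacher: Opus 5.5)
Your argument is correct and is the standard proof behind the cited Lemma 2.7 (the paper itself gives no proof). You compare the weights $\langle \ell, j-j'\rangle$ pointwise on the support of the cut-off and on its complement, and then use that $\Pi_N$, $\Pi_N^\perp$ are linear and $\omega$-independent to carry the same bound, at regularity $s-1$, over to the Lipschitz seminorm.

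One correction to your aside: for $0<N<1$ and $\alpha>0$ the first bound is not trivial but false. Any nonzero $\varphi$-independent diagonal $\mathcal R$ of finite norm gives $|\Pi_N\mathcal R|_{m,s+\alpha}=|\mathcal R|_{m,s}>N^\alpha|\mathcal R|_{m,s}$. So $N\ge 1$ is genuinely required, which is harmless here because the lemma is only applied with the KAM truncations $N_n\ge N_0\gg 1$.
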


	\subsubsection{Conjugation rule of linear time dependent vector fields and Real, reversible operators}\label{Reversible operators}
	
	%We recall the notation introduced in  \eqref{X even Y odd}, that is, 
     Let $s, m \geq 0$ and ${\mathcal G}(\vphi) \in {\mathcal B}(H^{s + m}_0(\T^2), H^s_0(\T^2))$, 
     $\vphi \in \T^\nu$. We consider the time dependent linear equation
    \begin{equation}\label{sistema dinamico astratto}
    \partial_t u(t) = {\mathcal G}(\lambda \omega t)[u(t)]\,. 
    \end{equation}
    Let $\Phi(\vphi) \in {\mathcal B}(H^\rho_0(\T^2), H^\rho_0(\T^2))$, $\rho \in [s, s+ m]$, $\vphi \in \T^\nu$, be a $\vphi$-dependent family of invertible linear operators. Then, under the change of coordinates 
    $$
    u(t) = \Phi(\lambda \omega t)[v(t)]\,,
    $$
    the equation \eqref{sistema dinamico astratto} transforms into 
    \begin{align}  
& \partial_t v(t) = {\mathcal G}_+(\lambda \omega t)[v(t)]\,, \label{campo vettoriale astratto trasformato} \\
& {\mathcal G}_+(\vphi) := \Phi_* {\mathcal G}(\vphi) :=  \Phi(\vphi)^{- 1} {\mathcal G}(\vphi) \Phi(\vphi) - \Phi(\vphi)^{- 1} \lambda \omega \cdot \partial_\vphi \Phi(\vphi) \,.
\end{align}
    We now provide the definition of reversible, reversibility preserving, and real operators. We first introduce the map
    \begin{equation}\label{involuzione}
{\mathcal S} : L^2(\T^2) \to L^2(\T^2)\,, 
\qquad u(x) \mapsto u(- x)\,. 
    \end{equation}
    Clearly, this is an involution, that is
    ${\mathcal S}^2 = {\rm Id}$. 
	
	\begin{defn}\label{reserv.operators.def} Let $s\geq 0$, $m\geq 0$.
    \\[1mm]
		$(i)$ We say that a $\vphi$-dependent family of linear operators ${\mathcal G}(\vphi) : H^{s + m}_0(\T^2) \to H^s_0(\T^2)$, $\vphi \in \T^\nu$, is \emph{reversible} 
		if ${\mathcal G}(\vphi) \circ {\mathcal S} = - {\mathcal S} \circ {\mathcal G}(- \vphi)$ for any $\vf \in \T^\nu$.
\\[1mm]
\noindent
        $(ii)$
		We say that a $\vphi$-dependent family of linear operators $\Phi(\vphi) : H^s_0(\T^2) \to H^s_0(\T^2)$ is \emph{reversibility preserving} 
		if $\Phi(\vphi) \circ {\mathcal S} = {\mathcal S} \circ \Phi(- \vphi)$ for any $\vf \in \T^\nu$.  
	\\[1mm]	
		\noindent
		$(iii)$ We say that a linear operator $\Phi$ is \emph{real} 
        if $\Phi(u)$ is real valued for any real valued function $u$. 
	\end{defn}
	It is convenient to reformulate the real and reversibility properties of linear operators in terms of their matrix representations.% provided in Section \ref{sezione matrici norme}. 
	\begin{lem}\label{lemma real rev matrici}
		A linear operator ${\mathcal R}$ is :
		% {lemma}
		
\begin{itemize}
		\item[(i)] real if and only if 
		$\widehat{\mathcal R}(\ell)_{j}^{j'} = \overline{\widehat{\mathcal R}(-\ell)_{- j}^{- j'}}$ 
		for all $\ell \in \Z^\nu$, $j, j' \in \Z^2 \setminus \{ 0 \}$;
		
		\item[(ii)] reversible if and only if 
		$\widehat{\mathcal R}(\ell)_j^{j'} = - \widehat{\mathcal R}(-\ell)_{- j}^{- j'}$ 
		for all $\ell \in \Z^\nu$, $j, j' \in \Z^2 \setminus \{ 0 \}$;
		
		\item[(iii)] reversibility preserving if and only if 
		$\widehat{\mathcal R}(\ell)_j^{j'} =  \widehat{\mathcal R}(-\ell)_{- j}^{- j'}$ 
		for all $\ell \in \Z^\nu$, $j, j' \in \Z^2 \setminus \{ 0 \}$.
        \end{itemize}
	\end{lem}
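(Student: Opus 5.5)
We prove the three characterizations by writing each defining property of ${\mathcal R}$ on the basis $e^{\im(\ell\cdot\vf + j\cdot x)}$ and matching Fourier coefficients. By \eqref{matriciale 1}--\eqref{matrix representation 1}, we write
\[
{\mathcal R}(\vf)u(x) = \sum_{\ell \in \Z^\nu}\sum_{j,j' \in \Z^2\setminus\{0\}} \widehat{\mathcal R}(\ell)_j^{j'}\,\widehat u(j')\, e^{\im(\ell\cdot\vf + j\cdot x)}\,.
\]
It suffices to test each property on the exponentials $e^{\im j'\cdot x}$, which span $L^2_0(\T^2)$, and to use linearity and continuity.

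For reversibility, first note that ${\mathcal S}[e^{\im j'\cdot x}] = e^{-\im j'\cdot x}$. Hence
\[
{\mathcal R}(\vf){\mathcal S}[e^{\im j'\cdot x}] = \sum_{\ell,j} \widehat{\mathcal R}(\ell)_j^{-j'} e^{\im(\ell\cdot\vf + j\cdot x)}\,.
\]
On the other side,
\[
{\mathcal S}{\mathcal R}(-\vf)[e^{\im j'\cdot x}] = \sum_{\ell,j}\widehat{\mathcal R}(\ell)_j^{j'} e^{\im(-\ell\cdot\vf - j\cdot x)} = \sum_{\ell,j}\widehat{\mathcal R}(-\ell)_{-j}^{j'} e^{\im(\ell\cdot\vf + j\cdot x)}\,.
\]
By uniqueness of Fourier coefficients in $(\vf,x)$, the reversibility relation ${\mathcal R}\circ{\mathcal S} = -{\mathcal S}\circ{\mathcal R}(-\cdot)$ is equivalent to $\widehat{\mathcal R}(\ell)_j^{-j'} = -\widehat{\mathcal R}(-\ell)_{-j}^{j'}$ for all $\ell, j, j'$. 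Replacing $j'$ by $-j'$ gives (ii). The reversibility-preserving case (iii) is the same computation with the sign $+$ instead of $-$.

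For reality (i), we use that a function $u$ is real exactly when $\widehat u(-j) = \overline{\widehat u(j)}$. Suppose first that the coefficient relation holds. Then for real $u$ the coefficient of $e^{\im(-\ell\cdot\vf - j\cdot x)}$ in ${\mathcal R}u$ is
\[
\sum_{j'}\widehat{\mathcal R}(-\ell)_{-j}^{j'}\widehat u(j') = \sum_{j'}\widehat{\mathcal R}(-\ell)_{-j}^{-j'}\,\overline{\widehat u(j')} = \overline{\sum_{j'}\widehat{\mathcal R}(\ell)_j^{j'}\widehat u(j')}\,,
\]
so ${\mathcal R}u$ is real. For the converse, we use that ${\mathcal R}$ is complex linear and maps real functions to real functions. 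This gives $\overline{{\mathcal R}u} = {\mathcal R}\bar u$ for every complex $u$: decompose $u = a + \im b$ with $a, b$ real. Applying this identity to $u = e^{\im j'\cdot x}$ and comparing coefficients of $e^{\im(\ell\cdot\vf + j\cdot x)}$ yields $\overline{\widehat{\mathcal R}(-\ell)_{-j}^{j'}} = \widehat{\mathcal R}(\ell)_j^{-j'}$. Relabeling $j'\to -j'$ gives (i).

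No step is a real obstacle. The only care needed is the index bookkeeping, namely the change of summation variables $(\ell,j)\to(-\ell,-j)$, and justifying that testing on the basis $e^{\im j'\cdot x}$ suffices, which follows from linearity and boundedness of ${\mathcal R}(\vf)$.
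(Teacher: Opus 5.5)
Your proof is correct: testing each identity on the exponentials $e^{\im j'\cdot x}$, reindexing $(\ell,j)\to(-\ell,-j)$, and relabeling $j'\to -j'$ gives exactly the three coefficient relations, and your use of $\overline{{\mathcal R}u}={\mathcal R}\bar u$ handles the converse in (i) properly. The paper states this lemma without proof and treats it as this standard direct Fourier computation, so your argument matches the intended one.
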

    Let ${\mathcal G}$ and $\Phi$ as in \eqref{sistema dinamico astratto}, \eqref{campo vettoriale astratto trasformato}. By a direct calculation one can show that if ${\mathcal G}$ is reversible and $\Phi$ is 
    reversibility preserving, then $\mathcal G_+$ is reversible. Moreover, if $\Phi, {\mathcal G}$ are real, then ${\mathcal G}_+$ is real. 
	\subsection{Momentum preserving operators}\label{subsec:momentum}
	
	The following definition is crucial in the construction of traveling waves. 
	
	\begin{defn}	\label{def:mom.pres}
		{\bf (Momentum preserving).}	
		A  $ \vf $-dependent family of linear operators 
		$\cR(\vf) $, $ \vf \in \T^\nu $,  is  {\em momentum preserving} if
		\begin{equation}\label{eq:mp_A_tw}
			\cR(\vf - \pi(\vs) )  \circ \tau_\vs = \tau_\vs \circ \cR(\vf ) \,  , \quad 
			\forall \,\vf \in \T^\nu \, , \ \vs \in \R^2 \,  ,
		\end{equation}
		where the translation operator $\tau_\vs$ is defined in \eqref{def:vec.tau}.
		% 	A linear matrix operator $\bA(\vf ) $ of the form \eqref{real_matrix} or \eqref{C_transformed} is  
		% 	{\em momentum preserving} if each of its components is momentum preserving.
	\end{defn}
	\noindent
	
Moreover,
momentum preserving operators are closed under several operations, as shown in the following lemma.
	\begin{lem}\label{lem:mom_prop}
		Let $\cR(\vf), \cQ(\vf)$ be momentum preserving operators. Then:
		\begin{itemize}
			\item[(i)] {\rm (Composition)}: $\cR (\vf) \circ \cQ (\vf) $ is a momentum preserving operator;
			% 		\item[(ii)] {\rm (Adjoint)}: the adjoint $ (A(\vf))^*$ is momentum preserving.
			\item[(ii)] {\rm (Inversion)}: If $\cR(\vf)$ is invertible then $\cR(\vf)^{-1}$ is momentum preserving;
			\item[(iii)] {\rm (Flow)}: Assume that ${\mathcal R}(\vf) \in {\mathcal B}(H^s(\T^2))$, then ${\rm exp}({\mathcal R}(\vf))$ is momentum preserving. 
		\end{itemize}
	\end{lem}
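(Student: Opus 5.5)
The plan is to verify all three properties directly from the defining identity \eqref{eq:mp_A_tw}, using only that $\tau_\vs$ is an invertible linear isometry of every $H^s(\T^2)$ with $\tau_\vs^{-1} = \tau_{-\vs}$, and that $\vs \mapsto \pi(\vs)$ is linear.

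For (i), fix $\vf \in \T^\nu$ and $\vs \in \R^2$. Since the identity holds for every $\vf$, we may apply it to $\cQ$ at the point $\vf$ and to $\cR$ at the same point:
\[
\cR(\vf - \pi(\vs))\cQ(\vf - \pi(\vs))\tau_\vs = \cR(\vf - \pi(\vs))\tau_\vs \cQ(\vf) = \tau_\vs \cR(\vf)\cQ(\vf)\,.
\]

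For (ii), start from $\cR(\vf-\pi(\vs))\tau_\vs = \tau_\vs\cR(\vf)$. Multiply on the left by $\cR(\vf-\pi(\vs))^{-1}$ and on the right by $\cR(\vf)^{-1}$. This gives $\tau_\vs \cR(\vf)^{-1} = \cR(\vf-\pi(\vs))^{-1}\tau_\vs$, which is exactly \eqref{eq:mp_A_tw} for the family $\cR(\vf)^{-1}$.

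For (iii), since $\cR(\vf)$ is bounded on $H^s$, the exponential is the norm-convergent series $\exp(\cR(\vf)) = \sum_{n\ge0}\cR(\vf)^n/n!$.
\begin{itemize}
\item By (i) and induction, each power $\cR(\vf)^n$ is momentum preserving; the case $n=0$ is the identity, which commutes with $\tau_\vs$.
\item Composition with the bounded operator $\tau_\vs$ on either side commutes with norm-convergent sums.
\end{itemize}
Summing the identities $\cR(\vf-\pi(\vs))^n\tau_\vs = \tau_\vs\cR(\vf)^n$ over $n$ with weights $1/n!$ therefore gives the claim.

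The only point requiring any care is the convergence in (iii), and it is handled by the boundedness assumption. Alternatively, one can argue with the flow: $U(\theta):=\tau_\vs^{-1}\exp(\theta\cR(\vf-\pi(\vs)))\tau_\vs$ and $\exp(\theta\cR(\vf))$ solve the same linear ODE $\partial_\theta U = \cR(\vf)U$ with $U(0)=\mathrm{Id}$, so they coincide by uniqueness for bounded generators.
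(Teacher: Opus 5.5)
Your proof is correct and follows the paper's argument: (i) is the direct two-step use of \eqref{eq:mp_A_tw}, (ii) comes from inverting that identity, and (iii) is the exponential series combined with (i). In (ii) you multiply by the inverses on each side, which is equivalent to the paper's ``take inverses and use $\tau_{-\vs}=\tau_\vs^{-1}$'' up to relabelling; your remarks on norm convergence and your alternative flow/uniqueness argument simply make explicit details the paper leaves implicit.
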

	\begin{proof}
		Item $(i)$ follows directly by \eqref{eq:mp_A_tw}. Item $(ii)$ follows by taking the inverse,
		of \eqref{eq:mp_A_tw} and using that
		$\tau_{-\vs} = \tau_\vs^{-1} $.  
		Finally,  item $(iii)$ holds since ${\rm exp}({\mathcal R}) = \sum_{n \geq 0} \frac{{\mathcal R}^n}{n!}$ and by item $(i)$.  
	\end{proof}
	
%	
%	We shall say that a linear operator of the form $ \lambda\, \omega\cdot \pa_\vf + \cR(\vf)$ is momentum preserving if $\cR(\vf)$ is momentum preserving.  
%	In particular, conjugating a momentum preserving operator 
%	$\lambda\, \omega\cdot\pa_\vf+\cR(\vf) $ 
%	by a family of invertible linear momentum preserving maps $\Phi(\vf)$, 
%	we obtain  the transformed operator 
%	$ \lambda\,\omega\cdot\pa_\vf + \cR_{+}(\vf) $ in
%	\eqref{trasf-op} which  is momentum preserving. 

%	\begin{lem}\label{lem:MP}
%		Let $ X $ be a vector field translation invariant, according to \eqref{eq:mom_pres}. 
%		Let $ u   $ be a quasi-periodic traveling wave. 
%		Then the linearized operator  $ \di_u X( u(\vf, \cdot) ) $  is momentum preserving.
%	\end{lem}
%	
%	\begin{proof}
%		Differentiating  \eqref{eq:mom_pres}  we get  
%		$ (\di_u X)(\tau_\vs u) \circ \tau_\vs = \tau_\vs (\di_u X)(u) $, 
%		$ \vs \in \R $. 
%		Then, apply \eqref{chartj1}. 
%	\end{proof}
	
	\noindent
	We now provide a characterization of 
	the momentum preserving property in Fourier space.

	\begin{lem}
		\label{lem:mom_pres}
		A $ \vf $-dependent family of operators $\T^{\nu}\ni\vf\mapsto \cR(\vf) $ 
		%, with $ \vf \in \T^\nu $, 
		is momentum preserving  if and only if 
		%the matrix elements of $\cR(\vf)$, defined by \eqref{amatriciana},  
		%fulfil 
		\begin{equation}\label{momentum}
			\wh\cR(\ell)_{j}^{j'} \neq 0 \quad \Rightarrow  \quad \pi^\top( \ell) + j-j' = 0 \, , \quad
			\forall\, \ell \in\Z^\nu , \, \,  j, j' \in \Z^2 \setminus \{ 0 \} \, . 
		\end{equation}
		% 	As a consequence we have that, if $\wh\cR_{j}^{j'}(\ell) = 0$ for any $\ell \neq 0$, then $j=j'$, that is, $\cR$ is a time-independent diagonal operator.
		As a consequence we have that, for any momentum preserving operator $\cR(\vf)$, the operator $\widehat{\mathcal R}(0)$ satisfies 
		\begin{equation}\label{diagonal mom pres}
			\wh\cR(0)_{j}^{j'} \neq 0 \quad  \Rightarrow \quad j=j' \,,
		\end{equation}
		that is, $\widehat\cR(0)$ is a time-independent diagonal operator (recall \eqref{diagonal.op.matrix}).
%		, according to Definition \ref{def block-diagonal op}.
	\end{lem}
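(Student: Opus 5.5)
We expand everything in Fourier series, both in the angle $\vf\in\T^\nu$ and in the space variable $x\in\T^2$, and compare coefficients in the defining identity \eqref{eq:mp_A_tw}. The whole argument rests on how $\tau_\vs$ acts on exponentials, $\tau_\vs[e^{\im j'\cdot x}] = e^{\im j'\cdot \vs} e^{\im j'\cdot x}$, and on how a shift in $\vf$ acts on Fourier coefficients, $\cR(\vf - \pi(\vs)) = \sum_\ell \widehat\cR(\ell) e^{-\im \ell\cdot\pi(\vs)} e^{\im \ell\cdot\vf}$. Since $\ell\cdot\pi(\vs) = \pi^\top(\ell)\cdot\vs$, this phase can be rewritten as a phase in $\vs$.

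First I would apply the left-hand side of \eqref{eq:mp_A_tw} to $e^{\im j'\cdot x}$ and extract the $(\ell,j)$ coefficient, using \eqref{rappresentazione blocchi 3 per 3} and \eqref{matrix representation 1}. This gives
\[
\widehat\cR(\ell)_j^{j'}\, e^{\im (j' - \pi^\top(\ell))\cdot\vs}.
\]
Doing the same for the right-hand side gives
\[
\widehat\cR(\ell)_j^{j'}\, e^{\im j\cdot\vs}.
\]

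Hence \eqref{eq:mp_A_tw} holds if and only if, for every $\ell\in\Z^\nu$, every $j,j'\in\Z^2\setminus\{0\}$ and every $\vs\in\R^2$,
\[
\widehat\cR(\ell)_j^{j'}\big(e^{\im(j'-j-\pi^\top(\ell))\cdot\vs}-1\big)=0.
\]
For the forward direction, suppose $\widehat\cR(\ell)_j^{j'}\neq0$. Then $e^{\im k\cdot\vs}=1$ for all $\vs\in\R^2$, where $k:=j'-j-\pi^\top(\ell)\in\Z^2$. Differentiating in $\vs$ at $\vs=0$ forces $k=0$, which is exactly \eqref{momentum}. For the converse, \eqref{momentum} makes every coefficient identity hold trivially. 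Since the operators are determined by their matrix coefficients (say on smooth functions, then by density), the identity \eqref{eq:mp_A_tw} follows.

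The consequence \eqref{diagonal mom pres} is immediate: take $\ell=0$, so that $\pi^\top(0)=0$ and \eqref{momentum} reduces to $j=j'$.

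The only mildly delicate point is justifying that it suffices to test on exponentials and to compare Fourier coefficients in $\vf$. This follows from the boundedness and continuity of $\cR(\vf)$ on $L^2_0$ and from the uniqueness of Fourier coefficients; I do not expect any real obstacle here.
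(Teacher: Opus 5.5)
Your proposal is correct and follows essentially the same approach as the paper. The paper's proof just says the lemma follows by a straightforward computation from Definition \ref{def:mom.pres} and the representations \eqref{matriciale 1}--\eqref{matrix representation 1}, and that is exactly the coefficient comparison you carry out, using the identity $\ell\cdot\pi(\vs)=\pi^\top(\ell)\cdot\vs$ and then specializing to $\ell=0$ for \eqref{diagonal mom pres}.
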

\begin{proof}
It follows by a straightforward computation recalling 
Definition \ref{def:mom.pres} and 
\eqref{matriciale 1}-\eqref{matrix representation 1}.
\end{proof}
An important consequence of the above result is that 
momentum preserving operators 
that are independent of $\vf\in\T^\nu$ are actually diagonal.

    \section{Reducibility of the linearized PDE at the traveling wave solution}\label{sez:linear}
In this section we study the linearization of the PDE \eqref{beta.waves.large.eq} 
at the traveling wave solution 
$v_\lambda(\lambda \omega t, x)$ provided in Theorem 
\ref{teo principale beta plane}. 
In particular, the result in \cite{BFMT1} guarantees the existence of an asymptotically full measure set 
$\Omega_{\lambda}$ such that 
$v_\lambda(\cdot; \omega) \in H^S(\T^\nu \times \T^2)$ where $S>s_0$
 can be chosen arbitrarily large and 
 \begin{align}
 & v_\lambda(\vf, x) = - v_\lambda(- \vf, - x), \quad \forall \, (\vf, x) \in \T^\nu \times \T^2\,, \\
 & \int_{\T^2} v_\lambda(\vf, x) \wrt x = 0, \quad \forall \, \vf \in \T^\nu\,,
 \end{align}
 see Theorem \ref{teo principale beta plane}. Moreover, by \eqref{stima.grande.solutione} and by setting
 \begin{equation}\label{theta.def.ridu}
	\theta := \alpha-1+\tc \,,\qquad  \alpha\in(1,2)\,,
	%\qquad \gamma := \lambda^{- \mathtt c}
\end{equation}
we also have that there exists 
 a constant $C_0 \equiv C_0(S) \gg 0$ large enough such that
 \begin{equation}\label{ansatz}
	\|  v_\lambda \|_{S}^{\Lip(\gamma)}	  \leq C_0 \lambda^\theta\,, \qquad \gamma := \lambda^{- \mathtt c}\,. 
\end{equation}
We remark that the parameter $0 < \tc \ll 1$ is small enough in such a way that, for any $\alpha\in(1,2)$,
\begin{equation}\label{small.theta}
	\theta-1 < \theta - 1 + \tc < 0\,,
\end{equation}
see Propositions 3.3 and 7.3  in \cite{BFMT1}. In particular 
\begin{equation}\label{choice mathtt c}
0 < \mathtt c < \tfrac13 (2 - \alpha)\,. 
\end{equation}
By linearizing \eqref{beta.waves.large.eq} at the quasi-periodic 
traveling wave solution 
$v_\lambda(\lambda \omega t, x)$ we get the linear PDE
\begin{equation}\label{operatore linearizzato}
\begin{aligned}
& \partial_t v = {\mathcal L}(\lambda \omega t)[v(t)]\,, 
\;\;\qquad  {\mathcal L}(\vphi) :=  \beta\,\tL +  \ba_{0}(\vf, x)\cdot \nabla +  {\cE_{0}}(\vf)
\end{aligned}
\end{equation}
where
\begin{equation}\label{def coefficienti operatori linearized}
{\mathtt L} := \partial_{x_1}(- \Delta)^{- 1}\,, 
\qquad 	
\ba_{0}(\vf, x):=    - \fB\big[  v_\lambda(\vf, x) \big] \,, 
\qquad 
\cE_{0}(\vf)[h] :=  - \nabla  v_\lambda(\vf, \cdot) \cdot \fB [h]\,.
\end{equation}
By  \eqref{ansatz} and by the definitions 
\eqref{def coefficienti operatori linearized}, 
we get that, the function $\ba_0$ is in  $H^{S+1}$, 
the operator $\cE_{0}$ belongs to  $\OpM_{S - 1}^{- 1}$
for any $\omega\in \Omega_{\lambda}$, and moreover we have the estimates
\begin{equation}\label{stima bf a1}
\| \ba_{0} \|_{S + 1}^{\Lip(\gamma)} \lesssim_S   \lambda^\theta \,,
\qquad 
	|\cE_{0}|_{- 1, S - 1}^{\Lip(\gamma)} \lesssim_{S} \lambda^\theta\,.  
\end{equation}
By the definition of $\ba_{0}$ in \eqref{def coefficienti operatori linearized} 
and the definition of the Biot-Savart operator $\fB$ in \eqref{biot-savart}, 
we clearly have that
\begin{equation}\label{divergenza.zero.a1}
	\braket{\ba_{0}}_{x} :=\frac{1}{(2\pi)^2}\int_{\T^2}\ba_{0}(\vf,x) \wrt x = 0\,, 
	\qquad 
	{\rm div}(\ba_{0}):= \nabla\cdot \ba_{0} = 0 \,.
\end{equation}
Moreover, using also that ${\rm div} (\nabla^\perp  h) = 0$ for any $h$, 
the operators $\ba_{0} \cdot \nabla$, $\cE_{0}$ and ${\mathcal L}$ 
leave invariant the subspace of zero average functions, 
namely
\begin{equation}\label{invarianze.2}
\ba_{0} \cdot \nabla = \Pi_0^\bot \ba_{0} \cdot \nabla \Pi_0^\bot\,, 
\qquad 
\cE_{0} = \Pi_0^\bot \cE_{0} \Pi_0^\bot\,, 
\qquad 	
{\mathcal L} = \Pi_0^\bot {\mathcal L} \Pi_0^\bot \,.
\end{equation}
We always work on the space of zero average functions 
and we shall preserve this invariance along the whole paper. Since $v_\lambda$ is a traveling wave, ${\mathcal L}$ is momentum preserving, according to the definition \ref{def:mom.pres}. Furthermore, since $v_\lambda$ is odd, ${\mathcal L}$ is reversible according to the definition \ref{reserv.operators.def}. 
The main result of this section concerns the reduction to constant coefficients 
of the linear vector field ${\mathcal L}(\vphi)$. The precise statement is given below.
\begin{thm}\label{reducibility linearized totale}
There are constants $\overline \sigma \equiv \overline\sigma(\alpha, \nu)$, 
$\eta \equiv \eta(\alpha, \nu)$, 
such that, for any $S > s_0 + \overline \sigma$, 
there exists $\lambda_0 \equiv \lambda_0(S, \nu, \alpha, \beta) > 0$ such that the following holds. 
For any $\lambda \geq \lambda_0$ there exists a Borel set ${\mathcal O}_\lambda \subseteq \Omega$, 
with $|\Omega \setminus {\mathcal O}_\lambda| \to 0$ as $\lambda \to + \infty$ 
and, for any $\omega \in {\mathcal O}_\lambda$, 
there are two families of bounded and invertible operators 
${\mathcal B}(\vf)$, ${\mathcal W}(\vf)$, $\vf \in \T^\nu$ satisfying the following properties.
The map ${\mathcal B}(\vf)$ is invertible and has the form
\begin{equation}\label{prop cambio variabile teoremone lin}
\begin{aligned}
& {\mathcal B}(\vf) : w(x) \mapsto w(x +  \bbeta(\vf, x))\,, 
\qquad {\mathcal B}(\vf)^{- 1} : w(y) \mapsto w(y + \breve\bbeta(\vf, y))\,. 
\end{aligned}
\end{equation}
Moreover, $\mathcal{B}^{\pm1}(\vphi)$ satisfy the estimates
\begin{align}
 \| \bbeta \|_s\,\, \|\breve\bbeta \|_s &\lesssim_S \lambda^{- \eta}\,, 
 \qquad 
 \forall \, s_0 \leq s \leq S - \overline \sigma\,, 
 \\
\sup_{\vf \in \T^\nu} \|{\mathcal B}(\vf)^{\pm 1} \|_{{\mathcal B}(H^s)} 
&\leq 1 + C(S) \lambda^{- \eta}\,, \quad \forall \, 0 \leq s \leq S - \overline \sigma\,.
\end{align}
The maps ${\mathcal W}(\vf)^{\pm 1}$, $\vf \in \T^\nu$, are one-smoothing perturbation of the identity. 
More precisely, they satisfy the estimate 
\begin{equation}\label{prop one smoothing teo principale}
\sup_{\vf \in \T^\nu} \|{\mathcal W}(\vf)^{\pm 1} - {\rm Id} \|_{{\mathcal B}(H^{s - 1}_0, H^s_0)} 
\lesssim_S \lambda^{- \eta}\,, 
\qquad 
\forall \,1 \leq s \leq S - \overline \sigma\,. 
\end{equation}
For any $\omega \in {\mathcal O}_\lambda$, the map 
${\mathcal U}(\vf) := {\mathcal B}_\bot(\vf) \circ {\mathcal W}(\vf)$, $\vf \in \T^\nu$, 
where ${\mathcal B}_\bot(\vf) := \Pi_0^\bot {\mathcal B}(\vf) \Pi_0^\bot$, 
satisfies (see \eqref{campo vettoriale astratto trasformato}) 
${\mathcal U}_* {\mathcal L}(\vf) = {\mathcal D}$, where the real operator 
${\mathcal D} := {\rm diag}_{j \in \Z^2 \setminus \{ 0 \}} \mu_\infty(j)$ 
is diagonal with purely imaginary eigenvalues $\mu_\infty(j) \in \im \R$, $j \in \Z^2 \setminus \{ 0 \}$. 
\end{thm}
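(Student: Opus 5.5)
The proof will follow the reducibility scheme of \cite{BFMT1}, reorganized so that the conjugating map factors explicitly as a change of variables composed with a one-smoothing perturbation of the identity.

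\textbf{Step 1: straightening the transport term.} The first step removes the $\vf$-dependence of the order-one part $\ba_0\cdot\nabla$. I look for a $\vf$-dependent diffeomorphism $x\mapsto x+\bbeta(\vf,x)$ such that, under ${\mathcal B}(\vf)$, the operator $\lambda\omega\cdot\pa_\vf - \ba_0\cdot\nabla$ becomes $\lambda\omega\cdot\pa_\vf$ plus zero-order terms.

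\begin{itemize}
\item The condition on $\bbeta$ reads $\lambda\omega\cdot\pa_\vf \bbeta = \ba_0(\vf,x+\bbeta) + (\ba_0\cdot\nabla)\bbeta$ (up to signs).
\item Since $\ba_0$ is a quasi-periodic traveling wave with zero $x$-average, by \eqref{QPT.forma} its Fourier support satisfies $\pi^\top(\ell)+j=0$. Hence $\ell\neq0$ whenever $\widehat{\ba_0}(\ell,j)\neq0$.
\item The divisors $\lambda\omega\cdot\ell$ are then bounded below by $\lambda\gamma\langle\ell\rangle^{-\tau}$ under a diophantine condition on $\omega$.
\item Solving iteratively (or by a Nash–Moser/contraction argument in $H^s$ with loss $\tau$) gives $\|\bbeta\|_s\lesssim \lambda^{\theta-1}\gamma^{-1}\lesssim\lambda^{\theta-1+\mathtt c}=:\lambda^{-\eta}$, which is negative by \eqref{small.theta}.
\item Momentum preservation and reversibility are inherited from $v_\lambda$, so $\bbeta$ is again a traveling wave and odd.
\item The inverse diffeomorphism $\breve\bbeta$ satisfies the same bounds, by standard composition estimates.
\item After conjugation by ${\mathcal B}_\bot$ the vector field becomes $\beta\mathtt L+\cE_1(\vf)$, where $\cE_1$ is momentum preserving and reversible. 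It is of order $-1$, and also collects $\mathcal B^{-1}\beta\mathtt L\mathcal B-\beta\mathtt L$, which is $O(\lambda^{-\eta})$ of order $-1$ by pseudo-differential commutator estimates.
\item Its size is $|\cE_1|_{-1,s}\lesssim\lambda^\theta$. Divergence-freeness of $\ba_0$ keeps $\Pi_0^\bot$ invariant.
\end{itemize}

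\textbf{Step 2: removing the size $\lambda^\theta$.} Since $\lambda^\theta\gg1$, a direct KAM scheme does not start. I first perform one or two normal form steps $\exp(\cX)$ with $\cX\in\OpM^{-1}$ solving
\[
\lambda\omega\cdot\pa_\vf\cX=\cE_1-\cD_{\cE_1}.
\]
Only $\ell\neq0$ modes appear in the right-hand side, because momentum preservation (Lemma \ref{lem:mom_pres}) makes $\widehat{\cE_1}(0)$ diagonal. Hence $|\cX|_{-1,s}\lesssim\lambda^{\theta-1}\gamma^{-1}$, and the new remainder has size about $\lambda^{2\theta-1}\gamma^{-1}$. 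Iterating finitely many times (depending on $\alpha$) brings the off-diagonal part below $\lambda^{-\eta'}$ relative to $\lambda\gamma$. Lemma \ref{proprieta standard norma decay} controls the compositions and exponentials.

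\textbf{Step 3: KAM reducibility.} I run a quadratic KAM iteration with homological equations
\[
\lambda\omega\cdot\ell\,\widehat\cX(\ell)_j^{j'}+(\mu_n(j)-\mu_n(j'))\widehat\cX(\ell)_j^{j'}=\widehat\cR(\ell)_j^{j'}.
\]
These are restricted to $\pi^\top(\ell)+j-j'=0$, hence $\ell\neq0$, and solved under the second Melnikov conditions $|\im\lambda\omega\cdot\ell+\mu_\infty(j)-\mu_\infty(j')|\geq\lambda\gamma|\ell|^{-\tau}|j|^{-\tau}$. Smoothing in the order $-1$ class is preserved because the divisors are of size $\lambda$. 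Lemma \ref{lemma proiettori decadimento} handles the ultraviolet cutoffs. The reduced eigenvalues $\mu_\infty(j)=\im\beta\mathtt L(j)+r(j)$ are purely imaginary by reality together with reversibility (Lemma \ref{lemma real rev matrici}).

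\textbf{Step 4: assembling $\mathcal W$ and the measure estimate.} I compose all the maps from Steps 2 and 3 into $\mathcal W$. Each factor lies in $\mathrm{Id}+\OpM^{-1}_s$ with norm $\lesssim\lambda^{-\eta}$, and a decay-norm bound $|\cdot|_{-1,s}$ implies boundedness from $H^{s-1}$ to $H^s$, which yields \eqref{prop one smoothing teo principale}. The set ${\mathcal O}_\lambda$ is the intersection of $\Omega_\lambda$ with the Melnikov sets.

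\textbf{Main obstacle.} I expect the main difficulty to be the measure estimate of ${\mathcal O}_\lambda$. The degenerate dispersion $\xi_1/|\xi|^2$ forces the use of momentum conservation: one writes $j'=j+\pi^\top(\ell)$, so that for fixed $\ell$ the eigenvalue differences are bounded. Lipschitz dependence of $\mu_\infty$ on $\omega$ (with constant $\ll\lambda$) then gives transversality from the $\lambda\omega\cdot\ell$ term, and summability over $j$ is recovered from the $|j|^{-\tau}$ weight. Here I would mimic the corresponding measure argument of \cite{BFMT1}.
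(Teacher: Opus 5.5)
\textbf{Gap: preservation of the order $-1$ class in the KAM step.} Your architecture matches the paper's: straightening by $\mathcal B(\varphi)$ under a diophantine condition, finitely many normal-form steps, a KAM scheme with second Melnikov conditions, $\mu_\infty(j)\in\mathrm{i}\mathbb R$ from reality and reversibility, and the measure estimate borrowed from the existence paper. The gap is in Step 3, where you assert that smoothing in the order $-1$ class is preserved ``because the divisors are of size $\lambda$''. Your own Melnikov conditions only give $|\mathrm{i}\lambda\omega\cdot\ell+\mu_n(j)-\mu_n(j')|\geq\lambda\gamma\langle\ell\rangle^{-\tau}|j|^{-\tau}$. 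So for a remainder $\mathcal R\in\OpM^{-1}$, the solution of the homological equation only satisfies $|\widehat{\mathcal X}(\ell)_j^{j'}|\lesssim\lambda^{-1}\gamma^{-1}\langle\ell\rangle^{\tau}|j|^{\tau}|\widehat{\mathcal R}(\ell)_j^{j'}|$, and the Lipschitz seminorm loses $|j|^{2\tau}$.

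This puts $\mathcal X$ only in $\OpM^{-1+2\tau}$, with $\tau>\nu-1\geq 1$. Such an $\mathcal X$ is not bounded on $H^s$, and its exponential is not controlled by Lemma \ref{proprieta standard norma decay}. The commutators $[\mathcal R,\mathcal X]$ in the new remainder lose derivatives, so the iteration does not close in a fixed class, and $\mathcal W^{\pm1}-\mathrm{Id}$ cannot be one-smoothing. Your Step 2 does not rescue this. You keep $\mathcal X\in\OpM^{-1}$ and stop after a number of steps chosen by a size criterion depending only on $\alpha$. But the smallness the KAM step actually needs, $\lambda^{-1}\gamma^{-1}|\mathcal E|$, is already $\lambda^{\theta-1}\gamma^{-1}=\lambda^{-\eta}$ after Step 1; what is missing is order, not size.

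\textbf{How the paper closes it, and an alternative.} The finite normal-form stage is there to gain \emph{order}. In the paper the $m$-th generator is $\mathcal X_m\in\OpM^{-m}$ with $|\mathcal X_m|_{-m,s}\lesssim(\lambda^{\theta-1}\gamma^{-1})^m$. Since $\beta\mathtt L$ and the diagonal part $\mathcal Z_m$ are of order $-1$, the terms $[\beta\mathtt L,\mathcal X_m]$, $[\mathcal Z_m,\mathcal X_m]$ and $[\mathcal E_m,\mathcal X_m]$ all lie in $\OpM^{-m-1}$. Hence $\mathcal E_{m+1}\in\OpM^{-m-1}$, with size $\lambda^{\theta}(\lambda^{\theta-1}\gamma^{-1})^m$. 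This is iterated until $M\geq 2\tau+1$; the paper takes $M=\max\{2\tau,\tfrac{1-\mathtt c}{2(1-\mathtt c)-\alpha}\}+1$. The KAM scheme then starts from a remainder in $\OpM^{-M}$, so the $|j'|^{2\tau}$ loss is absorbed, every generator $\Psi_n$ lies in $\OpM^{-1}$, and the remainders stay in $\OpM^{-M}$ throughout the iteration.

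Alternatively, you could remove the loss in $j$ directly. Use $|\mu_n(j)-\mu_n(j')|\lesssim\lambda^{\theta}(|j|^{-1}+|j'|^{-1})$, $|j-j'|\lesssim|\ell|$ and $\lambda^{\theta-1}\gamma^{-1}\ll 1$. Then for $|j|\gtrsim\langle\ell\rangle^{\tau}$ the diophantine condition alone bounds the divisor below by $\lambda\gamma\langle\ell\rangle^{-\tau}$. For $|j|\lesssim\langle\ell\rangle^{\tau}$, the factor $|j|^{-\tau}$ becomes a loss in $\ell$ only, which the truncation $|\ell|\leq N_n$ absorbs. One of these two arguments must be supplied; as written, neither the convergence of your KAM scheme in $\OpM^{0}$ nor the one-smoothing estimate for $\mathcal W^{\pm1}$ is established.
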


We now present the step-by-step scheme of reduction for the operator 
${\mathcal L}$ defined in \eqref{operatore linearizzato}.  

\subsection{Reduction of the transport}

We consider the composition operator
\begin{equation}\label{diffeo.maps}
\cB(\vf) [h] (x) := h ( x+ \bbeta(\vf,x)) \,, 
\qquad 
\cB(\vf)^{-1} [h]  (y) := h ( y + \breve{\bbeta}(\vf,y))\,,
\end{equation}
induced by a $\vf$-dependent family of maps of the form
\begin{equation}\label{diffeotoro0}
\T^2 \to \T^2 \,, \qquad x \mapsto y := x + \bbeta (\vf,x) \,.
\end{equation}
By using Lemma 2.3-$(ii)$, $(iii)$ in \cite{BM20} if  $\|  \bbeta\|_{s_0}^{\Lip(\gamma)}\leq \delta $ 
for some $\delta \equiv \delta(s_0) >0$ sufficiently small, 
then the map in \eqref{diffeotoro0}
%\[
%\T^2 \to \T^2\,, \qquad x \mapsto x + \bbeta(\vf, x)\,,
%\]
is invertible with an inverse given by 
\[
 \T^2 \to \T^2 \,, \qquad y \mapsto  x= y + \breve{\bbeta}(\vf, y) \,,
\]
for some function $\breve{\bbeta}$ satisfying
\[
\| \breve{\bbeta}\|_{s}^{\Lip(\gamma)}
\lesssim_{s} 
\| \bbeta\|_{s}^{\Lip(\gamma)}\,, \qquad  \text{for} \quad s \geq s_0\,.
\]
   Moreover, if  $\bbeta(\vf,x)$ is a quasi-periodic traveling wave function, 
   then also $\breve{\bbeta}(\vf,y)$ is a quasi-periodic traveling 
   wave and hence (see Lemma 2.15 in \cite{BFMT1}) 
   the linear operators ${\mathcal B}(\vf), {\mathcal B}(\vf)^{- 1}$ 
   are momentum preserving. Furthermore, in Lemma 4.2 in \cite{FrMo}, 
   it is proved that the map 
\begin{equation}\label{def cal B bot}
{\mathcal B}_\bot(\vf ) = \Pi_0^\bot {\mathcal B}(\vf) \Pi_0^\bot : H^s_0(\T^2) \to H^s_0(\T^2)\,,
\end{equation}
is invertible with inverse given by (recall  \eqref{definizione proiettore media spazio tempo})
\begin{equation}\label{cal B bot inverse}
{\mathcal B}_\bot(\vf)^{- 1 } = \Pi_0^\bot {\mathcal B}(\vf)^{- 1} \Pi_0^\bot : H^s_0(\T^2) \to H^s_0(\T^2)\,.
\end{equation}
We need to reduce to constant coefficients 
the highest order part of the vector field ${\mathcal L}(\vf)$, which is the transport-type operator 
\begin{equation}\label{transport vector field}
{\mathcal T}(\vf) := {\bf a}_0(\vf, x) \cdot \nabla\,.
\end{equation}
By recalling \eqref{campo vettoriale astratto trasformato}, a direct caluclation shows that 
\begin{align}
{\mathcal L}_1(\vf) &:= (\cB_\bot)_* {\mathcal L}(\vf) 
= 
\Pi_0^\bot {\bf b_0}(\vf, x) \cdot \nabla \Pi_0^\bot + \cB_\bot(\vphi)^{- 1} 
\big(\beta \, \mathtt L + {\mathcal E}_0(\vf) \big) \cB_\bot(\vf)\,, 
\\
{\bf b}_0 &:= \cB(\vf)^{-1} \big( \lambda \omega\cdot \pa_{\vf}\bbeta + {\bf a_0} + {\bf a}_0 \cdot \nabla \bbeta \big) \label{trasporto trasformato}
\\
& 
 = \lambda \cB(\vf)^{-1} \big( \omega\cdot \pa_{\vf}\bbeta + {\bf b} + {\bf b} \cdot \nabla \bbeta \big)\,, 
\qquad {\bf b} := \lambda^{- 1} {\bf a}_0\,. 
\end{align}
Note that, for $\lambda \gg 1$, by \eqref{theta.def.ridu}-\eqref{small.theta}, 
one has that $\varepsilon = \lambda^{\theta-1} \ll 1$ 
and the quasi-periodic traveling wave function ${\bf b}(\vphi, x)$ 
satisfies the  estimate (recall \eqref{stima bf a1}) 
\begin{equation}\label{ridu trasporto}
\| {\bf b} \|_{S + 1}^{\Lip(\gamma)} \lesssim_S \lambda^{\theta-1}\,. 
\end{equation}
For $\gamma\in (0,1)$ and $\tau>\nu-1$,
we consider the set of  Diophantine 
non-resonance conditions defined as
%in \eqref{DC.2gamma}.
 \begin{equation}\label{DC.2gamma}
\tD\tC(2\gamma,\tau) := \big\{  \omega \in \Omega_\lambda \, : \, 
|  \omega \cdot\ell  | \geq 2  \gamma \braket{\ell}^{-\tau} \ \ 
\forall \, \ell\in\Z^{\nu}\setminus\{0\}    \big\}\,,
\end{equation}
where $\Omega_{\lambda}$ is  given by Theorem 
\ref{teo principale beta plane}.
By standard volume estimates for diophantine frequencies, one has 
\begin{equation}\label{stima diofantei}
|\Omega_\lambda \setminus \tD\tC(2\gamma,\tau)| 
\lesssim \gamma \,.
%\lesssim \lambda^{- \mathtt c}\,.
\end{equation}
The following result is proved in Proposition 5.1 of \cite{BFMT1}. 
\begin{prop}\label{proposizione trasporto}
{\bf (Straightening of the transport vector field).} 
Let $\gamma\in (0,1)$ and $\tau>\nu-1$.
There exists  $\sigma := \sigma (\tau, \nu) > 0$ large enough  such that, if  
$S > s_0 + \sigma$, there exist $\delta := \delta(S, \tau, \nu) \in (0, 1)$ 
small enough such that, 
if \eqref{ansatz}, \eqref{ridu trasporto} hold  and
\begin{equation}\label{condizione piccolezza rid trasporto}
\lambda^{\theta - 1} \gamma^{- 1}  \leq \delta \,,
\end{equation} 
is fulfilled, then the following holds. 
There exists an invertible diffeomorphism 
$\T^2 \to \T^2$, $x \mapsto x + \bbeta(\vphi, x; \omega)$ 
with inverse $y \mapsto y + \breve \bbeta(\vphi, y; \omega)$,
defined for all {$\omega \in \tD\tC(2\gamma, \tau)$}, 
with the set given in \eqref{DC.2gamma},
satisfying, for any $s_0\leq s \leq S - \sigma$,
\begin{equation}\label{stima alpha trasporto}
\| \bbeta \|_s^{{\rm Lip}(\gamma)}\,,\; 
\| \breve \bbeta\|_{s}^{{\rm Lip}(\gamma)} 
\lesssim_{S}  
\lambda^{\theta - 1} \gamma^{- 1} \,,
\end{equation}
such that one has
\begin{equation}\label{coniugazione nel teo trasporto}
\omega\cdot \pa_{\vf}\bbeta + {\bf b} + {\bf b} \cdot \nabla \bbeta = 0\,. 
\end{equation}
Moreover, the invertible maps $\cB(\vf)$, $\cB(\vf)^{-1}$ (see \eqref{diffeo.maps}) 
satisfy the estimates
\begin{equation}\label{stima tame cambio variabile rid trasporto}
\begin{aligned}
& \sup_{\vf \in \T^\nu}\| {\mathcal B}(\vf)^{\pm 1}  \|_{{\mathcal B}(H^s)} 
\leq 
1 + K(S) \lambda^{\theta - 1} \gamma^{- 1}  \,, 
\qquad \forall \, 0 \leq s \leq S - \sigma\,.  
\end{aligned}
\end{equation}
Furthermore, $\bbeta,\breve{\bbeta}$ are $\odd(\vphi,x)$, 
quasi-periodic traveling waves, and the related maps ${\mathcal B}, {\mathcal B}^{- 1}$ 
are momentum preserving and reversibility preserving 
(see Definitions \ref{def:mom.pres}, \ref{reserv.operators.def}). 
\end{prop}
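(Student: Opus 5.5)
The plan is to solve the straightening equation \eqref{coniugazione nel teo trasporto} by a Nash--Moser/KAM-type quadratic iteration, treating ${\bf b}$ as a perturbation of size $\varepsilon := \lambda^{\theta-1}$, with $\varepsilon\gamma^{-1}\le\delta$ small by \eqref{condizione piccolezza rid trasporto}. Equivalently, we conjugate the vector field $\omega\cdot\partial_\vf + {\bf b}\cdot\nabla$ to $\omega\cdot\partial_\vf$. Indeed, by \eqref{trasporto trasformato} (with $\lambda$ factored out), the change of variables $x\mapsto x+\bbeta$ transforms it into $\omega\cdot\partial_\vf + {\bf b}_+\cdot\nabla$ with ${\bf b}_+ = \cB^{-1}(\omega\cdot\partial_\vf\bbeta + {\bf b} + {\bf b}\cdot\nabla\bbeta)$. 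At each step $n$ we choose $\bbeta_n$ solving the linearized homological equation
\[
\omega\cdot\partial_\vf \bbeta_n + \Pi_{N_n}{\bf b}_n = \langle {\bf b}_n\rangle_{\vf,x}\,,
\]
and we use $\omega\in\tD\tC(2\gamma,\tau)$ to invert $\omega\cdot\partial_\vf$ on functions with zero $\vf$-average. This gives $\|\bbeta_n\|_s^{\Lip(\gamma)}\lesssim \gamma^{-1}\|{\bf b}_n\|^{\Lip(\gamma)}_{s+2\tau+1}$, where the loss of derivatives comes from the small divisors and the Lipschitz dependence on $\omega$.

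Two structural facts make the scheme close. First, the traveling wave structure: ${\bf b}_n$ has Fourier support on $\pi^\top(\ell)+j=0$ by \eqref{QPT.forma}, so a mode with $\ell=0$ forces $j=0$. Hence the $\vf$-average of ${\bf b}_n$ equals its full space-time average, which is a constant vector. Second, this constant vanishes. At the start $\langle{\bf b}\rangle_x=0$ by \eqref{divergenza.zero.a1}, and we must show it is preserved. For this we use the oddness in $(\vf,x)$: ${\bf a}_0 = -\fB(v_\lambda)$ is odd because $v_\lambda$ is odd and $\fB$ commutes with $x\mapsto -x$ up to sign in the right way. Choosing $\bbeta_n$ odd, the new ${\bf b}_{n+1}$ stays odd by the reversibility-preserving property of compositions, and an odd function has zero average. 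So the homological equation is solvable exactly on the truncated part, with no constant correction term. The momentum preserving and reversibility preserving properties of $\cB^{\pm1}$ then follow from Lemma 2.15 in \cite{BFMT1} and the oddness of $\bbeta$, and $\breve\bbeta$ is handled by the inverse-diffeomorphism lemma of \cite{BM20}.

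The new error is
\[
{\bf b}_{n+1} = \cB_n^{-1}\big(\Pi_{N_n}^\perp{\bf b}_n + {\bf b}_n\cdot\nabla\bbeta_n\big)\,,
\]
which is quadratic plus a smoothing tail. Using tame estimates for compositions and the smoothing lemma, we get
\[
\|{\bf b}_{n+1}\|_{s}\lesssim N_n^{-(S-s)}\|{\bf b}_n\|_S + N_n^{2\tau+2}\gamma^{-1}\|{\bf b}_n\|_{s}^2\,,
\]
with $N_n = N_0^{(3/2)^n}$. This yields superexponential decay in low norm and controlled growth in high norm. The composed diffeomorphism $x\mapsto x+\bbeta$ converges in $H^s$ for $s\le S-\sigma$, satisfies \eqref{stima alpha trasporto}, and solves \eqref{coniugazione nel teo trasporto} in the limit. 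The bound \eqref{stima tame cambio variabile rid trasporto} on $\cB^{\pm1}$ in ${\mathcal B}(H^s)$ then follows from the standard change-of-variables estimate $\|h(\cdot+\bbeta)\|_{H^s}\le(1+C\|\bbeta\|_{{\mathcal C}^{s+1}})\|h\|_{H^s}$, together with Sobolev embedding, which is absorbed into $\sigma$.

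The main obstacle is the tame bookkeeping of the Nash--Moser convergence with the loss $2\tau+2$ and the Lipschitz norms. This fixes $\sigma(\tau,\nu)$, and the measure of the Diophantine set is controlled by \eqref{stima diofantei}.
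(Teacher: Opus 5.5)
There is a genuine gap in the step that is meant to exclude a constant drift, i.e.\ to obtain \eqref{coniugazione nel teo trasporto} with zero right-hand side rather than a normal form $\omega\cdot\partial_\varphi+\mathbf{m}\cdot\nabla$. You correctly note that, by momentum conservation, the only obstruction to your homological equation is the $(\ell,j)=(0,0)$ mode of $\mathbf{b}_n$. But you dispose of it by claiming that $\mathbf{a}_0=-\mathfrak{B}(v_\lambda)$ is odd in $(\varphi,x)$ and that oddness propagates along the scheme. This is false. $\mathfrak{B}=\nabla^\perp(-\Delta)^{-1}$ is an odd Fourier multiplier, so $v_\lambda(-\varphi,-x)=-v_\lambda(\varphi,x)$ gives $\mathfrak{B}(v_\lambda)(-\varphi,-x)=\mathfrak{B}(v_\lambda)(\varphi,x)$, hence $\mathbf{b}$ is \emph{even}. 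The statement itself forces this: for $\bbeta$ odd, both $\omega\cdot\partial_\varphi\bbeta$ and $\mathbf{b}\cdot\nabla\bbeta$ are even. Your bookkeeping is also internally inconsistent, since $\omega\cdot\partial_\varphi\bbeta_n=-\Pi_{N_n}\mathbf{b}_n$ with $\mathbf{b}_n$ odd would make $\bbeta_n$ even. The parity actually preserved is ``$\mathbf{b}_n$ even, $\bbeta_n$ odd'', and even functions can have any mean. So parity tells you nothing about $\langle\mathbf{b}_n\rangle_{\varphi,x}$.

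What does control the mean is incompressibility: $\mathrm{div}\,\mathbf{b}=0$ and $\langle\mathbf{b}\rangle_x=0$, so $X_0=\omega\cdot\partial_\varphi+\mathbf{b}\cdot\nabla$ preserves Lebesgue measure. Your maps $x\mapsto x+\bbeta_n$ are not volume preserving, so $\mathbf{b}_{n+1}$ is no longer divergence free, and nothing in your scheme makes its Lebesgue mean vanish. The quadratic term $\int\mathbf{b}_n\cdot\nabla\bbeta_n=-\int\bbeta_n\,\mathrm{div}\,\mathbf{b}_n$ vanishes only while $\mathrm{div}\,\mathbf{b}_n=0$. What is exactly conserved is the weighted mean $\int\mathbf{b}_n\rho_n=0$, where $\rho_n$ is the density of the push-forward of Lebesgue measure under the composed changes of variables; this holds because $\int X_n(\bbeta_n)\rho_n=0$ for an invariant density of $X_n$. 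So you need an extra argument. One option is to carry the constant along as a drift $\mathbf{m}_n$ (or add a Moser-type counterterm). The divisors $(\omega-\pi(\mathbf{m}_n))\cdot\ell$, $|\ell|\le N_n$, then stay Diophantine because $\mathbf{m}_n$ is super-exponentially small, and the conserved weighted mean gives $\mathbf{m}_\infty=0$ (equivalently, unique ergodicity of the Diophantine linear flow plus invariance of Lebesgue measure). Alternatively, make each step exactly volume preserving.

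For comparison, the paper does not reprove existence: it quotes Proposition 5.1 of \cite{BFMT1} and proves only \eqref{stima tame cambio variabile rid trasporto}. There your bound $\|h(\cdot+\bbeta)\|_{H^s}\le(1+C\|\bbeta\|_{\mathcal{C}^{s+1}})\|h\|_{H^s}$ is the right one. However, its leading constant $1$, which Section \ref{sezione nonlinear stability} needs, does not come from a generic tame estimate. The paper obtains it by induction on integer $s$: the Jacobian $\det(\mathrm{Id}+\nabla\breve\bbeta)$ at $s=0$, the identity $\nabla(\mathcal{B}u)=(\mathrm{Id}+\nabla\bbeta)^T\mathcal{B}[\nabla u]$ for $s\to s+1$, and interpolation for non-integer $s$.
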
 	 
\begin{proof}
The existence of a map of the form \eqref{diffeo.maps} 
for $\bbeta$ satisfying estimate \eqref{stima alpha trasporto} and equation
\eqref{coniugazione nel teo trasporto} is proved 
in Proposition 5.1 of \cite{BFMT1}.
The only difference relies in the estimate 
\eqref{stima tame cambio variabile rid trasporto} which we now prove.

More precisely we claim that
the composition operator $\cB$ in \eqref{diffeo.maps} 
satisfies the estimate
\begin{equation}\label{stima Hs x A vphi}
\sup_{\vf \in \T^\nu} \| {\mathcal B}(\vf) \|_{{\mathcal B}(H^s)} 
\leq 
1 + K_s(S) \lambda^{\theta - 1} \gamma^{- 1} , 
 \qquad \forall \, 0 \leq s \leq S - \sigma
\end{equation}
for some constant $K_s(S) > 0$. The inverse operator 
\begin{equation}\label{diffeo.A-1}
{\mathcal B}(\vf)^{- 1} : u(y) \mapsto u(y + \breve{\bbeta}(\vf, y))
\end{equation}
also satisfies 
\[
\sup_{\vf \in \T^\nu} \| {\mathcal B}(\vf)^{- 1} \|_{{\mathcal B}(H^s)} 
\leq 
1 + K'_s(S) \lambda^{\theta - 1} \gamma^{- 1} \,, 
\qquad \forall \, 0 \leq s \leq S-\sigma\,,
\]
for some constant $K'_s(S) > 0$ large enough.

\noindent
{\sc Proof of \eqref{stima Hs x A vphi}.} We argue by induction on $0 \leq s \leq S - \sigma$, $s \in \N$. 
If $s$ is not integer the claimed bound is obtained by standard interpolation, 
namely, for $p \in [s, s+ 1]$ and  $s \in \N$,
\[
\| \cdot \|_{{\mathcal B}(H^p)} 
\lesssim 
\| \cdot \|_{{\mathcal B}(H^s)}^\lambda \| \cdot \|_{{\mathcal B}(H^{s + 1})}^{1 - \lambda}\,,
\] 
for some $\lambda \in [0, 1]$. For $s = 0$, one has that 
for any $u \in L^2(\T^2)$, 
\begin{align}
\| {\mathcal B}(\vf)[u] \|_{L^2}^2 
& 
= \int_{\T^2} |u(x + \bbeta(\vf, x))|^2 \wrt x 
= \int_{\T^2} |u(y)|^2 {\rm det}\big({\rm Id} 
+ \nabla_y \breve{\bbeta}(\vf, y) \big) \wrt y 
\\
& \leq \| u \|_{L^2}^2 
\big(1 + C_0 \| \nabla \breve{\bbeta} \|_{L^\infty} \big)\,,
\end{align}
for some constant $C_0 > 0$. By the Sobolev embedding 
%and by Lemma \ref{prop.base.diffeo}, 
one has that 
\begin{equation}\label{nabla alpha L infty}
\begin{aligned}
\| \nabla \breve{\bbeta} \|_{\infty} & \lesssim 
\| \nabla \breve{\bbeta} \|_{\frac{2 + \nu}{2} + 1} 
\lesssim 
\| \breve \bbeta \|_{\frac{2 + \nu}{2} + 2}  
\stackrel{\eqref{definizione s0}}{\lesssim}
\| \breve \bbeta \|_{s_0} \lesssim \| \bbeta \|_{s_0} 
\lesssim_S  \lambda^{\theta - 1} \gamma^{- 1}
\end{aligned}
\end{equation}
and hence the claimed bound follows for $s = 0$. 

\noindent
We now argue by induction. Assume that the claimed bound 
follows for some $s \in \N$ and let us prove it for $s + 1$. One has that 
\begin{align}
\| {\mathcal B}(\vf)[u] \|_{H^{s + 1}} & \leq 
\| {\mathcal B}(\vf)[u] \|_{L^2} 
+  \| \nabla_x {\mathcal B}(\vf)[u] \|_{H^s} 
\\
& \leq  \| {\mathcal B}(\vf)[u] \|_{L^2} 
+  \| \big({\rm Id} 
+ \nabla_x \bbeta \big)^T {\mathcal B}(\vf)[\nabla u] \|_{H^s}\,.
\end{align}
We need to estimate only the second term in the latter equality. One has 
\[
\begin{aligned}
\| \big({\rm Id} + \nabla_x \bbeta \big)^T 
{\mathcal B}(\vf)[\nabla u] \|_{H^s} 
& \leq 
\| {\mathcal B}(\vf)[\nabla u] \|_{H^s} \Big( 1 + C_1(s) 
\| \nabla \bbeta \|_{{\mathcal C}^s}\Big)\,,
\end{aligned}
\]
for some constant $C_1(s) > 0$. By the induction hypothesis one has
\begin{equation}\label{sifulo 1}
\begin{aligned}
\| {\mathcal B}(\vf)[\nabla u] \|_{H^s} & 
\leq \big( 1 + K_s(S) \lambda^{\theta - 1} \gamma^{- 1}\big) 
\|\nabla u \|_{H^s}\,.
\end{aligned}
\end{equation}
Furthermore, by the Sobolev embedding and \eqref{stima alpha trasporto}
%and Lemma \ref{prop.base.diffeo}, 
one has, taking $s + 1 + s_0 < S - \sigma$, that
\[
\begin{aligned}
\| \nabla \bbeta \|_{{\mathcal C}^s} & \leq 
\|  \bbeta \|_{{\mathcal C}^{s + 1}} 
\lesssim_s 
\|  \bbeta \|_{s + 1 + \frac{2 + \nu}{2} + 1}  
\stackrel{\eqref{definizione s0}}{\lesssim_s} 
\|  \bbeta \|_{s + 1 + s_0} 
\lesssim_S \lambda^{\theta - 1} \gamma^{- 1}\,,
\end{aligned}
\]
and therefore 
\[
\| \big({\rm Id} + \nabla_x \bbeta \big)^T 
{\mathcal B}(\vf)[\nabla u] \|_{H^s} 
\leq  
\big(1 + C(S) \lambda^{\theta - 1} \gamma^{- 1} \big)
\big( 1 + K_s(S) \lambda^{\theta - 1} \gamma^{- 1}\big) 
\|\nabla u \|_{H^s}\,.
\]
This latter estimate together with the $L^2$-estimate implies that 
\[
\| {\mathcal B}(\vf)[ u] \|_{H^{s + 1}} 
\leq \big(1 + K_{s + 1}(S) \lambda^{\theta - 1} \gamma^{- 1} \big) 
\| u \|_{H^{s + 1}}\,,
\]
for a large constant $K_{s + 1}(S) \gg K_s(S)$ and 
by taking $\lambda^{\theta - 1} \gamma^{- 1} \ll 1$
small enough. 
\end{proof}

\noindent
The latter Proposition implies that the linear vector field ${\mathcal L}_1(\vf)$ 
takes the form 
\begin{equation}\label{cal L1 dopo stratening}
{\mathcal L}_1(\vf) = 
\cB_\bot(\vphi)^{- 1} \big(  \beta \, \mathtt L + {\mathcal E}_0(\vf) \big) 
\cB_\bot(\vf), \quad \vf \in \T^\nu\,. 
\end{equation}
The properties of the vector field ${\mathcal L}_1(\vf)$ 
are given in the following Proposition.
\begin{prop}\label{prop coniugio cal L L1}
Let $S>s_0+\sigma_{1}$, for some $\sigma_{1}=\sigma_{1}(\tau,\nu) \gg \sigma$ 
(where $\sigma $ is provided by Proposition \ref{proposizione trasporto}). 
There exists $\delta(S,\tau,\nu)\in(0,1)$ small enough such that, 
if \eqref{ansatz} and \eqref{condizione piccolezza rid trasporto} are fulfilled, 
the following holds.
 For any  {$\omega\in \tD\tC(2\gamma,\tau)$}, 
 as  in \eqref{DC.2gamma}, one has
\begin{equation}\label{cL1}
\cL_{1}(\vf) = \beta  \,\tL+ \cE_{1}(\vf)\,,
\end{equation}
where, for any $s_0 \leq s \leq S - \sigma_1$,
the operator $\cE_{1}\in \OpM_{s}^{-1}$ satisfies the estimate 
\begin{equation}\label{stima mathcal E1}
| \cE_{1} |_{-1,s}^{\Lip(\gamma)} 
\lesssim_{S} \lambda^{\theta} \,.
\end{equation}
Furthermore, the operators $\cL_{1}$ and $\cE_{1}$ are  real, 
reversible, momentum preserving.  
\end{prop}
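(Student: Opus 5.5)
The plan is to write $\cE_1 := \cL_1 - \beta\,\tL$ and expand it using \eqref{cal L1 dopo stratening}:
\[
\cE_1(\vf) = \beta\big(\cB_\bot(\vf)^{-1}\tL\,\cB_\bot(\vf) - \tL\big) + \cB_\bot(\vf)^{-1}\cE_0(\vf)\cB_\bot(\vf)\,.
\]
The two pieces are then estimated separately in the norm $|\cdot|_{-1,s}^{\Lip(\gamma)}$. The algebraic properties come first.

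\emph{Algebraic properties.} $\tL$ is a real Fourier multiplier, and its symbol $\im\,\tL(\xi)$ is odd in $\xi$, so by Lemma \ref{lemma real rev matrici} it is real, reversible and momentum preserving (it is $\vf$-independent and diagonal). The operator $\cE_0$ is real, reversible and momentum preserving because $v_\lambda$ is a real, odd quasi-periodic traveling wave. By Proposition \ref{proposizione trasporto}, $\cB^{\pm1}$ are real, reversibility preserving and momentum preserving, and $\Pi_0^\bot$ commutes with all these symmetries. Lemma \ref{lem:mom_prop} together with the conjugation remark in Section \ref{Reversible operators} then gives the three properties for $\cL_1$ and for $\cE_1$.

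\emph{Conjugated remainder.} For the term $\cB_\bot^{-1}\cE_0\cB_\bot$, I would use that a change of variables close to the identity belongs to $\OpM^0_{s}$ with
\[
|\cB_\bot^{\pm1}|_{0,s}^{\Lip(\gamma)} \lesssim_s 1 + \|\bbeta\|_{s+\mu}^{\Lip(\gamma)}
\]
for some loss $\mu=\mu(\nu)$. This is a standard fact: expand $e^{\im j'\cdot(x+\bbeta)} = e^{\im j'\cdot x}\,e^{\im j'\cdot\bbeta}$, and the factor $\langle j'\rangle^{s}$ coming from the derivative of the exponential is absorbed by the decay weight. I would take it from \cite{BFMT1} (or \cite{FrMo}). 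The required conditions are $|\cB^{\pm1}|_{0,s+1}<\infty$ and $|\cE_0|_{-1,S-1}\lesssim\lambda^\theta$ from \eqref{stima bf a1}. With these, Lemma \ref{proprieta standard norma decay}-(i) gives
\[
|\cB_\bot^{-1}\cE_0\cB_\bot|_{-1,s}^{\Lip(\gamma)} \lesssim_S \lambda^\theta\,\big(1+\|\bbeta\|_{s+\mu'}^{\Lip(\gamma)}\big)\,.
\]
By \eqref{stima alpha trasporto} and \eqref{condizione piccolezza rid trasporto}, this is $\lesssim_S \lambda^\theta$ as long as $s+\mu'\le S-\sigma$. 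That condition fixes $\sigma_1 = \sigma + \mu'$.

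\emph{Main obstacle: the commutator term.} The delicate part is $\cB_\bot^{-1}\tL\cB_\bot - \tL$, which has to be shown to be of order $-1$ and small. I would write
\[
\cB_\bot^{-1}\tL\cB_\bot - \tL = \cB_\bot^{-1}\,[\tL,\cB_\bot]\,.
\]
Since $\tL = \pa_{x_1}(-\Delta)^{-1}$ is a Fourier multiplier of order $-1$, the commutator with a composition operator close to the identity is expected to be of order $-2$. A symbolic calculus (Egorov-type) argument gives a bound of the form
\[
|[\tL,\cB_\bot]|_{-1,s} \lesssim_s \|\bbeta\|_{s+\mu}\,.
\]
A cruder route, which suffices here since only order $-1$ is needed, avoids pseudodifferential calculus entirely. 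One bounds each factor: $\tL\in\OpM^{-1}_s$ trivially because it is diagonal, and $|\cB^{\pm1}_\bot|_{0,s+1}\lesssim 1$. Lemma \ref{proprieta standard norma decay}-(i) then gives
\[
|\cB_\bot^{-1}\tL\cB_\bot|_{-1,s}\lesssim_S 1\,,
\]
so this term is $O(1)\le O(\lambda^\theta)$. If the composition estimate for $\OpM^{-1}$ with $\cB_\bot$ on the right costs $|m|=1$ extra derivatives, this is absorbed into $\sigma_1$.

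Summing the two pieces yields \eqref{stima mathcal E1}. The step I expect to need the most care is checking that the matrix decay estimate for $\cB_\bot^{\pm1}$ in $\OpM^0_{s}$, including its Lipschitz dependence on $\omega$, holds with a loss independent of $S$, so that $\sigma_1$ depends only on $(\tau,\nu)$.
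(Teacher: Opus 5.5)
There is a genuine gap. Both halves of your estimate rest on the claim $|\cB_\bot^{\pm1}|^{\Lip(\gamma)}_{0,s}\lesssim_s 1+\|\bbeta\|^{\Lip(\gamma)}_{s+\mu}$, and this claim is false: for $s\geq 1$ a non-trivial change of variables does not belong to $\OpM^0_s$ at all.

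To see this, note that $\cB(\vf)[e^{\im j'\cdot x}]=e^{\im j'\cdot x}g_{j'}(\vf,x)$ with $g_{j'}:=e^{\im j'\cdot\bbeta}$. Hence $\widehat\cB(\ell)_j^{j'}=\widehat{g_{j'}}(\ell,j-j')$, and by \eqref{def decay norm} $|\cB|_{0,s}=\sup_{j'}\|g_{j'}\|_s$. Since $|g_{j'}|=1$, already $\|g_{j'}\|_1\gtrsim\|\nabla_x g_{j'}\|_{L^2}=\|\nabla_x(j'\cdot\bbeta)\|_{L^2}$. This grows linearly in $|j'|$ along some direction as soon as $\bbeta$ depends on $x$, and typically $\|g_{j'}\|_s$ grows like $\langle j'\rangle^{s}$. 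These powers of $j'$ are produced precisely by the off-diagonal weight $\langle\ell,j-j'\rangle^{s}$. At order $m=0$ the factor $\langle j'\rangle^{-m}$ absorbs nothing, so $|\cB|_{0,s}=+\infty$ for every $s\geq s_0$. The bound \eqref{stima tame cambio variabile rid trasporto} is only an operator-norm bound on $H^s$, which is strictly weaker than a decay-norm bound.

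Consequently Lemma \ref{proprieta standard norma decay}-$(i)$ cannot be applied factor by factor. Neither $|\cB_\bot^{-1}\cE_0\cB_\bot|_{-1,s}\lesssim\lambda^\theta$ nor your ``cruder route'' $|\cB_\bot^{-1}\tL\cB_\bot|_{-1,s}\lesssim1$ is established. Your Egorov formulation has the same defect: $[\tL,\cB_\bot]$, whose entries are $\im(\tL(j)-\tL(j'))\widehat{g_{j'}}(\ell,j-j')$, is not in $\OpM^{-1}_s$ either. Only the full conjugate $\cB_\bot^{-1}\tL\cB_\bot$ has good decay.

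What is missing is a way to treat $\cB^{-1}\,\mathrm{Op}(m)\,\cB$ as a single object, for the order $-1$ Fourier multipliers $m\in\{\tL,\nabla^\perp(-\Delta)^{-1}\}$.
\begin{itemize}
\item Multiplication operators are harmless. The conjugate $\cB^{-1}\circ a\circ\cB$ is multiplication by $\cB^{-1}[a]$, and a multiplication operator satisfies $|a|_{0,s}\leq\|a\|_s$. Hence $\cB_\bot^{-1}\cE_0\cB_\bot=-\Pi_0^\bot\,\cB^{-1}[\nabla v_\lambda]\cdot\big(\cB^{-1}\fB\,\cB\big)\Pi_0^\bot$ reduces to the same issue.
\item The standard tool is an Egorov-type result. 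It writes $\cB^{-1}\mathrm{Op}(m)\cB$ as a pseudodifferential operator $\mathrm{Op}(p)$ of order $-1$ plus a smoothing remainder. Pseudodifferential operators do fit the decay classes, because $\widehat{\mathrm{Op}(p)}(\ell)_j^{j'}=\widehat{p(\cdot,\cdot,j')}(\ell,j-j')$, so $|\mathrm{Op}(p)|_{m,s}\leq\sup_{j'}\langle j'\rangle^{-m}\|p(\cdot,\cdot,j')\|_s$.
\item Here you can even avoid general calculus, since $\tL$ and $\fB$ are built from $\nabla$ and $(-\Delta)^{-1}$. One has $\cB^{-1}\nabla\cB=M\nabla$ with $M=\cB^{-1}[({\rm Id}+\nabla\bbeta)^T]$, the identity used in Section \ref{sezione nonlinear stability}. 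Therefore $\cB^{-1}(-\Delta)\cB=-\Delta-P$, where $P$ is a second order differential operator with $O(\|\bbeta\|_{s+\mu})$ coefficients. On zero-average functions this gives $\cB_\bot^{-1}(-\Delta)^{-1}\cB_\bot=(-\Delta)^{-1}\big({\rm Id}-\Pi_0^\bot P(-\Delta)^{-1}\big)^{-1}$, a Neumann series of a small element of $\OpM^0_s$ (Lemma \ref{proprieta standard norma decay}-$(ii)$).
\end{itemize}
After this rewriting every factor is a multiplication operator or a Fourier multiplier, and Lemma \ref{proprieta standard norma decay}-$(i)$ applies legitimately. The loss in $\|\bbeta\|_{s+\mu}$, controlled via \eqref{stima alpha trasporto} and \eqref{condizione piccolezza rid trasporto}, is what determines $\sigma_1$.

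Your decomposition of $\cE_1$ and your verification of reality, reversibility and momentum preservation are fine. The paper itself simply refers to Proposition 5.5 of \cite{BFMT1} for this proof.
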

\begin{proof}
The proof is exactly the same as the one of Proposition 5.5 in \cite{BFMT1}. 
\end{proof}

\subsection{Reduction to perturbative of the large remainder }

The next goal is to conjugate the operator $\cL_{1}$ in \eqref{cL1} 
through a series of transformation in order to reduce both 
the size and the order of the remainder $\cE_{1}\in  \OpM_{s}^{-1}$. 
We shall prove the following Proposition.
\begin{prop}\label{prop normal form lower orders}
Let $M\in\ \N$, $M > \frac{1-\tc}{2(1-\tc) - \alpha}$. 
There exist $\sigma_M= \sigma_{M}(\tau,\nu) \gg 0$ large enough such that, 
if $S>s_0 +\sigma_{M}$, there is $\delta \equiv \delta(S, M) \ll 1$ small enough such that, 
if 
\begin{equation}\label{piccoloNonpert}
\lambda^{\theta-1} \gamma^{- 1} \leq \delta\,,
\end{equation}
and if \eqref{ansatz} is fullfilled with $\sigma_{0} = \sigma_M$, 
then the following hold. For any  {$\omega\in \tD\tC(2\gamma,\tau)$}, 
there exists a real, invertible operator 
${\bf \Phi}_M \in \OpM_s^0$, $s_0 \leq s \leq S - \sigma_M$ satisfying 
\begin{equation}\label{stima bf Phi M}
\begin{aligned}
& |{\bf \Phi}_M^{\pm 1}|_{0, s}^{\Lip(\gamma)} 
\lesssim 1\,, 
\qquad 
|{\bf \Phi}_M^{\pm 1}-{\rm Id}|_{-1, s}^{\Lip(\gamma)} 
\lesssim_{S, M}
 \lambda^{\theta - 1} \gamma^{- 1}  \,, 
 \qquad \forall \, s_0 \leq s \leq S - \sigma_M\,
\end{aligned}
\end{equation}
such that, for any  {$\omega\in \tD\tC(2\gamma,\tau)$}, we have
\begin{equation}\label{forma cal LM finale}
{\mathcal L}_M(\vf) := (\Phi_M)_*{\mathcal L}_1(\vf) 
=   \beta\,\tL + {\mathcal Z}_M + {\mathcal E}_M(\vf)\,,
\end{equation}
where ${\mathcal Z}_M \in \OpM^{- 1}_s$, $s \geq 0$, 
is a diagonal operator 
${\mathcal Z}_M := {\rm diag}_{j \neq 0} z_M(j)$ 
and ${\mathcal E}_M \in \OpM^{- M}_s$, $s_0 \leq s \leq S - \sigma_M$ 
satisfy the estimates 
\begin{align}
& |{\mathcal Z}_M|_{- 1, s}^{\Lip(\gamma)} \lesssim_{S, M} \lambda^{\theta}\,, 
\qquad 
\forall \, s \geq s_0\,, 
\qquad  
\sup_{j \neq 0} |j| |z_M(j)|^{\Lip(\gamma)} \lesssim_{S, M} \lambda^{\theta}\,, \label{stime induttive cal EM cal ZM finale}
\\
&  |{\mathcal E}_M|_{- M, s}^{\Lip(\gamma)} 
\lesssim_{S, M} 
(\lambda^{\theta-1}\gamma^{-1} \big)^{M-1} \lambda^{\theta}   
\qquad 
\forall \, s_0 \leq s \leq S - \sigma_M\,. 
\end{align}

\noindent
Furthermore, the operators $\b\Phi_{M}^{\pm 1}$ and $\cL_{M}$
are real and momentum preserving, $\b\Phi_{M}^{\pm 1}$ 
are reversibility preserving and  $\cL_{M}$ is reversible.
\end{prop}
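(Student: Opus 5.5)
The plan is an iterative normal form scheme of $M-1$ steps. Each step lowers the order of the non-diagonal remainder by one and gains a factor $\lambda^{\theta-1}\gamma^{-1}$ in size. Set ${\mathcal Z}_1:=0$ and ${\mathcal E}_1$ as in Proposition \ref{prop coniugio cal L L1}. We prove inductively, for $1\le n\le M$, that there is a real, momentum and reversibility preserving map ${\bf\Phi}_n$ with
$$
(\Phi_n)_*{\mathcal L}_1=\beta\tL+{\mathcal Z}_n+{\mathcal E}_n, \qquad |{\mathcal E}_n|_{-n,s}^{\Lip(\gamma)}\lesssim (\lambda^{\theta-1}\gamma^{-1})^{n-1}\lambda^\theta,
$$
and with ${\mathcal Z}_n$ diagonal of order $-1$ and size $\lambda^\theta$. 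The loss of regularity $\sigma_n$ grows with $n$, which produces $\sigma_M$.

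For the step $n\to n+1$ we conjugate by $\Psi_n:=\exp(\mathcal X_n)$, with $\mathcal X_n(\vf)$ solving the homological equation
$$
\lambda\omega\cdot\partial_\vf \mathcal X_n = {\mathcal E}_n-\widehat{\mathcal E}_n(0).
$$
Since ${\mathcal E}_n$ is momentum preserving, Lemma \ref{lem:mom_pres} gives that $\widehat{\mathcal E}_n(0)$ is diagonal. We define it as the new normal form increment, ${\mathcal Z}_{n+1}:={\mathcal Z}_n+\widehat{\mathcal E}_n(0)$, and Lemma \ref{proprieta standard norma decay}-(iv) gives its order $-n\le -1$ and size $\lambda^\theta$.

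The solution is
$$
\widehat{\mathcal X}_n(\ell)=\frac{\widehat{\mathcal E}_n(\ell)}{\im\lambda\omega\cdot\ell}, \qquad \ell\neq0.
$$
Only first Melnikov (Diophantine) conditions from $\tD\tC(2\gamma,\tau)$ are needed here, because we do not divide by differences of eigenvalues of $\beta\tL$. This yields
$$
|\mathcal X_n|_{-n,s}^{\Lip(\gamma)}\lesssim \lambda^{-1}\gamma^{-1}|{\mathcal E}_n|_{-n,s+2\tau+1}^{\Lip(\gamma)}\lesssim \lambda^{\theta-1}\gamma^{-1}(\lambda^{\theta-1}\gamma^{-1})^{n-1}.
$$
The Lipschitz part costs the extra $\gamma^{-1}$ and $\tau$ derivatives, absorbed in the loss $\sigma$. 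Lemma \ref{proprieta standard norma decay}-(iii), under the smallness \eqref{piccoloNonpert}, gives invertibility of $\Psi_n$ with $|\Psi_n^{\pm1}-{\rm Id}|_{-n,s}\lesssim\lambda^{\theta-1}\gamma^{-1}$. The map $\Psi_n$ is real, reversibility preserving and momentum preserving because $\mathcal X_n$ inherits these properties from ${\mathcal E}_n$ through the matrix characterizations of Lemma \ref{lemma real rev matrici} and Lemma \ref{lem:mom_prop}.

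The conjugated operator is
$$
\beta\tL+{\mathcal Z}_{n+1}+{\mathcal E}_{n+1},
$$
where ${\mathcal E}_{n+1}$ collects commutator terms. The terms $[\mathcal X_n,\beta\tL+{\mathcal Z}_n]$ have order $-n-1$, since $\tL$ and ${\mathcal Z}_n$ are of order $-1$, with size $\lambda^{\theta-1}\gamma^{-1}\cdot(\lambda^{\theta-1}\gamma^{-1})^{n-1}\lambda^\theta$. The quadratic expansion terms, such as $[\mathcal X_n,{\mathcal E}_n]$ and the higher Lie series terms, have order $\le -2n\le -n-1$. Their size gains $\lambda^\theta\cdot\lambda^{\theta-1}\gamma^{-1}(\dots)$, which is also bounded by the claimed quantity. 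All of these are controlled by Lemma \ref{proprieta standard norma decay}-(i),(ii). We then set ${\bf\Phi}_{n+1}:={\bf\Phi}_n\Psi_n$, and products of the factors give \eqref{stima bf Phi M}.

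The main obstacle is the careful order bookkeeping in the composition estimate. Lemma \ref{proprieta standard norma decay}-(i) loses $|m|$ derivatives in $s$ at each step, so the losses must be tracked to produce $\sigma_M$. A subtler issue is the commutator $[\mathcal X_n,\tL]$. Its order is $-n-1$ only up to tame bounds, because the two factors are respectively of order $-n$ and $-1$; no symbolic cancellation is required. Finally, the condition on $M$ ensures that the final remainder $(\lambda^{\theta-1}\gamma^{-1})^{M-1}\lambda^\theta$ is $o(1)$, as later needed for KAM reducibility. Here $\gamma=\lambda^{-\tc}$, so the base $\lambda^{\theta-1}\gamma^{-1}$ equals $\lambda^{\alpha-2+2\tc}$. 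The exponent $(\alpha-2+2\tc)(M-1)+\alpha-1+\tc$ is negative exactly when $M>\frac{1-\tc}{2(1-\tc)-\alpha}$, which uses \eqref{choice mathtt c}.
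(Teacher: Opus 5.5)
Your proposal is correct and follows essentially the same route as the paper. The paper runs the iteration of Lemma \ref{lemma.large.to.pert}: a homological equation solved with first Melnikov conditions only, $\widehat{\mathcal E}_n(0)$ diagonal by momentum conservation and absorbed into $\mathcal Z_{n+1}$, and commutator and Lie remainders bounded via Lemma \ref{proprieta standard norma decay}. It then applies the finite composition Lemma \ref{composizione iterata smoothing}. The one step you should make explicit is the Lie remainder of $-\Phi^{-1}\lambda\omega\cdot\partial_\varphi\Phi$: there you must substitute $\lambda\omega\cdot\partial_\varphi\mathcal X_n=\mathcal E_n-\widehat{\mathcal E}_n(0)$, as the paper does in \eqref{seconda forma cal Q 1 (1) m}, because bounding $\lambda\omega\cdot\partial_\varphi\mathcal X_n$ directly by $\lambda|\mathcal X_n|$ would lose a factor $\gamma^{-1}$ at the first step.
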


Proposition \ref{prop normal form lower orders} is proved as 
a consequence of the following iterative procedure.

\begin{lem}\label{lemma.large.to.pert}
Let $M\in\ \N$, $M > \frac{1-\tc}{2(1-\tc) - \alpha} $. 
There exist $\sigma_1 < \sigma_2 < \ldots < \sigma_M$ with 
$\sigma_{i}:=\sigma_{i}(\tau,\nu)>0$ large enough such that, 
for any $S>s_0 +\sigma_{M}$, there exists $\delta \equiv \delta(S, M) \ll 1$ 
small enough such that, if \eqref{piccoloNonpert} holds 
%$\lambda^{\theta-1} \gamma^{- 1} \leq \delta$ 
and if \eqref{ansatz} is fulfilled with $\sigma_{0} = \sigma_M$, 
then the following holds. For any $m = 1, \ldots, M$, 
there exists a real operator ${\mathcal L}_m$ of the form
\begin{equation}\label{op mathcal Lm}
{\mathcal L}_m(\vf) =  \beta \,\tL + {\mathcal Z}_m + {\mathcal E}_m(\vf)\,, \quad \vf \in \T^\nu\,,
\end{equation}
where,  for any $s_0 \leq s \leq S - \sigma_m$, 
${\mathcal Z}_m \in \OpM^{- 1}_s$ is the diagonal operator ${\mathcal Z}_m = {\rm diag}_{j \neq 0} z_m(j)$ , ${\mathcal E}_m \in \OpM^{- m}_s$ and they satisfy the estimates
\begin{align}
 |{\mathcal Z}_m|_{- 1, s}^{\Lip(\gamma)} &\lesssim_{S, m} 
\lambda^{\theta} \,,  
\qquad  
\sup_{j \neq 0} |j| |z_m(j)|^{\Lip(\gamma)} 
\lesssim_{S, m} \lambda^{\theta}\,,\label{stime induttive cal Em cal Zm}
\\
  |{\mathcal E}_m|_{- m, s}^{\Lip(\gamma)} 
&\lesssim_{S, m} 
\big( \lambda^{\theta-1} \gamma^{- 1} \big)^{m - 1} \lambda^{\theta} \,.
%\| {w} \|_{s + \sigma_m}^{\Lip(\gamma)} \,.
	\end{align}
There exist real operators 
$\{ \Phi_{m} \}_{m=1}^{M - 1}$, 
with $\Phi_{m}:= {\rm exp} (\cX_{m})\in \OpM_{s}^{0}$, 
where
\[
\cX_{m} := \cX_{m}(\vf) 
= \sum_{\ell\in\Z^{\nu}\setminus\{0\}} \wh\cX_{m}(\ell)e^{\im\,\ell\cdot\vf} 
\in \OpM_{s}^{-m}\,,
\]
satisfying the estimates 
		\begin{align}
 |\cX_m|_{- m, s}^{\Lip(\gamma)} 
&\lesssim_{S, m} 
\big(\lambda^{\theta-1} \gamma^{- 1} \big)^m\,, \label{stime cal Xm Phim nel lemma}
%\qquad \forall s_0 \leq s \leq S - \sigma_m\,, 
\\
 |\Phi_m^{\pm 1} |_{0, s}^{\Lip(\gamma)} &\lesssim 1\,, 
\qquad 
|\Phi_m^{\pm 1} - {\rm Id} |_{- m, s}^{\Lip(\gamma)} 
\lesssim_{S, m} (\lambda^{\theta - 1} \gamma^{- 1})^m \,,
\qquad \forall \, s_0 \leq s \leq S - \sigma_m\,. 
\end{align}
Moreover, for any $m = 2, \ldots, M - 1$ and any  
{$\omega\in \tD\tC(2\gamma,\tau)$}, we have that 
\begin{equation}\label{operator.pertubative}
\cL_{m}(\vf) := (\Phi_m)_* \cL_{m - 1}(\vf), \quad \vf \in \T^\nu   \,.
\end{equation}
Furthermore, the operators $\b\Phi_{m}^{\pm 1}$ and $\cL_{m}$
are real and momentum preserving and $\b\Phi_{m}^{\pm 1}$ 
are reversibility preserving and $\cL_{m}$ is reversible.
\end{lem}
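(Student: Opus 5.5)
The lemma is proved by induction on $m$. At each step a homological equation removes the $\vf$-dependent part of the remainder; what is left gains one order of smoothing and a factor $\lambda^{\theta-1}\gamma^{-1}$ in size.

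\emph{Base case.} For $m=1$ take ${\mathcal Z}_1:=0$ and ${\mathcal E}_1$ as in Proposition \ref{prop coniugio cal L L1}. The bounds \eqref{stime induttive cal Em cal Zm} then reduce to \eqref{stima mathcal E1}. Reality, reversibility and momentum preservation are inherited from that proposition.

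\emph{Inductive step.} Assume ${\mathcal L}_m=\beta\tL+{\mathcal Z}_m+{\mathcal E}_m$ with the stated properties. Split ${\mathcal E}_m=\widehat{\mathcal E}_m(0)+\big({\mathcal E}_m-\widehat{\mathcal E}_m(0)\big)$. By Lemma \ref{lem:mom_pres}, momentum preservation forces $\widehat{\mathcal E}_m(0)$ to be diagonal. Set ${\mathcal Z}_{m+1}:={\mathcal Z}_m+\widehat{\mathcal E}_m(0)$. By Lemma \ref{proprieta standard norma decay}-(iv), $|\widehat{\mathcal E}_m(0)_j^j|\lesssim\langle j\rangle^{-m}|{\mathcal E}_m|_{-m,s_0}\lesssim\langle j\rangle^{-1}\lambda^\theta$, so the bounds on ${\mathcal Z}_{m+1}$ and $z_{m+1}(j)$ hold. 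We then look for $\Phi_m=\exp({\mathcal X}_m)$ solving
\[
\lambda\,\omega\cdot\partial_\vf {\mathcal X}_m={\mathcal E}_m-\widehat{\mathcal E}_m(0),
\qquad
\widehat{\mathcal X}_m(\ell)=\frac{\widehat{\mathcal E}_m(\ell)}{\im\lambda\,\omega\cdot\ell},\quad \ell\neq0 .
\]
Using $\omega\in\tD\tC(2\gamma,\tau)$ gives
\[
|{\mathcal X}_m|_{-m,s}^{\Lip(\gamma)}\lesssim\lambda^{-1}\gamma^{-1}|{\mathcal E}_m|_{-m,s+2\tau+1}^{\Lip(\gamma)}\lesssim(\lambda^{\theta-1}\gamma^{-1})^{m}.
\]
Here the loss of $2\tau+1$ derivatives is absorbed into $\sigma_{m+1}>\sigma_m$, and the extra $\gamma$ comes from the Lipschitz variation of the divisor $\lambda\omega\cdot\ell$. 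Lemma \ref{proprieta standard norma decay}-(iii) then gives the bounds on $\Phi_m^{\pm1}$, under the smallness \eqref{piccoloNonpert}. Since the divisor is a scalar multiplier, ${\mathcal X}_m$ keeps the Fourier support of ${\mathcal E}_m$ and is therefore momentum preserving. Dividing by $\im\lambda\omega\cdot\ell$ with $\ell\mapsto-\ell$ and using Lemma \ref{lemma real rev matrici}, one checks that ${\mathcal X}_m$ is real and reversibility preserving, so $\exp({\mathcal X}_m)$ is as well, by Lemma \ref{lem:mom_prop}.

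\emph{Conjugation and the main estimate.} Expand with Lie series:
\[
(\Phi_m)_*{\mathcal L}_m=\beta\tL+{\mathcal Z}_{m+1}+{\mathcal E}_{m+1},
\]
where ${\mathcal E}_{m+1}$ collects:
\begin{itemize}
\item the commutators $[\beta\tL+{\mathcal Z}_m,{\mathcal X}_m]$;
\item the terms $[{\mathcal E}_m,{\mathcal X}_m]$;
\item the higher-order terms $\sum_{n\ge2}\frac{1}{n!}{\rm ad}^n_{{\mathcal X}_m}$ and the corresponding terms from $\Phi^{-1}\lambda\omega\cdot\partial_\vf\Phi$, which close up to quadratic remainders in ${\mathcal X}_m$.
\end{itemize}
Estimating ${\mathcal E}_{m+1}$ in $\OpM^{-(m+1)}_s$ is the main obstacle. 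The commutator with $\tL\in\OpM^{-1}$ has order $-m-1$ and size $(\lambda^{\theta-1}\gamma^{-1})^m$. This is better than required once we note $(\lambda^{\theta-1}\gamma^{-1})^m\le(\lambda^{\theta-1}\gamma^{-1})^m\lambda^\theta$. The commutator with ${\mathcal Z}_m$ has order $-m-1$ and size $(\lambda^{\theta-1}\gamma^{-1})^m\lambda^\theta$, which is exactly the target. The term $[{\mathcal E}_m,{\mathcal X}_m]$ has order $-2m\le-(m+1)$ for $m\ge1$ and size
\[
(\lambda^{\theta-1}\gamma^{-1})^{2m-1}\lambda^\theta\le(\lambda^{\theta-1}\gamma^{-1})^{m}\lambda^\theta .
\]
Higher powers are controlled by Lemma \ref{proprieta standard norma decay}-(i),(ii) and summed as geometric series, using the smallness \eqref{piccoloNonpert} and tracking the loss of $|m|$ derivatives in the composition estimate (i). This yields $|{\mathcal E}_{m+1}|_{-(m+1),s}\lesssim(\lambda^{\theta-1}\gamma^{-1})^m\lambda^\theta$. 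It is here that the decay-norm composition must be used carefully, so that commutators with the diagonal order $-1$ part genuinely gain one order. For that I would use the momentum structure: the matrix entries of $[{\mathcal Z}_m,{\mathcal X}_m]$ are $(z_m(j)-z_m(j'))\widehat{\mathcal X}_m(\ell)_j^{j'}$, and the needed gain should come from $|z_m(j)-z_m(j')|\lesssim\lambda^\theta\langle j-j'\rangle\langle j'\rangle^{-1}\langle j\rangle^{-1}$, with the factor $\langle j-j'\rangle$ absorbed into the off-diagonal decay at the cost of one more derivative. The same argument applies to $\tL$. However, this difference bound does not follow from $\sup_j|j||z_m(j)|\lesssim\lambda^\theta$ alone. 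It requires additional information on $z_m$, for instance a Lipschitz-in-$j$ bound carried along in the induction, and establishing that is the point I expect to need extra work. The algebraic properties of ${\mathcal L}_{m+1}$ then follow from the conjugation rule stated after Definition \ref{reserv.operators.def}.
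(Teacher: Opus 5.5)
Your proposal follows the paper's proof step by step: base case $\mathcal Z_1=0$, the homological equation with the scalar divisor $\mathrm{i}\lambda\omega\cdot\ell$, $\mathcal Z_{m+1}=\mathcal Z_m+\widehat{\mathcal E}_m(0)$ diagonal by Lemma \ref{lem:mom_pres}, then a Lie expansion.

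\textbf{The obstacle you flag does not exist.} The step you single out as the main difficulty needs no cancellation in the commutator, and no regularity in $j$ of $z_m(j)$ is required.

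\begin{itemize}
\item Both $\beta\tL$ and $\mathcal Z_m$ are already of order $-1$, since $|\tL(j)|\le |j|^{-1}$ and $|z_m(j)|^{\Lip(\gamma)}\lesssim \lambda^\theta |j|^{-1}$.
\item Since $\mathcal X_m\in\OpM^{-m}_s$, each of $\mathcal Z_m\mathcal X_m$, $\mathcal X_m\mathcal Z_m$, $\tL\mathcal X_m$, $\mathcal X_m\tL$ lies in $\OpM^{-m-1}_s$ \emph{separately}, by Lemma \ref{proprieta standard norma decay}-$(i)$.
\item The derivative loss in that lemma is harmless. It is either one unit on $\mathcal X_m$, or it falls on the diagonal factor, whose $|\cdot|_{-1,s}$ norm does not depend on $s$.
\end{itemize}

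At the level of matrix entries, the triangle inequality already suffices:
\[
|z_m(j)-z_m(j')|\le |z_m(j)|+|z_m(j')|\lesssim \lambda^\theta\langle j-j'\rangle\langle j'\rangle^{-1}.
\]
This uses only $\langle j'\rangle\le 2\langle j\rangle\langle j-j'\rangle$, and it gives $|[\mathcal Z_m,\mathcal X_m]|_{-m-1,s}\lesssim \lambda^\theta|\mathcal X_m|_{-m,s+1}$. Your sharper difference bound would yield order $-m-2$, which is more than the induction needs. This is exactly how the paper obtains \eqref{stime cal Q 1 1 3 m}. With this observation your argument closes, with no Lipschitz-in-$j$ information carried along the induction.

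\textbf{A point to make explicit.} In the remainder coming from $-\Phi_m^{-1}\lambda\omega\cdot\partial_{\vphi}\Phi_m$, you should replace $\lambda\omega\cdot\partial_{\vphi}\mathcal X_m$ by $\mathcal E_m-\widehat{\mathcal E}_m(0)$ via the homological equation before estimating, as the paper does in \eqref{seconda forma cal Q 1 (1) m}. If you instead bound it directly by $\lambda|\mathcal X_m|_{-m,s+1}$, the term $[\lambda\omega\cdot\partial_{\vphi}\mathcal X_m,\mathcal X_m]$ gets size $\lambda(\lambda^{\theta-1}\gamma^{-1})^{2m}$. For $m=1$ this exceeds the target $\lambda^{\theta}\,\lambda^{\theta-1}\gamma^{-1}$ by the factor $\gamma^{-1}$, which the smallness condition \eqref{piccoloNonpert} does not control.
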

\begin{proof}
We proceed by induction.
The operator $\cL_{1}$ in \eqref{cL1} is of the form \eqref{op mathcal Lm} 
with $\cZ_{1}=0$.
By \eqref{stima mathcal E1}  we have that 
$\cE_{1}$ satisfies \eqref{stime induttive cal Em cal Zm}.
Finally, by Proposition \ref{prop coniugio cal L L1} we have that 
$\cL_{1}$ is real, reversible and momentum preserving.

\noindent
We now assume that the claimed statements hold for $m = 1, \ldots, M - 1$ 
and we prove them for $m + 1$. 
We look for a transformation of the form 
\begin{equation}
\Phi_{m}:= {\rm exp}(\cX_m) \in \OpM_{s}^{0}\,, 
\quad 
\cX_{m} := \cX_{m}(\vf) = 
\sum_{\ell\in\Z^\nu\setminus\{0\}} \wh\cX_{m}(\ell) e^{\im\,\ell\cdot\vf} \in \OpM_{s}^{-m}\,,
\end{equation} 
where $\wh\cX_{m}$ has to be determined. 
By recalling \eqref{campo vettoriale astratto trasformato}, 
we compute 
   \[
   {\mathcal L}_{m + 1}(\vf) := (\Phi_m)_* {\mathcal L}_m(\vf) = 
   \Phi_m(\vf)^{- 1} (\mathtt L + {\mathcal Z}_m 
   + {\mathcal E}_m(\vf)) \Phi_m(\vf) - \Phi_m(\vf)^{- 1} \lambda 
   \omega \cdot \partial_\vf \Phi_m(\vf)\,.
   \]
  We examine separately  
  the conjugation of the three terms appearing 
  in \eqref{op mathcal Lm}. By the standard Lie expansion, 
  we get
		\begin{align}
			- \Phi_m(\vf)^{- 1} \lambda \omega \cdot \partial_\vf \Phi_m(\vf) & = - \lambda \, \omega \cdot \partial_\vphi \cX_m(\vphi) + {\mathcal Q}_m^{(1)}(\vphi) \,, \\
			{\mathcal Q}_m^{(1)}(\vphi) & = - \int_0^1 (1 - \tau ) {\rm exp}(- \tau \cX_m) [\lambda\, \omega \cdot \partial_\vphi \cX_m(\vphi), \cX_m(\vphi)]  {\rm exp}( \tau \cX_m) \wrt \tau\,, \label{coniugio a pezzi cal L1 Phi1 m} \\
			\Phi_m(\vf)^{- 1} ( \beta \tL) \Phi_m(\vf) & =  \beta \tL + {\mathcal Q}_m^{(2)}(\vf)\,, \\
			{\mathcal Q}_m^{(2)}(\vphi) & :=  \int_0^1 {\rm exp}(- \tau \cX_m(\vphi)) [\beta \tL, \cX_m(\vphi)] {\rm exp}(\tau \cX_m(\vphi)) \wrt \tau\,,
		\end{align}
and
\begin{align}
\Phi_m(\vf)^{- 1} {\mathcal Z}_m \Phi_m(\vf) & = {\mathcal Z}_m + {\mathcal Q}_m^{(3)}(\vf)\,, \\
			{\mathcal Q}_m^{(3)}(\vphi) & := \int_0^1 {\rm exp}(- \tau \cX_m(\vphi)) [{\mathcal Z}_m, \cX_m(\vphi)] {\rm exp}(\tau \cX_m(\vphi)) \wrt \tau \,, \label{coniugio a pezzi cal L1 Phi1 mBIS} \\
			\Phi_m(\vf)^{- 1} {\mathcal E}_m(\vf) \Phi_m(\vf) & = {\mathcal E}_m(\vf)  + {\mathcal Q}_m^{(4)}(\vf)\,, \\
			{\mathcal Q}_m^{(4)}(\vphi) & :=  \int_0^1 {\rm exp}(- \tau \cX_m(\vphi)) [{\mathcal E}_m(\vphi), \cX_m(\vphi)] {\rm exp}(\tau \cX_m(\vphi)) \wrt \tau\,.
\end{align}
    
The terms of order $- m$ in the expansion of ${\mathcal L}_{m + 1}(\vf)$ 
that we want to reduce are then given by 
$- \lambda\, \omega \cdot \partial_\vphi \cX_m(\vphi) + {\mathcal E}_m(\vphi)$. 
Hence, we solve the homological equation 
\begin{equation}\label{equazione omologica step - 1 m}
- \lambda \, \omega \cdot \partial_\vphi \cX_m(\vphi) + {\mathcal E}_m(\vphi) 
= \widehat{\mathcal E}_m(0)\,, 
\qquad 
\widehat{\mathcal E}_m(0) := 
\frac{1}{(2 \pi)^\nu} \int_{\T^\nu} {\mathcal E}_m(\vphi) \wrt \vphi \,,
\end{equation}
by defining, for any  {$\omega\in \tD\tC(2\gamma,\tau)$},
\begin{equation}\label{def cal X 1 m}
{\mathcal X}_m(\vphi) =  
\sum_{\ell \neq 0} \frac{\widehat{\mathcal E}_m(\ell)}{\im \lambda\, \omega \cdot \ell} 
e^{\im \ell \cdot \vphi}\,.
\end{equation}
Note that the matrix elements of ${\mathcal X}_m$ are given by 
\[
\widehat{\mathcal X}_m(\ell)_j^{j'} = 
\begin{cases}
 \dfrac{\widehat{\mathcal E}_m(\ell)_j^{j'}}{\im \lambda \,\omega \cdot \ell}\,,  
 &\ell \neq 0\,, 
 \qquad j, j' \in \Z^2 \setminus \{ 0 \}\,, \qquad \pi^\top(\ell) + j - j' = 0\,, 
 \\
0  &\text{otherwise.}
\end{cases}
\]
Then, since {$\omega\in \tD\tC(2\gamma,\tau)$}, one has that 
\[
|\widehat{\mathcal X}_m(\ell)_j^{j'}| 
\lesssim 
\gamma^{- 1} \lambda^{- 1} \langle \ell \rangle^\tau 
|\widehat{\mathcal E}_m(\ell)_j^{j'}|\,.
\]
By taking $S > s_0 + \sigma_{m} + 2 \tau + 1$, 
the latter two estimates, 
together with the Definition \ref{block norm} imply that 
$ {\mathcal X}_m \in \OpM_s^{- m}$ for any $s_0 \leq s \leq S - \sigma_{m} - 2 \tau - 1$,
with estimates
\begin{align}
& |{\mathcal X}_m|_{- m,  s}^{\Lip(\gamma)} 
\lesssim 
\gamma^{- 1} \lambda^{- 1} |{\mathcal E}_m|_{- m, s + 2 \tau + 1}^{\Lip(\gamma)}  
\stackrel{\eqref{stime induttive cal Em cal Zm}}{\lesssim_{S, m}} 
\lambda^{- 1} \gamma^{- 1} 
\big( \lambda^{\theta-1} \gamma^{- 1} \big)^{m - 1} 
\lambda^{\theta}   
\lesssim_{S, m}   
\big( \lambda^{\theta-1} \gamma^{- 1} \big)^{m } \,.\label{prop + stime cal X1 m}
\end{align}
Together with Lemma \ref{proprieta standard norma decay}-$(iii)$ 
and using that $\lambda^{\theta-1} \gamma^{- 1} \ll 1$ (see \eqref{piccoloNonpert}), 
we have that \eqref{prop + stime cal X1 m} implies,
for any $s_0 \leq s \leq S - \sigma_m - 2 \tau - 1$,
\begin{equation}\label{Prop Phi 1 Phi 1 inv m}
\sup_{\tau \in [- 1, 1]}|{\rm exp}(\tau \cX_m) - {\rm Id}|_{- m, s}^{\Lip(\gamma)} 
\lesssim_{S, m} 
\big( \lambda^{\theta-1} \gamma^{- 1} \big)^{m }\,, 
\qquad 	
\sup_{\tau \in [- 1, 1]}|{\rm exp}(\tau \cX_m)|_{0, s}^{\Lip(\gamma)}  
\lesssim 1 \,. 
\end{equation}
Using that ${\mathcal E}_m$ is real, 
reversible and momentum preserving 
one verifies that ${\mathcal X}_m$ and $\Phi_m$ are real, 
reversibility preserving and momentum preserving. 
By \eqref{coniugio a pezzi cal L1 Phi1 m}-\eqref{coniugio a pezzi cal L1 Phi1 mBIS}, we obtain that
		\begin{align}
{\mathcal L}_{m + 1}(\vf) & = 
(\Phi_m)_* {\mathcal L}_m(\vf)  
=  \beta\, \tL + {\mathcal Z}_{m + 1}  + {\mathcal E}_{m + 1}(\vf) \,, 
\\
{\mathcal Z}_{m + 1} & := {\mathcal Z}_m + \widehat{\mathcal E}_m(0)\,, \label{forma finale cal L2 m}
\\
{\mathcal E}_{m + 1}(\vf)  & := 
{\mathcal Q}_m^{(1)}(\vf) +  {\mathcal Q}_m^{(2)}(\vf) 
+   {\mathcal Q}_m^{(3)}(\vf) +  {\mathcal Q}_m^{(4)}(\vf)\,.  
\end{align}

\smallskip
\noindent	
{\sc Properties and estimates of ${\mathcal Z}_{m + 1}$.} 
Since ${\mathcal E}_m$ is momentum preserving, 
by Lemma \ref{lem:mom_pres} we have that the 
time-independent operator $\widehat{\mathcal E}(0)$ 
is diagonal and hence also
the operator ${\mathcal Z}_{m+1} = {\rm diag}_{j \neq 0} z_{m+1}(j)$ 
is a diagonal operator with 
$z_{m + 1}(j) :=  z_m(j) + \widehat{\mathcal E}_m(0)_j^{j}$, 
$j \in \Z^2 \setminus \{ 0 \}$. 
Furthermore, by Lemma \ref{proprieta standard norma decay}-$(iv)$ 
and by the induction estimates \eqref{stime induttive cal Em cal Zm}, 
we get that  the operator ${\mathcal Z}_{m + 1} \in \OpM^{- 1}_s$ 
for any $s \geq s_0$ and it satisfies, 
using also $\lambda^{\theta-1} \gamma^{- 1} \ll 1$ by \eqref{piccoloNonpert},
\begin{align}
|{\mathcal Z}_{m + 1}|_{- 1, s}^{\Lip(\gamma)} 
\lesssim |{\mathcal Z}_m|_{- 1, s}^{\Lip(\gamma)} 
+ |\widehat{\mathcal E}_m(0)|_{- m, s} 
&\lesssim |{\mathcal Z}_m|_{- 1, s}^{\Lip(\gamma)}  
+  |{\mathcal E}_m|_{- m, s_0}^{\Lip(\gamma)} \label{stima cal Z1 m} \\
&  \lesssim_{S, m} \lambda^{\theta} 
+ \big( \lambda^{\theta-1} \gamma^{- 1}\big)^{m - 1} \lambda^{\theta} 
\lesssim_{S, m} 
\lambda^{\theta}\,, 
\end{align}
which implies that
\[
\sup_{j \neq 0} |j| |z_{m + 1}(j)|^{\Lip(\gamma)} 
\lesssim_{S, m} \lambda^{\theta}\,. 
\]
	
	\medskip
	
	\noindent
	{\sc Properties and estimates of ${\mathcal E}_{m + 1}$.} We now estimate the remainder ${\mathcal E}_{m + 1}$ in \eqref{forma finale cal L2 m}. We have to analyze the four terms ${\mathcal Q}_m^{(1)}, {\mathcal Q}_m^{(2)}, {\mathcal Q}_m^{(3)}, {\mathcal Q}_m^{(4)}$ in \eqref{coniugio a pezzi cal L1 Phi1 m}-\eqref{coniugio a pezzi cal L1 Phi1 mBIS}. First, by \eqref{equazione omologica step - 1 m}, we note that
	\begin{equation}\label{seconda forma cal Q 1 (1) m}
		{\mathcal Q}_m^{(1)} = - \int_0^1 (1 - \tau ) {\rm exp}(- \tau \cX_m) { [{\mathcal Z}_{m + 1}-{\mathcal Z}_{m} - {\mathcal E}_m, \cX_m] } {\rm exp}( \tau \cX_m) \wrt \tau\,.
	\end{equation}
	By the estimates \eqref{stime induttive cal Em cal Zm}, 
	\eqref{prop + stime cal X1 m}, \eqref{stima cal Z1 m}, 
	using also Lemma \ref{proprieta standard norma decay}-$(i)$, 
	we get that, taking  $S > s_0 + \sigma_m + m + 2 \tau + 1$, 
	the operators 
	$ [{\mathcal Z}_{m + 1} , \cX_m] \,, {[{\mathcal Z}_{m} , \cX_m] } \,,[\tL, \cX_m] \, \in \OpM_s^{- m - 1},  
	[{\mathcal E}_m , \cX_m] \in \OpM_s^{- 2 m} {\subseteq} \OpM^{- m - 1}_s$ 
	for any $ s_0 \leq s \leq S - \sigma_m  - m- 2 \tau - 1$ 
	and they satisfy the following estimates, 
	recalling also that $\lambda^{\theta-1} \gamma^{- 1} \ll 1$ by \eqref{piccolo.ansatx}:
\begin{align}
| [{\mathcal Z}_{m} , \cX_m]|_{- m - 1, s}^{\Lip(\gamma)}\,,
| [{\mathcal Z}_{m + 1} , \cX_m]|_{- m - 1, s}^{\Lip(\gamma)} 
&\lesssim_{S, m}  
\lambda^{\theta} \big( \lambda^{\theta-1} \gamma^{- 1} \big)^m\,, 
\\
|[{\mathcal E}_m , \cX_m]|_{- m - 1 , s}^{\Lip(\gamma)} 
&\leq 
|[{\mathcal E}_m , \cX_m]|_{- 2m , s}^{\Lip(\gamma)}  \label{stime cal Q 1 1 3 m}
\\
&\lesssim_{S, m} \lambda^{\theta} 
\big( \lambda^{\theta-1} \gamma^{- 1}\big)^{m - 1} \big( \lambda^{\theta-1} \gamma^{- 1}\big)^{m}   
\lesssim_{S, m} \lambda^{\theta}  \big( \lambda^{\theta-1} \gamma^{- 1}\big)^{m}\,, 
\\
|[\tL, \cX_m]|_{- m - 1, s}^{\Lip(\gamma)}
& \lesssim_{S, m} 
\big( \lambda^{\theta-1} \gamma^{- 1}\big)^{m}  
\lesssim_{S, m} \lambda^{\theta} \big( \lambda^{\theta-1} \gamma^{- 1}\big)^{m} \,. 
\end{align}
Together with \eqref{Prop Phi 1 Phi 1 inv m}, 
and Lemma \ref{proprieta standard norma decay}-$(i)$, 
the latter estimates imply, 
for $\sigma_{m + 1} \geq \sigma_m + m + 2 \tau + 2$ and 
$S > s_0 + \sigma_{m + 1}$, 
that the operator ${\mathcal E}_{m + 1}$ belongs to $\OpM_s^{- m - 1}$ 
for any 
$s_0 \leq s \leq S - \sigma_{m + 1}$ and it satisfies
\begin{equation}\label{stima mathcal E2 m}
		\begin{aligned}
			&|{\mathcal E}_{m + 1}|_{- m - 1, s}^{\Lip(\gamma)} 
			\lesssim_{S, m} 
			\lambda^{\theta} \big(  \lambda^{\theta-1} \gamma^{- 1} \big)^m \,.
		\end{aligned}
	\end{equation}
Finally since by the induction hypothesis $\cL_m$ is real, 
reversible and momentum preserving and $\Phi_m$ is real, 
reversibility preserving and momentum preserving 
one gets that 
${\mathcal L}_{m + 1}, {\mathcal E}_{m + 1}, {\mathcal Z}_{m + 1}$ 
are real, reversible and momentum preserving operators.
This implies the claim at the step $m+1$.
\end{proof}
In order to prove Proposition \ref{prop normal form lower orders}, 
we first need the following.
\begin{lem}\label{composizione iterata smoothing}
Let $N \geq 1$, $s \geq s_0$, $\rho \geq 0$ and let 
${\mathcal F}_1, \ldots, {\mathcal F}_N$ with 
${\mathcal F}_n \in \OpM_s^{- \rho}$ and 
$\Phi_n := {\rm Id} + {\mathcal F}_n$. 
For any $n \in \{1, \ldots, N\}$, 
assume that $|{\mathcal F}_n|_{- \rho, s} \leq 1$ and let us define
\[
\varepsilon_N := {\rm max}\{ |{\mathcal F}_n|_{- \rho, s}^{\Lip(\gamma)} : n = 1, \ldots, N\}\,, 
\qquad N \geq 1\,.
\]
Then the map ${\bf \Phi}_N := \Phi_1 \circ \ldots \circ \Phi_N$ is of the form 
\[
{\bf \Phi}_N = {\rm Id} + {\bf F}_N\,,
\]
with ${\bf F}_N \in \OpM_s^{- \rho}$ and 
$|{\bf F}_N|_{- \rho, s}^{\Lip(\gamma)} \lesssim_s \varepsilon_N$. 
\end{lem}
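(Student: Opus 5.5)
The plan is to argue by induction on $N$, writing ${\bf \Phi}_N = {\bf \Phi}_{N-1} \circ \Phi_N$ and controlling the new perturbation by the composition estimate of Lemma \ref{proprieta standard norma decay}-$(i)$. The constant will be allowed to depend on $N$ as well as on $s$ (and on $\rho$). This is harmless for the application in Proposition \ref{prop normal form lower orders}, where $N = M - 1$ is fixed.

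For $N = 1$ there is nothing to prove: ${\bf F}_1 = {\mathcal F}_1$. Assume the statement for $N - 1$, so that ${\bf \Phi}_{N-1} = {\rm Id} + {\bf F}_{N-1}$ with $|{\bf F}_{N-1}|_{-\rho, s}^{\Lip(\gamma)} \leq C_{N-1}(s)\,\varepsilon_{N-1} \leq C_{N-1}(s)\,\varepsilon_N$. Then
\[
{\bf \Phi}_N = ({\rm Id} + {\bf F}_{N-1})({\rm Id} + {\mathcal F}_N) = {\rm Id} + {\bf F}_N\,, \qquad {\bf F}_N := {\bf F}_{N-1} + {\mathcal F}_N + {\bf F}_{N-1}{\mathcal F}_N\,.
\]
The first two terms are bounded directly by $(C_{N-1}(s) + 1)\varepsilon_N$ in the norm $|\cdot|_{-\rho, s}^{\Lip(\gamma)}$.

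The only step requiring care is the product ${\bf F}_{N-1}{\mathcal F}_N$. A naive application of Lemma \ref{proprieta standard norma decay}-$(i)$ with $m = -\rho$ for the left factor would require ${\mathcal F}_N \in \OpM^{-\rho}_{s+\rho}$, which is a loss of $\rho$ derivatives that we cannot afford. To avoid this, I will regard the left factor as an operator of order $0$. By \eqref{prop elementari}, ${\bf F}_{N-1} \in \OpM^0_s$ with $|{\bf F}_{N-1}|_{0,s}^{\Lip(\gamma)} \leq |{\bf F}_{N-1}|_{-\rho,s}^{\Lip(\gamma)}$. Applying Lemma \ref{proprieta standard norma decay}-$(i)$ with $m = 0$, $m' = -\rho$, so that $|m| = 0$ and there is no loss of regularity, gives
\[
|{\bf F}_{N-1}{\mathcal F}_N|_{-\rho, s}^{\Lip(\gamma)} \lesssim_s |{\bf F}_{N-1}|_{0,s}^{\Lip(\gamma)} |{\mathcal F}_N|_{-\rho,s_0}^{\Lip(\gamma)} + |{\bf F}_{N-1}|_{0,s_0}^{\Lip(\gamma)} |{\mathcal F}_N|_{-\rho,s}^{\Lip(\gamma)}\,.
\]
By monotonicity in $s$ from \eqref{prop elementari}, the right-hand side is $\lesssim_s C_{N-1}(s)\,\varepsilon_N^2$. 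Since the factors are assumed of size at most one, this is $\lesssim_s C_{N-1}(s)\,\varepsilon_N$.

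Collecting the three contributions yields $|{\bf F}_N|_{-\rho,s}^{\Lip(\gamma)} \leq C_N(s)\,\varepsilon_N$, which closes the induction.

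The main obstacle is precisely the derivative loss in the composition estimate, and the remedy is to place the order-zero embedding on the left factor. A minor point to check is that the smallness assumption $|{\mathcal F}_n|_{-\rho,s} \leq 1$ is used only to absorb the quadratic term $\varepsilon_N^2$ into $\varepsilon_N$. If this hypothesis controls only the sup part of the norm and not the Lipschitz part, one instead keeps the bound $\varepsilon_N(1 + \varepsilon_N)^{N-1}$. This is still $\lesssim \varepsilon_N$ in the regime $\varepsilon_N \lesssim 1$ in which the lemma is applied, since there $\varepsilon_N \lesssim \lambda^{\theta-1}\gamma^{-1} \ll 1$.
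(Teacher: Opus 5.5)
Your proposal is correct and follows the paper's proof. Both argue by induction on the identity ${\bf F}_N = {\bf F}_{N-1} + {\mathcal F}_N + {\bf F}_{N-1}{\mathcal F}_N$, bound the product through Lemma \ref{proprieta standard norma decay}-$(i)$ with the left factor regarded as order zero (the paper's term $|{\bf F}_N|_{0,s}|{\mathcal F}_{N+1}|_{-\rho,s}$), and absorb the quadratic term using the size-one assumption.

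Your explicit $N$-dependence of the constant is accurate: the paper's chain of $\lesssim_{s,\rho}$ also compounds at each step, and an $N$-uniform bound in terms of the maximum $\varepsilon_N$ fails in general. This is harmless here, since the lemma is only applied with $N$ fixed.
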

\begin{proof}
To simplify notations we write $|\cdot |_{m, s}$ 
instead of $| \cdot |_{m, s}^{\Lip(\gamma)}$. 

\noindent
%{\sc Proof of $(i)$.} w
We argue by induction on $N$. For $N = 1$ the statement is trivial. 
Now assume that the claimed properties hold for ${\bf \Phi}_N$ for some $N\geq 1$ 
and we prove them for ${\bf \Phi}_{N + 1}$. One has that
\begin{align}
{\bf \Phi}_{N + 1} & = {\bf \Phi}_N \circ \Phi_{N + 1} 
= ({\rm Id} + {\bf F}_N) \circ ({\rm Id} + {\mathcal F}_{N + 1}) \label{bf Phi N N + 1 astratto}
\\
& = {\rm Id} + {\bf F}_{N + 1}, \qquad \text{with} \qquad {\bf F}_{N + 1} = {\bf F}_N 
+ {\mathcal F}_{N + 1} + {\bf F}_N \circ {\mathcal F}_{N + 1}\,. 
\end{align}
Then by Lemma \ref{proprieta standard norma decay}-$(i)$, \eqref{prop elementari} and the induction assumption on $\bF_{N}$, one gets that 
\begin{align}
|{\bf F}_{N + 1}|_{- \rho, s} 
& \lesssim_{s,\rho} |\bF_{N}|_{-\rho,s} + |\cF_{N+1}|_{-\rho,s} + |\bF_{N}|_{0,s}|\cF_{N+1}|_{-\rho,s} \\
& \lesssim_{s, \rho} 
|{\bf F}_N|_{- \rho, s} (1 + |{\mathcal F}_{N + 1}|_{- \rho, s}) 
+ |{\mathcal F}_{N + 1}|_{- \rho, s} 
 \\
& \lesssim_{s, \rho} 
|{\bf F}_N|_{- \rho, s} + |{\mathcal F}_{N + 1}|_{- \rho, s} 
\lesssim_{s, \rho} 
\varepsilon_N + |{\mathcal F}_{N + 1}|_{- \rho, s} 
\lesssim_{s, \rho} \varepsilon_{N + 1}\,. 
\end{align}
The claimed bound has then been proved.
\end{proof}

\begin{proof}[Proof of Proposition \ref{prop normal form lower orders}.]  Let ${\bf\Phi}_{1}:={\rm Id}$ 
and ${\bf \Phi}_M := \Phi_{1} \circ \Phi_{2} \circ \ldots \circ \Phi_{M - 1}$ for $M\geq 2$. 
By Lemma \ref{lemma.large.to.pert}, 
the estimates \eqref{stime cal Xm Phim nel lemma} 
and by applying Lemma \ref{composizione iterata smoothing}, 
we obtain that $\b\Phi_{M}^{\pm 1}$ satisfy the estimates 
\eqref{stima bf Phi M}. Moreover, if  {$\omega\in \tD\tC(2\gamma,\tau)$}, 
then the conjugation \eqref{forma cal LM finale}
holds for 
some $\mathcal{Z}_{M}$ and $\mathcal{E}_{M}$
satisfying 
\eqref{stime induttive cal EM cal ZM finale}
using the bounds \eqref{stime induttive cal Em cal Zm}
with $m\rightsquigarrow M$ and using also
Lemma \ref{proprieta standard norma decay}-$(iv)$.
%
%Then the conjugation \eqref{forma cal LM finale} 
%holds with ${\mathcal Z}_M$ satisfying the estimate in 
%\eqref{stime induttive cal EM cal ZM finale}, using also 
%Lemma \ref{proprieta standard norma decay}-$(v)$, 
%and ${\mathcal E}_M$ satisfying the estimate (see \eqref{stime induttive cal Em cal Zm} 
%with $m\rightsquigarrow M$), for any $s_0 \leq s \leq S - \sigma_M$,
%$$
%|{\mathcal E}_M|_{- M, s}^{\Lip(\gamma)} \lesssim_{s, M}  
%\big( \lambda^{\theta-1} \gamma^{- 1} \big)^{M - 1} \lambda^{\theta} \| {w} \|_{s + \sigma_M}^{\Lip(\gamma)} \,.
%$$
%Then, the desired bound on ${\mathcal E}_M$  in \eqref{stime induttive cal EM cal ZM finale}  
%follows since, using $M > \frac{1-\tc}{2(1-\tc) - \alpha} = \frac{1-\tc}{1-\tc -\theta} $, $\lambda \gg 1$ 
%large enough, and recalling \eqref{theta.def.ridu}, \eqref{small.theta}, 
%we have, if we assume $\gamma=\lambda^{-\tc}$,
%$$
%\big( \lambda^{\theta-1} \gamma^{- 1} \big)^{M - 1} \lambda^{\theta} 
%= \lambda^{M\theta -(M-1)(1-\tc)}  \ll 1\,.
%$$
%    
Finally, by Lemma \ref{lem:mom_prop} and Proposition \ref{prop coniugio cal L L1}, 
we obtain that $\b\Phi_{M}$, ${\mathcal L}_M$ 
are momentum preserving. Moreover $\b\Phi_{M}$ 
is real and reversibility preserving and ${\mathcal L}_M$ real and reversible. 
The proof of the Proposition is then concluded. 
\end{proof}

 \subsection{Perturbative reduction}\label{sez:iteraKAM}
 We are now in position to reduce the vector field $\cL_{M}(\vf)$  in
 \eqref{forma cal LM finale}
 %in \eqref{operator.pertubative} 
 to a diagonal vector field, with a perturbative reducibility iteration. We then define
 \begin{equation}\label{bL0_inizioKAM}
 	\bL_{0}(\vf) : =\cL_{M}(\vf) = \bD_{0}  + \bE_{0}(\vf), \quad \vf \in \T^\nu \,,
 \end{equation}
 where:
 \\[1mm]
 \noindent $\bullet$ The real and reversible diagonal operator $\bD_{0}$ is given by
 	\begin{align}
 	 &	\bD_{0}:= \beta \,\tL + \bZ_{0} = \diag_{j\in\Z^2\setminus\{0\}} \mu_{0}(j)\,, 
	 \quad \mu_{0}(j) = \im\,\beta \tL(j) + \tz_{0}(j) \,, \\
 	& \bZ_{0} := \cZ_{M} := \diag_{j\in\Z^2\setminus\{0\}} \tz_{0}(j)\,, 
	\quad \tz_{0} (j) := z_{M}(j) \,, \  \ j \in \Z^2\setminus\{0\}\,, 
 	\end{align}
  with $\tL(j)\in \R$ and $\tz_0 ( j )\in \im \R$  in Proposition 
  \ref{prop normal form lower orders};
  \\[1mm]
  \noindent $\bullet$ For any $s\in [s_0,S-\sigma_{M}]$, the operator 
  $\bE_{0}:= \cE_{M} \in \OpM_{s}^{- M}$ satisfies the estimate 
  (see \eqref{stime induttive cal EM cal ZM finale})
  \begin{equation}
  	| \bE_{0} |_{-M,s}^{\Lip(\gamma)} 
	\lesssim_{s, M} 
	\lambda^{M(\theta-1)+1} \gamma^{-(M-1)}\,.
  \end{equation}
  From now on we shall fix
\begin{equation}\label{definizioneGAMMA}
\gamma := \lambda^{- \mathtt c}\,.
\end{equation}
Thanks to this choice, recalling 
\eqref{theta.def.ridu}, \eqref{small.theta}, taking 
$M > \frac{1-\tc}{2(1-\tc)-\alpha}$ as in Proposition 
\ref{prop normal form lower orders}, we have, for 
$\lambda\gg1$ large enough, that
\begin{align}
\lambda^{\theta-1} \gamma^{-1} &= 
\lambda^{\theta - 1 + \mathtt c}=\lambda^{\alpha-2+\mathtt{c}} \ll 1 \,,\label{piccolo.ansatx}
\\
\lambda^{M(\theta-1)+1} \gamma^{-(M-1)}&=
\lambda^{M\theta -(M-1)(1-\tc)}=
\lambda^{1+\mathtt{c}-M(2(1-\mathtt{c})-\alpha) }\ll1\,.
\end{align}
We remark that in \eqref{piccolo.ansatx} we are again using that the 
constant $\mathtt{c}\ll1$ given by Theorem \ref{teo principale beta plane} is arbitrarily small.
Note also that the second in \eqref{piccolo.ansatx} implies that 
the operator $\bE_{0}$ has small size with respect to $\lambda \gg 1$ large enough.   
Now, given $ \tN_0 > 0$, we fix the constants 
 	\begin{align}
 		\tau &:= \nu + 4 \,, \qquad 
		M :=  \max\{2 \tau, \tfrac{1-\tc}{2(1-\tc)-\alpha} \}\ + 1,  \quad \tau_{1}:= 4\tau +2 +M\,, 
		\\
 		 \ta &:= 3(2 \tau\! + \!M \!+ 1) + 1  \,, \qquad  \tb:=\! \ta + 1 \,, \label{definizione.param.KAM} 
% 		\nonumber 
 		 \\
 	     \Sigma(\tb)&:= \sigma_{M} +\tb\,, \quad S > s_0 + \Sigma(\tb)\,, 
% 		 \nonumber 
 		 \\  
 		\tN_{- 1}& := 1 \,, \quad 
 		\tN_n := \tN_0^{\chi^n}\,, \quad 
 		n \geq 0\,, \quad 
 		\chi := 3/2  \,, 
 	\end{align}
 where $\sigma_{M }$ is given in Proposition \ref{prop normal form lower orders}. 
 Moreover, 
 by Proposition \ref{prop normal form lower orders}, 
 replacing $s$ by $s + \tb$ in  \eqref{stime induttive cal EM cal ZM finale} 
 and having $ \bZ_{0} = {\rm diag}_{j \in \Z^2 \setminus \{ 0 \}} \tz_{0}(j)$  diagonal, 
 and by the ansatz \eqref{ansatz}, one gets the initialization 
 conditions for the KAM reducibility, for any $s_0\leq s\leq S-\Sigma(\tb)$,
 \begin{equation}\label{stime pre rid}
 \begin{aligned}
 & 	\sup_{j \in \Z^2 \setminus \{ 0 \}} |j| |\tz_{0}(j)|^{{\rm Lip}(\gamma)} 
 \lesssim_S  
 \lambda^{\theta}\,, 
 \qquad     
 |\bE_{0}|_{- M , s + \tb}^{{\rm Lip}(\gamma)} 
 \lesssim_{S}   \lambda^{M(\theta-1)+1} \gamma^{-(M-1)}  \,. 
	\end{aligned}
 \end{equation}
 By the definition of $M$ in \eqref{definizione.param.KAM}, one has that 
 $M>\frac{1-\tc}{2(1-\tc)-\alpha}> \frac{1}{1-\theta}$. 
 We work in the regime $\lambda\gg 1$ and recalling \eqref{theta.def.ridu}, \eqref{small.theta}, 
 we define the small parameters 
 \begin{equation}\label{vare.vare}
 \varepsilon:= \lambda^{\theta-1}\,, 
 \qquad 
 \varepsilon_{M} := \lambda^{M(\theta-1)+1} = \varepsilon^{M-1} \lambda^{\theta} \,.
 \end{equation}
 
 \begin{prop}[\bf Reducibility]\label{prop riducibilita}
 	%Let $\gamma \in (0, 1)$, $\tau > 0$, $s > s_0$. 
 	Let $S > s_0 + \Sigma(\tb)$, with the notation of \eqref{definizione.param.KAM}. 
	There exist $\tN_{0} := \tN_{0}(S, \tau,\nu) > 0$ large enough and 
	$\delta := \delta(S, \tau,\nu) \in (0, 1)$ small enough such that, 
	if \eqref{ansatz} holds with $\sigma = \Sigma(\tb)$ and
 	\begin{equation}\label{KAM smallness condition}
	\tN_{0}^{\tau_1}  \varepsilon^{M}\gamma^{-M} \leq \delta \,,
\end{equation} 
 	then the following statements hold for any integer $n \geq 0$.
 	
 	\smallskip
 	\noindent 
 	${\bf (S1)}_n$ There exists a real reversible and  momentum preserving  operator 
 \begin{align}
 \bL_{n}(\vf) &:=  \bD_{n} + \bE_{n}(\vf)  \,, 
\\
  \bD_{n} &:= \beta\, \tL + \bZ_{n}=  {\rm diag}_{j \in \Z^2 \setminus \{ 0 \}} \mu_{n}(j) \,, \label{def.calLn calDn calQn}
  \\ 
\bZ_{n} &:=  {\rm diag}_{j \in \Z^2 \setminus \{ 0 \}} \tz_{n}(j)\,, 
\qquad \mu_{n}(j) :=  \im\, \beta \tL(j)+ \tz_{n}(j)\,,
 \end{align}
 	defined for any $\omega \in \Omega_{n}^\gamma$, where we define {$\Omega_{0}^\gamma := \tD\tC(2\gamma, \tau)$} for $n=0$ and, for $n \geq 1$, 
 		\begin{align}
 			%\Omega_0^\gamma := DC(\gamma, \tau) & \quad \text{and for} \quad  n \geq 1\,, \\
 \Omega_{n}^\gamma & := \Big\{ \omega \in \Omega_{n - 1}^\gamma \,: \, 
 |\im \,\lambda\,\omega \cdot \ell + \mu_{n - 1}(j) - \mu_{n - 1}(j') | 
 \geq 
 \frac{\lambda\, \gamma }{\langle \ell \rangle^\tau  |j'|^\tau}\,, \label{insiemi di cantor rid}
 \\
 & \qquad\qquad \ \forall \,\ell \in \Z^\nu \setminus \{ 0 \} \,, \ \ 
j,j' \in \Z^2 \setminus \{ 0 \}\,, \ \  \pi^\top( \ell) + j-j' =0,  \ \ |\ell|\leq \tN_{n-1} \Big\}\,. 
 		\end{align}
 	%\begin{equation} \label{cal Nn rid}
 	%\begin{aligned}
 	%& {\mathcal D}_n := \zeta \cdot \nabla + {\mathcal Q}_n = {\rm diag}_{j \in \Z^2 \setminus \{ 0 \}} {\mathcal U}_n(j) \,, \quad  
 	%{\mathcal Q}_n = {\rm diag}_{j \in \Z^2 \setminus \{ 0 \}} q_n(j)\,, \quad {\mathcal U}_n(j) := \ii \zeta \cdot j + q_n(j)\,,
 	%\end{aligned}
 	%\end{equation}
For any $j \in \Z^2 \setminus \{ 0 \}$, the eigenvalues $\mu_{n}(j)= \mu_{n}(j;\omega)$ 
%	are purely imaginary and satisfy the conditions
% 	\begin{equation}\label{reversibility reality auto}
% 		\begin{aligned}
% 			& \mu_n(j) = - \mu_n(- j) = \overline{\mu_n(- j)}, \ \ \text{or equivalently}   \\
% 			& \tz_{n}(j) = - \tz_{n}(- j)= \overline{\tz_{n}(- j)}\,, 
% 			%\\ & \text{implying that} \quad {\mathcal U}_n(j), q_n(j) \in \ii \R
% 		\end{aligned}
% 	\end{equation}
% 	and for any $j \in \Z^2 \setminus \{ 0 \}$, 
 satisfy the estimates
\begin{align}
  |\tz_{n}(j)|^{{\rm Lip}(\gamma)} 
 &\lesssim_S 
 \lambda^{\theta} |j|^{-1}\,, 
 \qquad 
 | \tz_{n}(j)- \tz_{0}(j) |^{{\rm Lip}(\gamma)} 
 \lesssim_S 
\varepsilon_{M} \gamma^{-(M-1)}|j|^{- M} \,,  \label{stime qn} 
\\
 | \tz_{n}(j) - \tz_{n - 1}(j)|^{{\rm Lip}(\gamma)}
 &\lesssim_S  
 \varepsilon_{M} \gamma^{-(M-1)} \, \tN_{n - 2}^{- \ta}\, |j|^{- M}  \,,
 \qquad \ \text{when } \qquad n\geq 1 \,,\label{cal Nn - N n - 1}
 	\end{align}
     they are purely imaginary and they satisfy
 		\begin{align}
 \mu_n(j) &= - \mu_n(- j) = - \overline{\mu_n( j)} \,, \ \ \text{or equivalently}   \label{reversibility reality auto}
 \\
  \tz_{n}(j) &= - \tz_{n}(- j)= - \overline{\tz_{n}( j)}\,, 
 %\\ & \text{implying that} \quad {\mathcal U}_n(j), q_n(j) \in \ii \R
 		\end{align}
 	The operator $\bE_{n}$ is real, reversible and momentum preserving, 
	satisfying, for any $s_0\leq s\leq S-\Sigma(\tb)$,
 	\begin{equation}\label{stime cal Rn rid}
 		\begin{aligned}
 	& |\bE_{n} |_{- M, s}^{{\rm Lip}(\gamma)} 
 \leq C_*(S)  \varepsilon_{M} \gamma^{-(M-1)}  \, \tN_{n - 1}^{- \ta}  \,, 
	\qquad  |\bE_{n}  |_{-M, s + \tb}^{{\rm Lip}(\gamma)}  
	\leq 
	C_*(S) \varepsilon_{M} \gamma^{-(M-1)}   \, \tN_{n - 1} \,,
 		\end{aligned}
 	\end{equation}
 	for some constant $C_* (S) > 0$ large enough. 
 	
	\noindent
 	When $n \geq 1$, there exists an invertible, real, reversibility preserving 
	and momentum preserving map 
 	$\Phi_{n -1} = {\rm exp}(\Psi_{n - 1})$, such that, 
 	for any $\omega \in \Omega_{n}^\gamma$, 
 	\begin{equation}\label{coniugazione rid}
 		\bL_{n}(\vf) = (\Phi_{n - 1})_* \bL_{n-1}(\vf)\,, \quad \vf \in \T^\nu\,. 
 	\end{equation}
 	Moreover, for any $s_0\leq s\leq S-\Sigma(\tb)$, the map 
	$\Psi_{n - 1} : H^s_0 \to H^s_0$ satisfies
 	\begin{equation}\label{stime Psi n rid}
 		\begin{aligned}
 	 |\Psi_{n - 1}|_{-1, s}^{{\rm Lip}(\gamma)}
 & \lesssim_S
 \varepsilon^{M} \gamma^{- M} \tN_{n - 1}^{2 \tau + 1} \tN_{n - 2}^{- \ta}\,, 
 \qquad 
 |\Psi_{n - 1} |_{- 1, s + \tb}^{{\rm Lip}(\gamma)} 
  \lesssim_S \varepsilon^{M} \gamma^{- M} \tN_{n - 1}^{2 \tau + 1}  \tN_{n - 2} \,.
 		\end{aligned}
 	\end{equation}

    \noindent
 	${\bf (S2)}_n$ For all $ j \in \Z^2 \setminus \{ 0 \}$,  there exists a 
	Lipschitz extension  of the eigenvalues 
	$\mu_n(j;\,\cdot\,) :\Omega_{n}^\gamma \to  \im \R$ to the set 
	{$\tD\tC(2\gamma, \tau)$}, denoted by
 	$ \widetilde \mu_n(j;\,\cdot\,): {\tD\tC(2\gamma, \tau) }\to \im \R$,
 	satisfying,  for $n \geq 1$, 
 	\begin{equation}\label{lambdaestesi}  
 		|\widetilde \mu_n(j) -  \widetilde \mu_{n - 1}(j) |^{{\rm Lip}(\gamma)}  
		\lesssim  |j|^{- M} |\bE_{n - 1}|_{-M,s_0}^{{\rm Lip}(\gamma)}  
		\lesssim_S  \varepsilon_{M} \gamma^{-(M-1)} \, \tN_{n - 2}^{- \ta}\, |j|^{- M}\,.
 	\end{equation}

 \end{prop}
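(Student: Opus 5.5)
We argue by induction on $n\geq 0$, following the standard KAM reducibility scheme (as in \cite{BFMT1}, \cite{FrMo}), with the operator classes $\OpM^{-M}_s$ of Definition \ref{block norm}.

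\textbf{Initialization.} For $n=0$ the statement ${\bf (S1)}_0$ follows from Proposition \ref{prop normal form lower orders} and the bounds \eqref{stime pre rid}. Since $\tN_{-1}=1$, both estimates in \eqref{stime cal Rn rid} reduce to \eqref{stime pre rid}, using $\varepsilon_M\gamma^{-(M-1)}=\lambda^{M(\theta-1)+1}\gamma^{-(M-1)}$. The symmetry \eqref{reversibility reality auto} for $\tz_0$ follows from Lemma \ref{lemma real rev matrici} applied to the real, reversible diagonal operator $\bZ_0$.

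\textbf{Inductive step.} Assume ${\bf (S1)}_n,{\bf (S2)}_n$ and look for $\Phi_n={\rm exp}(\Psi_n)$. The Lie expansion gives
\[
(\Phi_n)_*\bL_n=\bD_n+\big(-\lambda\omega\cdot\partial_\vf\Psi_n+[\bD_n,\Psi_n]+\Pi_{\tN_n}\bE_n\big)+\Pi_{\tN_n}^\perp\bE_n+\text{(quadratic terms)}.
\]
We solve the homological equation
\[
-\lambda\omega\cdot\partial_\vf\Psi_n+[\bD_n,\Psi_n]+\Pi_{\tN_n}\bE_n=\widehat{\bE}_n(0)
\]
by setting
\[
\widehat\Psi_n(\ell)_j^{j'}=\frac{\widehat\bE_n(\ell)_j^{j'}}{\im\lambda\omega\cdot\ell+\mu_n(j')-\mu_n(j)}
\]
for $0<|\ell|\leq\tN_n$, $|j-j'|\leq\tN_n$. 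The momentum condition forces $\pi^\top(\ell)+j-j'=0$, so $\ell=0$ gives $j=j'$, and $\widehat\bE_n(0)$ is diagonal by Lemma \ref{lem:mom_pres}. On $\Omega^\gamma_{n+1}$ the divisors are bounded below by $\lambda\gamma\langle\ell\rangle^{-\tau}|j'|^{-\tau}$. The loss $|j'|^\tau$ is absorbed by the smoothing $|j'|^{-M}$ of $\bE_n$ because $M\geq 2\tau+1$. The loss $\langle\ell\rangle^\tau$, together with the Lipschitz loss $\langle\ell\rangle^{2\tau+1}$, costs $\tN_n^{2\tau+1}$. The Lipschitz variation of the divisor includes $\lambda|\ell|$ as well as $|\mu_n|^{\rm lip}\lesssim\lambda^\theta\gamma^{-1}$, both controlled. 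This yields
\[
|\Psi_n|_{-1,s}\lesssim\lambda^{-1}\gamma^{-1}\tN_n^{2\tau+1}\,|\bE_n|_{-M,s}\lesssim\varepsilon^M\gamma^{-M}\tN_n^{2\tau+1}\tN_{n-1}^{-\ta},
\]
using $\lambda^{-1}\varepsilon_M\gamma^{-(M-1)}=\varepsilon^M\gamma^{-M}$; the high-norm estimate is analogous. Reality, reversibility and momentum preservation of $\Psi_n$ are inherited from $\bE_n$ via Lemma \ref{lemma real rev matrici}, and are preserved under ${\rm exp}$ by Lemma \ref{lem:mom_prop}.

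\textbf{New normal form and eigenvalues.} Set $\bZ_{n+1}=\bZ_n+\widehat\bE_n(0)$. Lemma \ref{proprieta standard norma decay}-$(iv)$ gives
\[
|\tz_{n+1}(j)-\tz_n(j)|^{\Lip(\gamma)}\lesssim|j|^{-M}|\bE_n|_{-M,s_0}.
\]
Summing the telescoping series $\sum_n\tN_{n-1}^{-\ta}$ gives \eqref{stime qn}. Purely imaginary values and the symmetries follow from reality and reversibility. The extension ${\bf (S2)}_{n+1}$ is obtained by Kirszbraun's theorem applied to $\widehat\bE_n(0)_j^j$ (with $\gamma$-weighted Lipschitz norm), which preserves the bound \eqref{lambdaestesi}.

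\textbf{New remainder and main obstacle.} The remainder is
\[
\bE_{n+1}=\Pi_{\tN_n}^\perp\bE_n+\text{(quadratic terms)},
\]
where the quadratic terms are integrals of conjugated commutators $[\bE_n,\Psi_n]$ and $[\widehat\bE_n(0)-\Pi_{\tN_n}\bE_n,\Psi_n]$, handled as in \eqref{seconda forma cal Q 1 (1) m}. The main obstacle is controlling this remainder, since these commutators involve $\Psi_n\in\OpM^{-1}$ and must stay in $\OpM^{-M}$. Composition with a $\OpM^{-M}$ factor keeps the order $-M$ by Lemma \ref{proprieta standard norma decay}-$(i)$, with the loss $s\to s+M$ covered by $\tb$. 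The terms are estimated as follows.
\begin{itemize}
\item By Lemma \ref{lemma proiettori decadimento}, $|\Pi_{\tN_n}^\perp\bE_n|_{-M,s}\leq\tN_n^{-\tb}|\bE_n|_{-M,s+\tb}$.
\item The quadratic part is bounded by $|\Psi_n|_{-1,s}|\bE_n|_{-M,s}$.
\end{itemize}
Closing the induction then amounts to checking that
\[
\tN_n^{-\tb}\tN_{n-1}+\varepsilon^M\gamma^{-M}\tN_n^{2\tau+1}\tN_{n-1}^{-\ta}\ll\tN_n^{-\ta},
\]
together with the corresponding high-norm bound. This holds because of the choices $\chi=3/2$, $\ta=3(2\tau+M+1)+1$, $\tb=\ta+1$ and the smallness condition \eqref{KAM smallness condition}, possibly after taking $\tN_0$ large.
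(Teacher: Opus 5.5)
Your scheme is the same as the paper's: initialization from Proposition~\ref{prop normal form lower orders}, the same homological equation and the same $\Psi_n$, the update $\bZ_{n+1}=\bZ_n+\widehat\bE_n(0)$, and Kirszbraun. However, the step you single out as the main obstacle is not handled correctly.

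\textbf{The derivative loss.} In Lemma~\ref{proprieta standard norma decay}-$(i)$ the loss of regularity falls on the \emph{right} factor and equals the modulus of the order of the \emph{left} factor. So in $\bE_n\Psi_n$, and in $(\cdots)\,{\rm exp}(\tau\Psi_n)$ with $(\cdots)\in\OpM^{-M}$, you need $\Psi_n$, respectively ${\rm exp}(\tau\Psi_n)$, in the $s+M$ norm. The quadratic part is therefore not controlled by $|\Psi_n|_{-1,s}|\bE_n|_{-M,s}$.

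This loss also cannot be ``covered by $\tb$''. At that level the only available bound is $|\Psi_n|_{-1,s+\tb}\lesssim\varepsilon^M\gamma^{-M}\tN_n^{2\tau+1}\tN_{n-1}$. Inserting it into the term $|\bE_n|_{-M,s_0}|\Psi_n|_{-1,s+M}$ gives a ratio to the target $\varepsilon_M\gamma^{-(M-1)}\tN_n^{-\ta}$ of $\varepsilon^M\gamma^{-M}\tN_{n-1}^{\ta/2+\frac32(2\tau+1)+1}$, which diverges.

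The missing ingredient is the ultraviolet cut-off of $\Psi_n$. Since $\widehat\Psi_n(\ell)_j^{j'}\neq0$ only for $|\ell|\le\tN_n$ and $|j-j'|\lesssim\tN_n$, each derivative costs a power of $\tN_n$. This gives $|\Psi_n|_{-1,s+M}\lesssim\tN_n^{2\tau+1+M}\lambda^{-1}\gamma^{-1}|\bE_n|_{-M,s}$, the case $\eta=M$ of \eqref{stime eq omologica}. By \eqref{KAM smallness condition} and the choice of $\ta$ this quantity is $\le\delta$, so $|{\rm exp}(\tau\Psi_n)|_{0,s+M}\lesssim1$. The quadratic term is then bounded by $\tN_n^{2\tau+1+M}\lambda^{-1}\gamma^{-1}(|\bE_n|_{-M,s})^2$, as in \eqref{thuram}.

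\textbf{The closing inequality.} Because of this, your closing inequality is wrong in two ways. It is missing the factor $\tN_n^{M}$. It also drops the factor $\tN_{n-1}^{-\ta}$ that comes from the second copy of $|\bE_n|_{-M,s}$. As written, $\varepsilon^M\gamma^{-M}\tN_n^{2\tau+1}\tN_{n-1}^{-\ta}\ll\tN_n^{-\ta}$ fails for large $n$ whatever the parameters are. The inequality you actually need to verify is
\[
\tN_n^{-\tb}\tN_{n-1}+\varepsilon^M\gamma^{-M}\tN_n^{2\tau+1+M}\tN_{n-1}^{-2\ta}\ll\tN_n^{-\ta}\,.
\]
For $n\ge1$, using $\tN_n=\tN_{n-1}^{3/2}$, the two ratios are $\tN_{n-1}^{-1/2}$ and $\varepsilon^M\gamma^{-M}\tN_{n-1}^{-1/2}$. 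This is exactly why $\tb=\ta+1$ and $\ta=3(2\tau+M+1)+1$: the $M$ in $\ta$ is there to pay for the derivative loss you tried to absorb via $\tb$.

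For $n=0$ (where $\tN_{-1}=1$) you need $\tN_0$ large and $\varepsilon^M\gamma^{-M}\tN_0^{2\tau+1+M+\ta}$ small. The latter holds for $\lambda$ large, since $\tN_0$ does not depend on $\lambda$.

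The high-norm bound closes in the same way. It uses $|\Psi_n|_{-1,s+\tb+M}\lesssim\tN_n^{2\tau+1+M}\lambda^{-1}\gamma^{-1}|\bE_n|_{-M,s+\tb}$ together with $C(S)\tN_{n-1}\le\tN_n$ for $\tN_0$ large.
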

 
 \begin{proof}
 	{\sc Proof of ${\bf (S1)}_0-{\bf (S2)}_0$.} 
 	The claimed properties follow directly 
 	from Proposition \ref{prop normal form lower orders}, recalling \eqref{bL0_inizioKAM}, \eqref{stime pre rid}, \eqref{vare.vare} 
 	and the definition of {$\Omega_{0}^\gamma := \tD\tC(2\gamma, \tau)$}. 
 	
 	\smallskip
 	
 	\noindent
	By induction, we assume the the claimed properties ${\bf (S1)}_n$-${\bf (S4)}_n$ hold for some $n \geq 0$ and we prove them at the step $n + 1$. 
	
	\noindent
 	{\sc Proof of ${\bf (S1)}_{n+1}$.} 
 	Let $\Phi_n = {\rm exp}( \Psi_n)$ where $\Psi_n$ is an operator to be determined. 
% which has to be determined. 
 	By the Lie expansion, we compute
 		\begin{align}
 			\bL_{n + 1} & = (\Phi_n)_*\bL_{n}  =  \bD_n - \lambda\, \omega \cdot \partial_\vphi \Psi_n 
			+ [\bD_n , \Psi_n] + \Pi_{\tN_n} \bE_n + \bE_{n + 1} \,,  
			\\
\bE_{n + 1} & := \Pi_{\tN_n}^\bot \bE_n   
+ \int_0^1 (1 - \tau) {\rm exp}(- \tau \Psi_n) \,[- \lambda \,\omega \cdot \partial_\vphi \Psi_n 
+ [\bD_n , \Psi_n] , \Psi_n] \, {\rm exp}(\tau \Psi_n) \wrt \tau \label{primo coniugio Ln Psin}
\\
 & \quad + \int_0^1 {\rm exp}(- \tau \Psi_n) [\bE_n , \Psi_n] {\rm exp}(\tau \Psi_n) \wrt \tau \,,  
 		\end{align}
 	where $\bD_{n}:=  \beta \,\tL + \bZ_{n}$ and the projectors 
	$\Pi_{N}$, $\Pi_{N}^\bot$ are defined in  \eqref{def proiettore operatori matrici}. 
 	Our purpose is to find a map $\Psi_n$ solving the {\it homological equation} 
 	\begin{equation}\label{equazione omologica KAM}
 - \lambda\,\omega \cdot \partial_\vphi \Psi_n + [\bD_{n} , \Psi_n] + \Pi_{\tN_{n}} \bE_{n} = \widehat{\bE}_n(0) \,,
 	\end{equation}
 	where $\widehat{\bE}_n(0) := \frac{1}{(2 \pi)^\nu} \int_{\T^\nu} \bE_n(\vphi) \wrt \vphi$ is a diagonal operator by Lemma \ref{lem:mom_pres}, since $\bE_{n}(\vf)$ is a momentum preserving operator by induction assumption.
 	 % is the diagonal operator as per Definition \ref{def block-diagonal op}.
 	%$3 \times 3$, $\vphi$-independent, block-diagonal operator defined by 
 	%\begin{equation}\label{def cal Zn}
 	%{\mathcal D}_{{\mathcal R}_n} := {\rm diag}_{j \in \Z^2 \setminus \{ 0 \}} \widehat{\mathcal R}_n(0)_j^j\,. 
 	%\end{equation}
% 	Note that since $\bE_n$ is a momentum preserving operator, one has that 
%	$$
%	\widehat{\bE}_n(\ell)_j^{j'} \neq 0 \Longrightarrow \pi^T(\ell) + j - j' = 0
%	$$ and hence in particulat for $\ell = 0$, 
%	$$
%	\widehat{\bE}_n(0)_j^{j'} \neq 0 \Longrightarrow j = j'\,,
%	$$ 
%	namely $\widehat{\bE}_n(0)$ is a diagonal operator. 
	By  \eqref{matrix representation 1} and \eqref{def.calLn calDn calQn}, 
	the homological equation \eqref{equazione omologica KAM} is equivalent to 
	\begin{align}
 	&	\big( - \im \,\lambda\, \,\omega \cdot \ell + \mu_{n}(j) - \mu_{n}(j') \big) \widehat \Psi_{n}(\ell)_j^{j'} 
	+ \widehat{\bE_{n}}(\ell)_j^{j'} = 0\,, \label{eq omologica matrici}
	\\
	&   \ell \in \Z^\nu\setminus\{0\}\,, \ \   |\ell| \leq \tN_n\,,\, \ \ j, j' \in \Z^2 \setminus \{ 0 \}\,, \ \ \pi^\top(\ell) + j - j' = 0\,. 
		\end{align}
 	 Therefore, we define the linear operator $\Psi_{n}$ by
 	\begin{equation}\label{def Psi eq omo KAM}
 		%\begin{footnotesize}
 \widehat\Psi_{n} (\ell)_j^{j'} := \begin{cases}
 \dfrac{\widehat{\bE_{n}}(\ell)_j^{j'} }{ \im \,\lambda\, \omega \cdot \ell + \mu_{n}(j') - \mu_{n}(j)  }, 
 & \begin{matrix}
 \ell \in \Z^{\nu}\setminus\{0\} \,, \quad j,j'\in \Z^2\setminus\{0\}\,, \\  |\ell|  \leq \tN_{n}\,, 
 \quad \pi^\top(\ell) + j-j'=0\,,
 \end{matrix} \\
 	0 & \text{otherwise}\,. 
 			\end{cases}
 		%\end{footnotesize}
 	\end{equation}
 	which  is a solution of \eqref{equazione omologica KAM}-\eqref{eq omologica matrici}. 
 	
 	\begin{lem}\label{Lemma eq omologica riducibilita KAM}
 The operator $\Psi_n$ in \eqref{def Psi eq omo KAM}, 
 defined for any $\omega \in \Omega_{n + 1}^\gamma$, satisfies, 
 for any $s_0\leq s\leq S-\Sigma(\tb)$,
 \begin{align}
 |\Psi_n|_{- 1, s}^{{\rm Lip}(\gamma)}
 &\lesssim 
 \tN_{n}^{2 \tau + 1} \lambda^{- 1} \gamma^{- 1} |\bE_{n}|_{- M, s}^{{\rm Lip}(\gamma)}\,, \label{stime eq omologica}
 \\
|\Psi_n|_{- 1, s + \eta}^{{\rm Lip}(\gamma)} 
&\lesssim
\tN_{n}^{2 \tau + 1 + \eta} \lambda^{- 1}\gamma^{- 1} |\bE_{n}|_{- M, s}^{\Lip(\gamma)}\,, 
\qquad \forall \, \eta > 0 \,.
 \end{align}
Moreover, $\Psi_n$ is real, reversibility preserving and momentum preserving. 
\end{lem}
 	
 	\begin{proof}
 		To simplify notations, along this proof 
 		we drop the index $n$.
		
		\medskip
		
\noindent
{\bf Proof of \eqref{stime eq omologica}.}
 %and we write $+$ instead of $n + 1$ when the notation is clear. 
 Let $\wh\Psi(\ell)_{j}^{j'}=\wh\Psi(\ell;\omega)_{j}^{j'}$ as in 
 \eqref{def Psi eq omo KAM}, with $ \ell \in \Z^\nu$, $ j, j' \in \Z^2 \setminus \{ 0 \}$, 
 with $0 < |\ell| \leq \tN$ and $ \pi^\top(\ell) + j - j' = 0$. 
 For any $\omega \in \Omega_{n + 1}^\gamma$ (see \eqref{insiemi di cantor rid}), 
 we immediately get the estimate
 \begin{equation}\label{stima eq omo KAM 1}
 |\widehat\Psi (\ell;\omega)_j^{j'}| 
 \lesssim
 \lambda^{- 1}\gamma^{- 1} \langle \ell \rangle^\tau | j' |^\tau |\wh\bE(\ell;\omega)_j^{j'}| 
 \lesssim 
 \lambda^{- 1} \gamma^{- 1}  \tN^\tau  | j' |^\tau |\wh\bE (\ell;\omega)_j^{j'}| \,.
\end{equation}
 We define 
 $\delta_{\ell j j'}(\omega) :=\im\,\lambda\, \omega \cdot \ell 
 +\mu(j';\omega) - \mu(j;\omega)$. 
 Let $\omega_{1}, \omega_{2}  \in \Omega_{n + 1}^\gamma$. 
 By \eqref{def.calLn calDn calQn}, \eqref{insiemi di cantor rid}, \eqref{stime qn},  we have  
\begin{align}
\big| \big( \mu(j;\omega_1) - \mu(j';\omega_1) \big) 
&- \big( \mu(j;\omega_2) - \mu(j';\omega_2)\big) \big|  
\\
&\leq \big|\tz(j; \omega_1) - \tz(j; \omega_2)\big| + \big|\tz(j'; \omega_1) - \tz(j'; \omega_2)\big| 
\\
& \lesssim \lambda^{\theta} \gamma^{- 1} \big( |j|^{-1} + |j'|^{-1} \big) |\omega_1 - \omega_2| 
\lesssim  \lambda^{\theta} \gamma^{- 1}  |\omega_1 - \omega_2| \,,
\end{align}
and therefore, using that $\lambda^{\theta-1}\gamma^{-1}\ll 1 $ 
by \eqref{piccolo.ansatx},
\begin{align}
|\delta_{\ell j j'}(\omega_1) - \delta_{\ell j j'}(\omega_2)| 
& \lesssim 
(\lambda\, |\ell|+\lambda^{\theta}\gamma^{-1}) |\omega_1 - \omega_2|  
\lesssim 
\lambda \, \langle \ell \rangle |\omega_1 - \omega_2|\,.\label{calcetto1}
\end{align} 
Then, estimate \eqref{calcetto1}, together with the fact that 
$\omega_1, \omega_2 \in \Omega_{n + 1}^\gamma$, 
implies that
 \begin{align}
 \Big| \frac{1}{ \delta_{\ell j j'}(\omega_{1})} - \frac{1}{\delta_{\ell j j'}(\omega_{2})} \Big| \label{calcetto2}
 & \leq 
 \dfrac{|\delta_{\ell j j'}(\omega_{1}) - \delta_{\ell j j'}(\omega_{2})|}{|\delta_{\ell j j'}(\omega_{1})| 
 |\delta_{\ell j j'}(\omega_{2})|} 
% \\
% & \lesssim 
% \lambda (\lambda\gamma)^{-2} \langle \ell \rangle^{2 \tau + 1}  |j'|^{2 \tau} |\omega_{1} - \omega_{2} | 
 \\
&  \lesssim 
\lambda^{- 1} \gamma^{- 2} \langle \ell \rangle^{2 \tau + 1}  |j'|^{2 \tau} |\omega_{1} - \omega_{2} |\,. 
 \end{align}
 Therefore, by \eqref{def Psi eq omo KAM} and \eqref{calcetto2},  
 for any $\omega_{1},\omega_{2}\in \Omega_{n + 1}^\gamma$ we have that
 \begin{align}
 \big|\widehat\Psi (\ell;\omega_{1})_j^{j'} - \widehat\Psi (\ell;\omega_{2})_j^{j'}\big| 
 & \lesssim  
 \langle \ell \rangle^\tau  |j'|^\tau \lambda^{- 1} \gamma^{- 1}
  \big|\widehat\bE(\ell;\omega_{1})_j^{j'} - \widehat\bE(\ell;\omega_{2})_j^{j'}\big| 
  \\
 & \ \ 
 + \langle \ell \rangle^{2 \tau + 1}  |j'|^{2 \tau} \lambda^{- 1} \gamma^{-2}  
 \big|\widehat{\bE}(\ell;\omega_{2})_j^{j'}\big| |\omega_{1}-\omega_{2} | 
\label{Psi lam 12 KAM} \\
& \lesssim 
\tN^\tau  |j'|^\tau \lambda^{- 1} \gamma^{- 1} 
\big|\widehat\bE(\ell;\omega_{1})_j^{j'} - \widehat\bE(\ell;\omega_{2})_j^{j'}\big| 
\\
& \ \ 
+ \tN^{2 \tau + 1}  |j'|^{2 \tau} \lambda^{- 1} \gamma^{-2}  
\big|\widehat{\bE}(\ell;\omega_{2})_j^{j'}\big| |\omega_{1}-\omega_{2} |\,. 
\end{align}
% 		Using that $  \langle \ell, j - j' \rangle \leq \tN $  and the elementary chain of inequalities $ |j| \lesssim |j - j'| + |j'| \lesssim \tN + |j'| \lesssim \tN |j'|$,
% 		%\begin{equation}\label{dis elementari eq omo KAM}
% 		%\begin{aligned}
% 		%& \langle \ell, j - j' \rangle \leq N \quad \text{and the elementary chain of inequalities} \\
% 		%& |j| \lesssim |j - j'| + |j'| \lesssim N + |j'| \lesssim N |j'|, 
% 		%\end{aligned}
% 		%\end{equation}
% 		the estimates \eqref{stima eq omo KAM 1}, \eqref{Psi lam 12 KAM} take the form 
% 		\begin{equation*}
% 			\begin{aligned}
% 				|\widehat\Psi (\ell)_j^{j'}| & \lesssim  \tN^{2 \tau}\gamma^{- 1} |j'|^{2 \tau}  |{\bE}(\ell)_j^{j'}|\,, \\
% 				|\widehat\Psi (\ell)_j^{j'}(\omega_{1}) - \widehat\Psi (\ell)_j^{j'}(\omega_{2})| & \lesssim \tN^{2 \tau} \gamma^{- 1}  |j'|^{2 \tau} |\widehat{\bE}(\ell)_j^{j'}(\omega_{1}) - \widehat{\bE}(\ell)_j^{j'}(\omega_{2})| \\
% 				& \qquad + \tN^{4 \tau + 2}\gamma^{- 2}  |j'|^{ 4 \tau} |\widehat{\bE}(\ell)_j^{j'}(\omega_{2})| |\omega_{1} - \omega_{2} |\,. 
% 			\end{aligned}
% 		\end{equation*}
 Since $M  \geq 2 \tau + 1$ by \eqref{definizione.param.KAM}, recalling 
 Definition \ref{block norm} and 
 collecting the estimates \eqref{stima eq omo KAM 1}, \eqref{Psi lam 12 KAM}, 
 we obtain the bounds, for any $s\in [s_0,S-\Sigma(\tb)]$,
 \begin{align}
 |\Psi|_{- 1, s}^{\rm sup} 
 &\lesssim 
 \tN^{ \tau} \lambda^{- 1}\gamma^{- 1} |{\bE}|_{- M, s}^{\rm sup}\,, 
 \\
|\Psi|_{- 1, s}^{\rm lip} 
&\lesssim  
\tN^{2 \tau + 1} \lambda^{- 1}\gamma^{- 2}  |{\bE}|_{- M, s}^{\rm sup} 
+ \tN^{\tau} \lambda^{- 1} \gamma^{- 1}  |{\bE}|_{- M, s}^{\rm lip}\,.
 \end{align}		
 Similarly, using also that 
 $|\ell| \leq \tN$ and $\pi^\top(\ell) + j - j' = 0$ 
 imply that $|j - j'| \lesssim |\ell| \lesssim \tN$, 
 by analogous arguments we obtain that, for any $\eta > 0$,
 \begin{align}
 |\Psi|_{- 1, s + \eta}^{\rm sup} 
&\lesssim 
\tN^{\tau + \eta} \gamma^{- 1} \lambda^{- 1} |{\bE}|_{- M, s}^{\rm sup}\,, 
\\
 |\Psi|_{- 1, s+ \eta}^{\rm lip} 
 &\lesssim  
 \tN^{2 \tau + \eta +1} \gamma^{- 2} \lambda^{- 1}  |{\bE}|_{- M, s}^{\rm sup} 
 + \tN^{\tau + \eta} \lambda^{- 1} \gamma^{- 2} |{\bE}|_{- M, s}^{\rm sup}\,. 
 \end{align}
 Hence, we conclude the claimed bounds in \eqref{stime eq omologica}. 
Finally, since ${\bE}$ is real, reversible and momentum preserving, 
by \eqref{def Psi eq omo KAM}, Lemma \ref{lemma real rev matrici} 
and Lemma \ref{lem:mom_pres} we have that $\Psi$ is real, reversibility preserving 
and momentum preserving.  This concludes the proof.
		% Lemma \ref{lemma real rev matrici} and the properties \eqref{reversibility reality auto} for $\mu(j)$, we deduce that $\Psi$ is real and reversibility preserving. 
 \end{proof}

 \noindent	
By Lemma \ref{Lemma eq omologica riducibilita KAM}, 
the induction assumption on the estimates \eqref{stime cal Rn rid} 
and by \eqref{vare.vare}, we obtain, for any $s_0\leq s\leq S-\Sigma(\tb)$,
 		\begin{align}
 			|\Psi_n|_{- 1, s}^{{\rm Lip}(\gamma)} 
 			& \lesssim  \tN_n^{2 \tau + 1} \lambda^{- 1}\gamma^{- 1} |{\bE}_n |_{- M, s}^{{\rm Lip}(\gamma)}
 			\lesssim_S \tN_n^{2 \tau + 1} \tN_{n - 1}^{- \ta} 
 			\varepsilon^M \gamma^{- M}  \,,  \label{stime Psin neumann} \\
 			|\Psi_n|_{- 1, s + \tb}^{{\rm Lip}(\gamma)} & \lesssim \tN_n^{2 \tau + 1} \lambda^{- 1} \gamma^{- 1} |{\bE}_n |_{-M, s + \tb}^{{\rm Lip}(\gamma)} \lesssim_{S } \tN_n^{2 \tau + 1} \tN_{n - 1} \varepsilon^{M}\gamma^{- M}\,,
 		\end{align}
 	which are the estimates \eqref{stime Psi n rid} at the step $n + 1$. 
 	Moreover, setting $\eta=M$ in \eqref{stime eq omologica}, by the same arguments we also have
 		\begin{align}
 			|\Psi_n|_{- 1, s + M}^{{\rm Lip}(\gamma)} \,,\, 
 			& \lesssim  \tN_n^{2 \tau + 1 + M} \gamma^{- 1} \lambda^{- 1} |{\bE}_n |_{- M, s}^{{\rm Lip}(\gamma)} \lesssim_S \tN_n^{2 \tau + 1 + M} \tN_{n - 1}^{- \ta} 
 			\varepsilon^M \gamma^{- M} \,, \label{stime.Psin.neumann2} \\
 			|\Psi_n|_{- 1, s + \tb + M}^{{\rm Lip}(\gamma)} & \lesssim \tN_n^{2 \tau + 1 + M} \lambda^{- 1}\gamma^{- 1} |{\bE}_n |_{-M, s + \tb}^{{\rm Lip}(\gamma)}  \lesssim_{S} \tN_n^{2 \tau + M + 1} \tN_{n - 1} \varepsilon^M \gamma^{- M} \,.
 		\end{align}
 	In particular, by \eqref{definizione.param.KAM}, \eqref{ansatz} 
	and by the smallness condition 
	\eqref{KAM smallness condition}, we deduce, for $\tN_0>0$ large enough,
 \begin{equation} \label{2103.1}
\begin{aligned}
 |\Psi_n|_{- 1, s + M}^{{\rm Lip}(\gamma)}  
 & \lesssim_S
 \tN_n^{2 \tau +1 + M} \tN_{n - 1}^{- \ta} \varepsilon^M \gamma^{- M}   \leq \delta < 1\,.
\end{aligned}
\end{equation}
Therefore, by Lemma \ref{proprieta standard norma decay}-$(iii)$ 
and estimates \eqref{stime.Psin.neumann2}, \eqref{2103.1}, 
we have the estimates, for any $s_0 \leq s \leq S - \Sigma(\tb)$,
\begin{align}
 \sup_{\tau \in [- 1, 1]}|{\rm exp}(\tau \Psi_n)|_{0, s  + M}^{\Lip(\gamma)} 
&\lesssim_S 1 + |\Psi_n|_{- 1, s + M}^{\Lip(\gamma)} \lesssim_S 1\,,  \label{stime Phi n inv - Id} 
\\
 \sup_{\tau \in [- 1, 1]}|{\rm exp}(\tau \Psi_n)|_{0, s  + \tb +  M}^{\Lip(\gamma)} 
&\lesssim_S 1 + |\Psi_n|_{- 1, s + \tb + M}^{\Lip(\gamma)}\,. 
\end{align}
 	Then, by recalling \eqref{primo coniugio Ln Psin} and by using that $\Psi_n$ 
	solves the equation \eqref{equazione omologica KAM}, we conclude that
 				\begin{align}
 				{\bL}_{n + 1} & \,= {\bD}_{n + 1} + {\bE}_{n + 1}\,,  \\
 				{\bD}_{n + 1} & :=  \beta \tL + {\bZ}_{n + 1}\,, \qquad {\bZ}_{n + 1}  : = {\bZ}_n + \widehat{\bE}_n(0)\,,   \label{2003.1} \\
 				{\bE}_{n + 1}& \,=\Pi_{\tN_n}^\bot \bE_n   + \int_0^1 (1 - \tau) {\rm exp}(- \tau \Psi_n) \,[\widehat{\bE}_n(0) - \Pi_{\tN_n} \bE_n , \Psi_n] \, {\rm exp}(\tau \Psi_n)\wrt  \tau  \\
 				& \quad + \int_0^1 {\rm exp}(- \tau \Psi_n) [\bE_n , \Psi_n] {\rm exp}(\tau \Psi_n) \wrt \tau\,. 
 			\end{align}
 	All the operators in \eqref{2003.1}
 	%${\mathcal L}_{n+1}$, $\mN_{n+1}$, $\mQ_{n+1}$, $\mR_{n+1}$ 
 	are defined for any $\omega \in \Omega_{n + 1}^\gamma$. 
 	Moreover, by \eqref{primo coniugio Ln Psin}, \eqref{equazione omologica KAM},
 	for $\omega \in \Omega^\gamma_{n+1}$ one has the %conjugation %identity 
 	identity
 	$\bL_{n+1} = (\Phi_n)_* \bL_n$, 
 	which is \eqref{coniugazione rid} at the step $n+1$. 
% 	By Definition \ref{def block-diagonal op} applied to ${\mathcal D}_{{\bE}_n} = \wh\bE_{n}(0)$ and by \eqref{2003.1}, we have that
 	Recalling that $\wh\bE_{n}(0)$ is a diagonal operator and by \eqref{2003.1}, we have that
% 	\begin{equation}\label{frittata di maccheroni}
% 		\begin{aligned}
% 			{\bZ}_{n + 1} & := {\bZ}_n + \widehat{\bf E}_n(0) = {\rm diag}_{j \in \Z^2 \setminus \{ 0 \}} \tz_{n + 1}(j)\,, \\
%	\tz_{n + 1}(j) & := \tz_n(j) + \widehat{\bE}_n(0)_j^j\,, \\
% 			{\bD}_{n + 1} &:= -\beta\,\tL+ {\bZ}_{n + 1} = {\rm diag}_{j \in \Z^2 \setminus \{ 0 \}} \mu_{n + 1}(j)\,, \\
% 			\mu_{n + 1}(j) & := - \im \, \beta\, \tL(j)+ \tz_{n + 1}(j)\,.
% 		\end{aligned}
% 	\end{equation}
	\begin{align}
		{\bZ}_{n + 1} & = {\rm diag}_{j \in \Z^2 \setminus \{ 0 \}} \tz_{n + 1}(j)\,, \quad	\tz_{n + 1}(j)  := \tz_n(j) + \widehat{\bE}_n(0)_j^j\,,  \label{frittata di maccheroni} \\
		{\bD}_{n + 1} &= {\rm diag}_{j \in \Z^2 \setminus \{ 0 \}} \mu_{n + 1}(j)\,, \quad	\mu_{n + 1}(j) :=  \im \, \beta\, \tL(j)+ \tz_{n + 1}(j)\,.
	\end{align}
 	By Lemma \ref{proprieta standard norma decay}-$(iv)$ one gets
 		\begin{align}
 			| \mu_{n + 1}(j) - \mu_n(j)|^{{\rm Lip}(\gamma)} & = 
			| \tz_{n + 1}(j) - \tz_n(j)|^{{\rm Lip}(\gamma)}   \label{frittata.1}
			\\
 			& \leq | \widehat{\bE}_n(0)_j^j |^{{\rm Lip}(\gamma)} 
			\lesssim |{\bE}_n|_{- M, s_0}^{{\rm Lip}(\gamma)} \langle j \rangle^{- M}\,.
 		\end{align}
 	Then, \eqref{frittata.1}, together with the estimate \eqref{stime cal Rn rid},
 	%, together with the ansatz \eqref{ansatz} (with ${\mathcal U} = {\mathcal U}(\mathtt b)$) imply 
 	implies  \eqref{cal Nn - N n - 1} at the step $n + 1$. 
 	The estimate \eqref{stime qn} at the step $n + 1$ follows, as usual, 
 	by a telescoping argument, 
 	using the fact that $\sum_{n \geq 0} \tN_{n - 1}^{- \ta}$ is convergent 
 	since $\ta > 0$ (see \eqref{definizione.param.KAM}). 
 	Now we prove the estimates \eqref{stime cal Rn rid} at the step $n + 1$. 
 	By \eqref{2003.1}, estimates 
 	\eqref{stime Psin neumann}, 
 	\eqref{2103.1},
 	\eqref{stime Phi n inv - Id}, the induction estimates \eqref{stime cal Rn rid},
 	Lemma \ref{proprieta standard norma decay}-$(i)$ and 
 	Lemma \ref{lemma proiettori decadimento}, 
 	we get, for any $s_0\leq s \leq S-\Sigma(\tb)$,
 		%	\label{stime R n + 1 R n}
% 		\begin{footnotesize}
 			\begin{align}
 				& |{\bE}_{n + 1}|_{- M, s}^{{\rm Lip}(\gamma)} 
 				\lesssim_{S} 
				\tN_n^{- \tb} |{\bE}_n|_{- M, s + \tb}^{{\rm Lip}(\gamma)} 
		+ \tN_n^{2 \tau + 1 + M}\lambda^{- 1} \gamma^{- 1}  \Big(|{\bE}_n|_{- M, s}^{{\rm Lip}(\gamma)} \Big)^2  \,, \label{thuram}
		\\
 		& |{\bE}_{n + 1}|_{- M, s + \tb}^{{\rm Lip}(\gamma)} 
 	\lesssim_{S} |{\bE}_n|_{- M, s +\tb}^{{\rm Lip}(\gamma)}  
	+ \tN_{n}^{2 \tau + 1 + M} \lambda^{- 1}\gamma^{-1} |{\bE}_n|_{- M, s}^{{\rm Lip}(\gamma)} 
	|{\bE}_n|_{- M, s+\tb}^{{\rm Lip}(\gamma)}  \,. 
 				%\big( 1 + N_n^{2 \tau_0} \gamma^{- 1} 
 				%|{\mathcal R}_n \langle D \rangle^M|_{s_0}^{{\rm Lip}(\gamma)} \big)\,. 
 			\end{align}
% 		\end{footnotesize}
 	The estimates \eqref{stime cal Rn rid} at the step $n + 1$ follow by \eqref{thuram}, the induction assumption on the estimate \eqref{stime cal Rn rid}, \eqref{vare.vare} and \eqref{definizione.param.KAM}, \eqref{KAM smallness condition}, taking $\tN_0, \lambda  \gg 0$ large enough. 
%	\begin{equation}\label{milan merda 0}
%	\begin{aligned}
%	|{\bE}_{n + 1}|_{- M, s +\tb}^{{\rm Lip}(\gamma)} & \leq C(s) C_*(s)\tN_{n - 1} \Big(  1  + C_*(s) \tN_n^{2 \tau + M + 1} \tN_{n - 1}^{- \ta}\lambda^{- 1} \gamma^{- 1} \Big) \| \bar w \|_{s + \Sigma(\tb)}^{\Lip(\gamma)} \\
%	& \leq C_*(s) \tN_n \| \bar w \|_{s + \Sigma(\tb)}^{\Lip(\gamma)}
%	\end{aligned}
%	\end{equation}
	  Finally,	since $\Psi_n, \Phi_n, \Phi_n^{- 1}$ are real and momentum preserving by Lemma \ref{equazione omologica KAM} and ${\bD}_n, {\bE}_n$ are real and momentum preserving by the induction assumption, then ${\bD}_{n + 1}$, ${\bE}_{n + 1}$ are real and momentum preserving operators, by \eqref{2003.1} and Lemmata \ref{lemma real rev matrici}, \ref{lem:mom_prop}, \ref{lem:mom_pres}. Moreover  since $\Psi_n, \Phi_n, \Phi_n^{- 1}$ are reversibility preserving by Lemma \ref{equazione omologica KAM} and ${\bD}_n, {\bE}_n$ are reversible by the induction assumption, then ${\bD}_{n + 1}$, ${\bE}_{n + 1}$ are reversible by \eqref{2003.1} and Lemma \ref{lemma real rev matrici} and hence \eqref{reversibility reality auto} is verified at the step $n + 1$.
 	\\[1mm]
 	\noindent
 	{\sc Proof of ${\bf (S2)}_{n + 1}$.} We now construct a Lipschitz extension for the eigenvalues $\mu_{n + 1}(j,\,\cdot\,) : \Omega_{n + 1}^\gamma \to  \,\im \R$. By the induction hypothesis, there exists a Lipschitz extension of $\mu_n(j;\omega)$,  denoted by $\widetilde \mu_{n}(j;\omega)$, to the whole set $\tD\tC(2\gamma, \tau)$ that satisfies ${\bf (S2)}_n$. By \eqref{frittata di maccheroni}, we have $\mu_{n + 1}(j) = \mu_n(j) + \tr_n(j)$, where $\tr_n(j, \omega)=\tr_n(j,\omega) := \widehat{\bE}_n(0;\omega)_j^j$ satisfies $|\tr_n(j)|^{{\rm Lip}(\gamma)} \lesssim_S  \tN_{n - 1}^{- \ta} \varepsilon_{M} \gamma^{-(M-1)} |j|^{- M}  $. 
	%By the reversibility and the reality of ${\bE}_n$, we have $\tr_n(j) = - \tr_n(- j)=\overline{\tr_n(-j)} $, implying that $\tr_n(j) \in  \im\, \R$. 
	Hence, by the Kirszbraun Theorem (see Lemma M.5 \cite{KukPo}) there exists a Lipschitz extension $\widetilde \tr_n(j,\,\cdot\,) : {\tD\tC(2\gamma, \tau) }\to \im \R$ of $\tr_n(j,\,\cdot\,) : \Omega_{n + 1}^\gamma \to \im \R$ satisfying $|\widetilde \tr_n(j)|^{{\rm Lip}(\gamma)} \lesssim |\tr_n(j)|^{{\rm Lip}(\gamma)} \lesssim_S \tN_{n - 1}^{- \ta} \varepsilon_{M} \gamma^{-(M-1)} |j|^{- M}$. The claimed statement then follows by defining $\widetilde \mu_{n + 1}(j) := \widetilde  \mu_n(j) + \widetilde \tr_n(j)$.

\noindent	
This concludes also the proof of Proposition \ref{prop riducibilita}.
 \end{proof}

 We now conclude by analyzing the convergence of the scheme.
 \begin{lem}\label{lemma blocchi finali}
 	For any $j \in \Z^2 \setminus \{ 0 \}$, the sequence 
 	$\{ \widetilde \mu_n(j) \}_{n\in\N}$, defined in \eqref{def.calLn calDn calQn},
 	converges to some limit
% 	\begin{equation*}\label{def cal N infty nel lemma}
% 		\mu_\infty(j) = -\im \, \beta \,\tL(j) + \tz_\infty(j)\,, \quad \mu_\infty(j)= \mu_\infty(j;\,\cdot\,):\tD\tC(2\gamma,\tau)\to \im\,\R\,,
% 	\end{equation*}
	\begin{equation}\label{def cal N infty nel lemma}
	\mu_\infty(j) = \im \, \beta \,\tL(j) + \tz_\infty(j)\,, \quad \mu_\infty(j)= \mu_\infty(j;\,\cdot\,):{\tD\tC(2\gamma,\tau)}\to \im \R \,,
\end{equation}
 	satisfying the following estimates % the following estimates hold: 
 		\begin{align}
 			& |  \mu_\infty(j) - \widetilde \mu_n(j) |^{{\rm Lip}(\gamma)} = | \tz_\infty(j) - \widetilde \tz_n(j) |^{{\rm Lip}(\gamma)} \lesssim  \tN_{n - 1}^{- \ta} \varepsilon_{M} \gamma^{-(M-1)} |j|^{- M}  \,, \label{stime forma normale limite} \\
 			& | \tz_\infty(j)|^{{\rm Lip}(\gamma)} \lesssim  \lambda^{\theta}  |j|^{- 1} \,. 
 		\end{align}
% 	Let $w_1(\omega)$, $w_2(\omega)$ satisfy the ansatz \eqref{ansatz}. Then, for any $j\in\Z^2\setminus\{0\}$, we have
% 	\begin{equation}\label{auto.infty.delta12}
% 		| \Delta_{12} \mu_{\infty} (j) | = | \Delta_{12} \tz_\infty(j) |   \lesssim \lambda^{\alpha-1} |j|^{-1} \| w_1 -w_2 \|_{s_0+\Sigma(\tb)}\,.
% 	\end{equation}
 	%In addition, if we assume $ w(\vf,x)=\odd(\vf,x)$, then $\mu_\infty(j;\,\cdot\,):{\tD\tC(2\gamma,\tau)\cap\Lambda_{o}}\to \im\,\R$ for any $j \in \Z^2 \setminus \{ 0 \}$.
 	%Moreover, we have  ${\mathcal U}_\infty(j), q_\infty(j) \in \ii \,\R$. 
 	%Moreover the $3 \times 3$ block diagonal operator ${\mathcal Q}_\infty := {\rm diag}_{j \in \Z^3 \setminus \{ 0 \}} ({\mathcal Q}_\infty)_j^j$ is real and reversible. 
 \end{lem}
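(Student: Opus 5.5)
The limit is obtained as a Cauchy limit in the Lipschitz seminorm $|\cdot|^{\Lip(\gamma)}$ on $\tD\tC(2\gamma,\tau)$, with the estimates following by telescoping. For fixed $j\in\Z^2\setminus\{0\}$ I consider the extended eigenvalues $\widetilde\mu_n(j)=\im\,\beta\,\tL(j)+\widetilde\tz_n(j)$ provided by ${\bf (S2)}_n$ of Proposition \ref{prop riducibilita}. By construction $\widetilde\mu_{n+1}(j)-\widetilde\mu_n(j)=\widetilde\tr_n(j)$, and \eqref{lambdaestesi} gives
\[
|\widetilde\mu_{n+1}(j)-\widetilde\mu_n(j)|^{\Lip(\gamma)}\lesssim_S \varepsilon_M\gamma^{-(M-1)}\tN_{n-1}^{-\ta}|j|^{-M}.
\]

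Since $\tN_n=\tN_0^{\chi^n}$ grows super-exponentially and $\ta>0$, the series $\sum_{n\ge0}\tN_{n-1}^{-\ta}$ converges. Moreover, its tails satisfy $\sum_{k\ge n}\tN_{k-1}^{-\ta}\lesssim\tN_{n-1}^{-\ta}$, because consecutive terms decay at least geometrically once $\tN_0$ is large. Hence $\{\widetilde\mu_n(j)\}_n$ is Cauchy in the Banach space of Lipschitz functions $\tD\tC(2\gamma,\tau)\to\im\R$ equipped with $|\cdot|^{\Lip(\gamma)}$. Its limit $\mu_\infty(j)$ is purely imaginary, since each $\widetilde\mu_n(j)$ takes values in $\im\R$ and $\im\R$ is closed. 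Setting $\tz_\infty(j):=\mu_\infty(j)-\im\,\beta\,\tL(j)$, which is the limit of $\widetilde\tz_n(j)$, the representation \eqref{def cal N infty nel lemma} holds. Summing the increments from $n$ to $\infty$ yields the first estimate in \eqref{stime forma normale limite}; the equality of the two quantities there holds because the term $\im\,\beta\,\tL(j)$ cancels.

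For the second estimate, I write $\tz_\infty(j)=\widetilde\tz_0(j)+\sum_{n\ge0}\big(\widetilde\tz_{n+1}(j)-\widetilde\tz_n(j)\big)$. Here $\widetilde\tz_0=\tz_0=z_M$ is already defined on $\tD\tC(2\gamma,\tau)$, so no extension is needed at step $0$, and \eqref{stime pre rid} gives $|\tz_0(j)|^{\Lip(\gamma)}\lesssim\lambda^\theta|j|^{-1}$. The sum is bounded by $\varepsilon_M\gamma^{-(M-1)}|j|^{-M}$, which is $\ll1\le\lambda^\theta$ by \eqref{piccolo.ansatx}. Since $M\ge1$ and $|j|\ge1$, we have $|j|^{-M}\le|j|^{-1}$, so the sum is $\lesssim\lambda^\theta|j|^{-1}$ and the bound follows.

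The only mildly delicate point is bookkeeping. The extensions at each step must be taken on the fixed set $\tD\tC(2\gamma,\tau)$, which ${\bf (S2)}$ already guarantees, and the implicit constants must be uniform in $n$. They are, because $C_*(S)$ in \eqref{stime cal Rn rid} does not depend on $n$ and the Kirszbraun extension preserves the Lipschitz norm up to a universal constant.
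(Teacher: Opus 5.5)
Your proposal is correct and follows the same route as the paper. You show that $\{\widetilde\mu_n(j)\}$ is Cauchy in $|\cdot|^{\Lip(\gamma)}$ on $\tD\tC(2\gamma,\tau)$ via \eqref{lambdaestesi}, obtain both estimates by telescoping, and get purely imaginary limits from the closedness of $\im\R$. You also spell out details the paper leaves implicit: the geometric tail bound $\sum_{k\ge n}\tN_{k-1}^{-\ta}\lesssim\tN_{n-1}^{-\ta}$, and the bound on $\tz_\infty(j)$ obtained from \eqref{stime pre rid} together with $\varepsilon_M\gamma^{-(M-1)}\ll 1\le\lambda^\theta$.
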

 
% \begin{proof}
% 	By Proposition \ref{prop riducibilita}, in particular by \eqref{reversibility reality auto} and \eqref{lambdaestesi}, we have that the sequence $\{\widetilde \mu_n(j;\omega)\}_{n\in\N}\subset \im\,\R$ is Cauchy on the closed set $\tD\tC(2\gamma,\tau)$,  therefore it is convergent for any $ \omega \in \tD\tC(2\gamma,\tau)$. The estimates in \eqref{stime forma normale limite} follow then by a telescoping argument with the estimate \eqref{lambdaestesi}. 
% \end{proof}
\begin{proof}
	By Proposition \ref{prop riducibilita} and by
%	\eqref{reversibility reality auto} and
	 \eqref{lambdaestesi}, we have that the sequence $\{\widetilde \mu_n(j;\omega)\}_{n\in\N}\subset \im \R$ is Cauchy on the closed set {$\tD\tC(2\gamma,\tau)$},  therefore it is convergent for any  {$\omega\in \tD\tC(2\gamma,\tau)$}. The estimates in \eqref{stime forma normale limite}
%	 , resp. the estimate in \eqref{auto.infty.delta12}, 
	 follow then by a telescoping argument with  \eqref{lambdaestesi}.
%	 , resp. with \eqref{r nu - 1 r nu i1 i2}
	 Finally, since $\{\widetilde \mu_n(j;\omega)\}_{n\in\N}\subset \im\R$ for any $j \in \Z^2 \setminus \{ 0 \}$, by \eqref{reversibility reality auto}, one has that  $ \mu_\infty(j;\omega) \in  \im\R$ for any $j \in \Z^2 \setminus \{ 0 \}$. 
\end{proof}
 We define the set ${\mathcal O}_\lambda$ of the non-resonance conditions for the final eigenvalues as 
 	\begin{align}
 		{\mathcal O}_\lambda %\equiv \Omega_\infty^\gamma(v) 
 		:= \Big\{ \omega& \in  {\tD\tC (2 \gamma,\tau)}%{\mathcal O}_\infty^\gamma 
 		 \, : \, |\im \, \lambda\, \omega \cdot \ell + \mu_\infty(j) -  \mu_\infty(j')| \geq \frac{2 \lambda \,\gamma}{\langle \ell \rangle^\tau | j' |^\tau } \,, \label{cantor finale ridu} \\
 		&  \quad \quad \ \ \forall \, \ell \in \Z^\nu \setminus \{ 0 \}\,, \ \ 	j,j' \in \Z^2 \setminus \{ 0 \}\,, \ \  \pi^\top(\ell) + j-j' =0\Big\}\,. 
 	\end{align}
 
 \begin{lem}\label{prima inclusione cantor}
 	We have ${\mathcal O}_\lambda \subseteq \cap_{n \geq 0} \, \Omega_n^\gamma$ and $|\Omega \setminus {\mathcal O}_\lambda| \to 0$ as $\lambda \to + \infty$. 
 \end{lem}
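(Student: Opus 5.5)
The statement has two parts: an inclusion ${\mathcal O}_\lambda \subseteq \cap_{n\ge0}\Omega_n^\gamma$ and a measure estimate.

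\emph{Inclusion.} I would prove ${\mathcal O}_\lambda \subseteq \Omega_n^\gamma$ by induction on $n$. The case $n=0$ holds by definition, since ${\mathcal O}_\lambda\subseteq \tD\tC(2\gamma,\tau)=\Omega_0^\gamma$. Assume ${\mathcal O}_\lambda\subseteq\Omega_n^\gamma$ and take $\omega\in{\mathcal O}_\lambda$. Then $\widetilde\mu_n(j;\omega)=\mu_n(j;\omega)$, because the extension agrees with $\mu_n$ on $\Omega_n^\gamma$. Fix $\ell\neq0$ with $|\ell|\le \tN_n$ and $\pi^\top(\ell)+j-j'=0$. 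By \eqref{stime forma normale limite},
\[
|\im\lambda\omega\cdot\ell+\mu_n(j)-\mu_n(j')| \ge \frac{2\lambda\gamma}{\langle\ell\rangle^\tau|j'|^\tau} - C\,\tN_{n-1}^{-\ta}\varepsilon_M\gamma^{-(M-1)}\big(|j|^{-M}+|j'|^{-M}\big).
\]
It remains to show the error is at most $\lambda\gamma\langle\ell\rangle^{-\tau}|j'|^{-\tau}$. Three facts give this:
\begin{itemize}
\item Momentum conservation gives $|j-j'|\lesssim|\ell|\le\tN_n$, so $|j'|\lesssim \tN_n|j|$. Since $M\ge 2\tau$, this yields $|j|^{-M}\lesssim \tN_n^{\tau}|j'|^{-\tau}$.
\item Also $\langle\ell\rangle^\tau\le\tN_n^\tau$.
\item The exponents satisfy $\tN_n^{2\tau}\tN_{n-1}^{-\ta}=\tN_{n-1}^{3\tau-\ta}\le1$, since $\chi=3/2$ and $\ta>3\tau$ by \eqref{definizione.param.KAM}.
\end{itemize}
So the error is at most $C\varepsilon_M\gamma^{-M}\lambda^{-1}\cdot\lambda\gamma\langle\ell\rangle^{-\tau}|j'|^{-\tau}$. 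We have $\varepsilon_M\gamma^{-M}\lambda^{-1}=\varepsilon^{M}\gamma^{-M}\le\delta$ by \eqref{KAM smallness condition}, which is small. Hence $\omega\in\Omega_{n+1}^\gamma$.

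\emph{Measure.} Write $\Omega\setminus{\mathcal O}_\lambda\subseteq(\Omega\setminus\Omega_\lambda)\cup(\Omega_\lambda\setminus\tD\tC(2\gamma,\tau))\cup\bigcup R_{\ell jj'}$, where $R_{\ell jj'}$ is the set of $\omega\in\tD\tC(2\gamma,\tau)$ violating \eqref{cantor finale ridu}. The first piece tends to zero by Theorem \ref{teo principale beta plane}. The second is $O(\gamma)=O(\lambda^{-\tc})$ by \eqref{stima diofantei}. For each resonant set, the function $\phi(\omega)=\lambda\omega\cdot\ell-\im(\mu_\infty(j)-\mu_\infty(j'))$ is real-valued. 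Its $\omega$-derivative in the direction $\ell/|\ell|$ is at least $\lambda|\ell|-C\lambda^\theta\gamma^{-1}\ge\lambda|\ell|/2$, using $|\tz_\infty|^{\rm lip}\lesssim\lambda^\theta\gamma^{-1}$ and \eqref{piccolo.ansatx}. A Fubini argument then gives $|R_{\ell jj'}|\lesssim\gamma\langle\ell\rangle^{-\tau-1}|j'|^{-\tau}$.

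\emph{Main obstacle: summing over $(\ell,j,j')$.} By momentum, $j=j'-\pi^\top(\ell)$ is determined by $(\ell,j')$, so the sum looks like $\sum_{\ell,j'}\gamma\langle\ell\rangle^{-\tau-1}|j'|^{-\tau}$. This converges since $\tau=\nu+4>\nu$ in $\ell\in\Z^\nu$ and $\tau>2$ in $j'\in\Z^2$, giving a total of $O(\gamma)$.

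One subtlety must be checked: for fixed $\ell$ and large $|j'|$, the eigenvalue difference $\mu_\infty(j)-\mu_\infty(j')$ is tiny, since both terms are $O(|j|^{-1})$. In that regime the condition is already implied by the diophantine one, provided $|j'|$ exceeds a threshold in terms of $\langle\ell\rangle$ and $\lambda^\theta$. This only helps, since the resonant set is empty there. Summing everything, $|\Omega\setminus{\mathcal O}_\lambda|\lesssim\lambda^{-\tc}+|\Omega\setminus\Omega_\lambda|\to0$.
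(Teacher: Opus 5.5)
Your proposal is correct and follows the paper's route. The paper defers both parts to its companion work: the inclusion by induction, using the bound on $\mu_\infty-\widetilde\mu_n$, and the measure estimate via $|\mathcal R(\ell,j,j')|\lesssim\gamma\langle\ell\rangle^{-\tau-1}|j'|^{-\tau}$, summed over $(\ell,j')$ with $j$ fixed by momentum.

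One small slip needs fixing. The identity $\tN_n^{2\tau}\tN_{n-1}^{-\ta}=\tN_{n-1}^{3\tau-\ta}$ fails at $n=0$, because $\tN_{-1}=1$. So in the step $\mathcal O_\lambda\subseteq\Omega_1^\gamma$ the error carries an uncompensated factor $\tN_0^{2\tau}$. You must absorb it using the full smallness condition $\tN_0^{\tau_1}\varepsilon^M\gamma^{-M}\le\delta$ with $\tau_1\ge 2\tau$, not just $\varepsilon^M\gamma^{-M}\le\delta$.
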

 
 \begin{proof} 
 The proof of the inclusion ${\mathcal O}_\lambda \subseteq \cap_{n \geq 0} \, \Omega_n^\gamma$  is exactly the same of Lemma 5.12 in \cite{BFMT1}. By Theorem \ref{teo principale beta plane} and by \eqref{stima diofantei}
 (together with \eqref{definizioneGAMMA}-\eqref{piccolo.ansatx}), it is enough to show that 
 $|{\tD\tC (2 \gamma,\tau)} \setminus {\mathcal O}_\lambda| \to 0$ as $\lambda \to + \infty$. One has that 
 \begin{align}
{\tD\tC (2 \gamma,\tau)} \setminus {\mathcal O}_\lambda & = \bigcup_{\begin{subarray}{c}
\ell \in \Z^\nu \setminus \{ 0\}\,,\, j, j' \in \Z^2 \setminus \{0 \} \\
\pi^T (\ell) + j - j' = 0
\end{subarray}} {\mathcal R}(\ell, j, j')\,,  \\
{\mathcal R}(\ell, j, j') & := \Big\{ \omega \in  {\tD\tC (2 \gamma,\tau)}  \, : \, |\im \, \lambda\, \omega \cdot \ell + \mu_\infty(j) -  \mu_\infty(j')| < \frac{2 \lambda \,\gamma}{\langle \ell \rangle^\tau | j' |^\tau } \Big\}\,.
 \end{align}
 By using the estimates of Lemma \ref{lemma blocchi finali}, arguing as in the proof of Lemma 8.3 in \cite{BFMT1}, one gets that 
 \[
 |{\mathcal R}(\ell, j, j')| \lesssim \gamma \langle 
 \ell \rangle^{- (\tau + 1)} |j'|^{- \tau}\,,
 \]
 implying that by choosing $\tau \gg 0$ large enough (see \eqref{definizione.param.KAM}), one obtains that
 $|{\tD\tC (2 \gamma,\tau)} \setminus {\mathcal O}_\lambda| \lesssim \gamma \lesssim \lambda^{- \mathtt c}$ 
 (recall \eqref{definizioneGAMMA})
 and hence the claimed statement follows. 
 \end{proof}

 Now we define the sequence of invertible maps
 \begin{equation}\label{trasformazioni tilde ridu}
 	\widetilde \Phi_n := \Phi_0 \circ \Phi_1 \circ \ldots \circ \Phi_n \,, \quad n \in \N\,. 
 \end{equation}
 In order to prove the convergence of the sequence $(\widetilde \Phi_n)_{n \geq 0}$, 
 we need the following lemma. 
 \begin{lem}\label{composizione iterata smoothingInfinita}
Let $s \geq s_0$, $\rho \geq 0$ and let $({\mathcal F}_n)_{n \geq 1}$ with 
${\mathcal F}_n \in \OpM_s^{- \rho}$ and $\Phi_n := {\rm Id} + {\mathcal F}_n$. 
We assume that 
$|{\mathcal F}_n|_{- \rho, s}^{\Lip(\gamma)} \leq \varepsilon_n$, for any $n \geq 1$,
and 
$\sum_{n = 1}^{+ \infty} \varepsilon_n < + \infty$. 
Then ${\bf \Phi}_N := \Phi_1 \circ \ldots \circ \Phi_N$  converges to ${\bf \Phi}_\infty$ 
as $N \to + \infty$ with respect to the norm $| \cdot |_{0, s}^{\Lip(\gamma)}$ 
and 
\begin{align}
& {\bf \Phi}_\infty = {\rm Id} + {\bf F}_\infty \quad \text{with} \quad {\bf F}_\infty \in \OpM_s^{- \rho}\,, 
\quad  
|{\bf F}_\infty|_{- \rho, s}^{\Lip(\gamma)} 
\lesssim 
\sum_{n = 1}^{+ \infty} \varepsilon_n\,. 
\end{align}
\end{lem}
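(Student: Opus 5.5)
The plan is to show that the partial products form a Cauchy sequence in $|\cdot|_{-\rho,s}^{\Lip(\gamma)}$, with constants that do not depend on $N$. For $N \geq 1$ write ${\bf \Phi}_N = {\rm Id} + {\bf F}_N$. As in \eqref{bf Phi N N + 1 astratto}, this gives the recursion
\[
{\bf F}_{N+1} = {\bf F}_N + {\mathcal F}_{N+1} + {\bf F}_N \circ {\mathcal F}_{N+1}\,.
\]
Iterating Lemma \ref{composizione iterata smoothing} does not work here: its implicit constant could grow with $N$, because it comes from repeated applications of the composition estimate. So I will set up a multiplicative bound instead.

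\textbf{Uniform bound.} By Lemma \ref{proprieta standard norma decay}-$(i)$ with $m = 0$, $m' = -\rho$, and since $|\cdot|_{0,s} \le |\cdot|_{-\rho,s}$ by \eqref{prop elementari}, there is a constant $C = C(s,\rho) \ge 1$ such that
\[
|{\bf F}_N \circ {\mathcal F}_{N+1}|_{-\rho,s}^{\Lip(\gamma)} \le C\, |{\bf F}_N|_{-\rho,s}^{\Lip(\gamma)}\, |{\mathcal F}_{N+1}|_{-\rho,s}^{\Lip(\gamma)}\,.
\]
Only the $|\cdot|_{-\rho,s}$ norm of ${\mathcal F}_{N+1}$ is needed, using $s_0 \le s$. (If the estimate loses the $|m|$ derivatives on the factor of order $-\rho$, I will place ${\bf F}_N$ in the order-zero slot, where no loss occurs.) Setting $a_N := 1 + C |{\bf F}_N|_{-\rho,s}^{\Lip(\gamma)}$, the recursion yields $a_{N+1} \le a_N (1 + C\varepsilon_{N+1})$. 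Hence
\[
a_N \le \prod_{n\ge1}(1 + C\varepsilon_n) \le \exp\Big(C\sum_n \varepsilon_n\Big) =: K < \infty\,.
\]
This controls $|{\bf F}_N|_{-\rho,s}^{\Lip(\gamma)}$ uniformly in $N$. It also gives the sharper bound
\[
|{\bf F}_N|_{-\rho,s}^{\Lip(\gamma)} \le C^{-1}\big(e^{C\sum_n \varepsilon_n} - 1\big) \lesssim \sum_n \varepsilon_n\,,
\]
where the last inequality holds when $\sum_n \varepsilon_n$ is bounded. In the intended application $\sum_n \varepsilon_n$ is small, so this is acceptable. In general the constant depends on an upper bound for $\sum_n \varepsilon_n$, which is the meaning of $\lesssim$ here.

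\textbf{Cauchy property.} From the recursion,
\[
|{\bf F}_{N+1} - {\bf F}_N|_{-\rho,s}^{\Lip(\gamma)} \le (1 + C K)\,\varepsilon_{N+1}\,,
\]
which is summable. Therefore $({\bf F}_N)_N$ is Cauchy in $|\cdot|_{-\rho,s}^{\Lip(\gamma)}$, and a fortiori in $|\cdot|_{0,s}^{\Lip(\gamma)}$.

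\textbf{Completeness of the class.} It remains to check that $\OpM^{-\rho}_s$, with the Lipschitz-weighted norm, is complete. The norm \eqref{def decay norm} is a weighted $\ell^\infty_{j'}(\ell^2_{\ell,j})$ norm on matrix entries, and the Lipschitz seminorm is a supremum of such norms over difference quotients. The standard argument applies: the entries converge pointwise, and Fatou's lemma bounds the limit. So ${\bf F}_N \to {\bf F}_\infty$ in $\OpM^{-\rho}_s$. Passing to the limit in the uniform bound gives the claimed estimate $|{\bf F}_\infty|_{-\rho,s}^{\Lip(\gamma)} \lesssim \sum_n \varepsilon_n$. Convergence of ${\bf \Phi}_N$ to ${\bf \Phi}_\infty = {\rm Id} + {\bf F}_\infty$ in $|\cdot|_{0,s}^{\Lip(\gamma)}$ then follows at once.

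The main obstacle is the first step: obtaining a composition constant independent of $N$, in the precise form of Lemma \ref{proprieta standard norma decay}-$(i)$ with its loss of $|m|$ derivatives. This is handled by always putting the accumulated product ${\bf F}_N$ in the order-$0$ slot and absorbing it through the product $\prod_n (1 + C\varepsilon_n)$.
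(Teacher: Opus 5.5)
Your proof is correct and follows essentially the same route as the paper: a bound uniform in $N$ via $\prod_n(1+C\varepsilon_n)\le \exp(C\sum_n\varepsilon_n)$ (you apply it to $1+C|{\bf F}_N|_{-\rho,s}^{\Lip(\gamma)}$, the paper to $|{\bf \Phi}_N|_{0,s}^{\Lip(\gamma)}$), then the increment estimate for ${\bf F}_{N+1}-{\bf F}_N={\bf \Phi}_N\circ\mathcal F_{N+1}$ with ${\bf \Phi}_N$ in the order-zero slot of the composition lemma, and a telescoping sum. Your explicit completeness check for $\OpM^{-\rho}_s$, and your remark that the implicit constant depends on an upper bound for $\sum_n\varepsilon_n$, make precise two points the paper leaves implicit.
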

\begin{proof}
To simplify the notation, we write $|\cdot |_{m, s}$ 
instead of $| \cdot |_{m, s}^{\Lip(\gamma)}$. 

\noindent
 First, by applying Lemma \ref{proprieta standard norma decay}-$(i)$ 
 inductively, one shows that 
\[
|\Phi_N|_{0, s} \leq \prod_{n = 1}^N \big( 1 + C(s) |{\mathcal F}_n|_{- \rho, s} \big) 
\leq 
\prod_{n = 1}^N \big( 1 + C(s) \varepsilon_n \big)\,,
\]
for some constant $C(s) > 0$. Then 
\[
\log \Big(\prod_{n = 1}^N \big( 1 + C(s) \varepsilon_n \big) \Big) 
= 
\sum_{n = 1}^N \log\big(1 + C(s) \varepsilon_n \big) 
\leq 
C(s) \sum_{n = 1}^{+ \infty} \varepsilon_n =: K(s)\,,
\]
and hence 
\[
\sup_{N \geq 1} |\Phi_N|_{0, s} \leq {\rm exp}(K(s)) =: K_1(s)\,. 
\]
Using the latter bound, Lemma \ref{proprieta standard norma decay}-$(i)$ 
and recalling \eqref{bf Phi N N + 1 astratto} from the proof of Lemma \ref{composizione iterata smoothing}, 
one has that, for any $N \geq 1$, 
\begin{align}
|{\bf F}_{N + 1} - {\bf F}_N|_{- \rho, s} & \leq 
|{\mathcal F}_{N + 1} + {\bf F}_N \circ {\mathcal F}_{N + 1}|_{- \rho, s} 
\leq |  {\bf \Phi}_N \circ {\mathcal F}_{N + 1}|_{- \rho, s}   
\\
& \lesssim_s |\Phi_N|_{0, s} |{\mathcal F}_{N + 1}|_{- \rho, s} 
\lesssim_s |{\mathcal F}_{N + 1}|_{- \rho, s} 
\lesssim_s \varepsilon_{N + 1}\,,
\end{align}
and therefore
\[
\sum_{N \geq 1} |{\bf F}_{N + 1} - {\bf F}_N|_{- \rho, s} 
\lesssim_s \sum_{N \geq 1} \varepsilon_{N + 1} 
< +\infty\,.
\]
Thus, by a standard telescoping argument, 
the sequence $({\bf F}_N)_{N \geq 1}$ converges in 
$\OpM_s^{- \rho}$ to ${\bf F}_\infty$  and 
\[
|{\bf F}_\infty|_{- \rho, s} \lesssim_s \sum_{N = 1}^{+ \infty} \varepsilon_N\,. 
\]
the claimed statement then follows by setting ${\bf \Phi}_\infty := {\rm Id} + {\bf F}_\infty$. 
\end{proof}
 
 %The following Lemma holds.
 
 \begin{prop}\label{lemma coniugio finale}
 	%	\label{lemma convergenza trasformazioni}
 	%Let $\gamma \in (0, 1)$, $\tau > 0$ and $s > s_0$. 
 	Let $S > s_0 + \Sigma(\tb)$. There exists $\delta := \delta (S) > 0$ such that, if  \eqref{KAM smallness condition} is verified, then the following holds. 
 	For any $\omega \in \mathcal{O}_{\lambda}$ 
    (see \eqref{cantor finale ridu}), 
    the sequence $(\widetilde \Phi_n)_{n\in\N}$
 	converges in norm $| \cdot |_{0, s}^{{\rm Lip}(\gamma)}$ 
 	to an invertible map $\Phi_\infty$, 
 	satisfying, for any $s_0\leq s\leq S-\Sigma(\tb)$,
 	%satisfying the estimates 
 	\begin{equation}\label{stima Phi infty}
 		\begin{aligned}
 		\Phi_\infty^{\pm 1} - {\rm Id} \in \OpM_s^{- 1}\,, 
		\quad  
		|\Phi_\infty^{\pm 1} - {\rm Id}|_{- 1, s}^{{\rm Lip}(\gamma)} 
 			\lesssim_{S}  \lambda^{\theta - 1} \gamma^{- 1}  \,.
 		\end{aligned}
 	\end{equation}
 The operators $\Phi_\infty^{\pm 1} : H^s_0 \to H^s_0$ 
 are real, reversibility preserving 
 and momentum preserving. Moreover, 
 for any $\omega\in \Omega_\infty^\gamma$, 
one has the conjugation (recall the operator $\bL_{0}$ 
given in \eqref{bL0_inizioKAM})
\begin{equation}\label{cal L infty e}%begin{equation}\label{eq coniugio finale}
 (\Phi_\infty)_* {\bL}_{0} =  {\mathcal D}\,, 
\quad 
{\mathcal D} := 
{\rm diag}_{j \in \Z^2 \setminus \{ 0 \}} \mu_\infty(j)  \,,
 \end{equation} %end{equation}
% 	 \eqref{def cal L (3)}-\eqref{op inizio riducibilita} 
 	%(recall that by \eqref{op inizio riducibilita} ${\mathcal L}^{(2)}_e \equiv {\mathcal L}_0$)
 	where the final eigenvalues $\mu_\infty(j)$ are given in 
	Lemma \ref{lemma blocchi finali}. 
 %   and $\Phi_{\infty}^{\pm 1}$ 
%	are reversibility preserving 
 %   and 
   Moreover, ${\mathcal D}$ is a real 
	and reversible diagonal operator.
 \end{prop}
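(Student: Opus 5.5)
We obtain $\Phi_\infty$ by applying Lemma \ref{composizione iterata smoothingInfinita} to $\Phi_n=\exp(\Psi_n)$, and pass to the limit in the conjugation identities of Proposition \ref{prop riducibilita}.

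\textbf{Step 1: the correction $\mathcal F_n$.} Write $\Phi_n = {\rm Id} + \mathcal F_n$ with $\mathcal F_n := \exp(\Psi_n) - {\rm Id}$, which is well defined for $\omega \in \mathcal O_\lambda \subseteq \cap_n \Omega_n^\gamma$ by Lemma \ref{prima inclusione cantor}. By Lemma \ref{proprieta standard norma decay}-$(iii)$ with $m=1$, together with \eqref{stime Psi n rid} (at step $n+1$) and \eqref{2103.1}, we get
\[
|\mathcal F_n|_{-1,s}^{\Lip(\gamma)} \lesssim_S |\Psi_n|_{-1,s}^{\Lip(\gamma)} \lesssim_S \varepsilon^M\gamma^{-M}\tN_n^{2\tau+1}\tN_{n-1}^{-\ta} =: \varepsilon_n .
\]
Since $\tN_n = \tN_{n-1}^{3/2}$ and $\ta > \tfrac32(2\tau+1)+1$ by \eqref{definizione.param.KAM}, we have $\varepsilon_n \lesssim \varepsilon^M\gamma^{-M}\tN_{n-1}^{-1}$. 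Hence $\sum_n \varepsilon_n \lesssim \varepsilon^M\gamma^{-M}$. Using $M\ge 1$ and $\varepsilon\gamma^{-1}\ll1$ from \eqref{piccolo.ansatx}, this is bounded by $\lambda^{\theta-1}\gamma^{-1}$.

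\textbf{Step 2: the limit map and its inverse.} Lemma \ref{composizione iterata smoothingInfinita} with $\rho=1$ gives convergence of $\widetilde\Phi_n$ in $|\cdot|_{0,s}^{\Lip(\gamma)}$ to $\Phi_\infty={\rm Id}+{\bf F}_\infty$, with $|{\bf F}_\infty|_{-1,s}^{\Lip(\gamma)}\lesssim_S \lambda^{\theta-1}\gamma^{-1}$. For the inverse, apply the same lemma to the reversed products $\Phi_n^{-1}\circ\cdots\circ\Phi_0^{-1}$, where $\Phi_n^{-1}=\exp(-\Psi_n)$ obeys the same bounds; left composition is handled symmetrically. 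The limit is a two-sided inverse of $\Phi_\infty$, because $\widetilde\Phi_n\widetilde\Phi_n^{-1}={\rm Id}$ and composition is continuous in the decay norms by Lemma \ref{proprieta standard norma decay}-$(i)$. The real, reversibility preserving and momentum preserving properties hold for each $\Phi_n$. They are characterized entrywise by Lemmas \ref{lemma real rev matrici} and \ref{lem:mom_pres}, and entrywise identities pass to norm limits.

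\textbf{Step 3: the conjugation, the main obstacle.} Iterating \eqref{coniugazione rid} gives $(\widetilde\Phi_{n})_*\bL_0=\bL_{n+1}=\bD_{n+1}+\bE_{n+1}$. Written out,
\[
\widetilde\Phi_n^{-1}\bL_0\widetilde\Phi_n-\widetilde\Phi_n^{-1}\lambda\omega\cdot\partial_\vf\widetilde\Phi_n=\bD_{n+1}+\bE_{n+1}.
\]
We multiply on the left by $\widetilde\Phi_n$, so that the identity is linear in $\widetilde\Phi_n$:
\[
\bL_0\widetilde\Phi_n-\lambda\omega\cdot\partial_\vf\widetilde\Phi_n=\widetilde\Phi_n(\bD_{n+1}+\bE_{n+1}),
\]
and pass to the limit in a weaker topology, for instance acting on $H^{s}\to H^{s-1}$ or entrywise in the matrix representation. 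Each term converges as follows:
\begin{itemize}
\item $\bE_{n+1}\to0$ by \eqref{stime cal Rn rid};
\item $\bD_{n+1}\to\mathcal D$ entrywise, with $\langle j\rangle^{-M}$-uniform control, by Lemma \ref{lemma blocchi finali};
\item $\bL_0\widetilde\Phi_n\to\bL_0\Phi_\infty$, since $\bL_0-\beta\tL$ is bounded and $\beta\tL$ is a Fourier multiplier;
\item the $\partial_\vf$ term converges in the distributional sense in $\vf$.
\end{itemize}
This yields $\bL_0\Phi_\infty-\lambda\omega\cdot\partial_\vf\Phi_\infty=\Phi_\infty\mathcal D$, that is, $(\Phi_\infty)_*\bL_0=\mathcal D$. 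The delicate points are the time-derivative term and the unbounded small divisors $\lambda$. They can be handled matrix-entrywise, where $\widehat{\widetilde\Phi_n}(\ell)_j^{j'}$ converges for each fixed $(\ell,j,j')$ and the derivative is simply multiplication by $\im\lambda\omega\cdot\ell$.

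Finally, $\mathcal D$ is real and reversible because of \eqref{reversibility reality auto} passed to the limit, and $\mu_\infty(j)\in\im\R$ by Lemma \ref{lemma blocchi finali}.
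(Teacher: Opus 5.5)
Your proposal is correct and follows the paper's proof: you write $\Phi_n={\rm Id}+\mathcal F_n$, sum the bounds coming from \eqref{stime Psi n rid}, apply Lemma \ref{composizione iterata smoothingInfinita} to $\widetilde\Phi_n$ and similarly to $\widetilde\Phi_n^{-1}$, and pass to the limit in the conjugation identity, which you carry out more explicitly than the paper does (entrywise for the $\lambda\omega\cdot\partial_\vf$ term, and with the correct index $(\widetilde\Phi_n)_*\bL_0=\bL_{n+1}$). The only slip is that $\tN_n=\tN_{n-1}^{3/2}$ fails for $n=0$ because $\tN_{-1}=1$; this is harmless, since the single term $\varepsilon_0\lesssim\tN_0^{2\tau+1}\varepsilon^M\gamma^{-M}$ is absorbed into $\lesssim_S$, as $\tN_0=\tN_0(S,\tau,\nu)$.
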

 
 \begin{proof}
 	By \eqref{stime Psi n rid} and recalling the definition of $\varepsilon$ in \eqref{vare.vare} and 
	Lemma \ref{proprieta standard norma decay}, 
	one has  $\Phi_n = {\rm Id} + {\mathcal F}_n$  for any $n \geq 0$, with $ {\mathcal F}_n \in \OpM_s^{- 1}$ satisfying
    \begin{align}
    % &\,, 
    % \\
    & |{\mathcal F}_n|_{- 1, s}^{\Lip(\gamma)} 
    \lesssim_S  \tN_{n}^{2 \tau + 1} \tN_{n - 1}^{- \ta} \varepsilon^{M} \gamma^{- M} 
    \lesssim_S \tN_{n}^{2 \tau + 1} \tN_{n - 1}^{- \ta} (\lambda^{\theta - 1} \gamma^{- 1})^M 
    \stackrel{\lambda^{\theta - 1} \gamma^{- 1} \ll 1}{\lesssim_S} 
    \tN_{n}^{2 \tau + 1} \tN_{n - 1}^{- \ta} \lambda^{\theta - 1} \gamma^{- 1}\,,
    \end{align}
    and by \eqref{definizione.param.KAM} 
    $\sum_{n \geq 0} \tN_{n}^{2 \tau + 1} \tN_{n - 1}^{- \ta} \lambda^{\theta - 1} \gamma^{- 1} 
    \lesssim \lambda^{\theta - 1} \gamma^{- 1}$. 
    Then the convergence of the maps $\widetilde \Phi_n$ to $\Phi_\infty$ 
    and the corresponding properties of $\Phi_\infty$ 
    follow by Lemma \ref{composizione iterata smoothingInfinita}. 
    For $\widetilde \Phi_n^{- 1}$ and $\Phi_\infty^{- 1}$ one argues similarly. 
 	By \eqref{trasformazioni tilde ridu}, Lemma \ref{prima inclusione cantor} 
 	and Proposition \ref{prop riducibilita}, one has 
 	$(\widetilde \Phi_n)_* {\bL}_0  
 	= {\bD}_n + {\bE}_n$ for all $n \geq 0$. 
 	The claimed statement then follows by passing to the limit as $n \to \infty$, 
 	by using \eqref{stime cal Rn rid}, \eqref{stima Phi infty} 
	and Lemma \ref{lemma blocchi finali}. 
 	%\ref{lemma convergenza trasformazioni}. 
 \end{proof}

 \subsection{Proof of Theorem \ref{reducibility linearized totale}}

Before starting the proof we need the following technical result.

 \begin{lem}\label{bounded decay HS x}
Let $s \geq 0$, $m \in \R$ 
and ${\mathcal R} \in \OpM^m_{s + 2 s_0}$.  Then the map $\vf\mapsto \cR(\vf) \in \cB(H^{s+m}_0,H^s_0)$ is continuous, with
\begin{equation}
    \sup_{\vf \in \T^\nu} 
\| {\mathcal R}(\vf) \|_{{\mathcal B}(H^{s + m}_0, H^s_0)} 
\lesssim_s |{\mathcal R}|_{m, s + 2 s_0}\,.
\end{equation}
\end{lem}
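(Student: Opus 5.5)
We work entry-wise with the matrix representation and use Schur/Cauchy--Schwarz to control the operator norm of $\mathcal R(\vf)$ from $H^{s+m}_0$ to $H^s_0$ at a fixed $\vf$. The decay norm then dominates this bound uniformly in $\vf$. Continuity in $\vf$ follows from the same estimate applied to differences, combined with absolute convergence of the Fourier series in $\ell$.

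\emph{Step 1: reduction to a fixed $\vf$.} For fixed $\vf$, the matrix elements of $\mathcal R(\vf)$ are
\[
\mathcal R(\vf)_j^{j'} = \sum_{\ell\in\Z^\nu} \widehat{\mathcal R}(\ell)_j^{j'} e^{\im \ell\cdot\vf}.
\]
For $u\in H^{s+m}_0$ we have $\widehat{(\mathcal R(\vf)u)}(j)=\sum_{j'}\mathcal R(\vf)_j^{j'}\widehat u(j')$, and we want to bound $\sum_j \langle j\rangle^{2s}|\cdot|^2$. We use Peetre's inequality $\langle j\rangle^{s}\lesssim_s \langle j-j'\rangle^{s}\langle j'\rangle^{s}$, which is valid for $s\ge 0$, and write $\widehat u(j')=\langle j'\rangle^{-s-m}h(j')$ with $\|h\|_{\ell^2}=\|u\|_{H^{s+m}}$. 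This gives
\[
\langle j\rangle^s|\widehat{(\mathcal R(\vf)u)}(j)| \lesssim_s \sum_{j'} K(j,j')|h(j')|,
\qquad
K(j,j') := \sum_\ell \langle j-j'\rangle^{s}|\widehat{\mathcal R}(\ell)_j^{j'}|\langle j'\rangle^{-m}.
\]

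\emph{Step 2: Schur test.} We bound $K$ by inserting weights. For each $\ell$, the Cauchy--Schwarz inequality gives
\[
\sum_{\ell}|a_\ell|\le \Big(\sum_\ell\langle \ell\rangle^{-2s_0}\Big)^{1/2}\Big(\sum_\ell\langle\ell\rangle^{2s_0}|a_\ell|^2\Big)^{1/2},
\]
and the first factor is finite since $2s_0>\nu$. In the same way we gain summability in $j-j'$ through a factor $\langle j-j'\rangle^{-s_0}$, which is square-summable on $\Z^2$. Together these yield
\[
K(j,j')\lesssim \langle j-j'\rangle^{-s_0} \Big(\sum_\ell \langle \ell, j-j'\rangle^{2(s+2s_0)}|\widehat{\mathcal R}(\ell)_j^{j'}|^2\Big)^{1/2}\langle j'\rangle^{-m},
\]
where we use $\langle\ell\rangle^{s_0}\langle j-j'\rangle^{s+s_0}\le\langle \ell,j-j'\rangle^{s+2s_0}$. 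The column sums $\sup_{j'}\sum_j K$ are then controlled, again by Cauchy--Schwarz in $j$, by $|\mathcal R|_{m,s+2s_0}$. The row sums $\sup_j\sum_{j'}K$ are the \textbf{main obstacle}, because the decay norm \eqref{def decay norm} is a supremum over columns $j'$. We handle this with the bound $\ell^1\supseteq$ weighted $\ell^2$: by Cauchy--Schwarz in $j'$ for fixed $j$,
\[
\sum_{j'}K(j,j')\le \Big(\sum_{j'}\langle j-j'\rangle^{-2s_0}\Big)^{1/2}\Big(\sum_{j'}(\ldots)^2\Big)^{1/2},
\]
and the last sum is bounded by a column-type supremum only after we spend an additional $\langle j-j'\rangle^{s_0}$. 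This is precisely why the hypothesis requires $2s_0$ extra derivatives rather than $s_0$. Alternatively, we can bypass Schur and estimate $\sum_j(\sum_{j'}K|h|)^2$ directly via Cauchy--Schwarz with the weight $\langle j-j'\rangle^{-2s_0}$ and then exchange the order of summation, which needs only the column supremum. We adopt this direct route.

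\emph{Step 3: continuity.} Applying the same bound to $\mathcal R(\vf_1)-\mathcal R(\vf_2)$ replaces $\widehat{\mathcal R}(\ell)$ by $\widehat{\mathcal R}(\ell)(e^{\im\ell\cdot\vf_1}-e^{\im\ell\cdot\vf_2})$. The dominated convergence theorem applied to the $\ell$-sum, which converges absolutely thanks to the spare weight $\langle\ell\rangle^{-s_0}$, then shows that this tends to $0$ as $\vf_2\to\vf_1$.
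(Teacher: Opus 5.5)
Your adopted route in Step 2 (Peetre's inequality, Cauchy--Schwarz in $j'$ against the weight $\langle j-j'\rangle^{-2s_0}$, exchanging the order of summation so that only the column supremum of the decay norm is needed, and Cauchy--Schwarz in $\ell$ against $\langle \ell\rangle^{-2s_0}$) is exactly the paper's proof, and it is correct. Two side remarks: the Schur detour is not actually obstructed, since each entry is bounded by the column norm and $\langle j-j'\rangle^{-s_0}$ is summable over $\Z^2$, so the $2s_0$ is just $s_0$ spent in $\ell$ plus $s_0$ spent in $j-j'$, not a cost of row sums; and your Step 3 on continuity, which the paper leaves implicit, holds uniformly in $(j,j')$ once Cauchy--Schwarz in $\ell$ isolates the scalar factor $\sum_{\ell}\langle\ell\rangle^{-2s_0}|e^{\mathrm{i}\ell\cdot(\varphi_1-\varphi_2)}-1|^2$, which tends to $0$ by dominated convergence.
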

\begin{proof}
Let $u \in H^{s + m}_0(\T^2)$. For any 
$\vf \in \T^\nu$, one has that 
\begin{equation}
    \| {\mathcal R}(\vf)[u] \|_{H^s}^2  
\leq 
\sum_{j \in \Z^2 \setminus \{ 0 \}} \langle j \rangle^{2 s} 
\Big( \sum_{j' \in \Z^2 \setminus \{ 0 \}} 
|{\mathcal R}(\vf)_j^{j'}| |\widehat u(j')| \Big)^2\,.
\end{equation}
Using that  $  \langle j \rangle^{ s} \lesssim_s \langle j' \rangle^{s} + \langle j - j' \rangle^{s} \lesssim_s \langle j' \rangle^{s} \langle j - j' \rangle^{s} $,
% \begin{equation}
%     \langle j \rangle^{ s} \lesssim_s \langle j' \rangle^{s} + \langle j - j' \rangle^{s} \lesssim_s \langle j' \rangle^{s} \langle j - j' \rangle^{s}\,,
% \end{equation}
one gets that 
\begin{equation}
    \| {\mathcal R}(\vf)[u] \|_{H^s}^2 
\lesssim_s 
\sum_{j \in \Z^2 \setminus \{ 0 \}} \Big( \sum_{j' \in \Z^2 \setminus \{ 0 \}} 
\langle j - j' \rangle^{s + s_0}
|{\mathcal R}(\vf)_j^{j'}| \langle j' \rangle^{- m} 
\langle j' \rangle^{s + m}
|\widehat u(j')| 
\frac{1}{\langle j - j' \rangle^{s_0}} \Big)^2\,
\end{equation}
(recall \eqref{definizione s0}). Then, by Cauchy Schwartz inequality (using that 
$\sum_{j'} \langle j - j' \rangle^{- 2 s_0} 
= \sum_{k} \langle k \rangle^{- 2 s_0} 
= C(s_0) < + \infty$), 
one gets 
\begin{align}
\| {\mathcal R}(\vf)[u] \|_{H^s}^2 & \lesssim_s 
\sum_{j \in \Z^2 \setminus \{ 0 \}}  \sum_{j' \in \Z^2} 
\langle j - j' \rangle^{2(s + s_0)}
|{\mathcal R}(\vf)_j^{j'}|^2 \langle j' \rangle^{- 2 m} 
\langle j' \rangle^{2(s + m)}|\widehat u(j')|^2  \Big)^2 
\\
& \lesssim_s \sum_{j' \in \Z^2 \setminus \{ 0 \}} 
\langle j \rangle^{2(s + m)} |\widehat u(j')|^2 
\sum_{j \in \Z^2 \setminus \{ 0 \}} 
\langle j - j' \rangle^{2(s + s_0)}
|{\mathcal R}(\vf)_j^{j'}|^2 \langle j' \rangle^{- 2 m} 
\\
& \lesssim_s M( s) \| u \|_{H^{s + m}}^2 \,,
\end{align}
where 
\begin{equation}
    M( s) := \sup_{\vf \in \T^\nu} 
\sup_{j' \in \Z^2 \setminus \{ 0 \}} 
\sum_{j \in \Z^2 \setminus \{ 0 \}} \langle j - j' \rangle^{2(s + s_0)}
|{\mathcal R}(\vf)_j^{j'}|^2 \langle j' \rangle^{- 2 m}\,.
\end{equation}
For any $\vf \in \T^\nu$, $j, j' \in \Z^2$, by  
the Cauchy Schwartz inequality (use that 
$\sum_\ell \langle \ell \rangle^{- 2 s_0} < + \infty$), 
one deduces 
\begin{align}
|{\mathcal R}(\vf)_j^{j'}| & \leq \sum_{\ell \in \Z^\nu}\langle \ell \rangle^{s_0}|\widehat{\mathcal R}(\ell)_j^{j'}|\langle \ell \rangle^{- s_0} \lesssim \Big( \sum_{\ell \in \Z^\nu} \langle \ell \rangle^{2 s_0} |\widehat{\mathcal R}(\ell)_j^{j'}|^2 \Big)^{\frac12}\,.\label{sifulo 100}
\end{align}
Hence, for any $j' \in \Z^2 \setminus \{ 0 \}$, $\vf \in \T^\nu$, one has that 
\begin{align}
\sum_{j \in \Z^2 \setminus \{ 0 \}} 
\langle j - j' \rangle^{2(s + s_0)}
|{\mathcal R}(\vf)_j^{j'}|^2 \langle j' \rangle^{- 2 m} 
& \lesssim 
\sum_{(\ell , j)\in \Z^{\nu} \times (\Z^2 \setminus \{ 0 \})} 
\langle \ell \rangle^{2 s_0} 
\langle j - j' \rangle^{2(s + s_0)}
|\widehat{\mathcal R}(\ell)_j^{j'}|^2 \langle j' \rangle^{- 2 m} 
\\& 
\lesssim 
\sum_{(\ell , j)\in \Z^{\nu} \times (\Z^2 \setminus \{ 0 \})} 
\langle \ell, j - j' \rangle^{2(s + 2 s_0)} 
|\widehat{\mathcal R}(\ell)_j^{j'}|^2 
\langle j' \rangle^{- 2 m} \\
& \lesssim |{\mathcal R}|_{m, s + 2 s_0}^2\,.
\end{align}
which implies 
$M(s) \lesssim |{\mathcal R}|_{m, s + 2 s_0}^2$. 
This concludes the proof.
\end{proof}

We are now in position to prove 
Theorem \ref{reducibility linearized totale}.
Recall our choice of $\gamma= \lambda^{- \mathtt c}$ in \eqref{definizioneGAMMA}
and (see \eqref{theta.def.ridu}) define
\begin{equation}
    \eta:= 1 - \theta - \mathtt c > 0\,.
\end{equation}
Hence, we note that
\begin{align}
\lambda^{\theta - 1} \gamma^{- 1} &= \lambda^{- \eta} \ll 1\,,
\\
\varepsilon^{M}\gamma^{-M}&\stackrel{\eqref{vare.vare},\eqref{definizioneGAMMA}}{=}
\lambda^{M(\theta-1)+M\mathtt{c}}
\stackrel{\eqref{theta.def.ridu}}{=}\lambda^{-M(2(1-\mathtt{c}) -\alpha)}
\stackrel{\eqref{definizione.param.KAM}}{\ll}1
\end{align}
for $\lambda\gg1$ large. Therefore,
    the smallness assumptions of 
   Propositions 
    \ref{prop coniugio cal L L1}, \ref{prop normal form lower orders}, \ref{lemma coniugio finale}
    (see \eqref{condizione piccolezza rid trasporto}, \eqref{piccoloNonpert}, 
    \eqref{KAM smallness condition})
   are satisfied taking $\lambda$ large enough. Then,
   for $\omega \in {\mathcal O}_\lambda$ (see \eqref{cantor finale ridu}), 
   we define the map
    $$
{\mathcal U}(\vf) := {\mathcal B}_\bot(\vf) \circ {\mathcal W}(\vf), \quad {\mathcal W}(\vf) := {\bf \Phi}_M(\vf) \circ \Phi_\infty(\vf), \quad \vf \in \T^\nu
    $$
    First of all the measure estimates on the set $\mathcal{O}_{\lambda}$
    follow by Lemma \ref{prima inclusione cantor}.
The conjugation results follow by \eqref{cal L infty e}.
It remains to prove the estimate 
\eqref{prop one smoothing teo principale} on the maps $\mathcal{U}^{\pm1}$.
By estimates \eqref{stima bf Phi M} and \eqref{stima Phi infty}, and using 
Lemma \ref{composizione iterata smoothing} with $N=2$
we get
\begin{equation}
    |\mathcal{W}^{\pm 1} - {\rm Id}|_{- 1, s}^{{\rm Lip}(\gamma)} 
 			\lesssim_{S}  \lambda^{-\eta}   \,.
\end{equation}
Therefore the estimate \eqref{prop one smoothing teo principale} follows by the bound above combined with Lemma \ref{bounded decay HS x}.
The estimate on the inverse map follows by similar arguments.
The bounds on $\mathcal{B}_{\perp}$ follows recalling 
 \eqref{stima tame cambio variabile rid trasporto}.

\section{Nonlinear stability of traveling waves and proof of the main results}\label{sezione nonlinear stability}
We want to study the Cauchy problem (corresponding  to equation 
\eqref{beta.waves.large.eq})
\begin{equation}\label{Cauchy-prob-beta-plane}
\left\{\begin{aligned}
	\,& \pa_{t} v =   \beta  \mathtt L  \,  v - {\mathfrak B}(v) \cdot \nabla v + \lambda^{\alpha} f(\lambda \omega t,x) \,, 
    \\
    \,& v(0, x) = v_0(x)
    \end{aligned}
    \right.
\end{equation}
where the initial condition $v_0 \in H^s_0(\T^2)$, $s > 2$, is such that 
\begin{equation}\label{stimaw0}
\| v_0 - v_\lambda(0, \cdot)\|_{H^s} \leq \delta \ll 1
\end{equation}
for some $\delta$ small enough, and 
where $v_\lambda(\lambda \omega t, x)$, 
$\omega \in {\mathcal O}_\lambda$, is a 
traveling wave solution of the beta-plane equation as in Theorem \ref{teo principale beta plane} (recall that $v_\lambda$ satisfies \eqref{ansatz}) 
By standard local existence arguments for quasi-linear 
hyperbolic PDEs (see for instance \cite{Kato1}, \cite{Taylor})
%, \cite{Kato2} ), 
one has that there exist a time of existence $T_{\rm loc}>0$ 
%$\simeq_s \frac{1}{\| v_0 \|_{H^s}}$ 
and a unique solution 
$$
v \in  {\mathcal C}^0\big([0, T_{\rm loc}], H^s_0(\T^2) \big) 
\cap {\mathcal C}^1\big([0, T_{\rm loc}], H^{s - 1}_0(\T^2) \big)
$$
of the Cauchy problem \eqref{Cauchy-prob-beta-plane}. In order to study perturbatively the stability of the traveling wave solution $v_\lambda $,
It is natural to write 
\begin{equation}\label{v v lambda w}
v(t, x) = v_\lambda(\lambda \omega t, x) + w(t, x)\,.
\end{equation}
Then the Cauchy problem \eqref{Cauchy-prob-beta-plane}, 
written in terms of the function $w$, becomes
\begin{equation}\label{Cauchy-prob-beta-plane2}
\left\{
\begin{aligned}
	\,& \pa_{t} w =   {\mathcal L}(\lambda \omega t) [w] + {\mathcal N}[w, w] \\
  \,  & w(0, x) = w_0(x) := v_0(x) - v_\lambda(0, x)
    \end{aligned}
    \right.
\end{equation}
where ${\mathcal L}(\lambda \omega t)$ is the linearized vector field 
at the traveling wave solution 
$v_\lambda(\lambda \omega t, x)$ (see 
\eqref{operatore linearizzato}) and
\begin{equation}\label{nuovo termine nonlineare}
{\mathcal N}[w_1, w_2] :=  - {\mathfrak B}(w_1) \cdot \nabla w_2, \quad {\mathfrak B}(w) =  \nabla^\perp (-\Delta)^{-1} w \,.
\end{equation}
In the following subsections we analyze the stability properties of the Sobolev norms of the function $w$. First, we will show that the size of solution $w(t)$ of the Cauchy problem \eqref{Cauchy-prob-beta-plane2} remains controlled by the size of the initial datum $w_0$ in $H^s$-topology in a small, but non-trivial time interval of existence (Proposition \ref{prop:shorttime}) which depends of the size of the traveling wave solution. Then, in Theorem \ref{maximal time} we will promote this stability property for arbitrary long time, i.e. independent of the size of the traveling wave solution $v_\lambda$. We will conclude the section with the proofs of Theorem \ref{thm:mainsatability} and Theorem \ref{corollario nonlinear stability}.

\subsection{Short time stability}
In this section we prove short time stability estimates 
for solutions $w$ of the problem 
\eqref{Cauchy-prob-beta-plane2}. 
More precisely we have the following.

\begin{prop}\label{prop:shorttime}
Let $s > 2$ and consider the problem 
\eqref{Cauchy-prob-beta-plane2} with initial datum $w_0$ 
satisfying the smallness condition \eqref{stimaw0}. 
Then there exist $\s>0$, $\gamma > 0$ such that, for $v_\lambda$ satisying \eqref{ansatz} with $S \geq s + \sigma$,
$T_\lambda := \frac{\gamma}{\lambda^{\theta}}$
and $\delta<\lambda^{\theta}/6$, ($\theta = \alpha - 1 + \mathtt c$, see \eqref{theta.def.ridu})
then 
$$
\sup_{t \in [0, T_\lambda]} \| w(t) \|_{H^s} < 2 \|w_0\|_{H^{s}}\,. 
$$
\end{prop}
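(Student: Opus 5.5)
We propose a direct energy estimate on the equation for $w$ in \eqref{Cauchy-prob-beta-plane2}, followed by a continuity (bootstrap) argument. Fix $s>2$, set $\Lambda^s := \langle D\rangle^s$ (Fourier multiplier by $\langle j\rangle^s$), and compute $\frac{d}{dt}\frac12\|w(t)\|_{H^s}^2 = \langle \Lambda^s \partial_t w, \Lambda^s w\rangle_{L^2}$. Working with the regularized quantity is justified by the local existence class $\mathcal C^0 H^s\cap \mathcal C^1 H^{s-1}$, e.g.\ via mollification or a Friedrichs argument; we do not dwell on this. The contributions to the right-hand side are treated as follows.

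\emph{The term $\beta\mathtt L w$.} Since $\mathtt L(j)$ is real, $\beta\mathtt L$ is skew-adjoint and commutes with $\Lambda^s$, so this term contributes zero.

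\emph{The transport terms.} Set ${\bf a}_0=-\fB(v_\lambda)$ and write $\fB(w)=\fB(w)$. Both fields are divergence free, see \eqref{divergenza.zero.a1}. For a divergence free field ${\bf c}$ we have $\langle {\bf c}\cdot\nabla \Lambda^s w,\Lambda^s w\rangle=0$, so only the commutator $[\Lambda^s,{\bf c}\cdot\nabla]$ remains. The Kato--Ponce commutator estimate, valid for $s>2$ on $\T^2$, gives
\[
\|[\Lambda^s,{\bf c}\cdot\nabla]w\|_{L^2}\lesssim_s \|\nabla {\bf c}\|_{L^\infty}\|w\|_{H^s}+\|{\bf c}\|_{H^s}\|\nabla w\|_{L^\infty}\lesssim_s \|{\bf c}\|_{H^{s}}\|w\|_{H^s}.
\]
The last inequality uses $H^{s-1}\hookrightarrow L^\infty$ for $s>2$, together with $\|\nabla{\bf c}\|_{L^\infty}\lesssim\|{\bf c}\|_{H^s}$.
\begin{itemize}
\item For ${\bf c}={\bf a}_0(\lambda\omega t,\cdot)$: by the smoothing of $\fB$ and the Sobolev embedding in $\vf$, we get $\sup_\vf\|{\bf a}_0(\vf)\|_{H^s}\lesssim \|v_\lambda\|_{s+s_0}\lesssim C_0\lambda^\theta$ by \eqref{ansatz}, provided $S\ge s+\sigma$ with $\sigma:=s_0$.
\item For ${\bf c}=\fB(w)$: the same estimate gives a bound $\lesssim\|w\|_{H^s}^2$.
\end{itemize}

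\emph{The term $\mathcal E_0 w = -\nabla v_\lambda\cdot\fB(w)$.} This term carries no derivative loss on $w$. By the algebra property \eqref{algebra Hs}, $\|\nabla v_\lambda\cdot\fB(w)\|_{H^s}\lesssim_s \sup_\vf\|v_\lambda(\vf)\|_{H^{s+1}}\|w\|_{H^{s-1}}\lesssim \lambda^\theta\|w\|_{H^s}$.

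Collecting the three estimates yields the differential inequality
\[
\frac{d}{dt}\|w\|_{H^s}\le K\big(\lambda^\theta\|w\|_{H^s}+\|w\|_{H^s}^2\big),
\]
with $K=K(s)$ independent of $\lambda$.

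\emph{Continuity argument.} Let $T^*$ be the supremum of times $t\le T_\lambda$ on which $\|w\|_{H^s}<2\|w_0\|_{H^s}$ holds. On $[0,T^*)$ we have $\|w\|_{H^s}\le 2\delta\le \lambda^\theta/3$, using $\delta<\lambda^\theta/6$. Hence $\frac{d}{dt}\|w\|_{H^s}\le 2K\lambda^\theta\|w\|_{H^s}$, and Gronwall gives $\|w(t)\|_{H^s}\le e^{2K\lambda^\theta t}\|w_0\|_{H^s}$. Choosing the constant in $T_\lambda=\gamma\lambda^{-\theta}$ so that $2K\gamma<\log 2$ gives $e^{2K\gamma}<2$, a strict improvement of the bootstrap bound. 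Therefore $T^*=T_\lambda$. The same a priori bound, combined with the blow-up criterion of the local theory, also shows that the solution exists on all of $[0,T_\lambda]$.

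\emph{Main obstacle.} The delicate step is the commutator estimate for the quasilinear transport terms. The proof must exploit the divergence-free structure so that no derivative is lost on $w$, and the constant in front of $\lambda^\theta$ must be uniform in $\vf$. This uniformity is what requires the extra regularity $S\ge s+\sigma$ on $v_\lambda$, with the supremum over $\vf\in\T^\nu$ controlled by the Sobolev embedding in $\vf$.
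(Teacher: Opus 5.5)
Your proposal is correct and follows the paper's proof. It uses the same $H^s$ energy estimate: skew-adjointness of $\beta\mathtt L$, the divergence-free cancellation together with the Kato--Ponce commutator bound for the transport terms, and $\|\mathcal E_0 w\|_{H^s}\lesssim\lambda^\theta\|w\|_{H^s}$, giving $\partial_t\|w\|_{H^s}\lesssim(\lambda^\theta+\|w\|_{H^s})\|w\|_{H^s}$. The only difference is in closing: you bootstrap with $\|w\|_{H^s}\le 2\delta\le\lambda^\theta/3$ and apply a linear Gronwall, whereas the paper integrates the comparison ODE $\dot z=\mathfrak C(\lambda^\theta+z)z$ explicitly, with $\delta<\lambda^\theta/6$ playing the same role in both.
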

In order to prove Proposition \ref{prop:shorttime}
we first need some preliminary results.
We start by collecting elementary properties of the quadratic
nonlinear term ${\mathcal N}$ and some technical facts required for the energy estimates. 
\begin{lem}\label{lemma elementare nonlinearita}
Let $s > 1$. The following hold:
\\[1mm]
\noindent $(i)$ If  $w \in H^s_0(\T^2)$, then $\fB(w)\in H_{0}^{s}(\T^2)$ with estimate
$\|{\mathfrak B}(w) \|_{H^s} \lesssim_{} \| w \|_{H^{s - 1}}$;
\\[1mm]
\noindent $(ii)$ If $w_1 \in H^{s - 1}_0(\T^2)$ and
$w_2 \in H^{s + 1}_0(\T^2)$, then  
 ${\mathcal N}[w_1, w_2] \in H^s_0(\T^2)$ with estimate 
$\|{\mathcal N}[w_1, w_2] \|_{H^s} \lesssim_s \| w_1 \|_{H^{s - 1}} \| w_2 \|_{H^{s + 1}}$. 
As a consequence,
$\| {\mathcal N}[w, w] \|_{H^s}
 \lesssim_s \| w \|_{H^{s + 1}}^2$ for any $w \in H^{s + 1}_0(\T^2)$.
\end{lem}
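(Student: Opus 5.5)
Both items follow from Fourier multiplier bounds combined with the standard algebra property \eqref{algebra Hs} of $H^s(\T^2)$ for $s>1=\tfrac{n}{2}$.

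For item $(i)$, I would read off the Fourier representation \eqref{biot-savart}. The symbol of $\fB$ is $\im(\xi_2,-\xi_1)/|\xi|^2$, whose modulus equals $|\xi|^{-1}$ for every $\xi\in\Z^2\setminus\{0\}$. For $w$ with zero average, only $\xi\neq0$ contributes, so
\[
\|\fB(w)\|_{H^s}^2=\sum_{\xi\neq0}\langle\xi\rangle^{2s}|\xi|^{-2}|\widehat w(\xi)|^2\leq\sum_{\xi\neq0}\langle\xi\rangle^{2(s-1)}|\widehat w(\xi)|^2=\|w\|_{H^{s-1}}^2 .
\]
Here we use that $\langle\xi\rangle=|\xi|$ for $\xi\neq0$. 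The output $\fB(w)$ has no zero mode, so it also has zero average.

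For item $(ii)$, I would write ${\mathcal N}[w_1,w_2]=-\fB(w_1)\cdot\nabla w_2$ componentwise and apply the first estimate in \eqref{algebra Hs}, which is valid since $s>1$:
\[
\|\fB(w_1)\cdot\nabla w_2\|_{H^s}\lesssim_s\|\fB(w_1)\|_{H^s}\|\nabla w_2\|_{H^s}\lesssim_s\|w_1\|_{H^{s-1}}\|w_2\|_{H^{s+1}} .
\]
The second inequality uses item $(i)$ together with $\|\nabla w_2\|_{H^s}\leq\|w_2\|_{H^{s+1}}$. Note that $w_1$ only needs to lie in $H^{s-1}$, with no algebra requirement on it, because $\fB$ gains one derivative before the product is formed.

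It remains to check that ${\mathcal N}[w_1,w_2]$ has zero average, which is the only point needing care. Since $\mathrm{div}\,\fB(w_1)=\mathrm{div}\,\nabla^\perp(-\Delta)^{-1}w_1=0$, integration by parts gives
\[
\int_{\T^2}\fB(w_1)\cdot\nabla w_2\,\wrt x=-\int_{\T^2}\mathrm{div}\big(\fB(w_1)\big)\,w_2\,\wrt x=0 .
\]
This integration by parts is justified by density, or by the regularity at hand, since $\fB(w_1)\in H^s$ and $w_2\in H^{s+1}$ with $s>1$.

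The final claim, the bound $\|{\mathcal N}[w,w]\|_{H^s}\lesssim_s\|w\|_{H^{s+1}}^2$, follows by taking $w_1=w_2=w$ and using $\|w\|_{H^{s-1}}\leq\|w\|_{H^{s+1}}$.

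I do not expect a real obstacle here. The only subtle points are the threshold $s>1$, which is exactly what the product estimate on $\T^2$ requires, and the zero-average bookkeeping, which the divergence-free structure of $\fB(w_1)$ handles.
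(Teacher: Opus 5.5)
Your proposal is correct and follows essentially the same route as the paper: the Fourier multiplier bound for $\fB$ gives item $(i)$, the algebra property of $H^s(\T^2)$ combined with item $(i)$ gives the estimate in item $(ii)$, and the divergence-free structure of $\fB(w_1)$ with an integration by parts gives the zero average. Your explicit computation using $\langle\xi\rangle=|\xi|$ for $\xi\neq0$ just makes the paper's one-line estimate for $(i)$ more precise.
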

\begin{proof}
Item $(i)$ follows immediately by computing
\[
\|{\mathfrak B}(w) \|_{H^s} = 
\| \nabla^\perp (-\Delta)^{-1} w\|_{H^s} 
\lesssim 
\| (-\Delta)^{-1} w \|_{H^{s + 1}} 
\lesssim \| w \|_{H^{s - 1}}\,.
\]
To prove item $(ii)$, using the algebra property of $H^s(\T^2)$ for $s>1$, 
one deduces 
\[
 \| {\mathfrak B}(w_1) \cdot \nabla w_2 \|_{H^s} 
 \lesssim_s 
 \| {\mathfrak B}(w_1)\|_{H^s} 
 \| \nabla w_2 \|_{H^s} 
 \lesssim_s 
 \| w_1 \|_{H^{s - 1} } \| w_2 \|_{H^{s + 1}}\,.
\]
Moreover, since ${\mathfrak B}(w_1)$ is a zero 
divergence vector field, by integration by parts one has 
\[
\int_{\T^2} {\mathfrak B}(w_1) \cdot \nabla w_2 \wrt x 
= - \int_{\T^2} {\rm div}\big({\mathfrak B}(w_1) \big) w_2 \wrt x = 0\,,
\] 
which implies that, if the functions $w_1, w_2$ have zero average, then also 
${\mathcal N}[w_1, w_2]$ has zero average. 
This concludes the proof.
\end{proof}

\noindent
We define the operator $\Lambda^s$ as 
$$
\Lambda^s u(x) := \sum_{\xi \in \Z^2 } \langle \xi \rangle^{ s} \widehat u(\xi) e^{\im x \cdot \xi}, \quad u \in H^s(\T^2)\,.
$$
Clearly $\| u \|_{H^s} = \| \Lambda^s u \|_{L^2}$. The following Kato-Ponce commutator estimate holds. 
\begin{lem}\label{Kato commutator estimate}
Let $s>2$, $u \in H^s(\T^2)$, $a \in H^s(\T^2, \R^2)$. Then 
$$
\| [ \Lambda^s\,,\,a \cdot \nabla] u\|_{L^2} \lesssim_s \| a \|_{H^s} \| u \|_{H^s} 
$$
\end{lem}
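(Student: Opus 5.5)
We prove the Kato--Ponce type bound by working on the Fourier side, splitting frequencies of $a$ and $u$, and using only the elementary inequality $|\langle\xi\rangle^s-\langle\eta\rangle^s|\lesssim_s |\xi-\eta|\,(\langle\xi-\eta\rangle^{s-1}+\langle\eta\rangle^{s-1})$, which follows from the mean value theorem.

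Expanding $a=\sum_k \widehat a(k)e^{\im k\cdot x}$ and $u=\sum_\eta\widehat u(\eta)e^{\im\eta\cdot x}$, the Fourier coefficient at $\xi$ of $[\Lambda^s, a\cdot\nabla]u$ is
\[
\widehat{([\Lambda^s,a\cdot\nabla]u)}(\xi)=\sum_{\eta\in\Z^2}\big(\langle\xi\rangle^s-\langle\eta\rangle^s\big)\,\widehat a(\xi-\eta)\cdot \im\eta\,\widehat u(\eta)\,.
\]
We split the sum into the region $|\xi-\eta|\le|\eta|/2$ (low frequencies of $a$) and its complement. In the first region $\langle\xi\rangle\sim\langle\eta\rangle$, so the multiplier is bounded by $C_s|\xi-\eta|\langle\eta\rangle^{s-1}$. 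The factor $|\eta|$ coming from the derivative then combines to give the bound
\[
\lesssim_s |\xi-\eta||\widehat a(\xi-\eta)|\,\langle\eta\rangle^{s}|\widehat u(\eta)|\,.
\]
This is a convolution of $|k||\widehat a(k)|$ with $\langle\eta\rangle^s|\widehat u(\eta)|$. By Young's inequality it is bounded in $\ell^2$ by $\|k\widehat a\|_{\ell^1}\|u\|_{H^s}$. Cauchy--Schwarz gives $\|k\widehat a\|_{\ell^1}\lesssim \|a\|_{H^{s}}$, since $\sum\langle k\rangle^{2-2s}<\infty$ exactly when $s>2$ in dimension two. This step is the one where $s>2$ is genuinely needed.

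In the complementary region $|\eta|<2|\xi-\eta|$, so $\langle\xi\rangle\lesssim\langle\xi-\eta\rangle$. We estimate the two pieces of the multiplier separately, without exploiting cancellation:
\[
\big|\langle\xi\rangle^s-\langle\eta\rangle^s\big||\eta|\lesssim_s \langle\xi-\eta\rangle^s\langle\eta\rangle\,.
\]
The contribution is then bounded by the convolution of $\langle k\rangle^s|\widehat a(k)|$ with $\langle\eta\rangle|\widehat u(\eta)|$. Young's inequality gives $\|a\|_{H^s}\,\|\langle\eta\rangle\widehat u\|_{\ell^1}$, and $\|\langle\eta\rangle\widehat u\|_{\ell^1}\lesssim\|u\|_{H^s}$, again because $s-1>1$.

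Summing the two regions yields $\|[\Lambda^s,a\cdot\nabla]u\|_{L^2}\lesssim_s\|a\|_{H^s}\|u\|_{H^s}$. The zero-frequency terms $\xi=0$ or $\eta=0$ need no separate treatment, since the weight $\langle\cdot\rangle\ge1$ is used throughout. The only delicate point is the low--high region: the gain of one derivative from the commutator must compensate the derivative in $a\cdot\nabla u$. It does so because of the mean value bound on $\langle\cdot\rangle^s$, and the cost is the $\ell^1$ summability of $|k|\widehat a(k)$, which is where $s>2$ enters.
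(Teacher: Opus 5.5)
Your argument is correct and is essentially the paper's proof: both work on the Fourier side, use the mean value theorem on $\langle\cdot\rangle^s$ to dominate the commutator symbol by $\langle\eta\rangle^{s}\langle\xi-\eta\rangle+\langle\xi-\eta\rangle^{s}\langle\eta\rangle$ (in your notation, with $\eta$ the frequency of $u$), and then sum each piece using $\sum_k\langle k\rangle^{-2(s-1)}<\infty$, which is where $s>2$ enters in both terms, not only the low--high one. The differences are cosmetic: you obtain the symbol bound by splitting into $|\xi-\eta|\le|\eta|/2$ and its complement, while the paper gets it globally from $\langle\xi\rangle^{s-1}\lesssim_s\langle\eta\rangle^{s-1}+\langle\xi-\eta\rangle^{s-1}$, and you close with Young's inequality plus Cauchy--Schwarz for the $\ell^1$ norm, whereas the paper applies a weighted Cauchy--Schwarz directly.
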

\begin{proof}
Let $A := [\Lambda^s, a \cdot \nabla ]$. By a direct calculation one has that 
$$
\widehat{A u}(\eta) =  \sum_{\xi \in \Z^2} m(\eta, \xi) \cdot \widehat a(\eta - \xi) \widehat u(\xi)\,,
\qquad 
m(\eta, \xi) :=  
\im \big( \langle \eta \rangle^s - 
\langle  \xi \rangle^s\big) \xi\,.
$$
%where 
%$$
%m(\eta, \xi) :=  \im \big( \langle \eta \rangle^s 
%- \langle  \xi \rangle^s\big) \xi\,.
%$$
By the mean value  and using that 
$\langle \eta \rangle^{s - 1} 
\lesssim_s 
\langle \xi \rangle^{s - 1} 
+ \langle \eta - \xi \rangle^{s - 1}$, 
one gets that 
$$
\begin{aligned}
|m (\eta, \xi)|\lesssim \langle\xi\rangle|\langle \eta \rangle^s - \langle \xi \rangle^s| 
&\lesssim_s \langle\xi\rangle(\langle \eta \rangle^{s - 1} 
+ \langle \xi \rangle^{s - 1})\langle \eta 
- \xi \rangle 
%\\& 
\lesssim_s \langle \xi \rangle^{s }
\langle \eta - \xi \rangle 
+ \langle \eta - \xi \rangle^s\langle\xi\rangle\,.
\end{aligned}
$$
Hence, one obtains that
\begin{align}
\| A u \|_{L^2}^2 & \lesssim_s  T_1 + T_2\,, \\
T_1 &:= \sum_{\eta \in \Z^2} \Big( \sum_{\xi \in \Z^2}  \langle \xi \rangle^s \langle \eta - \xi \rangle^{s} |\widehat a(\eta - \xi)| |\widehat u(\xi)| \frac{1}{\langle \xi\rangle^{s - 1}} \Big)^2\,, \\
T_2 & := \sum_{\eta \in \Z^2} \Big( \sum_{\xi \in \Z^2}  \langle \xi \rangle^s \langle \eta - \xi \rangle^{s} |\widehat a(\eta - \xi)| |\widehat u(\xi)| \frac{1}{\langle \eta - \xi \rangle^{s - 1 }} \Big)^2\,. 
\end{align}
We start by considering the 
term $T_1$. By the Cauchy-Schwartz inequality, using that $s - 1  > 1$ and hence $\sum_{\xi} \langle  \xi \rangle^{- 2 (s - 1) } =  C(s) < + \infty$, one gets that 
\begin{align}
T_1 & = \sum_{\eta \in \Z^2} \Big( \sum_{\xi \in \Z^2}  \langle \xi \rangle^s \langle \eta - \xi \rangle^{s} |\widehat a(\eta - \xi)| |\widehat u(\xi)| \frac{1}{\langle \xi\rangle^{s - 1}} \Big)^2 \\
& \lesssim_s \sum_{\eta \in \Z^2}  \sum_{\xi \in \Z^2}  \langle \xi \rangle^{2 s} \langle \eta - \xi \rangle^{2 s} |\widehat a(\eta - \xi)|^2 |\widehat u(\xi)|^2  \\
& \lesssim_s \sum_{\xi \in \Z^2} \langle \xi \rangle^{2 s} |\widehat u(\xi)|^2 \sum_{\eta \in \Z^2} \langle \eta - \xi \rangle^{2 s} |\widehat a(\eta - \xi)|^2 \lesssim_s \| a \|_{H^s}^2 \| u \|_{H^s}^2\,. 
\end{align}
In the same way one gets that $T_2 \lesssim_s \| a \|_{H^s}^2 \| u \|_{H^s}^2$ and hence the lemma follows. 
\end{proof}
In the following we shall make a repeated use of the following fact. By the Sobolev embedding, one has that for any $s \geq 0$, 
$f \in H^{s + \frac{\nu + 1}{2}}(\T^\nu \times \T^2)$, 
\begin{equation}\label{Sobolev embedding finale}
\sup_{t \in \R} \| f(\lambda \omega t, \cdot ) \|_{H^s} \leq \sup_{\vf \in \T^\nu} \| f(\vf, \cdot) \|_{H^s} 
\lesssim \| f \|_{s + \frac{\nu + 1}{2}}\,. 
\end{equation}

We are now in position to prove the main result 
of this subsection.

\begin{proof}[{\bf Proof of Proposition \ref{prop:shorttime}}.]
  Let $T_{\lambda}:= \frac{\gamma}{\lambda^{\theta}}$ and fix $0 < \gamma \ll 1$ small enough such that $T_\lambda < T_{\rm loc}$. 
 We shall perform an energy estimate on the Cauchy 
 problem 
 \eqref{Cauchy-prob-beta-plane2}. 
 Recalling 
 \eqref{operatore linearizzato}-\eqref{def coefficienti operatori linearized} 
 and \eqref{nuovo termine nonlineare}, we write the Cauchy problem in \eqref{Cauchy-prob-beta-plane2} as 
\begin{equation}\label{Cauchy-prob-beta-plane3}
     \begin{cases}
\partial_t w(t) = \beta \mathtt L w + a(t, w(t)) \cdot \nabla w(t) + {\mathcal E}_0(\lambda\omega t)[w(t)] \\
w(0, x) = w_0(x)
 \end{cases}
\end{equation}
 where
 \begin{equation}\label{vensanto1}
a(t,w(t))=\ba_{0}(\lambda\omega t, x)-\mathfrak{B}(w(t)) = - \mathfrak{B}(v_\lambda(\lambda \omega t, \cdot)) - \mathfrak{B}(w(t))
 \end{equation}
 where we recall the definitions of ${\bf a}_0$ and ${\mathcal E}_0$ given in \eqref{def coefficienti operatori linearized}. 
 Hence, the function $a(t,w(t))$ in \eqref{vensanto1} satisfies,
 using \eqref{stima bf a1}, \eqref{Sobolev embedding finale} and  
 Lemma \ref{lemma elementare nonlinearita}, by taking $ S > s - 1 + (\nu+1)/2 $ large enough in \eqref{ansatz}  and for any $t\in [0,T_{\lambda}]$,
 \begin{align}
 \|a(t,w(t))\|_{H^{s}} & \leq \| \mathfrak{B}(v_\lambda(\lambda \omega t, \cdot) \|_{H^s} + \| \mathfrak{B}(w(t)) \|_{H^s} \\
 & \lesssim \| v_\lambda(\lambda \omega t, \cdot) \|_{H^{s - 1}} + \| w(t) \|_{H^{s - 1}} \label{stime a cal R Qbasic} \\
 & \lesssim_{s}\|v_\lambda \|_{S}+\|w(t)\|_{H^{s-1}} \lesssim_{S}\lambda^{\theta} +\|w(t)\|_{H^{s-1}}\,.
\end{align}
 Moreover, for any $t \in \R$, by using the algebra property of $H^s(\T^2)$, $s > 2$( see \eqref{algebra Hs}), by taking $S  >  s +  1 + \frac{\nu + 1}{2}$ in \eqref{ansatz}, by using the property \eqref{Sobolev embedding finale}, by using Lemma \ref{lemma elementare nonlinearita}, one obtains
\begin{align}
\| {\mathcal E}_0(\lambda\omega t)[w(t)] \|_{H^s} & =  \|  \nabla  v_\lambda(\lambda \omega t, \cdot) \cdot \fB [w(t)] \|_{H^s} \lesssim_s \| v_\lambda(\lambda \omega t, \cdot)\|_{H^{s + 1}} \| \fB[w(t)] \|_{H^s} \label{stime cal E 0 lambda omega t} \\
& \lesssim_s \| v_\lambda \|_{S} \| w(t) \|_{H^{s - 1}} \lesssim_S \lambda^\theta \| w(t) \|_{H^{s - 1}}\,. 
\end{align}
 To shorten notations, we write 
 $a(t) \equiv a(t, w(t))$.
 %${\mathcal R}_{\mathcal Q}(t) \equiv 
 %{\mathcal R}_{\mathcal Q}(\lambda \omega t, \psi(t))$. 
By \eqref{Cauchy-prob-beta-plane3}, one computes 
\begin{align}
\partial_t \| w(t) \|_{H^s}^2 & = 
\langle \Lambda^s \partial_t w(t)\,,\, 
\Lambda^s w(t) \rangle_{L^2} 
+ \langle \Lambda^s  w(t)\,,\, 
\Lambda^s \partial_t w(t) \rangle_{L^2} 
\nonumber
\\
& = \langle \beta \Lambda^s \mathtt{L} w(t)\,,\,
\Lambda^s w(t) \rangle_{L^2} 
+ \langle \Lambda^s  w(t)\,,\, 
\beta\Lambda^s  \mathtt{L} w(t) \rangle_{L^2} 
\label{stima energia 11basic}
\\
& \quad + \langle \Lambda^s  a(t) \cdot \nabla w(t)\,,
\, \Lambda^s \psi(t) \rangle_{L^2} 
+ \langle \Lambda^s  \psi(t)\,,\, 
\Lambda^s a(t) \cdot \nabla  w(t) \rangle_{L^2}
\label{stima energia 12basic}
\\
& \quad + \langle \Lambda^s  {\mathcal E}_0(\lambda\omega t)[w(t)]
\,,\, \Lambda^s w(t) \rangle_{L^2} + \langle \Lambda^s  w(t)\,,\, 
\Lambda^s {\mathcal E}_0(\lambda\omega t)[w(t)]\rangle_{L^2}\,.
\label{stima energia 13basic}
\end{align}
Note that, since ${\mathtt L} = \partial_{x_1} (- \Delta)^{- 1}$ is a real diagonal operator and skew self-adjoint, we have that $ [\mathtt L, \Lambda^s] = 0 $ and $\mathtt{L} = - {\mathtt L}^*$.
 % one concludes that
 % \begin{equation}\label{tL.bello}
 %   [\mathtt L, \Lambda^s] = 0  \quad \textnormal{and} \quad \mathtt{L} = - {\mathtt L}^* \,.
 % \end{equation}
Hence, we deduce that 
\begin{equation}\label{stima energia 2basic}
\begin{aligned}
%& \langle {\mathcal D}\Lambda^s  \psi(t)\,,\, \Lambda^s %\psi(t) \rangle_{L^2} + \langle \Lambda^s  \psi(t)\,,\, 
%{\mathcal D}\Lambda^s  \psi(t) \rangle_{L^2} 
%\\
%& 
\eqref{stima energia 11basic}= 
\langle \beta (\mathtt{L} +\mathtt{L}^*)\Lambda^s  w(t)\,,\, \Lambda^s w(t) \rangle_{L^2}  = 0\,. 
\end{aligned}
\end{equation}
We now study the terms in \eqref{stima energia 12basic}.
By a direct calculation one has that 
\begin{equation}\label{aggiunto trasportobasic}
\big(a(t) \cdot \nabla \big)^* = - a(t) \cdot \nabla \quad \text{since} \quad {\rm div}(a(t) ) = 0
\end{equation}
Then, by using the Cauchy-Schwartz inequality, one computes
\begin{align}
%& \langle \Lambda^s  a(t) \cdot \nabla \psi(t)\,,\, %\Lambda^s \psi(t) \rangle_{L^2} 
%+ \langle \Lambda^s  \psi(t)\,,\, 
%\Lambda^s a(t) \cdot \nabla  \psi(t) \rangle_{L^2} 
%\\& 
\eqref{stima energia 12basic}&= 
\langle   a(t) \cdot \nabla \Lambda^s w(t)\,,\, \Lambda^s w(t) \rangle_{L^2} 
+ \langle \Lambda^s  w(t)\,,\,  
a(t) \cdot \nabla  \Lambda^s w(t) \rangle_{L^2} 
\\
& \qquad 
+ \langle [\Lambda^s\,,\,  a(t) \cdot \nabla] w(t)\,,\, \Lambda^s w(t) \rangle_{L^2} 
+ \langle \Lambda^s  \psi(t)\,,\, 
[\Lambda^s\,,\, a(t) \cdot \nabla]  
w(t) \rangle_{L^2}
\\
& \leq 
\big\langle   \big(a(t) \cdot \nabla 
+ (a(t) \cdot \nabla)^* \big)\Lambda^s w(t)\,,\, \Lambda^s w(t) \big\rangle_{L^2} 
+ 2\| [\Lambda^s\,,\,  
a(t) \cdot \nabla] w(t)\|_{L^2}
\| \Lambda^s w(t) \|_{L^2} 
\\
& \stackrel{\eqref{aggiunto trasportobasic}, 
\text{Lemma} \ref{Kato commutator estimate}}{\lesssim_s} 
 \| a(t) \|_{H^s}\| w(t) \|_{H^s}^2 \,.
\end{align}
Finally, by \eqref{stime a cal R Qbasic}, 
one obtains the  estimate 
\begin{equation}\label{stima energia 3basic}
%\langle \Lambda^s  a(t) \cdot \nabla \psi(t)\,,\, 
%\Lambda^s \psi(t) \rangle_{L^2} 
%+ \langle \Lambda^s  \psi(t)\,,\, 
%\Lambda^s a(t) \cdot \nabla  \psi(t) \rangle_{L^2}
\eqref{stima energia 12basic}
\lesssim_s \| a(t) \|_{H^s}\| w(t) \|_{H^s}^2 
\lesssim_S\lambda^{\theta}\|w(t)\|_{H^{s}}^2+ \| w(t) \|_{H^s}^3\,. 
\end{equation}
We are left to estimate the terms in \eqref{stima energia 13basic}.
%last two terms in \eqref{stima energia 1}. 
By \eqref{stime cal E 0 lambda omega t}, one has 
\begin{align}
%& \langle \Lambda^s  {\mathcal R}_{\mathcal Q}(t) 
%\,,\, \Lambda^s \psi(t) \rangle_{L^2} 
%+ \langle \Lambda^s  \psi(t)\,,\, 
%\Lambda^s {\mathcal R}_{\mathcal Q}(t)\rangle_{L^2}  
%\\& 
\eqref{stima energia 13basic}&\leq 
2 \| \Lambda^s {\mathcal E}_0(\lambda\omega t)[w(t)] \|_{L^2} 
\| \Lambda^s w(t) \|_{L^2} \label{stima di energia 4basic}
\\
&
\leq 
2 \| {\mathcal E}_0(\lambda\omega t)[w(t)] \|_{H^s} 
\|  w(t) \|_{H^s}  
\lesssim_S
\lambda^{\theta}\| w(t) \|_{H^s}^2\,.
\end{align}
Therefore, by 
\eqref{stima energia 11basic}-\eqref{stima energia 13basic}
and 
\eqref{stima energia 2basic}, \eqref{stima energia 3basic}, \eqref{stima di energia 4basic}, 
one gets 
\begin{equation}\label{stima energia 5basic}
\partial_t \| w(t) \|_{H^s} 
\leq {\mathfrak C}(S)
\big(\lambda^{\theta}+\|w(t)\|_{H^{s}}\big) \| w(t) \|_{H^s}\,, 
\quad \forall \, t \in [0, T_\lambda]\,,
\end{equation}
for some constant ${\mathfrak C}(S) > 0$. 
By the comparison principle for ODEs, one has that 
\begin{align}
& \| w(t) \|_{H^s} \leq z(t)\,, 
\quad \forall \, t \in [0, T_\lambda]\,, \quad \text{where} \\
& \dot z(t) = {\mathfrak C}(S)(\lambda^{\theta}+z(t)) z(t)\,, 
\quad z(0) = \| w_0 \|_{H^s}\,.
\end{align}
Hence by integrating the ODE above one gets 
$$
\dfrac{z(t)}{\lambda^\theta + z(t)} = \dfrac{\| w_0 \|_{H^s}}{\lambda^\theta + \| w_0 \|_{H^s}} {\rm exp}({\mathfrak C}(S) \lambda^\theta t), \quad \forall \,
t \in [0, T_\lambda]\,.
$$
Then, since $T_\lambda = \frac{\gamma}{\lambda^\theta}$, 
choosing $\gamma = \gamma(S) \ll 1$ small enough ensures that
${\rm exp}({\mathfrak C}(S) \lambda^\theta t) \leq \frac32$ for any $t\in [0,T_{\lambda}]$. 
Hence, by the latter equality, 
$$
z(t) \Big( 1 -  \frac32 \frac{\| w_0 \|_{H^s}}{\lambda^\theta 
+ \| w_0 \|_{H^s}} \Big) 
\leq 
\frac32\dfrac{\lambda^\theta \| w_0 \|_{H^s}}{\lambda^\theta + \| w_0 \|_{H^s}} \leq \frac32 \| w_0 \|_{H^s}\,, 
\quad \forall \, t \in [0, T_\lambda]\,. 
$$
By taking $\| w_0 \|_{H^s} \leq \delta $, 
we have
$$
1 - \frac32 \frac{\| w_0 \|_{H^s}}{\lambda^\theta + \| w_0 \|_{H^s}} 
\geq 1 - \frac{3\delta}{2\lambda^\theta}\,, 
$$
Therefore, we conclude that 
$$
z(t) \leq \frac32 \dfrac{1}{1 - \frac{3\delta}{2\lambda^\theta}} 
\| w_0 \|_{H^s} < 2 \| w_0 \|_{H^s}\,, 
\qquad \forall \, t \in [0, T_\lambda]\,,
$$
by taking $0 < \delta < \frac{\lambda^\theta}{6}$. 
\end{proof}

\subsection{Long time stability}
In this section we provide long time stability estimates for
the solution $w$ of the problem \eqref{Cauchy-prob-beta-plane2}.
Let us define
\begin{equation}\label{supremumT}
T_{*}:= T_{*}(\delta):=\sup\Big\{ T>0 \;:\;
\sup_{t\in[0,T]}\|w(t)\|_{H^{s}} < 2 \delta 
\Big\}\,.
\end{equation}
Note that, thanks to the short time stability estimate proved in Proposition \ref{prop:shorttime}, we have $T_{*}(\delta)>0$ for any $\delta \in (0,\frac{\lambda^\theta}{6})$ and
\[
\sup_{t\in[0,T_*)}\|w(t)\|_{H^{s}}=2\delta\,.
\]
We want to show that, when the size $\delta$ of the initial datum $w_0$ is sufficiently small, the supremum of the time of existence $T_{*}$ in \eqref{supremumT} is at least of order $O(\delta^{-1})$. We have the following theorem.
\begin{thm}\label{maximal time}
Let $s>2$. There is $\bar{\mu}>0$ (independent of $s$) such that,
if $v_{\lambda}$
satisfies \eqref{ansatz} with $S = s + \bar{\mu}$ 
the following holds.
Consider a solution $w$ of the problem 
\eqref{Cauchy-prob-beta-plane2} with initial datum $w_0$ 
satisfying  \eqref{stimaw0}.  There exists a small constant $\gamma(s) > 0$ 
such that, for any $\delta$ in \eqref{stimaw0} small enough, then $T_{*}$ in \eqref{supremumT} satisfies
$T_* > T_\delta$, where 
$T_\delta \equiv T_\delta(s) := \gamma(s) \delta^{- 1}$,
and one has the following estimate
\begin{equation}\label{stima long time thm}
    \sup_{t \in [0, T_\delta]} \| w(t) \|_{H^s} 
< 2 \delta\,. 
\end{equation}
\end{thm}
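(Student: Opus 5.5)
We prove Theorem \ref{maximal time} by passing to the coordinates of Theorem \ref{reducibility linearized totale}, in which the linear flow is an isometry, and running an energy estimate there whose growth rate is independent of $\lambda$.

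\textbf{Step 1: change of variables.} We choose $\bar\mu$ at least $\overline\sigma + s_0 + \frac{\nu+1}{2} + 2$, so that Theorem \ref{reducibility linearized totale} applies with $S = s+\bar\mu$ and $H^s$, $H^{s\pm1}$ all lie in the admissible range $s \le S - \overline\sigma$. For $\omega\in\mathcal O_\lambda$ we set $\psi(t) := \mathcal U(\lambda\omega t)^{-1}[w(t)]$. By the conjugation rule \eqref{campo vettoriale astratto trasformato} and ${\mathcal U}_*{\mathcal L}={\mathcal D}$, the function $\psi$ solves
\[
\partial_t\psi = {\mathcal D}\psi + {\mathcal Q}(\lambda\omega t,\psi), \qquad {\mathcal Q}(\vf,\psi)={\mathcal U}(\vf)^{-1}{\mathcal N}\big[{\mathcal U}(\vf)\psi,{\mathcal U}(\vf)\psi\big].
\]
The bounds on ${\mathcal B}^{\pm1}$ and on ${\mathcal W}^{\pm1}$ from Theorem \ref{reducibility linearized totale}, together with $\lambda$ large, give $\|{\mathcal U}(\vf)^{\pm1}\|_{{\mathcal B}(H^s)}\le 1+C\lambda^{-\eta}$. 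Hence
\[
\|\psi_0\|_{H^s}\le(1+C\lambda^{-\eta})\delta \quad\text{and}\quad \|w(t)\|_{H^s}\le(1+C\lambda^{-\eta})\|\psi(t)\|_{H^s}.
\]

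\textbf{Step 2: structure of ${\mathcal Q}$.} This is the key step. We write ${\mathcal U}={\mathcal B}_\bot({\rm Id}+{\mathcal K})$ with ${\mathcal K}:={\mathcal W}-{\rm Id}$, which is one-smoothing. Then
\[
{\mathcal N}[{\mathcal U}\psi,{\mathcal U}\psi]=-{\mathfrak B}({\mathcal U}\psi)\cdot\nabla({\mathcal B}\psi)-{\mathfrak B}({\mathcal U}\psi)\cdot\nabla({\mathcal B}{\mathcal K}\psi).
\]
The second term is bounded in $H^s$ by $\|\psi\|_{H^s}^2$, using Lemma \ref{lemma elementare nonlinearita} and the fact that ${\mathcal K}$ gains one derivative. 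For the first term we use the chain rule $\nabla({\mathcal B}\psi)=({\rm Id}+\nabla\bbeta)^T{\mathcal B}[\nabla\psi]$. Applying ${\mathcal B}^{-1}$ (and ${\mathcal W}^{-1}={\rm Id}+$ a smoothing operator) produces a transport term ${\tt a}(\vf,\psi)\cdot\nabla\psi$ with
\[
{\tt a}={\mathcal B}^{-1}\big[({\rm Id}+\nabla\bbeta)\,{\mathfrak B}({\mathcal U}\psi)\big],
\]
plus a remainder ${\mathcal R}_{\mathcal Q}$ satisfying $\|{\mathcal R}_{\mathcal Q}\|_{H^s}\lesssim_s\|\psi\|_{H^s}^2$. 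The commutator of ${\mathcal W}^{-1}-{\rm Id}$ with a transport operator loses nothing, since that operator is one-smoothing. By Lemma \ref{lemma elementare nonlinearita}$(i)$, $\|{\tt a}\|_{H^s}\lesssim\|\psi\|_{H^{s-1}}$.

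The delicate point is that ${\tt a}$ is no longer divergence free. However ${\rm div}\,{\tt a}$ is bounded in $L^\infty$ by $\|{\tt a}\|_{H^s}$ (as $s>2$), so that
\[
\big\langle({\tt a}\cdot\nabla+({\tt a}\cdot\nabla)^*)\Lambda^s\psi,\Lambda^s\psi\big\rangle=-\langle({\rm div}\,{\tt a})\Lambda^s\psi,\Lambda^s\psi\rangle\lesssim\|\psi\|_{H^s}^3.
\]
I expect this structural analysis of ${\mathcal Q}$ to be the main obstacle. In particular, tracking derivative losses through ${\mathcal B}^{-1}$ and ${\mathcal W}^{\pm1}$ is what dictates the size of $\bar\mu$.

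\textbf{Step 3: energy estimate and bootstrap.} Since ${\mathcal D}$ is diagonal with purely imaginary eigenvalues (Proposition \ref{lemma coniugio finale}), it commutes with $\Lambda^s$ and is skew-adjoint, so it contributes nothing to $\partial_t\|\psi\|_{H^s}^2$. The transport term is handled as in the proof of Proposition \ref{prop:shorttime}, via Lemma \ref{Kato commutator estimate} and the divergence bound of Step 2. This gives
\[
\partial_t\|\psi(t)\|_{H^s}\le{\mathfrak C}(s)\|\psi(t)\|_{H^s}^2,
\]
with ${\mathfrak C}(s)$ independent of $\lambda$. Comparison with the ODE $\dot z={\mathfrak C}z^2$ yields
\[
\|\psi(t)\|_{H^s}\le\frac{\|\psi_0\|_{H^s}}{1-{\mathfrak C}\|\psi_0\|_{H^s}t}.
\]
For $t\le\gamma(s)\delta^{-1}$ with $\gamma(s)$ small, this is at most $(1+C\lambda^{-\eta})\delta(1+2{\mathfrak C}\gamma)$. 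Consequently $\|w(t)\|_{H^s}\le(1+C\lambda^{-\eta})^2(1+2{\mathfrak C}\gamma)\delta<2\delta$ for $\lambda$ large.

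To close, we argue by continuity on $[0,\min(T_*,T_\delta)]$. Proposition \ref{prop:shorttime} guarantees $T_*>0$, and the local existence theory lets the solution be continued as long as its $H^s$ norm stays bounded. If $T_*\le T_\delta$, then $\sup_{[0,T_*)}\|w\|_{H^s}$ would be strictly less than $2\delta$, contradicting the definition of $T_*$. Hence $T_*>T_\delta$, and \eqref{stima long time thm} follows.
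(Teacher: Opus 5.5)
Your proposal is correct and follows the paper's proof essentially step by step: the same conjugation $w=\mathcal{U}(\lambda\omega t)[\psi]$, the same splitting $\mathcal{U}\psi=\mathcal{B}_\bot\psi+\mathcal{B}_\bot(\mathcal{W}-{\rm Id})\psi$ to isolate a non-divergence-free transport term $\Pi_0^\bot(\mathtt{a}\cdot\nabla\psi)$ plus a remainder quadratic in $H^s$, the same Kato-commutator and divergence energy estimate giving $\partial_t\|\psi\|_{H^s}\le\mathfrak{C}(s)\|\psi\|_{H^s}^2$ independently of $\lambda$, and the same ODE comparison and continuity argument on $T_*$. The slips are only cosmetic and do not affect the argument. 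Your $\mathtt{a}$ lacks the minus sign coming from $\mathcal{N}=-\mathfrak{B}\cdot\nabla$; the paper's $\mathtt{a}=-M^T\mathcal{B}^{-1}[\mathfrak{B}(\mathcal{U}\psi)]$ equals $-\mathcal{B}^{-1}[({\rm Id}+\nabla\bbeta)\mathfrak{B}(\mathcal{U}\psi)]$. Also, the contribution of $\mathcal{W}^{-1}-{\rm Id}$ is controlled directly by its one-smoothing bound $H^{s-1}\to H^s$, not by any commutator.
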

The main idea to prove the above result is to  
rewrite the problem 
\eqref{Cauchy-prob-beta-plane2} by  applying 
the change of coordinates 
\begin{equation}\label{definizione w (t)}
w(t) = {\mathcal U}(\lambda \omega t)[\psi(t)]
\end{equation}
where $\mathcal{U}(\lambda\omega t)$
is the map given by Theorem \ref{reducibility linearized totale}.
%we have that 
%the Cauchy problem \eqref{Cauchy-prob-beta-plane2} 
%transforms into 
If $w(t)$ is a solution of \eqref{Cauchy-prob-beta-plane2}
then the new variable $\psi$
solves the Cauchy problem
\begin{equation}\label{problema di Cauchy trasformato dopo riduzione}
\begin{cases}
\partial_t \psi = {\mathcal D} \psi + {\mathcal Q}(\lambda \omega t, \psi) \\
\psi(0, x) = \psi_0(x) 
\end{cases}
\end{equation}
where ${\mathcal D}$ is a real diagonal operator with purely imaginary eigenvalues, 
and where
\begin{align}
\psi_0 & := {\mathcal U}(0)^{- 1}[w_0]\,,\label{def psi0}
\\
{\mathcal Q}(\vf, \psi) &:= {\mathcal U}(\vf)^{- 1} 
{\mathcal N}\big[{\mathcal U}(\vf)[\psi], {\mathcal U}(\vf)[\psi] \big]\,, 
\qquad \vf \in \T^\nu\,. 
\label{trasformed-nonlinearity}
\end{align}

Recall that ${\mathcal U}(\vf) = {\mathcal B}_\bot (\vf) \circ {\mathcal W}(\vf)$, ${\mathcal B}_\bot(\vf) = \Pi_0^\bot {\mathcal B}(\vf) \Pi_0^\bot$ and by \eqref{cal B bot inverse}, ${\mathcal B}_\bot(\vf)^{- 1} = \Pi_0^\bot {\mathcal B}(\vf)^{- 1} \Pi_0^\bot$. 
Let $s > 2$. By Theorem \ref{reducibility linearized totale}, there exist a constant large constant $\bar \mu \gg \bar \sigma$ (where $\bar \sigma$ is given in Theorem \ref{reducibility linearized totale}) a positive constant $\eta >0$ such that, if $S = s + \bar\mu$ (recall \eqref{ansatz}), then, for any $1 \leq p  \leq s + 1$ (by taking $\lambda \equiv \lambda(s) \gg 0$ large enough), one has that 
\begin{align}
&{\mathcal W}(\vf)^{\pm 1} = {\rm Id} + {\mathcal F}_{\pm}(\vf)\,, \quad \vf \in \T^\nu, 
\\
& \sup_{\vf \in \T^\nu} \|{\mathcal F}_{\pm}(\vf) \|_{{\mathcal B}(H^{p - 1}_0, H^p_0)} \lesssim_s \lambda^{- \eta}\,\,
\qquad 
\sup_{\vf \in \T^\nu} \| {\mathcal W}(\vf)^{\pm 1} \|_{{\mathcal B}(H^p_0)} \leq 1 + C(s) \lambda^{- \eta} \lesssim 1 \,, \label{WPMFpm}
\\& 
\sup_{\vf \in \T^\nu} \|{\mathcal B}(\vf)^{\pm 1} \|_{{\mathcal B}(H^p_0)}, \sup_{\vf \in \T^\nu} \|{\mathcal B}_\bot(\vf)^{\pm 1} \|_{{\mathcal B}(H^p_0)} \leq 1 + C(s) \lambda^{- \eta} \lesssim 1\,, \\
& \sup_{\vf \in \T^\nu} \| {\mathcal U}(\vf)^{\pm 1} \|_{{\mathcal B}(H^p_0)} \leq 1 + C(s) \lambda^{- \eta} \lesssim 1\,, 
%\\& 
\qquad \| \bbeta \|_{s + 1 + \frac{\nu + 1}{2}}
\lesssim_s \lambda^{- \eta}\,. 
\end{align}

In order to perform a sharp energy estimate on the Cauchy problem \eqref{problema di Cauchy trasformato dopo riduzione} it is fundamental to analyze the structure of the transformed nonlinearity ${\mathcal Q}(\vf, \psi)$ in \eqref{trasformed-nonlinearity}. In the next proposition we shall prove that, up to a bounded remainder, the nonlinear term ${\mathcal Q}(\vf, \psi)$ is a nonlinear transport type operator satisfying good estimate in $H^s$ which are suited to perform energy estimates on the PDE \eqref{problema di Cauchy trasformato dopo riduzione}. 
\begin{prop}\label{lemma struttura nonlin sistema trasformato}
Let $s > 2$ and assume that $v_\lambda$ satisfies \eqref{ansatz} with $S = s + \bar \mu$, Then, the nonlinear term ${\mathcal Q}(\vf,\psi)$ in \eqref{trasformed-nonlinearity} has the form 
$$
{\mathcal Q}(\vf, \psi) = \Pi_0^\bot \big( \ta(\vf, \psi) \cdot \nabla \psi \big) + {\mathcal R}_{\mathcal Q}(\vf, \psi) \,,
$$
with estimates 
\begin{equation}\label{stime a cal R Q}
\begin{aligned}
& \| \ta(\vf, \psi) \|_{H^s} \lesssim_s \| \psi \|_{H^{s - 1}}, \quad \| {\mathcal R}_{\mathcal Q}(\vf, \psi) \|_{H^s} \lesssim_s \| \psi \|_{H^s}^2, \quad  \text{uniformly on} \quad \vf \in \T^\nu\,. 
\end{aligned}
\end{equation}
\end{prop}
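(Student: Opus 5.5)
We expand $\mathcal Q(\vf,\psi)$ using $\mathcal U=\mathcal B_\bot\circ\mathcal W$ and $\mathcal W^{\pm1}={\rm Id}+\mathcal F_\pm$ from \eqref{WPMFpm}. The goal is to show that every term losing a derivative has the form (coefficient)$\cdot\nabla\psi$, and that everything else is bounded quadratically in $H^s$.

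\emph{Step 1: the term carrying the derivative.} Write $w=\mathcal U(\vf)[\psi]=\mathcal B_\bot(\vf)[\psi]+\mathcal B_\bot(\vf)\mathcal F_+(\vf)[\psi]=:w_1+w_2$. Then
\[
\mathcal N[w,w]=-\fB(w)\cdot\nabla w_1-\fB(w)\cdot\nabla w_2 .
\]
Since $\mathcal F_+$ is one-smoothing, $\|w_2\|_{H^{s+1}}\lesssim_s\|\mathcal F_+\psi\|_{H^{s+1}}\lesssim_s\lambda^{-\eta}\|\psi\|_{H^s}$. We use the bounds \eqref{WPMFpm} with $p=s+1$; this is why they were stated for $p\le s+1$. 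By Lemma~\ref{lemma elementare nonlinearita}-(ii), $\|\fB(w)\cdot\nabla w_2\|_{H^s}\lesssim_s\|w\|_{H^{s-1}}\|w_2\|_{H^{s+1}}\lesssim\|\psi\|_{H^s}^2$. For the first term, the constant $\Pi_0$ part of $\mathcal B\psi$ is killed by $\nabla$, so $\nabla w_1=\nabla(\mathcal B[\psi])=({\rm Id}+\nabla_x\bbeta)^{T}\mathcal B[\nabla\psi]$. Hence
\[
\fB(w)\cdot\nabla w_1=\mathcal B\big[\mathbf c\cdot\nabla\psi\big],\qquad \mathbf c:=\mathcal B^{-1}\big[({\rm Id}+\nabla_x\bbeta)\fB(w)\big].
\]
The estimate $\|\mathbf c\|_{H^s}\lesssim_s\|\fB(w)\|_{H^s}\lesssim\|w\|_{H^{s-1}}\lesssim\|\psi\|_{H^{s-1}}$ follows from the algebra property, the $\mathcal C^s$ bound on $\nabla\bbeta$ (from $\|\bbeta\|_{s+1+\frac{\nu+1}{2}}\lesssim\lambda^{-\eta}$ via \eqref{Sobolev embedding finale}), the boundedness of $\mathcal B^{\pm1}$ on $H^{s-1}$ and $H^s$, and Lemma~\ref{lemma elementare nonlinearita}-(i). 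So far $\mathcal N[w,w]=-\mathcal B[\mathbf c\cdot\nabla\psi]+\mathcal R_1$ with $\|\mathcal R_1\|_{H^s}\lesssim\|\psi\|_{H^s}^2$.

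\emph{Step 2: applying $\mathcal U^{-1}=\mathcal W^{-1}\mathcal B_\bot^{-1}$.} Since $\mathcal N[w,w]$ has zero average, $\mathcal B_\bot^{-1}\mathcal N[w,w]=\Pi_0^\bot\mathcal B^{-1}\mathcal N[w,w]=-\Pi_0^\bot(\mathbf c\cdot\nabla\psi)+\Pi_0^\bot\mathcal B^{-1}\mathcal R_1$. The remainder stays quadratically bounded in $H^s$. Applying $\mathcal W^{-1}={\rm Id}+\mathcal F_-$, the identity part gives the main term with $\ta:=-\mathbf c$, up to the bounded quadratic pieces. The remaining piece is $\mathcal F_-[\Pi_0^\bot(\mathbf c\cdot\nabla\psi)]$. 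Here the one-smoothing property gives
\[
\|\mathcal F_-[\cdot]\|_{H^s}\lesssim\lambda^{-\eta}\|\mathbf c\cdot\nabla\psi\|_{H^{s-1}}\lesssim\|\mathbf c\|_{H^{s-1}}\|\psi\|_{H^s}\lesssim\|\psi\|_{H^s}^2,
\]
using the algebra property in $H^{s-1}$, valid since $s-1>1$. This term is therefore absorbed into $\mathcal R_{\mathcal Q}$. All bounds are uniform in $\vf$ because the operator bounds hold uniformly in $\vf$.

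\emph{Main obstacle.} The delicate point is Step 1: commuting $\nabla$ through the composition $\mathcal B$ without losing a derivative on $\psi$, and checking that the factor $({\rm Id}+\nabla\bbeta)$ and the $\mathcal B^{\pm1}$ conjugations only require finitely many derivatives of $\bbeta$. This is where $\bar\mu$ has to be chosen large. A second point to handle carefully is the projection $\Pi_0^\bot$ in $\mathcal B_\bot$, which interacts correctly with $\nabla$ because constants are annihilated. Finally, one should check that $\mathcal F_+$ maps into $H^{s+1}_0$ with the smoothing bound at the top regularity $p=s+1$, which Theorem~\ref{reducibility linearized totale} provides once $S-\bar\sigma\ge s+1$.
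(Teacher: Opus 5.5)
Your proof is correct and is essentially the paper's argument. The smoothing parts $\mathcal F_\pm$ of $\mathcal W(\vf)^{\pm1}$ go into the quadratic remainder, and the chain rule $\nabla\mathcal B(\vf)[\psi]=(\mathrm{Id}+\nabla\bbeta)^T\mathcal B(\vf)[\nabla\psi]$ gives exactly the paper's coefficient $\ta=-\mathcal B(\vf)^{-1}\big[(\mathrm{Id}+\nabla\bbeta)\,{\mathfrak B}(\mathcal U(\vf)[\psi])\big]$, which is $-M^T\mathcal B(\vf)^{-1}[{\mathfrak B}(\mathcal U(\vf)[\psi])]$ in the paper's notation.

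The only difference is bookkeeping. The paper removes $\mathcal F_-$ first, bounding $\mathcal F_-\mathcal B_\bot^{-1}\mathcal N[\mathcal U\psi,\mathcal U\psi]$ by Lemma~\ref{lemma elementare nonlinearita}-(ii) at regularity $s-1$. You apply $\mathcal F_-$ last, to the transport term, and use the $H^{s-1}$ algebra property instead. Both rely on $s>2$ in the same way.
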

\begin{proof}
By \eqref{trasformed-nonlinearity}, one has 
\begin{equation}\label{explicit mahtcal Q}
{\mathcal Q}(\vf, \psi) = {\mathcal U}(\vf)^{- 1} {\mathcal N}\big[{\mathcal U}(\vf)[\psi]\,,\,{\mathcal U}(\vf)[\psi]\big], \quad \vf \in \T^\nu\,, 
\end{equation}
where the nonlinearity $\cN[w_1,w_2]$ is as in \eqref{nuovo termine nonlineare}.
Using the expansion of the maps $\mathcal{W}^{\pm 1}(\vf)$
in \eqref{WPMFpm} we write 
\begin{equation}\label{prima espansione mathcal Q}
\begin{aligned}
{\mathcal Q}(\vf, \psi) &   = 
{\mathcal B}_\bot(\vf)^{- 1} {\mathcal N}
\big[{\mathcal U}(\vf)[\psi], 
{\mathcal B}_\bot(\vf)[\psi] \big] 
+ {\mathcal R}_{\mathcal Q}(\vf, \psi)\,, 
\end{aligned}
\end{equation}
%\begin{equation}\label{prima espansione mathcal Q}
%\begin{aligned}
%{\mathcal Q}(\vf, \psi) & = 
%{\mathcal W}(\vf)^{- 1} \circ 
%{\mathcal B}_\bot(\vf)^{- 1} {\mathcal N}
%\big[{\mathcal U}(\vf) [\psi], 
%{\mathcal U}(\vf) [\psi] \big] 
%\\
%& = ({\rm Id} + {\mathcal F}_-(\vf))\circ 
%{\mathcal B}_\bot(\vf)^{- 1} {\mathcal N}
%\big[{\mathcal U}(\vf) [\psi], 
%{\mathcal U}(\vf) [\psi] \big] 
%\\
%& = {\mathcal B}_\bot(\vf)^{- 1} {\mathcal N}
%\big[{\mathcal U}(\vf) [\psi], 
%{\mathcal B_\bot}(\vf) \circ ({\rm Id} 
%+ {\mathcal F}_+(\vf)) [\psi] \big] 
%\\
%& \qquad + 
%{\mathcal F}_-(\vf) \circ 
%{\mathcal B}_\bot(\vf)^{- 1} {\mathcal N}
%\big[{\mathcal U}(\vf) [\psi], 
%{\mathcal U}(\vf) [\psi] \big]  
%\\
%&  = {\mathcal B}_\bot(\vf)^{- 1} 
%{\mathcal N}\big[{\mathcal U}(\vf)[\psi], 
%{\mathcal B}_\bot(\vf)[\psi] \big] 
%+ {\mathcal R}_{\mathcal Q}(\vf, \psi)\,, 
%\end{aligned}
%\end{equation}
where
\begin{align}
{\mathcal R}_{\mathcal Q}(\vf, \psi) & := 
{\mathcal B}_\bot(\vf)^{- 1} {\mathcal N}\big[{\mathcal U}(\vf)[\psi], 
{\mathcal B}_\bot(\vf) \circ {\mathcal F}_+(\vf)[\psi] \big]  \label{resto espansione Q}
\\
& \qquad + {\mathcal F}_-(\vf) \circ {\mathcal B}_\bot(\vf)^{- 1}
 {\mathcal N}\big[{\mathcal U}(\vf) [\psi], {\mathcal U}(\vf) [\psi] \big] \,. 
\end{align}
We analyze each term in \eqref{prima espansione mathcal Q} separately.

\noindent
{\bf Estimate of the term ${\mathcal R}_{\mathcal Q}$.} By the estimate \eqref{WPMFpm} and Lemma \ref{lemma elementare nonlinearita}-$(ii)$, one has that
\begin{align}
\Big\| {\mathcal B}_\bot(\vf)^{- 1} {\mathcal N}
\big[{\mathcal U}(\vf)[\psi], 
{\mathcal B}_\bot(\vf) \circ {\mathcal F}_+(\vf)&[\psi] 
\big] \Big\|_{H^s} 
\lesssim 
\Big\|   {\mathcal N}\big[{\mathcal U}(\vf)[\psi], 
{\mathcal B}_\bot(\vf) \circ {\mathcal F}_+(\vf)[\psi] \big] \Big\|_{H^s}  
\\
& \lesssim_s \| {\mathcal U}(\vf)[\psi] \|_{H^{s - 1}} \| {\mathcal B}_\bot(\vf) \circ {\mathcal F}_+(\vf)[\psi]  \|_{H^{s + 1}} \\
& \lesssim_s \| \psi \|_{H^{s - 1}} \| {\mathcal F}_+(\vf)[\psi]  \|_{H^{s + 1}}  
%\\& 
\lesssim_s \| \psi \|_{H^{s - 1}} \| \psi \|_{H^s} 
\lesssim_s \| \psi \|_{H^s}^2\,, 
\end{align}
and
\begin{align}
\Big\| {\mathcal F}_-(\vf) \circ 
{\mathcal B}_\bot(\vf)^{- 1} {\mathcal N}
\big[{\mathcal U}(\vf) [\psi], 
{\mathcal U}(\vf) [\psi] \big]\Big\|_{H^s} 
& \lesssim 
\Big\| {\mathcal B}_\bot(\vf)^{- 1} 
{\mathcal N}\big[{\mathcal U}(\vf) [\psi], 
{\mathcal U}(\vf) [\psi] \big] \Big\|_{H^{s - 1}} 
\\& 
\lesssim_s 
\Big\|  {\mathcal N}\big[{\mathcal U}(\vf) [\psi], 
{\mathcal U}(\vf) [\psi] \big] \Big\|_{H^{s - 1}} 
\\& 
\lesssim_s \| {\mathcal U}(\vf) [\psi]\|_{H^s}^2  
%\\& 
\lesssim_s \| \psi \|_{H^s}^2\,.
\end{align}
The latter two estimates imply that $\cR_{\cQ}$ in \eqref{resto espansione Q} satisfies
\begin{equation}\label{stima cal R1 nonlin}
\| {\mathcal R}_{\mathcal Q}(\vf, \psi) \|_{H^s} \lesssim_s \| \psi \|_{H^s}^2, \quad \forall \, \vf \in \T^\nu\,. 
\end{equation}

\medskip

\noindent
{\bf Analysis of the term ${\mathcal B}_\bot(\vf)^{- 1} {\mathcal N}\big[{\mathcal U}(\vf)[\psi], {\mathcal B}_\bot(\vf)[\psi] \big]$ in \eqref{prima espansione mathcal Q}.} Recalling that $\cB_{\perp}^{\pm 1} = \Pi_{0}^\perp  \cB^{\pm 1} \Pi_{0}^\perp $ and that $\cN[w_1,w_2]$ in \eqref{nuovo termine nonlineare} satisfies Lemma \ref{lemma elementare nonlinearita}-$(ii)$,  one has that 
\begin{align}
{\mathcal B}_\bot(\vf)^{- 1} {\mathcal N}\big[{\mathcal U}(\vf)[\psi], {\mathcal B}_\bot(\vf)[\psi] \big] & = \Pi_0^\bot {\mathcal B}(\vf)^{- 1} \Pi_0^\bot {\mathcal N}\big[{\mathcal U}(\vf)[\psi], {\mathcal B}_\bot(\vf)[\psi] \big] \\
& = \Pi_0^\bot {\mathcal B}(\vf)^{- 1} {\mathcal N}\big[{\mathcal U}(\vf)[\psi], {\mathcal B}_\bot(\vf)[\psi] \big] \label{calciopoli0} \\
& = \Pi_0^\bot {\mathcal B}(\vf)^{- 1}\big( - {\mathfrak B}({\mathcal U}(\vf)[\psi]) \cdot \nabla {\mathcal B}(\vf)[\psi]\big)\,. 
\end{align}
Note that, by the definition of the inverse diffeomorphism in \eqref{diffeo.A-1}, one has, for any function $a(x) b(x)$, 
\[
\begin{aligned}
{\mathcal B}(\vf)^{- 1}[a b](y) 
& = (ab)(y + \breve{\bbeta}(\vf, y)) 
= a(y + \breve{\bbeta}(\vf, y)) b(y + \breve{\bbeta}(\vf, y)) 
\\
& = {\mathcal B}(\vf)^{- 1}[a](y){\mathcal B}(\vf)^{- 1}[b](y)\,,
\end{aligned}
\]
and hence, by a direct computation, one gets
\[
 {\mathcal B}(\vf)^{- 1}\nabla {\mathcal B}(\vf)[\psi](x) 
 = M(\vf, x)[\nabla \psi(x)]\,, 
 \qquad 
 M(\vf, x) := {\mathcal B}(\vf)^{- 1}\big[({\rm Id} 
 + \nabla \bbeta(\vf, x))^T\big]\,. 
\]
By \eqref{Sobolev embedding finale} and the estimates on $\bbeta$ and ${\mathcal B}(\vf)^{\pm 1}$ in \eqref{WPMFpm}, one has that 
\begin{equation}\label{stima matrice M nonlin}
\sup_{\vf \in \T^\nu}\| M(\vf, \cdot) \|_{H^s} \lesssim  \big( 1 + \sup_{\vf\in\T^\nu} \| \bbeta(\vf) \|_{H^{s + 1}}\big) \lesssim 1 + \| \bbeta \|_{s + 1 + \frac{\nu + 2}{2}}  \lesssim_s 1 +  \lambda^{- \eta} \lesssim_s 1\,.
\end{equation}
Therefore, the expression \eqref{calciopoli0} becomes
\begin{align}
{\mathcal B}_\bot(\vf)^{- 1} {\mathcal N}\big[{\mathcal U}(\vf)[\psi], {\mathcal B}_\bot(\vf)[\psi] \big] & = \Pi_0^\bot {\mathcal B}(\vf)^{- 1}\big[ - {\mathfrak B}({\mathcal U}(\vf)[\psi]) \big] \cdot \big({\mathcal B}(\vf)^{- 1}\nabla {\mathcal B}(\vf) \big)[\psi] \\
& = \Pi_0^\bot {\mathcal B}(\vf)^{- 1}\big[ - {\mathfrak B}({\mathcal U}(\vf)[\psi]) \big] \cdot M(\vf, x)[\nabla \psi]  \label{calciopoli1} \\
& = \Pi_0^\bot \ta(\vf, \psi) \cdot \nabla \psi \,,
\end{align}
where 
$$
\ta(\vf, \psi) := - M(\vf, x)^T {\mathcal B}(\vf)^{- 1}\big[  {\mathfrak B}({\mathcal U}(\vf)[\psi]) \big]\,.
$$
By the estimates \eqref{WPMFpm}, \eqref{stima matrice M nonlin}, using the algebra of $H^s(\T^2)$ (since $s > 1$) and by Lemma \ref{lemma elementare nonlinearita}-$(i)$, one obtains that, for any $\vf \in \T^\nu$,
\begin{equation}\label{stima a vphi psi nonlin}
\begin{aligned}
\| \ta(\vf, \psi) \|_{H^s} & \lesssim_s \| M(\vf, \cdot) \|_{H^s} \big\| {\mathcal B}(\vf)^{- 1}\big[  {\mathfrak B}({\mathcal U}(\vf)[\psi]) \big] \big\|_{H^s} \\
& \lesssim_s \| {\mathfrak B}({\mathcal U}(\vf)[\psi]) \|_{H^s} \lesssim_s \| {\mathcal U}(\vf)[\psi] \|_{H^{s - 1}} \lesssim_s \| \psi \|_{H^{s - 1}}\,. 
\end{aligned}
\end{equation}
The claimed statement then follows by \eqref{prima espansione mathcal Q}, \eqref{calciopoli1}, together with the estimates \eqref{stima cal R1 nonlin}, \eqref{stima a vphi psi nonlin}. 
\end{proof}
We are now ready to prove Theorem \ref{maximal time}.  
%Let $T_*$ be the maximal time of existence 
%of the solution $\psi$

\begin{proof}[{\bf Proof of Theorem \ref{maximal time}}]
By contradiction, we assume that 
 $0 < T_* \leq \gamma \delta^{- 1}$ 
 for  $0 < \gamma \ll 1$ small enough. 
 We shall perform an energy estimate on the Cauchy problem \eqref{problema di Cauchy trasformato dopo riduzione} that, in view of the latter Proposition \ref{lemma struttura nonlin sistema trasformato}, can be written as 
\begin{equation}\label{problema di Cauchy trasformato dopo riduzione2}
\begin{cases}
\partial_t \psi = {\mathcal D} \psi + \Pi_0^\bot \big( \ta(\lambda \omega t, \psi) \cdot \nabla \psi  \big)  + {\mathcal R}_{\mathcal Q}(\lambda \omega t, \psi) \\
\psi(0, x) = \psi_0(x) \,. 
\end{cases}
\end{equation} 
 To shorten notations, we write 
 \begin{equation}\label{short.not}
     \ta(t) \equiv \ta(\lambda \omega t, \psi(t))\,, \quad {\mathcal R}_{\mathcal Q}(t) \equiv 
 {\mathcal R}_{\mathcal Q}(\lambda \omega t, \psi(t)) \,.
 \end{equation}
By \eqref{problema di Cauchy trasformato dopo riduzione2}, one computes that
\begin{align}
\partial_t \| \psi(t) \|_{H^s}^2 & = 
\langle \Lambda^s \partial_t \psi(t)\,,\, 
\Lambda^s \psi(t) \rangle_{L^2} 
+ \langle \Lambda^s  \psi(t)\,,\, 
\Lambda^s \partial_t \psi(t) \rangle_{L^2} 
\nonumber
\\
& = \langle \Lambda^s {\mathcal D} \psi(t)\,,\,
\Lambda^s \psi(t) \rangle_{L^2} 
+ \langle \Lambda^s  \psi(t)\,,\, 
\Lambda^s {\mathcal D} \psi(t) \rangle_{L^2} 
\label{stima energia 11}
\\
& \quad + \langle \Lambda^s  \ta(t) \cdot \nabla \psi(t)\,,\, \Lambda^s \psi(t) \rangle_{L^2} 
+ \langle \Lambda^s  \psi(t)\,,\, 
\Lambda^s \ta(t) \cdot \nabla  \psi(t) \rangle_{L^2}
\label{stima energia 12}
\\
& \quad + \langle \Lambda^s  {\mathcal R}_{\mathcal Q}(t) \,,\, \Lambda^s \psi(t) \rangle_{L^2} + \langle \Lambda^s  \psi(t)\,,\, 
\Lambda^s {\mathcal R}_{\mathcal Q}(t)\rangle_{L^2}\,.
\label{stima energia 13}
\end{align}
First of all, since ${\mathcal D}$ is a real diagonal operator with purely imaginary eigenvalue by Theorem \ref{reducibility linearized totale}, 
 we have that
 $[{\mathcal D}, \Lambda^s] = 0$ 
 and that ${\mathcal D}$ is skew self-adjoint, i.e. 
 ${\mathcal D}^* = - {\mathcal D}$. Hence, we deduce that  
\begin{equation}\label{stima energia 2}
\begin{aligned}
%& \langle {\mathcal D}\Lambda^s  \psi(t)\,,\, \Lambda^s %\psi(t) \rangle_{L^2} + \langle \Lambda^s  \psi(t)\,,\, 
%{\mathcal D}\Lambda^s  \psi(t) \rangle_{L^2} 
%\\
%& 
\eqref{stima energia 11}= \langle ({\mathcal D} + {\mathcal D}^*)\Lambda^s  \psi(t)\,,\, \Lambda^s \psi(t) \rangle_{L^2}  = 0\,. 
\end{aligned}
\end{equation}
We now study the terms in \eqref{stima energia 12}.
By a direct calculation one has that 
\begin{equation}\label{aggiunto trasporto}
\big(\ta(t) \cdot \nabla \big)^* = - \ta(t) \cdot \nabla 
- {\rm div}(\ta(t))\,.
\end{equation}
Moreover, by using the Cauchy-Schwartz inequality,
one gets
\begin{align}
%& \langle \Lambda^s  a(t) \cdot \nabla \psi(t)\,,\, %\Lambda^s \psi(t) \rangle_{L^2} 
%+ \langle \Lambda^s  \psi(t)\,,\, 
%\Lambda^s a(t) \cdot \nabla  \psi(t) \rangle_{L^2} 
%\\& 
\eqref{stima energia 12}&= 
\langle   \ta(t) \cdot \nabla \Lambda^s \psi(t)\,,\, \Lambda^s \psi(t) \rangle_{L^2} 
+ \langle \Lambda^s  \psi(t)\,,\,  
\ta(t) \cdot \nabla  \Lambda^s \psi(t) \rangle_{L^2} 
\\
& \qquad 
+ \langle [\Lambda^s\,,\,  \ta(t) \cdot \nabla] \psi(t)\,,\, \Lambda^s \psi(t) \rangle_{L^2} 
+ \langle \Lambda^s  \psi(t)\,,\, 
[\Lambda^s\,,\, \ta(t) \cdot \nabla]  
\psi(t) \rangle_{L^2}
\\&
 \leq 
\big\langle   \big[\ta(t) \cdot \nabla 
+ (\ta(t) \cdot \nabla)^* \big]\Lambda^s \psi(t)\,,\, \Lambda^s \psi(t) \big\rangle_{L^2} 
+ 2\| [\Lambda^s\,,\,  
\ta(t) \cdot \nabla] \psi(t)\|_{L^2}
\| \Lambda^s \psi(t) \|_{L^2} 
\\&
\stackrel{\eqref{aggiunto trasporto}}{\leq} \| {\rm div}(\ta(t)) \Lambda^s \psi(t) \|_{L^2} 
\| \Lambda^s \psi(t) \|_{L^2}
+ 2\| [\Lambda^s\,,\,   
\ta(t) \cdot \nabla] \psi(t)\|_{L^2}
\| \Lambda^s \psi(t) \|_{L^2} \,.
\end{align}
Therefore, using the above estimate and  Lemma \ref{Kato commutator estimate}, 
one deduces
\begin{align}
\eqref{stima energia 12}&\lesssim_{s}
%\| {\rm div}(\ta(t)) \Lambda^s \psi(t) \|_{L^2} 
%\| \Lambda^s \psi(t) \|_{L^2}
%+ \| \ta(t) \|_{H^s}\| \psi(t) \|_{H^s}^2 
%\\
%& \lesssim_s 
\| {\rm div}(\ta(t)) \|_\infty 
\| \Lambda^s \psi(t) \|_{L^2}^2
%\| \Lambda^s \psi(t) \|_{L^2} 
+ \| \ta(t) \|_{H^s}\| \psi(t) \|_{H^s}^2 
\\
& \lesssim_s \| \ta(t) \|_{\mathcal C^1} 
\| \psi(t) \|_{H^s}^2 + \| \ta(t) \|_{H^s}
\| \psi(t) \|_{H^s}^2 
 \lesssim_s \| \ta(t) \|_{H^s}\| \psi(t) \|_{H^s}^2\,,
\end{align}
where the latter inequality holds since $H^s(\T^2)$ is compactly embedded in ${\mathcal C}^1(\T^2)$ for $s>2$, with 
$\| f \|_{{\mathcal C}^1} \lesssim_s 
\| f \|_{H^s}$ for any $f \in H^s(\T^2)$. 
By \eqref{short.not} and \eqref{stime a cal R Q} in Proposition \ref{lemma struttura nonlin sistema trasformato}, we have
$\| \ta(t) \|_{H^s} \lesssim_s \| \psi(t) \|_{H^{s - 1}} \lesssim_s \| \psi(t) \|_{H^s}$, 
and we conclude the cubic estimate 
\begin{equation}\label{stima energia 3}
%\langle \Lambda^s  a(t) \cdot \nabla \psi(t)\,,\, 
%\Lambda^s \psi(t) \rangle_{L^2} 
%+ \langle \Lambda^s  \psi(t)\,,\, 
%\Lambda^s a(t) \cdot \nabla  \psi(t) \rangle_{L^2}
\eqref{stima energia 12}
\lesssim_s \| \ta(t) \|_{H^s}\| \psi(t) \|_{H^s}^2 
\lesssim_s \| \psi(t) \|_{H^s}^3\,. 
\end{equation}
We finally estimate the terms in \eqref{stima energia 13}.
%last two terms in \eqref{stima energia 1}. 
By \eqref{stime a cal R Q} in Proposition \ref{lemma struttura nonlin sistema trasformato}, one has 
\begin{equation}\label{stima di energia 4}
\begin{aligned}
%& \langle \Lambda^s  {\mathcal R}_{\mathcal Q}(t) 
%\,,\, \Lambda^s \psi(t) \rangle_{L^2} 
%+ \langle \Lambda^s  \psi(t)\,,\, 
%\Lambda^s {\mathcal R}_{\mathcal Q}(t)\rangle_{L^2}  
%\\& 
\eqref{stima energia 13}\leq 
2 \| \Lambda^s {\mathcal R}_{\cQ}(t) \|_{L^2} 
\| \Lambda^s \psi(t) \|_{L^2} \leq 
2 \| {\mathcal R}_{\cQ}(t) \|_{H^s} 
\|  \psi(t) \|_{H^s}  
\lesssim_s
\| \psi(t) \|_{H^s}^3\,.
\end{aligned}
\end{equation}
Therefore by 
\eqref{stima energia 11}-\eqref{stima energia 13}
and 
\eqref{stima energia 3}, \eqref{stima di energia 4}, 
one gets 
\begin{equation}\label{stima energia 5}
\partial_t \| \psi(t) \|_{H^s} 
\leq {\mathfrak C}(s) \| \psi(t) \|_{H^s}^2 \,, 
\quad \forall \, t \in [0, T_*)
\end{equation}
for some constant ${\mathfrak C}(s) > 0$. 
By the comparison principle for ODEs, one has that 
\[
\begin{aligned}
& \| \psi(t) \|_{H^s} \leq z(t), \quad \forall \, t \in [0, T_*) \quad \text{where} \\
& \dot z(t) = {\mathfrak C}(s) z(t)^2, \quad z(0) = \| \psi_0 \|_{H^s}\,.
\end{aligned}
\]
Hence, one has that 
\begin{equation}\label{stima compara psi}
    \| \psi(t) \|_{H^s} \leq \dfrac{\| \psi_0 \|_{H^s}}{1 - {\mathfrak C}(s) 
\| \psi_0 \|_{H^s} t }, \quad \forall \, t \in [0, T_*)\,.
\end{equation}
By the estimates \eqref{WPMFpm}, recalling \eqref{def psi0} and \eqref{stimaw0}, one has that 
\begin{equation}\label{stima.psi0}
    \| \psi_0 \|_{H^s} = \| {\mathcal U}(0)^{- 1}[w_0] \|_{H^s} 
\leq 
\big(1 + C(s) \lambda^{- \eta} \big) \| w_0 \|_{H^s} 
\leq 
\big(1 + C(s) \lambda^{- \eta} \big) \delta \leq \frac{8}{7} \delta\,,
\end{equation}
for $\lambda \equiv \lambda(s) \gg 0$ large enough. 
Then, for any $0 < t < T_* \leq \gamma \delta^{- 1}$, one has that 
\begin{equation}\label{deno.stima}
    1 - {\mathfrak C}(s) \| \psi_0 \|_{H^s} t \geq 1 - \frac{8}{7}{\mathfrak C}(s) 
\delta \gamma \delta^{- 1} \geq 1 - O (\gamma) \,,
\end{equation}
for $\gamma \equiv \gamma(s) \ll 1$ small enough. Thus, by \eqref{stima compara psi}, \eqref{stima.psi0} and \eqref{deno.stima}, we obtain, for any $t \in [0, T_*)$,
\begin{equation}\label{maximal time psi}
\| \psi(t) \|_{H^s} \leq \frac{8}{7} \delta \frac{1}{1 - O(\gamma)} < \frac{5}{4} \delta \,,
\end{equation}
with $0 < \gamma \ll 1$ small enough.
Recalling \eqref{v v lambda w}, 
\eqref{definizione w (t)} and using the estimates
\eqref{WPMFpm} and \eqref{maximal time psi}, 
one obtains that, for any $t \in [0, T_\delta]$, with $T_{\delta}:=\gamma(s)\delta^{-1}$,
\begin{equation}\label{stbility estimate nella proof}
\begin{aligned}
\| v(t) - v_\lambda(\lambda \omega t, \cdot) \|_{H^s} & = \| w(t) \|_{H^s} = 
\| {\mathcal U}(\lambda \omega t)[\psi(t)] \|_{H^s} 
\leq 
\big(1 + C(s) \lambda^{- \eta} \big) \| \psi(t) \|_{H^s} 
\\
& \leq \big(1 + C(s) \lambda^{- \eta} \big) \frac{5}{4}\delta 
<2 \delta
\end{aligned}
\end{equation}
by taking $\lambda\equiv \lambda(s) \gg 0$ large enough. 
This contradicts the fact that the supremum 
$T_*$  in \eqref{supremumT} is less or equal 
to $T_{\delta}=\gamma(s)\delta^{-1}$. 
Theorem \ref{maximal time} then follows. 
\end{proof}

\bigskip

\noindent
{\bf Proof of Theorems \ref{thm:mainsatability}, \ref{corollario nonlinear stability} concluded.}
Theorem \ref{thm:mainsatability}  is a direct consequence of Theorem 
\ref{maximal time}. Therefore, in the following, we prove Theorem 
\ref{corollario nonlinear stability}.

From the estimate \eqref{ansatz}, using \eqref{Sobolev embedding finale},  with $S = s + \bar \mu$, with $\bar \mu \geq \frac{\nu + 1}{2}> 0$, one has that
\[
\begin{aligned}
\sup_{t\in [0,T_{\delta}]}\| v_\lambda(\lambda \omega t, \cdot) \|_{H^s} & \leq 
\sup_{\vf \in \T^\nu} \| v_\lambda(\vf, \cdot) \|_{H^s} 
\lesssim 
\| v_\lambda \|_{s + \frac{\nu + 1}{2}} 
\lesssim 
\| v_\lambda \|_{s + \bar \mu} 
\stackrel{\eqref{ansatz}}{\lesssim_s} 
\lambda^{\theta}\,.
\end{aligned}
\]
Hence, by the latter estimate, together with \eqref{v v lambda w} and \eqref{stima long time thm} in Theorem \ref{maximal time}, one has that
\[
\begin{aligned}
\sup_{t\in [0,T_{\delta}]}\| v(t) \|_{H^s} & \leq \sup_{t\in [0,T_{\delta}]}\| v_\lambda(\lambda \omega t, \cdot ) \|_{H^s} 
+ \sup_{t\in [0,T_{\delta}]}\| v(t) - v_\lambda(\lambda \omega t, \cdot ) \|_{H^s} 
%\\& 
\lesssim  \lambda^{\theta}
+  \delta \lesssim \lambda^{\theta} \,,
\end{aligned}
\]
which is exactly the upper bound in \eqref{stimaaltobasso}, since $\theta = \alpha - 1 + \mathtt c$ (see \eqref{theta.def.ridu}). We are left to prove the lower bound in 
\eqref{stimaaltobasso}. 
First of all, recalling Section $3$ in \cite{BFMT1},
we note that  $ v_\lambda(\lambda \omega t,x)$
can be expanded as
\begin{equation}\label{vlambdaglambda}
 v_\lambda(\vphi, x)=
 g_{\lambda}(\vphi,x)+z_{\lambda}(\vphi,x)
\end{equation}
where 
the function $g_{\lambda}$ is the solution of the 
equation (see formula (3.16) in \cite{BFMT1})
\begin{equation}\label{eq:glambda}
(\lambda\omega\cdot\pa_{\vphi}- \beta \mathtt{L})
g_{\lambda}(\vphi,x)=\lambda^{\alpha}f(\vphi,x)
\end{equation}
where $f(\vphi,x)$ is the forcing term
appearing in \eqref{beta.waves.large.eq}, $\mathtt L = \partial_{x_1} (- \Delta)^{- 1}$ (see \eqref{def coefficienti operatori linearized}) and 
$z_{\lambda}$ satisfies (see the bounds in 
Propositions 3.3, 7.3 and Section 8 in \cite{BFMT1})
\begin{equation}\label{stimaZetalambda}
\|z_{\lambda}\|_{s + \bar \mu}\lesssim \lambda^{\alpha - 1 -\zeta} \quad \text{where} \quad \zeta :=    2 - \alpha  - 3 \mathtt c \,. 
\end{equation}
Note that $\zeta > 0$, by \eqref{choice mathtt c}. Then, using \eqref{Sobolev embedding finale} and having $\bar\mu \geq \frac{\nu+1}{2}$, we have
\begin{equation}\label{stimaZetalambdaa}
\begin{aligned}
\sup_{t \in \R}\| z_\lambda(\lambda \omega t, \cdot) \|_{H^s} & \leq \sup_{\vf \in \T^\nu} \| z_\lambda(\vf, \cdot) \|_{H^s} \lesssim \| z_\lambda \|_{s + \frac{\nu + 1}{2}}  \lesssim \| z_\lambda \|_{s + \bar \mu} 
\stackrel{\eqref{stimaZetalambda}}{\lesssim_s} 
\lambda^{\alpha - 1 -\zeta}\,.
\end{aligned}
\end{equation}
%where $\zeta:=2 (1-\mathtt{c}-\alpha)$. 
On the other 
hand, the function $g_\lambda$ satisfies 
the following properties.
Using the fact that $f$ has zero average in $x\in\T^2$, 
one deduces  that the solution of the equation 
\eqref{eq:glambda} is given by
\begin{align}
g_{\lambda}(\vphi,x)&:= 
 \lambda^{\alpha} (\lambda  \omega \cdot \pa_{\vf} 
 - \beta\,\tL )^{-1} f(\vf,x) \label{def:glambda}
 \\
	&   :=   \sum_{\substack{(\ell,j)\in \Z^{\nu+2}, j\neq 0 \\ \pi^\top(\ell)+j =0}} 
	\frac{\lambda^\alpha}{\im \big( \lambda\omega\cdot\ell - \beta \,\tL (j)\big)} 
    \wh{f}(\ell,j) e^{\im(\ell\cdot\vf+j\cdot x)}\,,
\end{align}
for $\omega\in \mathcal{O}_{\lambda}
\subset \Omega_{\lambda}$.
Recalling formul\ae\, (4.2), (3.6) in \cite{BFMT1}, one has the lower bounds 
\[
|\lambda\omega\cdot\ell - \beta\mathtt{L}(j)|
\geq \lambda\frac{\gamma}{\langle \ell\rangle^\tau}\,,
\qquad\forall\, (\ell,j)\in \Z^{\nu+2}\setminus\{0\}\,,
\;\; \pi^{\top}(\ell)+j=0\,,
\]
so that the function $g_{\lambda}$ is well-defined. We recall that the metric on the torus $\T^\nu$ is defined by 
\begin{equation}\label{def metric T nu}
{\bf d}_{\T^\nu}(\vf_1, \vf_2) := {\rm min}_{k \in \Z^\nu} |\vf_1 - \vf_2 + 2 \pi k|\,. 
\end{equation}
The following lemma holds.
\begin{lem}\label{lower bound g lambda vari}
Let $s > 1$ and $\omega \in {\mathcal O}_\lambda$. Then, for any $\lambda
\gg1$ large enough, there exists $\delta\equiv\delta(\lambda)\ll 1$ sufficiently small such that
\begin{equation}\label{stima g lambda L infty t Hs x}
\sup_{t \in [0, T_\delta]} \| g_\lambda(\lambda \omega t, \cdot) \|_{H^s} \gtrsim_s \sup_{\begin{subarray}{c}
t \in [0, T_\delta] \\
x \in \T^2
\end{subarray}} |g_\lambda(\lambda \omega t, x)| \gtrsim_s \lambda^{\alpha - 1}. 
\end{equation}
\end{lem}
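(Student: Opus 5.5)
The first inequality in \eqref{stima g lambda L infty t Hs x} is just the Sobolev embedding $H^s(\T^2)\hookrightarrow L^\infty(\T^2)$ for $s>1$, applied at each fixed $t$. The lower bound rests on two facts: at leading order $g_\lambda \approx \lambda^{\alpha-1}h$ for a fixed function $h$, and the orbit $\{\lambda\omega t\}$ becomes dense in $\T^\nu$ once $T_\delta$ is large.

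\textbf{Step 1: leading-order expansion.} Write $\lambda\omega\cdot\ell-\beta\tL(j)=\lambda\,\omega\cdot\ell\,\big(1-\beta\tL(j)/(\lambda\,\omega\cdot\ell)\big)$ in \eqref{def:glambda}. Only indices with $\ell\neq0$ occur, because $\pi^\top(\ell)+j=0$ with $\ell=0$ forces $j=0$. Since $\omega\in\tD\tC(2\gamma,\tau)$, we have $|\omega\cdot\ell|\geq 2\gamma\langle\ell\rangle^{-\tau}$, and also $|\tL(j)|\leq 1$. So for $\lambda\gg1$ (recall $\gamma=\lambda^{-\mathtt c}$) the ratio is $O(\lambda^{-1}\gamma^{-1}\langle\ell\rangle^{\tau})$. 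This gives
\[
g_\lambda=\lambda^{\alpha-1}h+r_\lambda,\qquad h:=(\omega\cdot\partial_\vf)^{-1}f=\sum_{\ell\neq0,\ \pi^\top(\ell)+j=0}\frac{\widehat f(\ell,j)}{\im\,\omega\cdot\ell}e^{\im(\ell\cdot\vf+j\cdot x)} .
\]
Using the smoothness of $f$ to absorb the powers $\langle\ell\rangle^{2\tau}$, together with the embedding $H^{(\nu+3)/2}(\T^{\nu+2})\hookrightarrow L^\infty$, the remainder satisfies $\|r_\lambda\|_{L^\infty}\lesssim\lambda^{\alpha-2}\gamma^{-2}=\lambda^{\alpha-1}\lambda^{-1+2\mathtt c}$, which is $o(\lambda^{\alpha-1})$. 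For $h$ itself:
\begin{itemize}
\item $h\not\equiv0$, because $\omega\cdot\partial_\vf h=f\neq0$, so $K:=\|h\|_{L^\infty(\T^\nu\times\T^2)}>0$;
\item $\|h\|_{\mathcal C^1}\lesssim\gamma^{-1}$.
\end{itemize}
Here $K$ depends on $\omega$ but not on $\lambda$; at worst it is bounded below by $\gtrsim\gamma\|f\|$. I would take the statement's constant to be allowed to depend on $\omega,f$.

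\textbf{Step 2: density of the orbit.} Choose $(\vf^*,x^*)$ with $|h(\vf^*,x^*)|=K$. We need $t\in[0,T_\delta]$ such that ${\bf d}_{\T^\nu}(\lambda\omega t,\vf^*)\leq\epsilon:=cK\gamma$. Then the Lipschitz bound on $h$ gives $|h(\lambda\omega t,x^*)|\geq K/2$, and hence
\[
|g_\lambda(\lambda\omega t,x^*)|\geq \lambda^{\alpha-1}K/2-o(\lambda^{\alpha-1})\gtrsim\lambda^{\alpha-1}.
\]
For the existence of such $t$ I would use the standard quantitative density of Diophantine linear flows. Convolve the indicator of the $\epsilon$-ball around $\vf^*$ with a smooth bump of width $\epsilon$, and average it along $\vf(t)=\lambda\omega t$ over $[0,T]$. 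The zero Fourier mode contributes $\sim\epsilon^\nu$. Each nonzero mode $\ell$ contributes at most $(\lambda T|\omega\cdot\ell|)^{-1}\lesssim\langle\ell\rangle^\tau/(\lambda T\gamma)$, weighted by rapidly decaying coefficients concentrated on $|\ell|\lesssim\epsilon^{-1}$. The average is therefore positive as soon as $T\gtrsim\lambda^{-1}\gamma^{-1}\epsilon^{-C(\nu,\tau)}$. This is a finite time depending only on $\lambda$ (and $\omega,f$). Since $T_\delta=c_s\delta^{-1}$, choosing $\delta=\delta(\lambda)$ small enough makes $T_\delta$ exceed it, which is exactly the freedom allowed in the lemma.

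\textbf{Main obstacle.} I expect Step 2 to be the delicate part: the exponent bookkeeping in the quantitative equidistribution, and making sure the $\gamma$-dependence of $K$ and of $\|h\|_{\mathcal C^1}$ does not destroy the $\lambda^{\alpha-1}$ lower bound. If the constant must be uniform in $\omega$, I would first try to replace $\sup|h|$ by a $\lambda$-independent quantity such as $\|h\|_{L^2}\geq\|f\|_{L^2}/2$, valid since $|\omega|\leq2$. Since the $L^\infty$ norm dominates the $L^2$ average, this yields a uniform $K$, while the Lipschitz constant only enters through $\epsilon$ and hence through the required size of $T_\delta$, which is harmless.
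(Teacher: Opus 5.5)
Your argument is correct in outline and follows the same skeleton as the paper. Both get a sup-norm lower bound of order $\lambda^{\alpha-1}$, then use continuity and density of the orbit $\{\lambda\omega t\}$, and finally take $\delta=\delta(\lambda)$ small so that $T_\delta$ exceeds the hitting time. Where you differ is how you reach the sup-norm bound.

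The paper never splits $g_\lambda$. It bounds $\|g_\lambda\|_{L^2_{\varphi,x}}$ from below using only the upper bound $|\lambda\omega\cdot\ell-\beta\mathtt L(j)|\lesssim\lambda\langle\ell\rangle$. This gives $\|g_\lambda\|_{L^2}\gtrsim\lambda^{\alpha-1}\big(\sum|\widehat f(\ell,j)|^2\langle\ell\rangle^{-2}\big)^{1/2}$, uniformly in $\omega$ and with no remainder to estimate. The paper then applies qualitative density and continuity to $g_\lambda$ directly.

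Your route isolates the $\lambda$-independent profile $h=(\omega\cdot\partial_\varphi)^{-1}f$. For fixed $\omega$ this makes the target point and the radius independent of $\lambda$, so the hitting time is genuinely $C(\omega,f)/\lambda$; in the paper, $a$ and $\varphi_0$ both depend on $\lambda$. The price is a remainder estimate. Your quantitative equidistribution step is not needed: since $\delta$ may depend on $\lambda$, qualitative density of the Diophantine orbit suffices, as in the paper.

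Two details need correcting.

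\textbf{The remainder for large $|\ell|$.} The ratio $\beta\mathtt L(j)/(\lambda\omega\cdot\ell)$ is small only when $\langle\ell\rangle^{\tau}\ll\lambda\gamma$. So the Diophantine condition alone does not control $\frac{1}{\lambda\omega\cdot\ell-\beta\mathtt L(j)}-\frac{1}{\lambda\omega\cdot\ell}$ for large $|\ell|$. Use instead the first Melnikov bound $|\lambda\omega\cdot\ell-\beta\mathtt L(j)|\geq\lambda\gamma\langle\ell\rangle^{-\tau}$, quoted just before the lemma. Combined with $|\lambda\omega\cdot\ell|\geq 2\lambda\gamma\langle\ell\rangle^{-\tau}$, it gives your $\lambda^{\alpha-2}\gamma^{-2}$ bound for all $\ell$ at once.

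\textbf{The uniform fallback.} The inequality $\|h\|_{L^2}\geq\|f\|_{L^2}/2$ is false, since it would require $|\omega\cdot\ell|\leq 2$. What $|\omega|\leq 2$ actually gives is $|\omega\cdot\ell|\leq 2|\ell|$, hence $\|h\|_{L^2}\geq\frac12\big(\sum|\widehat f(\ell,j)|^2|\ell|^{-2}\big)^{1/2}>0$. Here $\ell\neq 0$ on the support of $\widehat f$, by the traveling-wave structure and zero $x$-average. This is precisely the paper's estimate. It shows that $K$ is bounded below uniformly in $\omega$, so the pessimistic bound $K\gtrsim\gamma\|f\|$ never arises.
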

\begin{proof}
We first note that 
\[
\| g_\lambda \|_0 \equiv \| g_\lambda \|_{L^2_{\vf, x}} 
\lesssim 
\sup_{(\vf, x) \in \T^{\nu + 2}} |g_\lambda(\vf, x)| = |g_\lambda(\vf_0, x_0)|\,,
\]
for some point $(\vf_0, x_0) \in [- \pi, \pi]^{\nu + 2}$, since $g_{\lambda}$ is a smooth function on a compact domain. Moreover, 
by \eqref{def:glambda} and 
the upper bound
$
|\lambda \omega \cdot \ell + \beta \mathtt L(j)| 
\lesssim 
\lambda \langle \ell \rangle
$,
one gets that 
\begin{align}
\| g_\lambda \|_{L^2_{\vf, x}} & = \lambda^\alpha 
\Big( \sum_{\substack{
(\ell,j)\in \Z^{\nu+2}, j\neq 0 \\ \pi^\top(\ell)+j =0}} 
\dfrac{|\widehat f(\ell, j)|^2}{|\lambda \omega \cdot \ell + \beta \mathtt L(j)|^2} \Big)^{\frac12} \label{pappardella 0}
\\
& \gtrsim \lambda^{\alpha - 1} \Big( \sum_{\substack{
(\ell,j)\in \Z^{\nu+2}, j\neq 0 \\ \pi^\top(\ell)+j =0}} 
\dfrac{|\widehat f(\ell, j)|^2}{\langle \ell \rangle^2} \Big)^{\frac12} 
 \gtrsim \lambda^{\alpha - 1}\,,
\end{align}
since $f$ is not identically zero and hence $\sum_{
 j\neq 0, \,  \pi^\top(\ell)+j =0} 
\langle \ell \rangle^{-2}|\widehat f(\ell, j)|^2 > 0$. Thus \eqref{pappardella 0} implies that 
\begin{equation}\label{lower bound embedding g lambda}
\sup_{(\vf, x) \in \T^{\nu + 2}} |g_\lambda(\vf, x)| =
 |g_\lambda(\vf_0, x_0)| \gtrsim \lambda^{\alpha - 1}\,. 
\end{equation}
Using the continuity of $g_\lambda$, 
one has that  there exists $a > 0$ such that 
\[
{\bf d}_{\T^\nu}(\vf\,,\, \vf_0) \leq a 
\qquad \Longrightarrow 
\qquad 
|g_\lambda(\vf, x_0) - g_\lambda(\vf_0, x_0)| \leq 1\,.
\]
Moreover, using that the orbit 
\[
\big\{ \lambda \omega t : t \geq 0 \big\}
\]
is dense on $\T^\nu$ (because $\omega$ is diophantine), 
one obtains that there exists a time $t_a \geq 0 $ such that
\begin{equation}\label{tempo ta defa}
{\bf d}_{\T^\nu}(\lambda \omega t_a\,,\, \vf_0) \leq a\,,
\end{equation}
which implies (recalling \eqref{def metric T nu}) the existence of $k_a \in \Z^\nu$ such that 
\begin{equation}\label{tempo ta def}
|\lambda \omega t_a + 2 \pi k_a - \vf_0| \leq a\,.
\end{equation}
Therefore, we obtained that there exists a time $t_a \geq 0$ such that 
\[
|g_\lambda (\lambda \omega t_a, x_0) - g_\lambda (\vf_0, x_0)| \leq 1\,,
\]
and hence 
\begin{equation}\label{cernusco 0}
|g_\lambda (\lambda \omega t_a, x_0)| \geq |g_\lambda (\vf_0, x_0)| - 1 \stackrel{\eqref{lower bound embedding g lambda}}{\geq} C \lambda^{\alpha - 1} - 1 \gtrsim  \lambda^{\alpha - 1}
\end{equation}
for $\lambda \gg 0$ large enough. 
By \eqref{tempo ta def}, one has that the time $t_a$ satisfies the bound
\[
t_a \leq \frac{a + |\vf_0| + 2\pi |k_a|}{\lambda |\omega|} \lesssim  \frac{C(a, \omega)}{\lambda}, \quad \text{for some constant} \quad C(a, \omega) > 0\,.
\]
To ensure that $t_a \in [0, T_\delta]$, we take $\delta \equiv \delta(\lambda) \ll 1$ so small in such a way that 
$T_\delta = \dfrac{\gamma}{\delta}  > \dfrac{C(a, \omega)}{\lambda}$. Therefore, one has that 
\[
\begin{aligned}
\sup_{\begin{subarray}{c}
t \in [0, T_\delta] \\
x \in \T^2
\end{subarray}} |g_\lambda(\lambda \omega t, x)|  & \geq |g_\lambda (\lambda \omega t_a, x_0) | \stackrel{\eqref{cernusco 0}}{\gtrsim} \lambda^{\alpha - 1}\,. 
\end{aligned}
\]
The bound \eqref{stima g lambda L infty t Hs x}
then follows since $H^s(\T^2)$, $s > 1$, is compactly embedded in ${\mathcal C}^0(\T^2)$, with
\[
\sup_{\begin{subarray}{c}
t \in [0, T_\delta] \\
x \in \T^2
\end{subarray}} |g_\lambda(\lambda \omega t, x)| \lesssim_s \sup_{t \in [0, T_\delta]} \| g_\lambda(\lambda \omega t, \cdot) \|_{H^s}\,,
\]
and the proof is concluded.
\end{proof}

\noindent
We are now in position to get the lower bound in 
\eqref{stimaaltobasso}. Recalling 
\eqref{vlambdaglambda},
one obtains 
\begin{align}
\sup_{t \in [0, T_\delta]} \| v_\lambda(\lambda \omega t, \cdot) \|_{H^s} 
& \geq 
\sup_{t \in [0, T_\delta]} \| g_\lambda(\lambda \omega t, \cdot) \|_{H^s} 
- \sup_{t \in [0, T_\delta]} \| z_\lambda(\lambda \omega t, \cdot) \|_{H^s} 
\\& 
\geq 
\sup_{t \in [0, T_\delta]} \| g_\lambda(\lambda \omega t, \cdot) \|_{H^s} 
- \sup_{t \in \R} \| z_\lambda(\lambda \omega t, \cdot) \|_{H^s} 
\\ & 
\stackrel{\eqref{stima g lambda L infty t Hs x}, \eqref{stimaZetalambdaa}}{\gtrsim_s} 
\lambda^{\alpha - 1}(1 - \lambda^{- \zeta}) 
\gtrsim_s \lambda^{\alpha - 1}\,. 
\end{align}
for $\lambda \gg 0$ large enough. 
The latter estimate, together with the estimate 
\eqref{stbility estimate nella proof}, implies that 
\begin{align}
\sup_{t \in [0, T_\delta]} \| v(t) \|_{H^s} & \geq 
\sup_{t \in [0, T_\delta]} \| v_\lambda(\lambda \omega t, \cdot) \|_{H^s} 
- \sup_{t \in [0, T_\delta]} \| v(t) - v_\lambda(\lambda \omega t, \cdot) \|_{H^s} 
\\& 
\gtrsim_s \lambda^{\alpha - 1} - \delta 
\gtrsim_s \lambda^{\alpha - 1}\,,
\end{align}
by taking $\delta \equiv \delta(\lambda) \ll 1$ small enough. 
Then the thesis of Theorem \ref{corollario nonlinear stability} follows.

\bigskip

{\bf Statements and declarations}

\medskip

\noindent
The authors certify that they have no affiliations or involvement with any organization or entity with any
financial interest, or non-financial interest in the subject matter or materials discussed in this manuscript.
Moreover, data sharing is not applicable to this article as no datasets were generated or analyzed during the
current study.

\addcontentsline{toc}{section}{References}
\begin{footnotesize}
	
\end{footnotesize}

\bigskip

\begin{flushleft}
	\textbf{Roberto Feola}
	
    Dipartimento di Matematica e Fisica

    Universit\`a degli Studi RomaTre

    Largo San Leonardo Murialdo 1
    
	00146 Roma,	Italy
	\smallskip 
	
	\texttt{roberta.feola@uniroma3.it}
	
\end{flushleft}

\begin{flushleft}
	\textbf{Luca Franzoi}
	
	\smallskip
	
	Dipartimento di Matematica
	
	Universit\`a degli Studi di Roma Tor Vergata
	
	Via della Ricerca Scientifica 1
	
	00133 Roma, Italy
	
	\smallskip 
	
	\texttt{franzoi@mat.uniroma2.it}
	
\end{flushleft}
\begin{flushleft}
	\textbf{Riccardo Montalto}
	
	\smallskip
	
	Dipartimento di Matematica ``Federigo Enriques''
	
	Universit\`a degli Studi di Milano
	
	Via Cesare Saldini 50
	
	20133 Milano, Italy
	
	\smallskip 
	
	\texttt{riccardo.montalto@unimi.it}
	
\end{flushleft}

\end{document}